\newcommand{\Aut}{\operatorname{Aut}}
\newcommand{\Pro}{\operatorname{Pr}}
   \theoremstyle{plain}
   \newtheorem{thm}{Theorem}[section]
   \newtheorem{prop}[thm]{Proposition}
   \newtheorem{lemma}[thm]{Lemma}  
   \newtheorem{cor}[thm]{Corollary}
   \theoremstyle{definition}
   \newtheorem{defn}[thm]{Definition}
   \newtheorem{example}[thm]{Example}
   \theoremstyle{remark}
   \newtheorem{remark}[thm]{Remark}
\newcommand{\defeq}{:=}
\definecolor{mybgcolor}{gray}{0.8}
\definecolor{myframecolor}{rgb}{.647,.129,.149}
\newmdenv[style=mystyle]{important}
   \numberwithin{equation}{section}
        \date{\today}
\title[KMS weights on graph $C^*$-algebras]{KMS weights on graph $C^*$-algebras}  
\author{Klaus Thomsen}
        \keywords{Directed graphs, $C^*$-algebras, KMS weights}
\date{\today}
\email{matkt@math.au.dk}
\address{Institut for Matematik, Aarhus University, Ny Munkegade, 8000 Aarhus C, Denmark}
\begin{document}

\maketitle

\begin{abstract} The paper contains a study of the gauge invariant KMS
  weights for a generalized gauge action on a graph
  $C^*$-algebra. When the graph is irreducible and has the property
  that there are at most countably many roads to infinity in the graph,
  a complete description is given of the structure of KMS weights for the
  gauge action. The structure is very rich and is identical with the
  structure of KMS states for the restriction of the action to any corner defined by a projection in the
  fixed point algebra.
\end{abstract}

\section{Introduction} In 1980 Bratteli, Elliott and Kishimoto proved
a remarkable theorem concerning the structure of inverse
temperatures and the corresponding simplexes of KMS states for a
one-parameter group of automorphisms on a unital $C^*$-algebra,
\cite{BEK}. Their result says that if a
given structure of simplexes can be realized inside a metrizable
compact convex set in such a way that the closure properties which
the KMS
states have inside the state space of a unital $C^*$-algebra are satisfied, then the structure
is in fact the structure of KMS states of a periodic one-parameter
group of automorphisms acting on a unital and simple separable $C^*$-algebra. In
particular, it follows that any KMS structure which
occurs with some unital separable $C^*$-algebra, can also be realized with a
unital and simple separable
$C^*$-algebra. This fact is in striking contrast to the observation that in
practically all cases where it has been possible to determine the
structure of inverse temperatures and simplexes of KMS states of an a priori
given one-parameter action on a simple $C^*$-algebra, the
structure has been disappointingly poor; often with only one possible inverse
temperature and a unique KMS state. The gauge action on graph
$C^*$-algebras is no exception if one
sticks with finite graphs, \cite{EFW}, but it is the purpose of the present
paper to show how radically this changes when infinite graphs are considered.

We extend first the study of KMS weights on graph algebras which was
initiated in \cite{Th1} by allowing the graph to have sinks and
infinite emitters. In most of the paper we work in the same generality as in
\cite{Th1}, dealing with generalized gauge actions, but in this
introduction where only some of the results are described, attention
is restricted to the gauge action on the $C^*$-algebra of a strongly
connected graph $G$. There are then no sinks to consider, but there may be
plenty of infinite emitters. As in \cite{Th1} the KMS weights are
given by regular Borel measures on the path space of the graph, which
besides the infinite paths now also contains finite paths terminating
at an infinite emitter. This division of the path space leads to a
similar division of the KMS weights, depending on the supports of the
corresponding measures. If the measure is supported on the finite
paths, we say that the KMS weight is a \emph{boundary KMS weight} and if
the finite paths is a null set for the measure, that it is a
\emph{harmonic KMS weight}.

In order to have any KMS weights at all the adjacency matrix of the graph must have 'finite powers of all
orders'. This means that for a given vertex $v$
and a given natural number $n$
the number $a(n)$ of paths of length $n$ from $v$ back to itself must be
finite. In fact, the exponential growth rate of $a(n)$ must be
finite. The logarithm of this growth rate is the \emph{Gurevich
  entropy} $h(G)$ of the graph and there are no KMS weights when the Gurevich
entropy is infinite, and when it is finite there are no $\beta$-KMS
weights when $\beta < h(G)$. The graphs for which this paper describes the structure of
KMS weights completely are those with at most countably many exits. Here an \emph{exit} is a
tail-equivalence class of \emph{exit paths}, and an exit path is a
sequence $(t_i)_{i=1}^{\infty}$ of vertexes in the graph such that
there is an edge from $t_i$ to $t_{i+1}$ for all $i$ and such that
$t_i$ goes to infinity in the natural sense. Each exit
contributes an interval\footnote{Here and in the rest of the paper an
  interval is to be understood in the broadest sense possible. It may be empty,
open, closed, half-open or even degenerate, i.e. consist of a single
number only.} of inverse temperatures in $[h(G),\infty)$, and for each $\beta$ in the
interval there is an extremal ray of $\beta$-KMS weights, uniquely
determined by the condition that the corresponding measures are
supported on the exit. It is
these KMS weights that are responsible for the rich structure of the KMS
weights that may be realized with graphs of this kind, because it turns out that the intervals of inverse temperatures which the exits
contribute are independent and can be almost arbitrary. To formulate this
more precisely, we have to distinguish between when $G$ is
recurrent and when it is transient, \cite{Ru}. In terms of the numbers
$a(n)$ introduced above, $G$ is transient when the sum
$$
\sum_{n=1}^{\infty} a(n)e^{-nh(G)}
$$  
is finite and recurrent when it is not. Furthermore, we have to
distinguish between graphs that are row-finite, in the sense that the
out-degree at every vertex is finite, and those that are not
row-finite. Concerning the latter class of graphs we obtain the following theorems.

\begin{thm}\label{intro1} Let $N \in \{1,2,3, \cdots \} \cup
  \{\infty\}$ and let $h
  \in \big]0,\infty\big[$ be a positive real number. Let $\mathbb I$ be a finite or
  countably infinite collection of intervals in $\big]h,\infty\big[$. 

\smallskip

There is
  a strongly connected recurrent graph $G$ with Gurevich entropy
  $h(G) =h $, such that the set of exits in $G$ is in
  bijective correspondence with $\mathbb I$,
  and for $\beta \geq
  h$ there are the following extremal $\beta$-KMS weights for the
  gauge action on $C^*(G)$:
\begin{enumerate}
\item[$\bullet$] For $\beta > h$ there are $N$ extremal rays of boundary $\beta$-KMS weights,
  in bijective correspondence with the infinite emitters in $G$, and
  the rays of extremal harmonic $\beta$-KMS weights are in bijective correspondence with the set
$$
\left\{ I \in \mathbb I :  \ \beta \in I \right\} .
$$
\item[$\bullet$] For $\beta = h$ there are no boundary $h$-KMS weights
  and a unique ray of extremal harmonic $h$-KMS weights.
\end{enumerate}
\end{thm}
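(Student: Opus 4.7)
\emph{Strategy.} The statement is a realization result, so the plan is constructive. I will build $G$ as a union of three kinds of building blocks glued to a common ``base'' in a way that controls the Gurevich entropy, recurrence, strong connectivity, and the counts of exits and infinite emitters independently. The classification of extremal gauge-invariant KMS weights that the paper establishes for strongly connected graphs with countably many exits, applied to the assembled $G$, will then identify its KMS structure with the one claimed in the theorem.

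\emph{The building blocks.} Start with a strongly connected recurrent graph $G_{0}$ of Gurevich entropy $h$; a one-vertex bouquet with $a(n)$ loops of length $n$ works, with $\sum a(n)e^{-nh}=\infty$ (recurrence) and $a(n)^{1/n}\to e^{h}$ (entropy). By the classification, $C^{*}(G_{0})$ carries a unique ray of harmonic $h$-KMS weights, no boundary KMS weights at all, and no harmonic weights for $\beta>h$; this already yields the $\beta=h$ bullet point once we verify that attaching the remaining gadgets changes none of this at criticality. For each interval $I\in\mathbb{I}$ attach an \emph{exit gadget} $E_{I}$: an infinite spine $t_{1}^{I}\to t_{2}^{I}\to\cdots$ with branching multiplicities $b_{i}^{I}$ (either parallel edges, or short branches attached to the spine) and sparse back-edges into $G_{0}$ that restore strong connectivity. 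The multiplicities $b_{i}^{I}$ are chosen so that the series $\sum_{i} b_{i}^{I} e^{-\beta i}$ that governs the harmonic equation along the tail converges exactly on $\beta\in I$; polynomial corrections to exponentials tune whether the endpoints of $I$ are open, closed, or degenerate. The back-edges are kept sufficiently rare, and the $b_{i}^{I}$ sufficiently small, that the entropy stays $h$ and recurrence is preserved. Since $\mathbb{I}$ is countable and the spines are pairwise disjoint, all the gadgets can be attached at once and they produce genuinely distinct tail-equivalence classes. Finally, for each $1\le k\le N$ attach an \emph{infinite-emitter gadget}: a new vertex $w_{k}$ with infinitely many edges into $G_{0}$ and one edge from $G_{0}$ to $w_{k}$, with the many edges routed through long auxiliary paths so that the number of loops of length at most $n$ through $w_{k}$ stays subexponential in $n$.

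\emph{Verification.} The classification theorem already proved in the paper, applied to the constructed $G$, decomposes the extremal gauge-invariant KMS rays as follows. At $\beta=h$ the harmonic ray inherited from $G_{0}$ is unique, and no boundary weights exist because none of the infinite emitters supports a $\beta$-conformal measure at criticality. For $\beta>h$ the harmonic rays correspond bijectively to the exits $I$ that contain $\beta$, by the very summability condition built into the $b_{i}^{I}$, and the boundary rays correspond bijectively to the $N$ infinite emitters $w_{k}$, each contributing exactly one ray per $\beta>h$. This is precisely the structure asserted by \refthm{intro1}.

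\emph{Main obstacle.} The delicate step is the uniform design of the exit gadgets. Each interval type---open, closed, half-open, or degenerate---requires its own branching recipe (bare exponentials for open endpoints, polynomially corrected exponentials for closed ones, a single sharp eigenvalue for degenerate $I$), and all these choices must be made simultaneously over a countable $\mathbb{I}$ while keeping $h(G)=h$, keeping $G$ recurrent, keeping each spine in its own tail-equivalence class, and keeping strong connectivity. Most of the honest work of the proof lives in this combinatorial bookkeeping, together with the verification that the several small contributions to $a(n)$ from back-edges, branchings, and infinite-emitter rerouting sum to something still of exponential growth rate exactly $e^{h}$ and still of divergent zeta sum.
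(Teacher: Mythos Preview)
Your overall plan—exit gadgets with tunable summability per $I\in\mathbb I$, infinite-emitter gadgets, then invoke Theorem~\ref{n11}—matches the paper's. The gap is in the order of assembly. You propose to start with a recurrent base $G_0$ of entropy $h$ and then glue on gadgets, keeping the additions ``sufficiently rare'' so that entropy and recurrence are preserved. This cannot work: by Lemma~\ref{renew9}, ``recurrent with Gurevich entropy exactly $h$'' is equivalent to $\sum_{n}l^{n}_{vv}e^{-nh}=1$ at the base vertex, and any gadget that is wired back into $v$ (as strong connectivity demands) contributes new simple loops through $v$, pushing this sum strictly above $1$ and hence, by the same lemma, pushing $h(G)$ strictly above $h$. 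There is no ``sufficiently rare'' that avoids this; a single extra simple loop of any length already destroys the equality. Your description of $G_0$ also conflates simple loops with all loops: if $a(n)$ are simple-loop counts, then $a(n)^{1/n}\to e^h$ together with $\sum a(n)e^{-nh}=\infty$ already forces entropy strictly larger than $h$ (try $a(n)\approx e^{nh}$), while if $a(n)=A^n_{vv}$ a literal one-vertex bouquet only realizes $h=\log k$ for integers $k$.

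The paper reverses the order. It starts from a single vertex $v$ with no loops, attaches all exit gadgets (with multiplicities $\textbf{a},\textbf{b},\textbf{c}$ coming from Lemma~\ref{summint} so that $J(\textbf{a},\textbf{b},\textbf{c})=I$) and the extra vertices $w_i$, and then adds return paths to $v$ chosen long enough that $\sum_{n}l^{n}_{vv}(G_0)e^{-nh}<1-e^{-h}$. Only afterwards does it invoke Lemma~\ref{djurs10} to produce integers $b_n\ge l^{n}_{vv}(G_0)$ with $b_1=1$, $\limsup b_n^{1/n}=e^h$, and $\sum b_n e^{-nh}=1$; the surplus $b_n-l^{n}_{vv}(G_0)$ is realized as new loops of length $n$ routed through the $w_i$, which simultaneously makes each $w_i$ an infinite emitter and lands the whole graph exactly on the Ruette condition of Lemma~\ref{ruette3}. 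This ``build the gadgets first, then fill up to $1$'' mechanism is the missing idea; once it is in place, the verification that the $i$-th exit is $\beta$-summable iff $\beta\in I_i$ reduces to an explicit count of simple paths from $v$ to the $k$-th spine vertex, which the paper carries out.
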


\bigskip

\begin{thm}\label{intro} Let $N \in \{1,2,3,\cdots \} \cup \{\infty\}$
  and let $h
  \in \big]0,\infty\big[$ be a positive real number. Let $\mathbb I$ be a finite or
  countably infinite collection of intervals in $\big[h,\infty\big[$. 

\smallskip

There is
  a strongly connected transient graph $G$ with Gurevich entropy $h(G)
  = h$, such that the set of exits in $G$ is in 
  bijective correspondence with $ \mathbb I$, 
  and for $\beta \geq
  h$ there are the following rays of extremal $\beta$-KMS weights for the
  gauge action on $C^*(G)$: There are $N$ extremal rays of boundary $\beta$-KMS weights,
  in bijective correspondence with the infinite emitters in $G$, and
  the rays of extremal harmonic $\beta$-KMS weights are in bijective
  correspondence with the set $\left\{ I \in \mathbb I :  \ \beta \in I \right\}$.
\end{thm}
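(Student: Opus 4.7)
The theorem is a realization result, so my plan is to start from the data $(N,h,\mathbb I)$, construct an explicit strongly connected transient graph $G$, and then read off its KMS structure by applying the classification of gauge-invariant KMS weights on strongly connected graphs with at most countably many exits that the paper establishes beforehand. The base is a strongly connected transient graph $G_0$ with Gurevich entropy exactly $h$ (producible, for instance, by a renewal-type construction at a single vertex $v_\ast$ whose loop numbers $a_n$ are tuned so that $\sum_n a_n z^n$ has radius of convergence $e^{-h}$ and is finite at $z=e^{-h}$, which is exactly the transient regime). All further attachments will be made at $v_\ast$, so strong connectedness will be automatic.

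For each interval $I\in\mathbb I$ I would attach at $v_\ast$ a branch $B_I$ consisting of a one-sided infinite path $t_1^I\to t_2^I\to t_3^I\to\cdots$ decorated along the path by prescribed multiplicities $n_i^I$ of parallel edges, together with a single return edge from each $t_i^I$ back to $v_\ast$. The infinite path supplies exactly one tail-equivalence class of exit paths, hence one exit per interval. The multiplicities $n_i^I$ are chosen so that the summability condition characterizing existence of a harmonic $\beta$-KMS measure supported on this exit holds precisely when $\beta\in I$; the flexibility in choosing the $n_i^I$ is enough to realize any of the four types of interval (open, closed, half-open, or degenerate) with endpoints anywhere in $[h,\infty)$. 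The $N$ infinite emitters are added as extra vertices $w_1,\dots,w_N$, each receiving a single edge from $v_\ast$ and emitting infinitely many edges back to $v_\ast$; by the description of boundary KMS weights recalled in the introduction, each $w_j$ then contributes exactly one extremal ray of boundary $\beta$-KMS weight at every $\beta\ge h$.

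With $G$ in hand the proof concludes by checking $h(G)=h$, strong connectedness, transience, and the natural bijection between the exits of $G$ and $\mathbb I$, after which the paper's classification theorem identifies the extremal rays exactly as in the statement. The main obstacle will be the simultaneous calibration of the weights $\{n_i^I\}_{I\in\mathbb I,\,i\in\N}$: each branch must produce its prescribed interval with the correct open/closed behaviour at the endpoints (including the degenerate case $I=\{\beta_0\}$), while the aggregate contribution of the countable family $\{B_I\}_{I\in\mathbb I}$, together with the return edges, must neither lift the Gurevich entropy above $h$ nor destroy the transience of $G_0$. This forces the exponential growth rates along all branches to stay strictly below $e^h$ and the sum of their branch partition functions at $\beta=h$ to remain finite; once these summability estimates are carried out uniformly across $\mathbb I$, the theorem follows directly from the classification.
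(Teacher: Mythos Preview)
Your overall strategy is sound, but two concrete features of your construction will not work as described.

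First, your infinite emitters $w_j$ each emit infinitely many parallel edges back to $v_\ast$. Since there is also an edge $v_\ast\to w_j$, this forces $A^2_{v_\ast v_\ast}\geq A_{v_\ast w_j}A_{w_j v_\ast}=\infty$, so the adjacency matrix fails to have finite powers and $h(G)=\infty$. The paper avoids this by making each $w_i$ emit \emph{paths of varying lengths} back to $v$ (finitely many of each length), distributing the infinitely many outgoing edges over all length scales so that every $A^n_{vv}$ stays finite while $w_i$ is still an infinite emitter.

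Second, and more seriously, your branch $B_I$ is too simple to realize an arbitrary interval. With only multiplicities $n_i^I$ along the exit path and a single return edge from each $t_i^I$ to $v_\ast$, every simple path from $v_\ast$ to $t_k^I$ has length exactly $k$ and multiplicity $n_1^I\cdots n_{k-1}^I$; this is the same product that appears in $t(k)$, so the ratio
\[
t^\beta(k)^{-1}\sum_{n\geq 0} A^n_{v_\ast t_k}e^{-n\beta}
\]
is a constant multiple of $\alpha=\sum_n A^n_{v_\ast v_\ast}e^{-n\beta}$, independent of $k$. Your exit is therefore $\beta$-summable precisely when $e^{-\beta}A$ is transient, i.e.\ for the \emph{maximal} interval only. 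To obtain a prescribed interval one needs simple paths from $v_\ast$ to $t_k$ of lengths both shorter and longer than $k$, whose contributions can be tuned independently. The paper does this by attaching to each exit three families of edges with multiplicities $\textbf{a},\textbf{b},\textbf{c}$ (direct edges, long detours, and shortcuts), and then shows via Lemma~\ref{summint} that the resulting summability set is exactly $J(\textbf{a},\textbf{b},\textbf{c})$, which can be made equal to any prescribed interval in $[h,\infty)$.

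A smaller point: the paper does not attempt to preserve transience while adding branches. Instead it builds a \emph{recurrent} Ruette graph with a distinguished self-loop at $v$ and exact control on $\sum_n l^n_{vv}e^{-nh}=1$, and then obtains the transient graph for Theorem~\ref{intro} by deleting that one loop (Proposition~\ref{ruette4}). This sidesteps the uniform summability estimate you flag as the main obstacle.
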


Before we get to the construction of the graphs mentioned in the two theorems, we obtain results demonstrating
that the structures described are the most general that can be obtained from
strongly connected graphs with infinite emitters and at most countably
many exits.

The row-finite case, where there are no infinite emitters, must be
handled separately because the results from \cite{Th1} show that there
are $\beta$-KMS weights for all $\beta > h(G)$ when $G$ is not finite and hence the
total freedom in the choice of intervals in the theorems above does
not persist to the row-finite case. We show that in the row-finite case there must
be at least one exit which contributes the maximal possible interval
of inverse temperatures, namely $]h(G),\infty[$ in the recurrent
case and $[h(G),\infty[$ in the transient case. Once this restriction
is identified it is not difficult to modify the construction from the
case with infinite emitters and show that it is also the only restriction. In
this way we obtain results, formulated in
Theorem \ref{REV1} and Theorem \ref{REV2} below, which describe the
possibilities in the row-finite case.

The proofs of the main results are constructive in a very literal sense. They present a small collection of building blocks and describe
methods to choose and put together a finite or countable
collection of such building blocks to form a strongly connected graph
for which the gauge action has any desired KMS spectrum, restricted
only by the general limitations which graphs with at most countably
many exits are subject to.

By restricting attention to corners in $C^*(G)$ the above
structure of KMS weights becomes the structure of KMS states on a
unital and simple $C^*$-algebra: A vertex $v$ in $G$ determines a projection $1_v$ in $C^*(G)$ in a canonical
way, and the gauge action restricts to the corner $1_vC^*(G)1_v$ in
$C^*(G)$. There is then a bijective correspondence between the rays of KMS
weights on $C^*(G)$ and the KMS states on $1_vC^*(G)1_v$, and the
results of the paper describe therefore the structure of KMS states that can
occur on such a corner when the graph has at
most countably many exits.

\smallskip

\emph{Acknowledgment:} I am grateful to Johannes Christensen for his help with
  the revision of this paper, and to the Mittag-Leffler Institute for
  support during a part of the work.

\section{KMS weights, measures and almost harmonic vectors}

Recall, \cite{KV}, \cite{Th1}, that a weight $\psi$ on
the $C^*$-algebra $A$ is \emph{proper} when it is non-zero, densely defined and lower
semi-continuous. For such a weight, set
 $$
\mathcal N_{\psi} = \left\{ a \in A: \ \psi(a^*a) < \infty
\right\}.
$$ 
Let $\alpha : \mathbb R \to \Aut A$ be a point-wise
norm-continuous one-parameter group of automorphisms on
$A$. Let $\beta \in \mathbb R$. Following \cite{C} we say that a proper weight
$\psi$ on $A$ is a \emph{$\beta$-KMS
  weight} for $\alpha$ when
\begin{enumerate}
\item[i)] $\psi \circ \alpha_t = \psi$ for all $t \in \mathbb R$, and
\item[ii)] for every pair $a,b \in \mathcal N_{\psi} \cap \mathcal
  N_{\psi}^*$ there is a continuous and bounded function $F$ defined on
  the closed strip $D_{\beta}$ in $\mathbb C$ consisting of the numbers $z \in \mathbb C$
  whose imaginary part lies between $0$ and $\beta$, and is
  holomorphic in the interior of the strip and satisfies that
$$
F(t) = \psi(a\alpha_t(b)), \ F(t+i\beta) = \psi(\alpha_t(b)a)
$$
for all $t \in \mathbb R$. \footnote{As in \cite{Th1} we apply the
  definition from \cite{C} for the action $\alpha_{-t}$
  in order to use the same sign convention as in \cite{BR}, for example.}
\end{enumerate}   
A $\beta$-KMS weight $\psi$ with the property that 
$$
\sup \left\{ \psi(a) : \ 0 \leq a \leq 1 \right\} = 1
$$
will be called a \emph{$\beta$-KMS state}. This is consistent with the
standard
definition of KMS states, \cite{BR}, except when $\beta = 0$ in which
case our definition requires also that a $0$-KMS state, which is a
trace state, is $\alpha$-invariant. A $\beta$-KMS weight is \emph{extremal} when every $\beta$-KMS weight
$\varphi$ such that $\varphi \leq \psi$ has the form $\varphi =
\lambda \psi$ for some $\lambda \in ]0,1]$.

\subsection{The \'etale groupoid of a countable graph}

Let $G$ be a countable directed graph with vertex set $V$ and edge set
$E$. For an edge $e \in E$ we denote by $s(e) \in V$ its source and by
$r(e) \in V$ its range. An \emph{infinite path} in $G$ is an element
$p  \in E^{\mathbb N}$ such that $r(p_i) = s(p_{i+1})$ for all
$i$.  A finite path $p = p_1p_2 \dots p_n $ is
defined similarly. The number of edges in $p$ is its \emph{length}
and we denote it by $|p|$. A vertex $v \in V$ will be considered as
a finite path of length $0$.

We let $P(G)$ denote the set of
infinite paths in $G$ and $P_f(G)$ the set of finite paths in $G$. We
extend the source map to $P(G)$ such that
$s(p) = s(p_1)$ when $p = \left(p_i\right)_{i=1}^{\infty}$, and the
range and source maps to $P_f(G)$ such that $s(p) = s(p_1)$ and $r(p)
= r(p_n)$ when $|p| = n \geq 1$, and $s(v) = r(v) = v$ when $v\in V$.

A vertex $v$ which does not emit any edge is
a \emph{sink}, while a vertex $v$ which emits infinitely many edges will be called an \emph{infinite
  emitter}. The union $V_{\infty}$ of sinks and infinite emitters will
play a crucial role in the following.

The $C^*$-algebra $C^*(G)$ of the graph $G$ is the universal
$C^*$-algebra generated by a collection $S_e, e \in E$, of partial
isometries and a collection $P_v, v \in V$, of mutually orthogonal projections subject
to the conditions that
\begin{enumerate}
\item[1)] $S^*_eS_e = P_{r(e)}, \ \forall e \in E$,
\item[2)] $S_eS_e^* \leq P_{s(e),} \ \forall e \in E$,
\item[3)] $P_v \geq \sum_{e \in s^{-1}(v)} S_eS_e^*, \ \forall v \in
  V$, and
\item[4)] $P_v = \sum_{e  \in s^{-1}(v)} S_eS_e^*, \ \forall v \in V
  \backslash V_{\infty}$.
\end{enumerate}

It will be crucial for our approach to the graph $C^*$-algebra
$C^*(G)$ that it can be realized as the (reduced) $C^*$-algebra
$C^*_r(\mathcal G)$ of an \'etale groupoid
$\mathcal G$ through the construction introduced by J. Renault in
\cite{Re}. The relevant groupoid $\mathcal G$ was constructed by A. Paterson in \cite{Pa}. See in
particular Corollary 3.9 in \cite{Pa}.

As a set the unit space $\Omega_G$ of the groupoid $\mathcal G$ is the union
$$
\Omega_G = P(G) \cup Q(G), 
$$
where 
$$
Q(G) = \left\{p \in P_f(G): \ r(p) \in V_{\infty} \right\} 
$$ 
is the set of finite paths that terminate at a vertex
in $V_{\infty}$. Note that $Q(G)$ is countable and that $V_{\infty} \subseteq Q(G)$. For any $p \in P_f(G), |p| \geq 1$, set
$$
Z(p) = \left\{ q \in \Omega_G: \ |q| \geq |p| , \ q_i = p_i, \ i = 1,2,
  \cdots, |p| \right\},
$$
and
$$
Z(v) = \left\{ q \in \Omega_G : \ s(q) = v\right\}
$$
when $v \in V$. When $\nu \in P_f(G)$ and $F$ is a finite subset of $P_f(G)$, set
\begin{equation}\label{a6}
Z_F(\nu) = Z(\nu) \backslash \left(\bigcup_{\mu \in F} Z(\mu)\right) .
\end{equation}
The sets $Z_F(\nu)$ form a basis of compact and open subsets for a locally compact Hausdorff
topology on $\Omega_G$. When $\mu \in P_f(G)$ and $  x \in \Omega_G$, we can define the
concatenation $\mu x \in \Omega_G $ in the obvious way when $r(\mu) =
s(x)$. The groupoid $\mathcal G$ consists of the
elements in $\Omega_G \times \mathbb Z \times \Omega_G$ of the form
$$
(\mu x, |\mu| - |\mu'|, \mu'x),
$$
for some $x\in \Omega_G$ and some $ \mu,\mu' \in P_f(G)$. The product
in $\mathcal G$ is defined by
$$
(\mu x, |\mu| - |\mu'|, \mu' x)(\nu y, |\nu| -|\nu'|, \nu' y) = (\mu
x, \ |\mu | + |\nu| - |\mu'| - |\nu'|, \nu' y),
$$ 
when $\mu' x = \nu y$, and the involution by $(\mu x, |\mu| - |\mu'|,
\mu'x)^{-1} = (\mu' x, |\mu'| - |\mu|, \mu x)$. To describe the
topology on $\mathcal G$, let $Z_{F}(\mu)$ and $Z_{F'}(\mu')$ be two
sets of the form (\ref{a6}) with $r(\mu) = r(\mu')$. The topology we
shall consider has as a countable basis the sets of the form
\begin{equation}\label{top}
\left\{ (\mu x, |\mu| - |\mu'|, \mu' x) : \ \mu x \in Z_F(\mu), \
  \mu'x \in Z_{F'}(\mu') \right\} .
\end{equation}
With this topology $\mathcal G$ becomes an \'etale locally compact Hausdorff groupoid and we can consider the reduced $C^*$-algebra $C^*_r(\mathcal
G)$ as in
\cite{Re}. As shown by Paterson in
\cite{Pa} there is an isomorphism $C^*(G) \to
  C^*_r(\mathcal G)$ which sends $S_e$ to $1_e$, where $1_e$ is the
  characteristic function of the compact and open set
$$
\left\{ (ex, 1, r(e)x) : \ x \in \Omega_G \right\} \ \subseteq \
\mathcal G,
$$  
and $P_v$ to $1_v$, where $1_v$ is the characteristic function of the
compact and open set
$$
\left\{ (vx,0,vx) \ : \ x \in \Omega_G \right\} \ \subseteq \ \mathcal G.
$$ 
In the following we use the identification $C^*(G) = C_r^*(\mathcal
G)$ and identify $\Omega_G$ with the unit space of $\mathcal G$ via
the embedding
$\Omega_G \ni x \ \mapsto \ ( x, 0,x)$.

\begin{remark}\label{a45} The set $P(G)$ is usually considered as a
  metric space, e.g. with the metric
$$
d(p,q) = \sum_{i=1}^{\infty} 2^{-i} (1-\delta(p_i,q_i))
$$
where $\delta(e,e) = 1$ and $\delta(e,f) = 0$ when $e \neq f$. It is easy
to see that the topology on $P(G)$ defined by such a metric is the
same as the topology which $P(G)$ inherits as a subset of $\Omega_G$. In
particular, the two topologies define the same Borel subsets in $P(G)$.   
\end{remark}

\subsection{Generalized gauge actions on $C^*(G)$ and their
  gauge invariant KMS weights}

Let $F : E \to \mathbb R$ be a function. We extend $F$ to a function
$F : P_f(G) \to \mathbb R$ such that 
$$
F(p_1p_2\cdots p_n) = \sum_{i=1}^n F(p_i)
$$
when $p = p_1p_2 \cdots p_n$ is a path of length $n \geq 1$ in $G$, and
$F(v) = 0$ when $v \in V$. We can
then define a continuous function $c_{F} : \mathcal G \to \mathbb R$
such that
$$
c_{F}(ux,|u|-|u'|,u'x) = F(u)- F(u').
$$
Since $c_{F}$ is a continuous homomorphism it gives rise to a continuous
one-parameter automorphism group $\alpha^F$ on $C^*_r(\mathcal G)$
defined such that
$$
\alpha^{F}_t(f)(\gamma) = e^{it c_{F}(\gamma)} f(\gamma)
$$
when $f \in C_c(\mathcal G)$, cf. \cite{Re}. When $F$ is constant
$1$ this action is known as \emph{the gauge action} on $C^*(G)$.

Let $\beta \in \mathbb R$. Following the terminology used in \cite{Th1} we say that a regular Borel measure $m$ on
$\Omega_G$ is \emph{$(\mathcal G, c_{F})$-conformal with exponent $\beta$} when
\begin{equation}\label{c31} 
m(s(W)) = \int_{r(W)} e^{\beta c_{F}(r_W^{-1}(x))} \ dm(x)
\end{equation}
for every open bi-section $W \subseteq \mathcal G$. Here $r_W^{-1}$
denotes the inverse of $r: W \to r(W)$. 
When the function $F$ is fixed we shall often in the following refer to a $(\mathcal G,
c_{F})$-conformal measure with exponent $\beta$ as a \emph{$\beta$-KMS
  measure.} The connection to $\beta$-KMS weights is given by the following theorem which is a special case of
Theorem 2.2 in \cite{Th1}.

\begin{thm}\label{a1} There is a bijective correspondence $m \mapsto
  \varphi_m$ between the non-zero $(\mathcal G, c_{F})$-conformal
  measures $m$ with exponent $\beta$ and the gauge invariant $\beta$-KMS weights
$\varphi_m$ for the action $\alpha^{F}$
on $C^*(G)$. The bijection is such that
$$
\varphi_m(f) = \int_{\Omega_G} f(z) \ dm(z)
$$
when $f \in C_c(\mathcal G)$.
\end{thm}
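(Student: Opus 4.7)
The plan is to derive the theorem as a direct specialization of the general correspondence between conformal measures on an \'etale groupoid and KMS weights on its reduced $C^*$-algebra, developed by Renault in \cite{Re} and extended from states to weights in \cite{Th1}. Since $\mathcal G$ is an \'etale, second countable, locally compact Hausdorff groupoid with a continuous real-valued cocycle $c_{F}$, the proof of Theorem~2.2 in \cite{Th1} applies verbatim: the only features used there are the \'etale structure, the continuous cocycle, and the canonical conditional expectation $E \colon C^*_r(\mathcal G) \to C_0(\Omega_G)$ obtained by restricting functions in $C_c(\mathcal G)$ to the unit space.

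For the forward direction, given a $(\mathcal G,c_{F})$-conformal measure $m$ with exponent $\beta$, I would define $\varphi_m = m \circ E$ on positive elements via
\eQ{\varphi_m(a) \defeq \sup\left\{ \int_{\Omega_G} E(b)(x) \, dm(x) : \ 0 \leq b \leq a, \ b \in C_c(\mathcal G) \right\}.}
Proper-ness and lower semi-continuity follow from regularity of $m$ together with faithfulness of $E$, and gauge invariance is immediate because $c_{F}$ vanishes on $\Omega_G$, so $\alpha^{F}_t$ commutes with $E$. To verify the KMS condition on the dense $*$-subalgebra $C_c(\mathcal G)$, I would decompose a convolution product $f \alpha^{F}_z(g)$ into characteristic functions of basic bisections of the form (\ref{top}) and apply (\ref{c31}) bisection by bisection; the exponent $\beta$ in the conformal identity produces exactly the shift by $i\beta$ required by the KMS identity. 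Extension to arbitrary $a,b \in \mathcal N_{\varphi_m} \cap \mathcal N_{\varphi_m}^*$ is then a standard argument using analytic elements.

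Conversely, starting from a gauge invariant $\beta$-KMS weight $\varphi$, I would restrict $\varphi$ to $C_c(\Omega_G)$ and invoke the Riesz representation theorem to obtain a regular Borel measure $m$ on $\Omega_G$. Testing the KMS condition on characteristic functions of basic bisections (\ref{top}) forces $m$ to satisfy the conformal relation (\ref{c31}). The two assignments are mutually inverse because a gauge invariant weight vanishes on anything supported in a basic set (\ref{top}) with $|\mu| \neq |\mu'|$ and is therefore determined by its values on $C_0(\Omega_G)$.

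The main obstacle will be the KMS verification itself, in particular checking that the conformal identity extends to bisections whose range and source meet paths in $Q(G)$. However, the cocycle $c_{F}$ on an element of (\ref{top}) depends only on the initial segments $\mu, \mu'$, which are the same data whether the continuation lies in $P(G)$ or terminates in $V_\infty$; hence the presence of sinks and infinite emitters causes no essential new complication and the argument from \cite{Th1} transfers unchanged.
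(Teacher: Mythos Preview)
Your proposal is correct and takes essentially the same approach as the paper: both derive the result as a special case of Theorem~2.2 in \cite{Th1}, applied to the \'etale groupoid $\mathcal G$ with the cocycle $c_F$. The paper does not spell out any details beyond this citation (see Remark~\ref{a44}, where the specific choice $c = c_F$ and $c_0$ equal to the gauge cocycle is recorded), whereas you additionally sketch the mechanics of the bijection; your sketch is accurate, and in particular your observation that the presence of $Q(G)$ causes no new difficulty is exactly right, since $c_F$ on a basic bisection depends only on the finite prefixes $\mu,\mu'$.
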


In terms of the canonical generators, the $\beta$-KMS weight
$\varphi_m$ is given by
$$
\varphi_m\left(S_eS_f^*\right) = \delta(e,f) e^{-\beta F(e)} m\left(Z(r(e))\right) \
\text{and} \ \varphi_m(P_v) = m(Z(v)).
$$

\begin{remark}\label{a44} To obtain the theorem above we apply Theorem 2.2
  in \cite{Th1} with $c = c_F$ and $c_0$ equal
  to the homomorphism that gives the gauge action, i.e. $c_0 = c_F$
  with $F = 1$. However, when $F$ is strictly positive everywhere
  or strictly negative everywhere we can apply Theorem 2.2 in
  \cite{Th1} with $c = c_0 = c_F$ instead and
  obtain in that case a version of Theorem \ref{a1} with the words
  'gauge invariant' deleted. This shows that when $F$ is either
  strictly positive or strictly negative everywhere, all KMS
  weights for $\alpha^F$ are gauge invariant. The same is
  true whenever $C^*(G)$ is simple by Proposition 5.6 in \cite{CT2}. In
  all these cases the results we obtain about gauge invariant KMS
  weights therefore hold with the words 'gauge invariant' deleted. But
  in general when zero is a not a forbidden value for $F$, or $F$ is
  allowed to change sign, there can be KMS weights and states that are
  not gauge invariant. See
  \cite{N} and \cite{CT1}. 
\end{remark}

Before we restrict the attention entirely to KMS weights rather than
states, we want to point
out that when $C^*(G)$ is simple there is a bijection from rays of KMS weights on $C^*(G)$
onto the KMS states of some of its corners. By a \emph{ray} of KMS
weights we mean here a set of the form $\left\{ \lambda \psi : \ \lambda >
  0 \right\}$ for some KMS weight $\psi$.

\begin{thm}\label{h5}  Let $\alpha : \mathbb R \to \Aut A$ be a point-wise
norm-continuous one-parameter group of automorphisms on a $C^*$-algebra
$A$. Let $p$ be a projection in the fixed point algebra of
$\alpha$ such that $p$ is full in $A$. For all $\beta \in \mathbb R$ the map
$$
\psi \mapsto  \psi(p)^{-1} \psi|_{pAp}
$$
is a bijection between the set of rays of $\beta$-KMS weights for $\alpha$ and
the $\beta$-KMS states for the restriction of $\alpha$ to $pAp$.
\end{thm}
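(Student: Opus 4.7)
The plan is to exploit the fixed-point condition $\alpha_t(p) = p$, which makes $B := pAp$ invariant under $\alpha$ and turns $\alpha|_B$ into a point-wise norm-continuous one-parameter group on $B$, and then to treat well-definedness, injectivity, and surjectivity of the restriction map in turn, with fullness of $p$ as the bridge between $A$ and $B$. For well-definedness, one first needs $0 < \psi(p) < \infty$ for any $\beta$-KMS weight $\psi$ on $A$. Positivity comes from the fact that $J_\psi := \{a \in A : \psi(x^*ay) = 0 \text{ for all } x, y \in \mathcal N_\psi\}$ is a closed $\alpha$-invariant two-sided ideal (the two-sidedness resting on the KMS condition and approximation by analytic elements), so $\psi(p) = 0$ would place $p \in J_\psi$ and force $J_\psi = A$ by fullness, contradicting $\psi \neq 0$. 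Finiteness of $\psi(p)$ uses density of $\mathcal N_\psi$ and lower semi-continuity together with $p$ lying in the centralizer, so that $\psi(p) = \sup\{\psi(c) : c \in \mathcal N_\psi \cap \mathcal N_\psi^*, \ 0 \leq c \leq p\}$ can be controlled by elements of $pAp$. Once this is in place, $\psi|_B$ is checked to be a proper $\beta$-KMS weight for $\alpha|_B$ by standard restriction arguments (the strip functions from the KMS condition on $A$ restrict verbatim to pairs in $\mathcal N_{\psi|_B}$), and $\psi(p)^{-1}\psi|_B$ is the desired $\beta$-KMS state.

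For injectivity, suppose $\psi, \psi'$ are $\beta$-KMS weights with equal normalized restrictions, and rescale so that $\psi|_B = \psi'|_B$. Fullness of $p$ gives $\overline{ApA} = A$ and produces an approximate unit $(e_\lambda)$ of $A$ whose members are finite positive sums of elements of the form $x p x^*$. For $a \in \mathcal N_\psi \cap \mathcal N_\psi^*$ one writes $\psi(a^*a) = \lim_\lambda \psi(e_\lambda a^* a e_\lambda)$; expanding, each summand $\psi(x p x^* a^* a y p y^*)$ can be rewritten via the KMS condition and analyticity as $\psi$ evaluated on an element of $B$, i.e., computed from $\psi|_B$. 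The same expansion applies to $\psi'$, forcing $\psi = \psi'$ on the dense $*$-subalgebra $\mathcal N_\psi \cap \mathcal N_\psi^*$, after which lower semi-continuity extends the equality to all of $A_+$.

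For surjectivity I would use the $\mathbb R$-equivariant Morita equivalence between $A$ and $B$ provided by the imprimitivity bimodule $X := Ap$ (full because $p$ is full), with $B$-valued inner product $\xi^*\eta$, $A$-valued inner product $\xi\eta^*$, and lifted action $t \cdot ap := \alpha_t(a)p$. Given a $\beta$-KMS state $\omega$ on $B$, define $\psi$ on $A_+$ by taking a supremum over finite sums of the form $\sum_i \omega(\xi_i^* a \xi_i)$ with $\xi_i \in X$ satisfying $\sum_i \xi_i\xi_i^* \leq 1_{M(A)}$, then extend by lower semi-continuity; by construction $\psi(p) = \omega(p) = 1$ and $\psi|_B = \omega$. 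The principal obstacle is precisely this surjectivity step: verifying that the $\psi$ so constructed is a proper weight and satisfies the KMS condition globally on $A$, not merely on the dense $*$-subalgebra built from $X$. A secondary technical point is the finiteness $\psi(p) < \infty$ in the well-definedness step, which, although intuitively forced by fullness together with $p$ being in the centralizer, requires careful use of the KMS identity to transfer finiteness information between $A$ and the specific projection $p$.
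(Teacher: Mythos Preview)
Your approach differs substantially from the paper's, which is far more economical. The paper outsources almost all of the work: surjectivity (and, via uniqueness of the extension, also injectivity) is taken directly from Theorem~3.2 and Remark~3.3(i) of Laca--Neshveyev \cite{LN}, and finiteness $\psi(p) < \infty$ is quoted from Lemma~3.1 of \cite{CT2}. The only argument actually carried out in the paper is the positivity $\psi(p) > 0$, and there the paper's computation with analytic elements --- showing $\psi(apa^*) \leq \|a'\|^2\psi(p) = 0$ for analytic $a$ via the KMS identity $\psi\bigl(\alpha_{-i\beta/2}(a'p)\alpha_{-i\beta/2}(a'p)^*\bigr) = \psi(p{a'}^*a'p)$, hence $\psi = 0$ by fullness --- is essentially your ideal $J_\psi$ argument unpacked.

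Your self-contained route is legitimate in principle, but, as you correctly flag, the surjectivity step is the real content: building the KMS weight on $A$ from the state on $pAp$ through the imprimitivity bimodule $Ap$ and checking the global KMS condition is exactly what the Laca--Neshveyev extension theorem accomplishes, and it is not a short verification. Your finiteness sketch is also not yet an argument: the identity $\psi(p) = \sup\{\psi(c) : 0 \leq c \leq p,\ c \in \mathcal N_\psi \cap \mathcal N_\psi^*\}$ merely places the computation inside $pAp$ without bounding it; the proof in \cite{CT2} again uses the KMS condition and analytic elements in an essential way. So your outline is on the right track, but it effectively proposes to reprove both cited results, whereas the paper simply invokes them and spends its effort only on $\psi(p) > 0$.
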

\begin{proof} It follows from Theorem 3.2 in \cite{LN} that a $\beta$-KMS state for
  the restriction of $\alpha$ to $pAp$ has a unique extension to a
  $\beta$-KMS weight on $A$, cf. Remark 3.3 (i) in \cite{LN}. Hence
  all we need to do is to prove that the map in the statement is
  well-defined, and for this  it suffices to show that $0 < \psi(p) <
  \infty$ when $\psi$ is a $\beta$-KMS weight for $\alpha$. It was shown in Lemma 3.1 of
  \cite{CT2} that $\psi(p)< \infty$. Assume for a
  contradiction that $\psi(p) = 0$. It follows from Proposition 2.5.22
  in \cite{BR}
  that $A$ contains a dense $*$-algebra $\mathcal A$ consisting of
  analytic elements for $\alpha$.  Let $a,b \in \mathcal A$ and set
  $a' = \alpha_{\frac{i\beta}{2}}(a)$.
Then $ap = \alpha_{-\frac{i \beta}{2}}(a'p)$ and hence
$$
\psi(apbb^*pa^*) \leq  \|b\|^2 \psi(apa^*)  = \|b\|^2 \psi\left( \alpha_{-\frac{i
      \beta}{2}}(a'p) \alpha_{-\frac{i
      \beta}{2}}(a'p)^*\right) .
$$
By using the alternative formulation of the KMS condition given in Proposition 1.11 in \cite{KV}, we find that
$$
\psi\left( \alpha_{-\frac{i
      \beta}{2}}(a'p) \alpha_{-\frac{i
      \beta}{2}}(a'p)^*\right) = \psi(p{a'}^*a'p) \leq \|a'\|^2
\psi(p) = 0.
$$
Thus $\psi(apbb^*pa^*) = 0$. By taking $b =
p$ it follows in particular that $pa^* \in \mathcal N_{\psi}$. If we instead take $a =
b^*$ and $b =p$, it follows that $pb \in \mathcal N_{\psi}$, and we conclude that $apb =(pa^*)^*pb
\in \mathcal N_{\psi}^*\mathcal N_{\psi}$. Now recall that $\psi$ extends to a
positive linear functional on the $*$-algebra $\mathcal N_{\psi}^*\mathcal
N_{\psi}$, cf. page 842 in \cite{KV}. We can therefore use the Cauchy-Schwarz 
inequality to conclude that $\psi(xx^*) = 0$ for all $x$ that are
linear combinations of elements $apb$ with $a,b \in \mathcal A$. That
$p$ is full means that elements of the
form $apb$ span a dense subspace of
$C^*(G)$, and from what we have just shown it follows that $\psi(xx^*) =
0$ for all elements $x$ in this subspace. The lower semi-continuity of
$\psi$ implies then that $\psi = 0$; a contradiction. Hence $\psi(p) >
0$ as required.
\end{proof} 

For the actions on $C^*(G)$ we consider here, a natural choice of
projection to which Theorem \ref{h5} applies is one of the projections $P_v$. This leads to the following

\begin{cor}\label{h775} Let $C^*(G)$ be a simple graph $C^*$-algebra
  and $v$ a vertex in $G$. Let $P_v$ be the corresponding projection. The map 
$$
\psi \mapsto
  \psi(P_v)^{-1} \psi|_{P_vC^*(G)P_v}
$$ 
is a bijection from
  the rays of $\beta$-KMS weights for $\alpha^F$ on $C^*(G)$ onto the $\beta$-KMS states for the restriction of $\alpha^F$
  to $P_vC^*(G)P_v$.
\end{cor}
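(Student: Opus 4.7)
The plan is to verify that the hypotheses of Theorem \ref{h5} are satisfied for the action $\alpha^F$ on $A = C^*(G)$ and the projection $p = P_v$, and then apply the theorem directly. There are three things to check: that $P_v$ is a projection, that it lies in the fixed point algebra of $\alpha^F$, and that it is full in $C^*(G)$.

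The first point is immediate from the defining relations of $C^*(G)$, which state that the $P_v$ are mutually orthogonal projections. For the second point, I would use the groupoid picture: under the identification $C^*(G) = C_r^*(\mathcal G)$, the projection $P_v$ equals the characteristic function $1_v$ supported on the set of units $\{(vx,0,vx) : x \in \Omega_G\}$. On such units the cocycle $c_F$ vanishes because $c_F(ux,|u|-|u'|,u'x) = F(u) - F(u')$ and units require $u = u'$. Hence $\alpha^F_t(1_v)(\gamma) = e^{it c_F(\gamma)} 1_v(\gamma) = 1_v(\gamma)$ for all $t$, so $P_v$ is fixed by $\alpha^F$.

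For fullness, the hypothesis that $C^*(G)$ is simple does the work: in any simple $C^*$-algebra every nonzero projection is full, since the closed two-sided ideal it generates is nonzero and therefore equals the whole algebra. The projection $P_v$ is nonzero because the vertex $v$ gives rise to the nontrivial open set $Z(v) \subseteq \Omega_G$ (it contains at least $v$ itself or some path emanating from $v$), which prevents $1_v$ from being the zero function.

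With these three conditions verified, Theorem \ref{h5} applies verbatim and yields the stated bijection $\psi \mapsto \psi(P_v)^{-1}\psi|_{P_vC^*(G)P_v}$ between rays of $\beta$-KMS weights for $\alpha^F$ on $C^*(G)$ and $\beta$-KMS states for the restricted action on the corner $P_vC^*(G)P_v$. There is essentially no obstacle here; the content is entirely in Theorem \ref{h5}, and the only substantive observation is that the generating projections $P_v$ automatically satisfy its hypotheses in the simple case.
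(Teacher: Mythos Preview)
Your proof is correct and follows exactly the approach the paper intends: the corollary is stated immediately after Theorem~\ref{h5} with no separate proof, so the content is precisely the verification you give that $P_v$ is a nonzero projection fixed by $\alpha^F$ and hence full by simplicity of $C^*(G)$. You have simply made explicit what the paper leaves implicit.
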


\subsection{KMS measures and super-harmonic functions}

Given the function $F : E \to \mathbb R$ and a real number $\beta \in
\mathbb R$ we define the matrix $A(\beta) =
(A(\beta)_{uw})$ over $V$ by
$$
A(\beta)_{uw} \ = \sum_{ \left\{ e \in E : \ s(e) = u, \ r(e) = w\right\}}
e^{-\beta F(e)} .
$$
When $\beta = 0$ the matrix $A = A(0)$ is the \emph{adjacency
  matrix} of $G$, i.e.
\begin{equation}\label{h210}
A_{vw} \ = \ \# \left\{ e \in E: \ s(e) =v, \ r(e) =w \right\} .
\end{equation}
Note that $A(\beta)_{uw}$ can be infinite, i.e. $A(\beta)_{uw} \in
[0,\infty]$. Nonetheless we can define the powers $A(\beta)^n$ of
$A(\beta)$ in the
usual recursive way:
$$
A(\beta)^n_{uw} = \sum_{v \in V} \ A(\beta)_{uv}  A(\beta)^{n-1}_{vw} ,
$$
where we use the convention that $0 \cdot \infty = \infty \cdot 0 =
0$. We define $A(\beta)^0$ to be the identity matrix, i.e. $A(\beta)^0_{uw} = 1$ when $u
= w$ and $A(\beta)^0_{uw} = 0$ when $u\neq w$. We shall use the matrix
$A(\beta)$ to count the weighted paths between vertexes in $G$. For
this note that for each $n \in \mathbb N$, 
$$
A(\beta)^n_{vw} = \sum_{\mu} e^{-\beta F(\mu)}
$$
where the sum is over all paths $\mu$ of length $n$ from $v$ to $w$.

Given a $\beta$-KMS measure $m$ on $\Omega_G$ we define $\psi_v, \ v \in
V$, such that
\begin{equation}\label{b100}
\psi_v = m(Z(v)).
\end{equation}
Note that $\psi_v < \infty$ since $m$ is regular and $Z(v)$ is compact
in $\Omega_G$.

\begin{lemma}\label{b5} Let $m$ be a $\beta$-KMS measure on
  $\Omega_G$. The vector 
$\psi_v = m(Z(v)), \ v \in V$,
has the following two properties:
\begin{enumerate}
\item[1)] $\sum_{w \in V}  A(\beta)_{vw}\psi_w \leq \psi_v, \ v \in V$, and
\item[2)] $\sum_{w \in V}  A(\beta)_{vw}\psi_w = \psi_v, \ v \in V \backslash
  V_{\infty}$.
\end{enumerate}
The identity 2) holds for all $v \in V$ when $m(Q(G)) = 0$. 
\end{lemma}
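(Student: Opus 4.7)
My approach is to unfold $\psi_v = m(Z(v))$ by decomposing the cylinder $Z(v)$ according to the first edge of paths in $\Omega_G$ starting at $v$, apply the conformal relation \eqref{c31} to each piece, and then recognize the resulting sum as $\sum_w A(\beta)_{vw}\psi_w$.

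The first step is to express $m(Z(e))$ in terms of $\psi_{r(e)}$ for each edge $e$. For this I would use the open bi-section
\[
W_e \;=\; \bigl\{(ex,1,r(e)x) : x \in Z(r(e))\bigr\} \ \subseteq \ \mathcal G,
\]
which is one of the basic sets (\ref{top}) with $\mu = e$ and $\mu' = r(e)$. Then $s(W_e) = Z(r(e))$, $r(W_e) = Z(e)$, and $c_F \equiv F(e)$ on $W_e$. Plugging into \eqref{c31} gives
\[
\psi_{r(e)} \;=\; m(Z(r(e))) \;=\; e^{\beta F(e)} m(Z(e)),
\]
so $m(Z(e)) = e^{-\beta F(e)}\psi_{r(e)}$.

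Next I would decompose $Z(v)$. Any element of $Z(v)$ is either the length-zero path $v$ itself (which belongs to $\Omega_G$ precisely when $v \in V_\infty \subseteq Q(G)$), or has a well-defined first edge $e \in s^{-1}(v)$, so
\[
Z(v) \;=\; \delta_{v} \;\sqcup\; \bigsqcup_{e \in s^{-1}(v)} Z(e),
\]
where $\delta_v = \{v\}$ if $v \in V_\infty$ and $\delta_v = \emptyset$ otherwise. Applying $m$ and inserting the formula for $m(Z(e))$, then grouping edges by their range vertex, yields
\[
\psi_v \;=\; \mathbf 1_{V_\infty}(v)\,m(\{v\}) \;+\; \sum_{w \in V} A(\beta)_{vw}\,\psi_w.
\]
Because $\mathbf 1_{V_\infty}(v)\,m(\{v\}) \geq 0$, assertion 1) follows, and when $v \notin V_\infty$ the extra term vanishes, giving 2). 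Finally, if $m(Q(G)) = 0$ then $m(\{v\}) = 0$ for every $v \in V_\infty$ (since $V_\infty \subseteq Q(G)$), so 2) extends to all of $V$.

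The only mildly delicate point is making the countable disjoint decomposition of $Z(v)$ rigorous when $v$ is an infinite emitter, and checking that $\sigma$-additivity of $m$ (together with finiteness of $\psi_v$) allows one to sum $m(Z(e))$ over the infinite set $s^{-1}(v)$; this is immediate from regularity of $m$ and the fact that the $Z(e)$ are pairwise disjoint compact-open subsets of the compact set $Z(v)$.
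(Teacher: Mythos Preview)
Your proof is correct and follows essentially the same approach as the paper: the same bi-section $W_e$ is used to obtain $m(Z(e))=e^{-\beta F(e)}\psi_{r(e)}$, and the same disjoint decomposition of $Z(v)$ into $\{v\}$ (when $v\in V_\infty$) and the cylinders $Z(e)$, $e\in s^{-1}(v)$, yields the identity $\psi_v=\mathbf 1_{V_\infty}(v)\,m(\{v\})+\sum_w A(\beta)_{vw}\psi_w$, from which 1), 2), and the final assertion follow exactly as you describe.
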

\begin{proof} Consider a vertex $v \in V$ and an edge $e\in
  s^{-1}(v)$. Then
$\left\{ (ex,1,x) : \ x \in Z(r(e))\right\}$
is an open bi-section in $\mathcal G$. As $m$ is $(\mathcal
G,c_F)$-conformal with exponent $\beta$ this implies that
\begin{equation*}\label{n200}
m(Z(r(e)) = e^{\beta F(e)} m(Z(e)) .
\end{equation*}
Assume first that $v \in V_{\infty}$. Then $Z(v)$ is a disjoint union $Z(v)
= \{v\} \cup \bigcup_{e \in s^{-1}(v)} Z(e)$, and hence
\begin{equation*}
\begin{split}
&\psi_v =  m(\{v\})  + \sum_{e \in s^{-1}(v)}
m(Z(e)) = m(\{v\})  + \sum_{e \in s^{-1}(v)}
e^{-\beta F(e)} m(Z(r(e))) \\
&= m(\{v\}) + \sum_{w \in V} A(\beta)_{vw} m(Z(w)) \ \geq \  \sum_{w \in V} A(\beta)_{vw} \psi_w.
\end{split}
\end{equation*} 
This shows that 1) holds when $v \in V_{\infty}$.  When $v \in V
\backslash V_{\infty}$ or $m(\{v\}) = 0$ the term $m(\{v\})$ does not enter and we
obtain 2) instead.
\end{proof}

It follows from 1) in Lemma \ref{b5} by induction that
\begin{equation}\label{b7}
\sum_{w \in V} A(\beta)^n_{vw}\psi_w \leq \psi_v
\end{equation}
for all $n \in \mathbb N$ and all $v\in V$. In the following 
we shall say that $\psi$ is
\emph{$A(\beta)$-harmonic} when
$$
 \sum_{w \in V}  A(\beta)_{vw}\psi_w = \psi_v
$$
for all $v \in V$, and that $\psi$ is \emph{almost
  $A(\beta)$-harmonic} when conditions 1) and 2) in Lemma \ref{b5} both hold. This terminology is
inspired by the notion of harmonic and super-harmonic functions used in
the theory of Markov chains, cf. \cite{Wo}.

We aim to prove the following theorem.

\begin{thm}\label{almost} The map $m \mapsto \psi$ given by
  (\ref{b100}) is a bijection from $\beta$-KMS measures on $\Omega_G$
  onto the set of almost
  $A(\beta)$-harmonic vectors on $V$.
\end{thm}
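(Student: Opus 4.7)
By Lemma \ref{b5} the map $m \mapsto \psi$ with $\psi_v = m(Z(v))$ sends $\beta$-KMS measures to almost $A(\beta)$-harmonic vectors on $V$, so it remains to establish injectivity and surjectivity.

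For injectivity, apply the conformality relation (\ref{c31}) to the open bi-section
\[
 W_\mu = \left\{(\mu x, |\mu|, r(\mu) x) : \ x \in Z(r(\mu))\right\}
\]
to obtain $m(Z(\mu)) = e^{-\beta F(\mu)} \psi_{r(\mu)}$ for every $\mu \in P_f(G)$. For $\mu \in Q(G)$, the singleton $\{\mu\}$ is the disjoint difference $Z(\mu)\backslash \bigcup_{e \in s^{-1}(r(\mu))} Z(\mu e)$, so countable additivity of $m$ yields
\[
 m(\{\mu\}) = e^{-\beta F(\mu)}\left(\psi_{r(\mu)} - \sum_{w \in V} A(\beta)_{r(\mu),w}\psi_w\right).
\]
These two formulas, combined with inclusion-exclusion, determine $m$ on every basic compact open set $Z_F(\nu)$, and hence on every Borel set by the regularity of $m$. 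Thus $m$ is uniquely determined by $\psi$.

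For surjectivity, fix an almost $A(\beta)$-harmonic vector $\psi$ on $V$ and take the same two formulas, extended by inclusion-exclusion, as the definition of a non-negative set function $m_0$ on the Boolean algebra $\mathcal A$ generated by the sets $Z_F(\nu)$. Condition 1) of almost harmonicity guarantees that each candidate point mass $m_0(\{\mu\})$ is non-negative, while the natural disjoint decompositions $Z(v) = \bigcup_{e \in s^{-1}(v)} Z(e)$ for $v \in V\backslash V_\infty$ and $Z(v) = \{v\} \cup \bigcup_{e \in s^{-1}(v)} Z(e)$ for $v \in V_\infty$ are consistent with $m_0$ exactly because of conditions 2) and 1), respectively. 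Finite additivity of $m_0$ on $\mathcal A$ then follows by passing to a common refinement of any two representations of a given set and applying these identities. Countable additivity is automatic, because each $Z_F(\mu)$ is compact, so any descending sequence in $\mathcal A$ with empty intersection is eventually empty. Carath\'eodory extension now produces a regular Borel measure $m$ on $\Omega_G$ with $m(Z(v)) = \psi_v$.

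Finally, one verifies that $m$ is $(\mathcal G, c_F)$-conformal with exponent $\beta$. It suffices to check (\ref{c31}) for bi-sections in the basis (\ref{top}), since any open bi-section is a countable disjoint union of sets drawn from this basis and conformality is preserved under such unions. For a basic bi-section $W$ associated with paths $\mu,\mu'$ satisfying $r(\mu)=r(\mu')$, both $r(W)$ and $s(W)$ lie in $\mathcal A$, and (\ref{c31}) reduces directly, via the formula $m(Z(\nu)) = e^{-\beta F(\nu)}\psi_{r(\nu)}$, to an identity that holds on each side. The principal obstacle in the plan is the finite additivity of $m_0$ on $\mathcal A$: this is precisely where condition 2) of almost harmonicity enters essentially, and some bookkeeping is needed to reconcile representations of a given set that involve paths of different lengths and different choices of the exceptional finite sets $F$ in $Z_F(\nu)$.
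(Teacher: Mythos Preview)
Your injectivity argument is essentially the paper's: it derives $m(Z(\mu))=e^{-\beta F(\mu)}\psi_{r(\mu)}$ from conformality and then appeals to the fact that the cylinders $Z(\mu)$ determine the measure (the paper packages this as a $\pi$-system lemma, Lemma~\ref{twice}, rather than inclusion--exclusion on the $Z_F(\nu)$'s, but the content is the same).

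For surjectivity, however, you take a genuinely different route. The paper does \emph{not} build $m$ by a direct Carath\'eodory extension. Instead it invokes the Riesz decomposition (Lemma~\ref{b1}) to write $\psi=h+\sum_{n\ge 0}A(\beta)^n k$ with $h$ harmonic and $k$ supported on $V_\infty$, and then constructs the measure in two pieces: the boundary part $\sum_{v\in V_\infty}k_v m_v$ is assembled from explicit Dirac measures on $Q(G)$ (Theorem~\ref{a16}), while the harmonic part comes from a Markov-chain/Kolmogorov construction on $P(G)$ (Lemma~\ref{a20}, Theorem~\ref{a17}). Your approach is more elementary and self-contained---it avoids both the Riesz decomposition and the stochastic-matrix detour---but the paper's route buys something it actually needs later: it shows that the algebraic splitting of $\psi$ matches the geometric splitting $\Omega_G=P(G)\sqcup Q(G)$ of the unit space, and this harmonic/boundary dichotomy drives the rest of the paper.

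One technical caveat on your version: the phrase ``Boolean algebra $\mathcal A$ generated by the $Z_F(\nu)$'' is dangerous, since complements in $\Omega_G$ are generally non-compact and your compactness argument for countable additivity would fail there. You want the \emph{ring} generated by the $Z_F(\nu)$, whose elements are all compact open; with that correction your ``descending compact sequence with empty intersection is eventually empty'' step is fine. The finite-additivity bookkeeping you flag is real but routine once one normalises each $Z_F(\nu)$ so that the excluded paths in $F$ all extend $\nu$ by a single edge.
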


The injectivity of the map $m \mapsto \psi$ is a consequence of the
following lemmas.

\begin{lemma}\label{b101} Let $m$ be a $\beta$-KMS measure on
  $\Omega_G$. Then
\begin{equation}\label{b102}
m(Z(\mu)) = e^{-\beta F(\mu)} m\left(Z\left(r(\mu)\right)\right)
\end{equation}
for all $\mu \in P_f(G)$.
\end{lemma}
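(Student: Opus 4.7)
The plan is to prove this by a direct application of the $(\mathcal G, c_F)$-conformal defining equation (\ref{c31}) to a carefully chosen open bi-section $W$ built from the finite path $\mu$. The case $|\mu|=0$ is immediate since then $\mu = r(\mu) \in V$, $F(\mu)=0$, and both sides equal $m(Z(\mu))$. So I assume $|\mu| \geq 1$ throughout.

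First I would introduce the bi-section
$$
W \;=\; \bigl\{ (\mu x, |\mu|, r(\mu) x) \,:\, x \in Z(r(\mu)) \bigr\} \;\subseteq\; \mathcal G,
$$
which is of the form displayed in (\ref{top}) (with $\mu' = r(\mu)$ and empty forbidden sets), hence it is open. It is a bi-section because the range map restricts to the bijection $(\mu x,|\mu|,r(\mu)x) \mapsto \mu x$ onto $Z(\mu)$, and the source map restricts to the bijection $(\mu x,|\mu|,r(\mu)x) \mapsto r(\mu)x$ onto $Z(r(\mu))$. Thus $r(W) = Z(\mu)$ and $s(W) = Z(r(\mu))$.

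Next I would compute the cocycle $c_F$ on elements of $W$. By the formula defining $c_F$, for any $(\mu x, |\mu|, r(\mu)x) \in W$ we have $c_F(\mu x, |\mu|, r(\mu)x) = F(\mu) - F(r(\mu)) = F(\mu)$, using that $F$ vanishes on vertices. In particular, $c_F$ is constant on $W$, so the same is true of $c_F \circ r_W^{-1}$ on $r(W) = Z(\mu)$.

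Finally I would plug these data into (\ref{c31}): since $c_F \circ r_W^{-1}$ equals the constant $F(\mu)$ on $r(W) = Z(\mu)$, we obtain
$$
m\bigl(Z(r(\mu))\bigr) \;=\; m(s(W)) \;=\; \int_{Z(\mu)} e^{\beta F(\mu)} \, dm(x) \;=\; e^{\beta F(\mu)}\, m\bigl(Z(\mu)\bigr),
$$
and rearranging gives the desired identity (\ref{b102}). No real obstacle is anticipated; the only thing to be careful about is verifying that $W$ really is an open bi-section in the topology defined by (\ref{top}) and correctly identifying its range and source sets, after which the conformality axiom does the work.
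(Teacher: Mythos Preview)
Your proof is correct and follows exactly the same approach as the paper: both apply the conformality condition (\ref{c31}) to the open bi-section $W = \{(\mu x, |\mu|, r(\mu)x) : x \in Z(r(\mu))\}$. The paper states this in a single sentence, while you have spelled out the verification that $s(W)=Z(r(\mu))$, $r(W)=Z(\mu)$, and that $c_F$ is constant equal to $F(\mu)$ on $W$; these details are accurate.
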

\begin{proof} This follows from (\ref{c31}) applied to the open
  bisection
$$
W =
\left\{ (\mu x, |\mu|, r(\mu)x)  : \ x \in Z(r(\mu)) \right\} .
$$

\end{proof}

\begin{lemma}\label{twice} Let $v \in V$ be a vertex and let
  $m$, $m'$ be 
  finite Borel measures on $Z(v)$. Assume that $m(Z(\mu)) =
  m'(Z(\mu))$ for every finite path $\mu \in P_f(G)$ with $s(\mu) =
  v$. Then $m = m'$.
\end{lemma}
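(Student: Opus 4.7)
The plan is to apply the standard uniqueness criterion for finite Borel measures: it suffices to show that the family
\[
\mathcal C \;=\; \bigl\{ Z(\mu) \,:\, \mu \in P_f(G),\ s(\mu)=v \bigr\}
\]
is a $\pi$-system generating the Borel $\sigma$-algebra of $Z(v)$, and that $m(Z(v))=m'(Z(v))$. Then the $\pi$-$\lambda$ theorem forces $m=m'$.

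First I would check that $\mathcal C$ is closed under finite intersections. If $\mu,\nu\in P_f(G)$ both start at $v$, then a point $q\in Z(\mu)\cap Z(\nu)$ must have initial edges coinciding with those of both $\mu$ and $\nu$; hence either $\mu$ is a prefix of $\nu$ or vice versa, and in these cases $Z(\mu)\cap Z(\nu)$ equals the set corresponding to the longer path, while otherwise the intersection is empty. In particular $\mathcal C$ is a $\pi$-system, and note that $Z(v)\in\mathcal C$ (taking $\mu=v$), so the hypothesis yields $m(Z(v))=m'(Z(v))<\infty$.

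Next I would argue that $\sigma(\mathcal C)$ is the full Borel $\sigma$-algebra of $Z(v)$. The basic open sets of $\Omega_G$ restricted to $Z(v)$ are the sets $Z_F(\nu)$ with $s(\nu)=v$ and $F\subseteq P_f(G)$ finite, and each such set is a finite Boolean combination
\[
Z_F(\nu) \;=\; Z(\nu)\setminus\bigcup_{\mu\in F} Z(\mu)
\]
of members of $\mathcal C$ (if some $\mu\in F$ does not start at $v$ or is incompatible with $\nu$, then $Z(\mu)\cap Z(\nu)=\emptyset$ and can be discarded; the remaining $\mu$'s start at $v$). Since $P_f(G)$ is countable, the collection of such $Z_F(\nu)$ is a countable basis for the topology on $Z(v)$, so $Z(v)$ is second countable and its Borel $\sigma$-algebra is generated by this basis. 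Hence $\sigma(\mathcal C)$ contains every Borel set in $Z(v)$.

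The final step is the invocation of Dynkin's $\pi$-$\lambda$ theorem: two finite Borel measures agreeing on a $\pi$-system that generates the $\sigma$-algebra and on the total space must coincide. Both hypotheses hold, so $m=m'$. There is no real obstacle here; the only point requiring any care is the verification that the differences $Z_F(\nu)$ lie in $\sigma(\mathcal C)$, which is immediate once one notes that the ``foreign'' members of $F$ contribute empty intersections with $Z(\nu)$.
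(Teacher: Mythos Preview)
Your proof is correct and follows essentially the same approach as the paper: both verify that the cylinder sets $Z(\mu)$ with $s(\mu)=v$ form a $\pi$-system generating the Borel $\sigma$-algebra of $Z(v)$, and then invoke the standard uniqueness theorem for finite measures on a $\pi$-system (the paper cites Corollary~1.6.2 in Cohn). Your version is simply more explicit about why the basic open sets $Z_F(\nu)$ lie in $\sigma(\mathcal C)$, which the paper leaves to the reader.
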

\begin{proof} The intersection of a finite collection of cylinder sets
  $Z(\mu)$ is again a cylinder set or empty. These cylinder sets
  therefore form a $\pi$-system in the sense of \cite{Co} and the
  conclusion follows from Corollary 1.6.2 in \cite{Co} since
  the $\sigma$-algebra generated by the cylinder sets contains all open sets and
  hence the Borel $\sigma$-algebra.
\end{proof}

\emph{Proof of injectivity in Theorem \ref{almost}:} Let $m$ and $m'$
be $\beta$-KMS measures on $\Omega_G$ such that $m\left(Z(v)\right) =
m'\left(Z(v)\right)$ for every vertex $v \in V$. Fix a vertex $v \in
V$ and note that $m(Z(v)) = m'(Z(v)) < \infty$ by regularity of the
two measures. It follows from Lemma \ref{b101} that
$m\left(Z(\mu)\right) = m'\left(Z(\mu)\right)$ and then from Lemma
\ref{twice} that $m$ and $m'$ agree on all Borel subsets of $Z(v)$. Since $v \in V$ was arbitrary and $Z(v), v \in V$,
is a countable Borel partition of $\Omega_G$ it follows that the
two measures are identical. \qed

\smallskip

For the proof of the surjectivity part in Theorem \ref{almost} we
shall use
the following lemma which is an extension of the Riesz
decomposition used in connection with countable state Markov chains,
e.g. Lemma 4.2 in \cite{Sa}. The only difference is that we here do
not require that the matrix is stochastic or sub-stochastic, but the
proof is the same. In the present setting the details of the proof
are presented in \cite{Th3}.

\begin{lemma}\label{b1} 
Let $\psi = (\psi_v)_{v \in V} \in [0,\infty[^V$ be a non-negative
vector such that 
\begin{equation*}\label{b2}
\sum_{w\in V} A(\beta)_{vw} \psi_w \leq \psi_v
\end{equation*}
 for all $v\in V$. It follows that there are unique non-negative
 vectors $h, k \in [0,\infty[^V$ such that $h$ is $A(\beta)$-harmonic
 and
\begin{equation}\label{b3}
\psi_v = h_v + \sum_{w \in V} \sum_{n=0}^{\infty} A(\beta)^n_{vw}k_w 
\end{equation}
for all $v \in V$. The vector $k$ is given by
$$
k_v= \psi_v - \sum_{w \in V} A(\beta)_{vw}\psi_w, \ v \in V .
$$ 
\end{lemma}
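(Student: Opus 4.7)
The plan is to define $k$ by the prescribed formula and to construct $h$ as the limit of the decreasing iterates $S_N(v) := \sum_{w \in V} A(\beta)^N_{vw}\psi_w$. The super-harmonicity hypothesis ensures that $k_v = \psi_v - \sum_w A(\beta)_{vw}\psi_w$ is a well-defined vector in $[0,\psi_v]$, and the iterated inequality (\ref{b7}) shows that $N \mapsto S_N(v)$ is non-increasing in $[0,\psi_v]$, hence converges to some $h_v \in [0,\psi_v]$.

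The central computation is the telescoping identity
$$
\sum_{w \in V} A(\beta)^n_{vw} k_w = S_n(v) - S_{n+1}(v),
$$
which I would obtain by multiplying the defining equation for $k$ by $A(\beta)^n_{vw}$ and summing over $w$; the rearrangement is automatic since every term is non-negative. Summing over $n \in \{0,1,\dots,N-1\}$ telescopes to $\psi_v - S_N(v)$, and letting $N \to \infty$ while interchanging the sums over $n$ and $w$ by monotone convergence yields precisely the decomposition (\ref{b3}). To confirm that $h$ is $A(\beta)$-harmonic, I would write $S_{N+1}(v) = \sum_w A(\beta)_{vw} S_N(w)$ (again a rearrangement of non-negative terms) and pass to the limit by dominated convergence, with $\psi_w$ serving as the finite majorant since $\sum_w A(\beta)_{vw}\psi_w \leq \psi_v < \infty$.

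For uniqueness, given any decomposition $\psi_v = h'_v + \sum_w \sum_n A(\beta)^n_{vw} k'_w$ with $h'$ harmonic and $k' \geq 0$, applying $A(\beta)$ once (justified once more by monotone convergence of non-negative sums) produces $\sum_u A(\beta)_{vu}\psi_u = \psi_v - k'_v$, which forces $k'$ to coincide with the prescribed $k$, and consequently pins down $h'$. The main obstacle is the careful bookkeeping of the various interchanges of limits and sums in the presence of possibly infinite entries in $A(\beta)$; every such interchange is legitimate only because super-harmonicity supplies $\psi_v$ as a uniform finite dominator. This is precisely the reason the classical Riesz decomposition for Markov chains (Lemma 4.2 in \cite{Sa}) transfers directly to the present non-stochastic setting, as worked out in detail in \cite{Th3}.
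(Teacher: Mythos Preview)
Your proposal is correct and is precisely the standard Riesz decomposition argument that the paper alludes to; the paper itself does not prove this lemma but defers to \cite{Th3}, noting that the usual Markov-chain proof (Lemma 4.2 in \cite{Sa}) goes through unchanged because stochasticity is never used. Your write-up supplies exactly those details, with the correct justifications for the interchanges via non-negativity and the super-harmonic majorant $\psi$.
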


To prove the surjectivity of the map in Theorem
\ref{almost} we shall consider the
two components in the decomposition (\ref{b3}) of an almost
$A(\beta)$-harmonic vector separately. One virtue of this approach is that it
shows how the decomposition of an almost $A(\beta)$-harmonic vector
given by Lemma \ref{b1} corresponds to the decomposition of $\Omega_G$ as
the disjoint union of $P(G)$ and $Q(G)$.

\section{Boundary and harmonic KMS measures}

Recall that a subset $A \subseteq \Omega_G$ is \emph{$\mathcal
  G$-invariant} when $r\left(s^{-1}(A)\right) \subseteq A$ and $s\left(r^{-1}(A)\right) \subseteq A$.

\begin{lemma}\label{h6} Let $m$ be $(\mathcal G, c_F)$-conformal with
  exponent $\beta$ and
  let $A \subseteq \Omega_G$ be a $\mathcal G$-invariant Borel
  subset. Then the Borel measure $m_A$, given by
$$
m_A(B) = m(A \cap B),
$$
is $(\mathcal G, c_F)$-conformal with exponent $\beta$.
\end{lemma}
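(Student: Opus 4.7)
The plan is to verify condition (\ref{c31}) for $m_A$ by using the $\mathcal G$-invariance of $A$ to reduce the identity to the corresponding one for $m$.

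First I would record the key reformulation of invariance: the condition that $r(s^{-1}(A)) \subseteq A$ and $s(r^{-1}(A)) \subseteq A$ is equivalent to the equality $r^{-1}(A) = s^{-1}(A)$ of subsets of $\mathcal G$. Indeed, if $\gamma \in r^{-1}(A)$ then $s(\gamma) \in s(r^{-1}(A)) \subseteq A$, so $\gamma \in s^{-1}(A)$, and the reverse inclusion is identical. Now fix an open bisection $W$ and set $W' = W \cap r^{-1}(A)$. Since $s|_W$ and $r|_W$ are bijections onto their images, the equality $r^{-1}(A) = s^{-1}(A)$ translates into
\[
r(W') = r(W) \cap A \qquad \text{and} \qquad s(W') = s(W) \cap A,
\]
and in particular $m_A(s(W)) = m(s(W'))$.

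The second step is to extend the defining identity (\ref{c31}) from open to Borel bisections contained in $W$. For a fixed open bisection $W$ I would consider the collection $\mathcal D$ of Borel sets $B \subseteq W$ satisfying
\[
m(s(B)) = \int_{r(B)} e^{\beta c_F(r_W^{-1}(x))} \, dm(x).
\]
It contains the basic compact open bisections $Z_{F_1}(\mu) \cap W$ contributed by the topology (\ref{top}), is stable under finite disjoint unions, and is closed under monotone limits inside a fixed compact open piece, by $\sigma$-additivity of $m$ on the left and monotone convergence on the right. A standard Dynkin/monotone class argument, carried out inside each compact piece where both sides of the identity are finite by regularity and then assembled by $\sigma$-additivity, shows $\mathcal D$ contains every Borel subset of $W$.

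Applying this to $B = W' \subseteq W$ and combining with the first step,
\[
m_A(s(W)) = m(s(W')) = \int_{r(W')} e^{\beta c_F(r_W^{-1}(x))} \, dm(x) = \int_{r(W)} \mathbf{1}_A(x) \, e^{\beta c_F(r_W^{-1}(x))} \, dm(x),
\]
and the last expression equals $\int_{r(W)} e^{\beta c_F(r_W^{-1}(x))} \, dm_A(x)$ by the definition of $m_A$. Regularity of $m_A$ is inherited from regularity of $m$ since $A$ is Borel, so $m_A$ satisfies (\ref{c31}) for every open bisection and is therefore $(\mathcal G, c_F)$-conformal with exponent $\beta$. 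The only genuinely delicate step in this plan is the extension of (\ref{c31}) from open to Borel bisections; I expect it to be routine, but one must localize first to the compact open basis pieces $Z_F(\mu)$, where both sides are automatically finite, before assembling the general formula. Everything else is direct bookkeeping from the invariance hypothesis.
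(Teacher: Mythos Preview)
Your proof is correct and follows essentially the same route as the paper: reduce to showing the conformality identity for the Borel sub-bisection $W'=W\cap s^{-1}(A)$, using $\mathcal G$-invariance to identify $s(W')=s(W)\cap A$ and $r(W')=r(W)\cap A$. The only cosmetic difference is in the extension step: where you run a Dynkin/monotone-class argument localized to compact open pieces, the paper observes directly that $B\mapsto m(s(B))$ and $B\mapsto \int_{r(B)}e^{\beta c_F(r_W^{-1}(x))}\,dm(x)$ are two regular Borel measures on $W$ agreeing on open sets and hence equal; this packages the same content a bit more concisely.
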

\begin{proof}Let $W$ be an open bi-section in $\mathcal G$. Since $m$
  is $(\mathcal G,c_F)$-conformal with exponent $\beta$, the two Borel
  measures on $W$,
$$
B \mapsto m(s(B))
$$ 
and
$$
B \mapsto  \int_{r(B)} e^{\beta c_{F}(r_W^{-1}(x))} \ dm(x),
$$
agree on open sets. Note that they are both regular by Proposition
7.2.3 in \cite{Co} and therefore equal since they agree on open sets. It
follows that
$$
m_A\left(s(W)\right) = m\left(s(W \cap s^{-1}(A))\right) =
\int_{r\left(W \cap s^{-1}(A)\right)} e^{\beta
  c_F\left(r_W^{-1}(x)\right)} \ dm(x).
$$
Since $r(W \cap s^{-1}(A)) = r(W) \cap A$ because $A$ is
$\mathcal G$-invariant,
\begin{equation*}
\begin{split}
&\int_{r\left(W \cap s^{-1}(A)\right)} e^{\beta
  c_F\left(r_W^{-1}(x)\right)} \ dm(x) \\
&= \int_{r(W) \cap A} e^{\beta
  c_F\left(r_W^{-1}(x)\right)} \ dm(x) = \int_{r\left(W \right)} e^{\beta
  c_F\left(r_W^{-1}(x)\right)} \ dm_A(x).
\end{split}
\end{equation*}
\end{proof}

\begin{lemma}\label{a14} Let $m$ be a $\beta$-KMS measure on $\Omega_G$. There are
  unique $\beta$-KMS measures $m_1$ and $m_2$ such that $m = m_1+
  m_2$, and $m_1(Q(G)) = m_2(P(G)) = 0$.   
\end{lemma}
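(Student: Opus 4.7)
The plan is to obtain $m_1$ and $m_2$ as the restrictions of $m$ to the two pieces of the natural partition $\Omega_G = P(G) \sqcup Q(G)$. Concretely, I would set
\[
m_1(B) \defeq m\bigl(B \cap P(G)\bigr), \qquad m_2(B) \defeq m\bigl(B \cap Q(G)\bigr)
\]
for every Borel subset $B \subseteq \Omega_G$. With this definition the identity $m = m_1 + m_2$ and the support conditions $m_1(Q(G)) = 0 = m_2(P(G))$ are immediate, so all that remains is to verify that $m_1$ and $m_2$ are themselves $\beta$-KMS measures, and to settle uniqueness.

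For the first point I would invoke Lemma \ref{h6}, which requires me to check that $P(G)$ and $Q(G)$ are $\mathcal{G}$-invariant Borel subsets of $\Omega_G$. Borel measurability is cheap: $Q(G)$ is contained in the countable set $P_f(G)$, and since $\Omega_G$ is Hausdorff every singleton is closed, so $Q(G)$ is an $F_\sigma$ set, and $P(G) = \Omega_G \setminus Q(G)$ is Borel as well. For invariance, every element of $\mathcal{G}$ has the form $\gamma = (\mu x, |\mu| - |\mu'|, \mu' x)$ with $x \in \Omega_G$ and $\mu, \mu' \in P_f(G)$. Because the length of $\mu' x$ is finite (respectively infinite) precisely when the length of $x$ is finite (respectively infinite), and the same holds for $\mu x$, we see that $s(\gamma) \in Q(G)$ iff $x \in Q(G)$ iff $r(\gamma) \in Q(G)$; an identical argument handles $P(G)$. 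Hence both sets are $\mathcal{G}$-invariant and Lemma \ref{h6} shows that $m_1$ and $m_2$ are $(\mathcal{G}, c_F)$-conformal with exponent $\beta$; regularity is inherited from $m$ since they are the restrictions of a regular Borel measure to Borel subsets of a second-countable locally compact Hausdorff space.

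Uniqueness is a one-line computation: if $m = m_1' + m_2'$ is another decomposition with $m_1'(Q(G)) = 0 = m_2'(P(G))$, then for every Borel set $B$,
\[
m_1'(B) \;=\; m_1'\bigl(B \cap P(G)\bigr) \;=\; m\bigl(B \cap P(G)\bigr) - m_2'\bigl(B \cap P(G)\bigr) \;=\; m\bigl(B \cap P(G)\bigr) \;=\; m_1(B),
\]
and symmetrically $m_2' = m_2$. The only step that requires any real thought is the $\mathcal{G}$-invariance of $P(G)$ and $Q(G)$, but as indicated this is forced by the explicit description of composable pairs in $\mathcal{G}$, so no substantive obstacle is expected.
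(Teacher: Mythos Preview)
Your proof is correct and follows exactly the paper's approach: the paper simply says ``This follows from Lemma~\ref{h6} since $P(G)$ and $Q(G)$ are $\mathcal G$-invariant in $\Omega_G$,'' and you have supplied the details (Borel measurability, the explicit invariance check, regularity, and the uniqueness computation) that the paper leaves to the reader.
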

\begin{proof} This follows from Lemma \ref{h6} since $P(G)$ and $Q(G)$
  are $\mathcal G$-invariant in $\Omega_G$.
\end{proof}

The decomposition in Lemma \ref{a14} is a version for weights of the
decomposition of KMS states into finite and infinite type used by
Carlsen and Larsen in \cite{CL}. Since 'finite' and 'infinite' have
other meanings in connection with measures and weights we prefer to
alter the terminology. We will say that a $\beta$-KMS measure $m$ is a \emph{boundary
  $\beta$-KMS
measure} when $m(P(G)) = 0$ and a
\emph{harmonic $\beta$-KMS measure} when $m\left(Q(G)\right) =
0$. 
This terminology is justified (or so the author hopes) by the fact
that $\Omega_G$ in many cases is a completion of $P(G)$ with
$Q(G)$ as the boundary, and by the description of the harmonic KMS
measures we obtain in the following. 

From now on we will tacitly
identify a harmonic $\beta$-KMS measure with its restriction to
$P(G)$. The $\beta$-KMS weight defined by a non-zero boundary
$\beta$-KMS measure or a non-zero harmonic
$\beta$-KMS measure will be called a \emph{boundary $\beta$-KMS
  weight} and a \emph{harmonic $\beta$-KMS weight}, respectively.

\subsection{Boundary KMS measures}\label{bounKMS}

Let $\beta \in \mathbb R$. For any vertex $v \in V_{\infty}$ we can consider
the Borel measure 
$$
m_v \ = \sum_{\left\{ u
  \in Q(G) : \ r(u) = v\right\}} \ e^{-\beta F(u)} \delta_u
$$
on $\Omega_G$ where $\delta_u$ denotes the Dirac measure at
$u$. Consider an open bisection $W$ in
$\mathcal G$. Then $T = r\circ s^{-1} : s(W)\cap Q(G) \to r(W)\cap Q(G)$ is a bijection
determined by the condition that $(T(u), |T(u)|-|u|, u) \in W$. Since
$c_F\left( T(u), |T(u)| -|u|,u\right) = F(T(u)) - F(u)$ we find that
\begin{equation*}
\begin{split}
&m_v(s(W)) \ = \sum_{\left\{u \in s(W) \cap Q(G) : \ r(u) =v
  \right\} } \ e^{-\beta F(u)} \\
& \\
&= \sum_{\left\{u \in s(W) \cap Q(G) : \ r(u) =v
  \right\} } \ e^{\beta (F(T(u))-F(u))} e^{-\beta F(T(u))} \\
& \\
&=   \sum_{\left\{u \in s(W) \cap Q(G) : \ r(u) =v
  \right\} } \ e^{\beta \left(c_F\left(r_W^{-1}(T(u))\right)\right) }
e^{-\beta F(T(u))} =
 \int_{r(W)} e^{\beta c_F\left(r_W^{-1}(x)\right)} \ d m_v(x) .
\end{split}
\end{equation*}
This shows that $m_v$ satisfies condition (\ref{c31}), and hence is a
$\beta$-KMS measure if and only if it is regular. We say that a vertex $v \in V_{\infty}$ is \emph{$\beta$-summable} when 
$$
\sum_{n=0}^{\infty} A(\beta)^n_{wv}   \ < \ \infty 
$$
for all $w \in V$. 

\begin{lemma}\label{b104} Let $v \in V_{\infty}$. The Borel measure $m_v$ is regular, and
  hence a $\beta$-KMS measure if and only if $v$ is $\beta$-summable.
\end{lemma}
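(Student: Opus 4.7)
The plan is to observe that the computation performed immediately before the statement already shows that $m_v$ satisfies the conformality identity (\ref{c31}) on every open bi-section. Therefore the only thing standing between $m_v$ and the conclusion that it is a $\beta$-KMS measure is regularity, and it is regularity that I want to characterize in terms of $\beta$-summability.

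The first step is to compute $m_v$ on the basic compact open sets $Z(w)$, $w \in V$. A finite path $u$ with $s(u) = w$ and $r(u) = v$ has length $n$ and contributes $e^{-\beta F(u)}$; summing over $n$ and using that
$$
\sum_{\mu : s(\mu)=w,\, r(\mu)=v,\, |\mu|=n} e^{-\beta F(\mu)} = A(\beta)^n_{wv},
$$
one obtains
$$
m_v(Z(w)) = \sum_{n=0}^{\infty} A(\beta)^n_{wv}.
$$
(The $n=0$ term handles the case $w=v$, where $v$ itself is a path of length $0$ with $F(v)=0$.) Hence $v$ is $\beta$-summable if and only if $m_v(Z(w)) < \infty$ for every $w \in V$.

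Next I would verify the two implications. For the \emph{only if} direction, $Z(w)$ is compact open in $\Omega_G$, so if $m_v$ is regular then in particular it is finite on the compact set $Z(w)$, giving $\sum_{n=0}^{\infty} A(\beta)^n_{wv} < \infty$ for all $w$; thus $v$ is $\beta$-summable. For the \emph{if} direction, assume $v$ is $\beta$-summable, so $m_v(Z(w)) < \infty$ for every $w$. The sets $Z(w)$, $w \in V$, form an open cover of $\Omega_G$, so any compact $K \subseteq \Omega_G$ is contained in a finite union of such sets and therefore has finite $m_v$-measure. Consequently $m_v$ is finite on compact sets.

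The final step is to upgrade ``finite on compact sets'' to regularity. Since $V$ and $P_f(G)$ are countable, the basis $\{Z_F(\nu)\}$ of compact open sets is countable; thus $\Omega_G$ is a second countable locally compact Hausdorff space, in particular $\sigma$-compact. A standard result in measure theory (e.g.\ Theorem 7.8 in \cite{Fo} or Proposition 7.2.3 in \cite{Co}) then guarantees that a Borel measure on such a space that is finite on compact sets is automatically regular. Combining this with the conformality identity verified in the discussion preceding the lemma, $m_v$ is a $\beta$-KMS measure. The only place a subtlety could hide is in the abstract measure-theoretic step, where one must not confuse ``regular'' with ``locally finite'' on a general locally compact Hausdorff space; second countability of $\Omega_G$ is what makes these notions coincide, and this is the point I would emphasize in the write-up.
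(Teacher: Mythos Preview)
Your proof is correct and essentially identical to the paper's: both compute $m_v(Z(w)) = \sum_{n=0}^{\infty} A(\beta)^n_{wv}$, use that the $Z(w)$ form a compact open cover of $\Omega_G$, and invoke Proposition~7.2.3 in \cite{Co} to identify regularity with finiteness on compact sets. The paper's write-up is slightly more compressed, but the logic is the same.
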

\begin{proof} $m_v$ is regular if and only $m_v(K) < \infty$ for every
  compact subset $K \subseteq \Omega_G$, cf. e.g. Proposition 7.2.3 in
  \cite{Co}. Since $Z(w), \ w\in V$, is a cover of $\Omega_G$ by open
  and compact subsets it follows that $m_v$ is regular if and only if
  $m_v(Z(w)) < \infty$ for all $w \in V$. The lemma follows therefore
  from the observation that
\begin{equation}\label{b124}
m_v(Z(w)) \ \ =  \sum_{\left\{ u
  \in Q(G) : \ s(u) = w, \ r(u) = v\right\}}  \ e^{-\beta
F(u)} \ \ = \ \ \sum_{n=0}^{\infty} A(\beta)^n_{wv} .
\end{equation}
\end{proof}

\begin{thm}\label{a16} Let $S$ be a set of $\beta$-summable vertexes
  in $V_{\infty}$ and $t \in ]0,\infty[^S$ a vector such that
\begin{equation}\label{a19}
\sum_{v \in S} \sum_{n=0}^{\infty} A(\beta)^n_{wv} t_v < \infty 
\end{equation}
for all $w \in V$. Then 
\begin{equation}\label{b120}
m = \sum_{ v \in S} t_v m_v
\end{equation}
is a boundary $\beta$-KMS measure, and every non-zero boundary $\beta$-KMS measure
arises in this way.
\end{thm}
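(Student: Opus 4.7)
The plan is to prove the two directions separately. The bulk of the work for sufficiency is actually already done in the computation preceding the statement: that computation shows each $m_v$ satisfies the conformal equation (\ref{c31}), and (\ref{c31}) is preserved under non-negative linear combinations by linearity of the integral, so $m = \sum_{v \in S} t_v m_v$ automatically satisfies (\ref{c31}). What remains is to show that $m$ is actually a regular Borel measure: since the compact open sets $Z(w), w \in V$, cover $\Omega_G$, it suffices by Proposition 7.2.3 in \cite{Co} to prove $m(Z(w)) < \infty$ for every $w \in V$, and using the identity (\ref{b124}) this is exactly the hypothesis (\ref{a19}). Finally $m(P(G)) = 0$ because each $m_v$ is supported on $Q(G)$.

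For the converse, let $m$ be a non-zero boundary $\beta$-KMS measure. Since $m$ is supported on the countable set $Q(G)$, it is purely atomic, $m = \sum_{u \in Q(G)} a_u \delta_u$ with $a_u = m(\{u\})$, and the heart of the argument is to establish the atomic identity
\[
a_u \; = \; e^{-\beta F(u)} \, a_{r(u)} \qquad (u \in Q(G)).
\]
Granted this identity, setting $t_v := m(\{v\})$ for $v \in V_\infty$, $S := \{v \in V_\infty : t_v > 0\}$, and regrouping the sum defining $m$ by the common range vertex $r(u)$ produces $m = \sum_{v \in S} t_v m_v$, which is (\ref{b120}). The hypotheses on $S$ and $t$ then follow immediately: regularity of $m$ gives $m(Z(w)) < \infty$ for every $w \in V$, which via (\ref{b124}) becomes $\sum_{v \in S} t_v \sum_{n=0}^{\infty} A(\beta)^n_{wv} < \infty$, i.e.\ condition (\ref{a19}); in particular every $v \in S$ is $\beta$-summable by Lemma \ref{b104}.

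The one step I expect to need care is the atomic identity above, because when $v := r(u)$ is an infinite emitter the singleton $\{v\}$ is not open in $\Omega_G$ and hence (\ref{c31}) cannot be applied to a singleton bisection directly. To sidestep this I would use the disjoint decomposition
\[
Z(u) \; = \; \{u\} \; \sqcup \; \bigsqcup_{e \in s^{-1}(v)} Z(ue),
\]
where the union is countable and has finite total $m$-measure by regularity. Applying Lemma \ref{b101} to $u$ and to each $ue$ yields $m(Z(u)) = e^{-\beta F(u)} m(Z(v))$ and $m(Z(ue)) = e^{-\beta F(u)} m(Z(e))$; subtracting and using the analogous decomposition of $Z(v)$ gives $a_u = e^{-\beta F(u)}\bigl[m(Z(v)) - \sum_e m(Z(e))\bigr] = e^{-\beta F(u)} a_v$, as required.
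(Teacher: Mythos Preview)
Your proof is correct and follows the same overall architecture as the paper: the forward direction is identical (linearity of \eqref{c31} plus regularity via \eqref{b124}), and in the converse you reduce everything to the atomic identity $m(\{u\}) = e^{-\beta F(u)} m(\{r(u)\})$, then regroup by range vertex and read off \eqref{a19} from regularity. Where you genuinely diverge is in the proof of that atomic identity.

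The paper proves it by working directly with the conformal equation \eqref{c31}: for $v$ an infinite emitter it exhausts $s^{-1}(v)$ by finite sets $F_i$, applies \eqref{c31} to the open bisections $W_i = \{(ux,|u|,vx): x \in Z_{F_i}(v)\}$, and passes to the limit as $Z_{F_i}(v)\downarrow\{v\}$ and $Z_{uF_i}(u)\downarrow\{u\}$; for $v$ a sink it observes that $(u,|u|,v)$ is an isolated point of $\mathcal G$ and hence a bisection in itself. Your route is instead purely at the level of cylinder measures: you invoke Lemma~\ref{b101} for $u$ and for each $ue$, then subtract using the disjoint decomposition $Z(u)=\{u\}\sqcup\bigsqcup_{e}Z(ue)$ (and the parallel one for $Z(v)$). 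This is a legitimate and slightly more elementary argument---it never leaves the cylinder-set formula of Lemma~\ref{b101} and treats sinks and infinite emitters uniformly (the sum over $s^{-1}(v)$ is simply empty when $v$ is a sink). The paper's approach, on the other hand, makes the groupoid-conformal mechanism more visible and avoids the subtraction step, which tacitly uses $m(Z(u))<\infty$; you correctly note that regularity supplies this.
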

\begin{proof} Note that $m$ satisfies (\ref{c31}) since each $m_v$
  does. The condition (\ref{a19}) is therefore exactly what is needed
  to ensure that $m$ is regular and hence a boundary
  $\beta$-KMS measure. It remains therefore only
  to prove that every non-zero boundary $\beta$-KMS measure $m$ arises like
  this. Consider an element $u \in Q(G)$ and set $v = r(u) \in V_{\infty}$. Assume first that $v$ is
  an infinite emitter. Let $F_1 \subseteq F_2
  \subseteq F_3 \subseteq \cdots $ be finite subsets of edges such that
  $s^{-1}(v) = \bigcup_i F_i$. For each $i$
$$
W_i =\left\{ (ux,|u|, vx): \ x \in Z_{F_i}(v) \right\}
$$
is an open bisection in $\mathcal G$ such that $s(W_i) =
Z_{F_i}(v)$ and $r(W_i) = Z_{uF_i}(u)$, where
$$
uF_i = \left\{ ux: \ x \in F_i \right\}. 
$$
Note that $\bigcap_i Z_{F_i}(v) = \{v\}$, $\bigcap_i
Z_{uF_i}(u) = \{u\}$ and $c_F(ux,|u|,vx) = F(u)$. It follows therefore
from (\ref{c31}) that
\begin{equation}
\begin{split} 
&m(\{v\}) = \lim_{ i \to \infty} m(Z_{F_i}(v)) =  \lim_{i \to
  \infty} \int_{r(W_{i})} e^{\beta c_F\left(r_{W_i}^{-1}(z)\right)} \
dm(z)\\
& = \lim_{i \to
  \infty} \int_{r(W_{i})} e^{\beta F(u)} \ dm(z)
= e^{\beta F(u)} m(\{u\}).
\end{split}
\end{equation}
If instead $v$ is a sink, the point $(u,|u|,v)$ is isolated in
$\mathcal G$, and it is an open bisection in itself. It follows
therefore from (\ref{c31})
that 
$$
m(\{v\}) = e^{\beta F(u)} m(\{u\}), 
$$
also when $v$ is a sink.
For each $w \in V$, 
\begin{equation}\label{b110}
\begin{split}
& \sum_{n=0}^{\infty} A(\beta)^n_{wv} m(\{v\}) \ \ =  \sum_{\left\{ u
  \in Q(G) \cap Z(w) : \ r(u) = v\right\}} \ e^{-\beta F(u)}
m(\{v\})\\
& =  \sum_{\left\{ u
  \in Q(G) \cap Z(w) : \ r(u) = v\right\}} m(\{u\}) \ 
\leq \ m(Z(w)) \ < \ \infty,
\end{split}
\end{equation}
by regularity of $m$. Hence $v$ is $\beta$-summable whenever $m(\{v\})
> 0$.
Set $S = \left\{ v \in V_{\infty}  : \ m (\{v\}) \neq 0 \right\}$. By
assumption $m$ is supported on the countable set $Q(G)$ and hence
\begin{equation*}
\begin{split}
m = \sum_{u \in Q(G)} m(\{u\}) \delta_u = \sum_{v \in S}   \sum_{\{u  \in Q(G): \ r(u) = v\}} m(\{v\})
e^{-\beta F(u)} \delta_u = \sum_{v \in S}
m(\{v\}) m_v.
\end{split}
\end{equation*}
This shows that (\ref{b120}) holds when we set $t_v = m(\{v\})$. To show that also (\ref{a19}) holds, note that the sets $\left\{ u
  \in Q(G) \cap Z(w) : \ r(u) = v\right\}, v \in S$, are mutually
disjoint subsets of $Z(w)$. Using (\ref{b110}) this implies that
$$
\sum_{v \in S} \sum_{n=0}^{\infty} A(\beta)^n_{wv} m(\{v\}) =  \sum_{v
  \in S} \sum_{\left\{ u
  \in Q(G) \cap Z(w) : \ r(u) = v\right\}} m(\{u\}) \leq m(Z(w)) < \infty
.
$$
\end{proof}

 The decomposition (\ref{b120}) is unique since $m_v(\{v'\}) = 0$ when
 $v' \neq v, \ v,v' \in V_{\infty}$. Theorem \ref{a16} therefore has the following

\begin{cor}\label{b121} The map $v \mapsto m_v$ gives a bijective
  correspondence from the set of $\beta$-summable vertexes in
  $V_{\infty}$ onto the rays of non-zero extremal boundary $\beta$-KMS measures.
\end{cor}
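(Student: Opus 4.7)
The plan is to deduce the corollary directly from Theorem~\ref{a16} together with the uniqueness of the decomposition~(\ref{b120}) recorded just above. Injectivity of $v \mapsto [m_v]$ is immediate: since $F$ vanishes on vertexes, one has $m_v(\{v\}) = 1$ while $m_{v'}(\{v\}) = 0$ whenever $v' \neq v$ in $V_\infty$, so two distinct $\beta$-summable vertexes produce measures which cannot be positive scalar multiples of one another.

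To see that each $m_v$ generates an extremal ray, I would let $\varphi$ be a non-zero boundary $\beta$-KMS measure with $\varphi \leq m_v$ and apply Theorem~\ref{a16} to write $\varphi = \sum_{v' \in S'} t_{v'} m_{v'}$ with every $t_{v'} > 0$. If some $v' \in S'$ were different from $v$, then $\varphi(\{v'\}) = t_{v'} > 0$ while $m_v(\{v'\}) = 0$, contradicting $\varphi \leq m_v$. Hence $S' = \{v\}$ and $\varphi = t_v m_v$ with $t_v \in ]0,1]$, which is the extremality condition transferred to measures.

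For surjectivity, let $m$ be an extremal non-zero boundary $\beta$-KMS measure. Theorem~\ref{a16} gives a decomposition $m = \sum_{v \in S} t_v m_v$ with all $t_v > 0$. Fix $v_0 \in S$; the summand $t_{v_0} m_{v_0}$ satisfies $0 < t_{v_0} m_{v_0} \leq m$, and it is itself a boundary $\beta$-KMS measure because condition~(\ref{a19}) for the singleton $\{v_0\}$ is dominated by the corresponding condition for $S$. Extremality of $m$ therefore yields $t_{v_0} m_{v_0} = \lambda m$ for some $\lambda \in ]0,1]$, and the uniqueness of~(\ref{b120}) forces $S = \{v_0\}$, so $m$ lies in the ray of $m_{v_0}$. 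There is no serious obstacle in this argument: the heavy lifting was performed in Theorem~\ref{a16} and in the uniqueness remark, and the corollary is essentially a bookkeeping statement that translates the parameterization of~(\ref{b120}) into a bijection between $\beta$-summable vertexes and extremal rays.
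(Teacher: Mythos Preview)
Your proof is correct and follows essentially the same approach as the paper: the paper simply notes that the decomposition~(\ref{b120}) is unique because $m_v(\{v'\}) = 0$ for $v \neq v'$ in $V_\infty$, and declares the corollary a consequence of Theorem~\ref{a16}. You have spelled out the three pieces (injectivity, extremality of each $m_v$, surjectivity onto extremal rays) in detail, but the underlying mechanism is identical. One small remark: when you check extremality of $m_v$ you restrict the dominating measure $\varphi$ to be a boundary $\beta$-KMS measure; this is harmless since any $\beta$-KMS measure $\varphi \leq m_v$ is automatically supported on $Q(G)$ and hence boundary, but it would not hurt to say so.
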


The corresponding result for states is Corollary 5.18(1) and
Proposition 5.8 in \cite{CL}.

\subsection{ Harmonic KMS measures}\label{harm}

Let $m$ be a harmonic $\beta$-KMS measure on $\Omega_G$ and consider
the vector $\psi$ given by (\ref{b100}). It follows from Lemma \ref{b5} that $\psi$ is $A(\beta)$-harmonic. To show that all
$A(\beta)$-harmonic vectors arise this way it is convenient to relate
our setup to the theory of countable state Markov chains through the
following device.

Let $\psi$ be a non-zero $A(\beta)$-harmonic vector. Set
$$
E_0 = \left\{ e \in E: \ \psi_{r(e)} \neq 0 \right\} .
$$
We define a matrix $B(\beta)$ over $E_0$ by
\begin{equation}\label{Bbeta}
B(\beta)_{e_0,e_1} = \begin{cases} \psi_{r(e_0)}^{-1} \psi_{r(e_1)} e^{-\beta
  F(e_1)}, &  \ \text{when} \ r(e_0) = s(e_1), \\ 0, & \ \text{otherwise.} \end{cases}
\end{equation}
Then 
$$
\sum_{e_1 \in E_0} B(\beta)_{e_0,e_1} = \psi_{r(e_0)}^{-1} \sum_{e_1 \in E_0,
  \ s(e_1) = r(e_0)} e^{-\beta F(e_1)} \psi_{r(e_1)} =
\psi_{r(e_0)}^{-1} \sum_{w \in V}A(\beta)_{r(e_0),w} \psi_w  = 1
$$
since $\psi$ is $A(\beta)$-harmonic. Thus $B(\beta)$ is stochastic
and gives therefore rise to a Markov chain with $E_0$ as state space
for any choice of initial distribution on $E_0$,
cf. Theorem 1.12 (b) in \cite{Wo}. When we fix an edge $e \in E_0$ and
take as the initial distribution the Dirac measure at $e$ we get in
this way a probability measure $\text{Pr}_e$ on $E_0^{\mathbb N}$ defined on the
$\sigma$-algebra $\mathcal A$ generated by the cylinder sets in $E_0^{\mathbb N}$ by
Theorem 1.12 (a) in \cite{Wo}. When $(a_i)_{i=0}^n \in E_0^{n+1}$, the
corresponding cylinder set in $E_0^{\mathbb N}$ is
\begin{equation}\label{cyl23}
C\left((a_i)_{i=0}^n \right) =  \left\{ (x_i)_{i=0}^{\infty} \in E_0^{\mathbb N} : x_i = a_i, i =
  0,1,2, \cdots ,n \right\}.
\end{equation}
By definition
\begin{equation}\label{cyl24}
\text{Pr}_e\left( C\left((a_i)_{i=0}^n \right)\right) =  \psi_{r(e)}^{-1}
\psi_{r(a_n)} e^{-\beta \sum_{i=1}^nF(a_i) }
\end{equation}
when $a_0 = e$ and $r(a_i) = s(a_{i+1}),i = 0,1,2,\cdots, n-1$, while
$\text{Pr}_e\left(C\left((a_i)_{i=0}^n \right)\right) = 0$ in all other cases. 

In the following we set $Z'(\mu) = Z(\mu) \cap P(G)$ when $\mu \in P_f(G)$.

\begin{lemma}\label{boet} $B \cap E_0^{\mathbb N} \in \mathcal A$ for every Borel subset $B \subseteq Z'(e)$, and 
\begin{equation}\label{measure}
B \mapsto \Pro_e(B \cap  E_0^{\mathbb N})
\end{equation}
is a Borel probability measure
on $Z'(e)$.
\end{lemma}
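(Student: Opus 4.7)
The plan is to verify the two assertions separately, each time reducing to the generating cylinder sets.

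\medskip

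\emph{Measurability.} Let $\mathcal F$ be the collection of Borel subsets $B \subseteq Z'(e)$ for which $B \cap E_0^{\mathbb N} \in \mathcal A$. Since the operation $B \mapsto B \cap E_0^{\mathbb N}$ commutes with countable unions and with complementation (taken in $Z'(e)$ on the one side and in $E_0^{\mathbb N}$ on the other), and $\mathcal A$ is a $\sigma$-algebra, $\mathcal F$ is itself a $\sigma$-algebra on $Z'(e)$. By Remark~\ref{a45} the Borel $\sigma$-algebra on $Z'(e)$ coincides with the one inherited from the product topology on $E^{\mathbb N}$ and is therefore generated by the cylinders $Z'(e)\cap Z(\mu)$ for $\mu \in P_f(G)$ with $\mu_1 = e$. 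It thus suffices to exhibit each such cylinder as an element of $\mathcal F$. If every edge of $\mu$ lies in $E_0$, then $(Z'(e)\cap Z(\mu))\cap E_0^{\mathbb N}$ is, after the evident index shift, the cylinder $C((\mu_1,\mu_2,\ldots,\mu_{|\mu|}))$ in $E_0^{\mathbb N}$ in the sense of (\ref{cyl23}); otherwise the intersection is empty. In either case it belongs to $\mathcal A$.

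\medskip

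\emph{Measure.} Define $\nu(B) := \text{Pr}_e(B \cap E_0^{\mathbb N})$ for Borel $B \subseteq Z'(e)$. Non-negativity and countable additivity of $\nu$ transfer directly from the corresponding properties of $\text{Pr}_e$, since intersection with $E_0^{\mathbb N}$ preserves disjoint countable unions. It remains to show $\nu(Z'(e)) = 1$, for which I will argue that $\text{Pr}_e$ is concentrated on $Z'(e)\cap E_0^{\mathbb N}$, i.e.\ that the complement
\[
N \;=\; E_0^{\mathbb N} \setminus \bigl(Z'(e)\cap E_0^{\mathbb N}\bigr)
\]
is $\text{Pr}_e$-null. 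The set $N$ equals the union of $\{(x_i)\in E_0^{\mathbb N} : x_0 \neq e\}$ and the sets $\{(x_i)\in E_0^{\mathbb N} : r(x_i)\neq s(x_{i+1})\}$ for $i \geq 0$. Each of these is a countable disjoint union of basic cylinders $C((a_i)_{i=0}^n)$ in which either $a_0\neq e$ or $r(a_j)\neq s(a_{j+1})$ for some $j<n$, and formula (\ref{cyl24}) assigns $\text{Pr}_e$-mass zero to every such cylinder. Hence $\text{Pr}_e(N)=0$, and consequently $\nu(Z'(e)) = \text{Pr}_e(E_0^{\mathbb N}) = 1$.

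\medskip

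The only genuine delicacy here is the identification of the Borel $\sigma$-algebra on $Z'(e)$ with the cylinder $\sigma$-algebra on $E^{\mathbb N}$ restricted to $Z'(e)$, but this is precisely the content of Remark~\ref{a45}. Everything else is routine bookkeeping with countable unions together with the explicit formula (\ref{cyl24}).
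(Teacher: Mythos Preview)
Your argument for the second assertion (that the resulting measure is a probability measure) is correct and essentially identical to the paper's. The measurability part, however, contains a genuine gap.

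You assert that when all edges of $\mu$ lie in $E_0$,
\[
\bigl(Z'(e)\cap Z(\mu)\bigr)\cap E_0^{\mathbb N} \;=\; C\bigl((\mu_1,\ldots,\mu_{|\mu|})\bigr).
\]
This is false. The right-hand side consists of \emph{all} sequences in $E_0^{\mathbb N}$ whose first $|\mu|$ coordinates are $\mu_1,\ldots,\mu_{|\mu|}$; there is no requirement that $r(x_i)=s(x_{i+1})$ for $i\ge |\mu|-1$. The left-hand side, by contrast, contains only genuine infinite paths and is therefore a proper subset of the cylinder whenever $E_0$ has more than one edge. So you have not shown that the left-hand side lies in $\mathcal A$.

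The paper repairs this by writing
\[
Z'(\mu)\cap E_0^{\mathbb N} \;=\; \bigcap_{k>|\mu|}\ \bigcup_{\nu\in P_k(\mu)} C(\nu),
\]
where $P_k(\mu)$ is the set of finite paths of length $k$ extending $\mu$ and $C(\nu)$ is the cylinder in $E_0^{\mathbb N}$ determined by $\nu$ (empty if some edge of $\nu$ is not in $E_0$). The point is that being an infinite path with all edges in $E_0$ is not captured by a single cylinder; one must enforce the path condition at every stage, which costs a countable intersection. Once this formula is in place your $\sigma$-algebra argument goes through unchanged.
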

\begin{proof} The subsets $A$ of $Z'(e)$ such that $A \cap
  E_0^{\mathbb N} \in \mathcal A$ form a $\sigma$-algebra and the
  cylinders $Z'(\mu)$ generate the Borel
  $\sigma$-algebra in $Z'(e)$ so it suffices to show that
  $Z'(\mu) \cap E_0^{\mathbb N} \in \mathcal A$ when $\mu$ is
  a finite path in $G$ with $e$ as initial edge. Fix such a path
  $\mu$. When $\nu = e_0e_1\cdots e_n \in P_f(G)$ we set
$$
C(\nu) = \left\{ (x_i)_{i=0}^{\infty} \in E_0^{\mathbb N} : \ x_i =
  e_i, \ i =0,1,\cdots, n\right\}.
$$
Then $C(\nu) = \emptyset$ when $e_i \notin E_0$ for some $i$, and
otherwise $C(\nu)$ is the cylinder set $C\left((e_i)_{i=0}^n
\right)$. In any case $C(\nu) \in \mathcal A$. Let $P_n(\mu)$ denote the set of finite paths $\nu$
in $G$ of length $n \geq |\mu|$ such that $Z(\nu) \subseteq Z(\mu)$. Then
$$
Z'(\mu) \cap E_0^{\mathbb N} = \bigcap_{k > |\mu|}
\left(\bigcup_{\nu \in P_k(\mu)} C(\nu)\right) \in \mathcal A,
$$
completing the proof of the first assertion. For the second note that
$E_0^{\mathbb N} \backslash Z'(e)$ is contained in a
countable union of cylinder sets (\ref{cyl23}) for each of which either $a_0 \neq e$ or $r(a_i) \neq s(a_{i+1})$ for some $i =
0,1,2,\cdots, n-1$. Such sets are null-sets for $\text{Pr}_e$ by
definition. Hence $\text{Pr}_e\left(Z'(e)\right) =
\text{Pr}_e\left(E_0^{\mathbb N}\right) =1$.
\end{proof}

It can happen that $Z'(e) \nsubseteq E_0^{\mathbb N}$, but it follows
from Lemma \ref{boet} that $\text{Pr}_e(E_0^{\mathbb N} \backslash
Z'(e)) = 0$ which justifies that we in the following use the notation
$\text{Pr}_e$ also for the measure on $Z'(e)$ given by (\ref{measure}).

\begin{lemma}\label{a20} Let $\psi$ be an $A(\beta)$-harmonic vector. There is a harmonic $\beta$-KMS measure $m_{\psi}$ on $\Omega_G$ such
that $\psi_v = m_{\psi}\left(Z(v)\right)$ for all $v \in V$.
\end{lemma}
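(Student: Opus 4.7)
The plan is to assemble $m_\psi$ from the Markov-chain probability measures $\Pro_e$, $e \in E_0$. For a Borel set $B \subseteq \Omega_G$, I would set
\[
m_\psi(B) \defeq \sum_{e \in E_0} e^{-\beta F(e)} \psi_{r(e)} \Pro_e(B \cap Z'(e)).
\]
Because the cylinders $Z'(e)$, $e \in E$, form a countable Borel partition of $P(G)$ and each summand is a finite Borel measure concentrated on $Z'(e)$, this formula defines a Borel measure on $\Omega_G$ automatically supported on $P(G)$, so that $m_\psi(Q(G)) = 0$ and the harmonic property is built in. Using $\Pro_e(Z'(e)) = 1$ from Lemma \ref{boet} together with the $A(\beta)$-harmonicity of $\psi$,
\[
m_\psi(Z(v)) = \sum_{e \in s^{-1}(v) \cap E_0} e^{-\beta F(e)} \psi_{r(e)} = \sum_{w \in V} A(\beta)_{vw}\psi_w = \psi_v,
\]
which is the identity claimed in the lemma.

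Next I would promote this to the cylinder formula $m_\psi(Z(\mu)) = e^{-\beta F(\mu)}\psi_{r(\mu)}$ for every finite path $\mu = e_0 e_1 \cdots e_n$. When all $e_i$ lie in $E_0$ this follows directly from (\ref{cyl24}); when some $e_i \notin E_0$ both sides vanish, since the harmonicity relation $\sum_w A(\beta)_{r(e_i), w}\psi_w = \psi_{r(e_i)} = 0$ combined with $A(\beta)_{r(e_i), r(e_{i+1})} \geq e^{-\beta F(e_{i+1})} > 0$ propagates the vanishing of $\psi$ all the way to $r(\mu)$, while on the left the relevant Markov-chain cylinder is empty (or $e_0 \notin E_0$, so no summand contributes). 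Regularity of $m_\psi$ is automatic: $\Omega_G$ has a countable basis of compact open sets and $m_\psi$ is locally finite with $m_\psi(Z(v)) = \psi_v < \infty$, so Proposition 7.2.3 in \cite{Co} applies.

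The main step is to verify that $m_\psi$ is $(\mathcal G, c_F)$-conformal with exponent $\beta$, and it suffices to check the condition on the basis (\ref{top}) of open bi-sections. For $W = \{(\mu x, |\mu|-|\mu'|, \mu' x) : \mu x \in Z_{\mathcal F}(\mu), \ \mu' x \in Z_{\mathcal F'}(\mu')\}$ the cocycle $c_F$ takes the constant value $F(\mu) - F(\mu')$, so the required identity reduces to $m_\psi(s(W)) = e^{\beta(F(\mu) - F(\mu'))} m_\psi(r(W))$. Applying Lemma \ref{twice} to the cylinder formula shows that the two finite Borel measures $A \mapsto m_\psi(\mu A)$ and $A \mapsto e^{-\beta F(\mu)} m_\psi(A)$ on $Z(r(\mu))$ agree on all cylinders $Z(\nu')$ with $s(\nu') = r(\mu)$, hence coincide; the analogous statement holds for $\mu'$. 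Consequently $m_\psi(\mu' B) = e^{\beta(F(\mu) - F(\mu'))} m_\psi(\mu B)$ for every Borel $B \subseteq Z(r(\mu))$, and specializing to
\[
B = \{x \in Z(r(\mu)) : \mu x \in Z_{\mathcal F}(\mu) \text{ and } \mu' x \in Z_{\mathcal F'}(\mu')\}
\]
yields $\mu B = r(W)$ and $\mu' B = s(W)$, giving the conformal identity on $W$.

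The main obstacle is this last step: the purely local cylinder identity coming from the edge-wise Markov-chain construction must be upgraded to the global groupoid conformal condition, and it is the uniqueness result Lemma \ref{twice} that furnishes the required bridge by letting one transport $m_\psi$ along the prefix-removing homeomorphisms $Z(\mu) \to Z(r(\mu))$ and $Z(\mu') \to Z(r(\mu'))$.
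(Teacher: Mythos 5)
Your proposal is correct and follows essentially the same route as the paper's own proof: the same measure $m_{\psi}(B)=\sum_{e\in E_0}e^{-\beta F(e)}\psi_{r(e)}\Pro_e(B\cap Z'(e))$, the same cylinder identity $m_{\psi}(Z(\mu))=e^{-\beta F(\mu)}\psi_{r(\mu)}$, regularity via Proposition 7.2.3 in \cite{Co}, and Lemma \ref{twice} to verify the conformal condition on pieces where $c_F$ is constant. The only point stated more loosely than in the paper is the claim that checking (\ref{c31}) on the basis (\ref{top}) suffices -- a cover of an open bi-section $W$ by basis sets need not be disjoint, so the paper instead writes $W$ as a countable disjoint union of Borel pieces $\left\{(\mu_i x,|\mu_i|-|\mu_i'|,\mu_i'x): x\in B_i\right\}$; since you in fact prove $m_{\psi}(\mu'B)=e^{\beta(F(\mu)-F(\mu'))}m_{\psi}(\mu B)$ for \emph{every} Borel $B\subseteq Z(r(\mu))$, this disjointification closes that small step immediately.
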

\begin{proof} When $\psi = 0$, set $m_{\psi} =0$. Assume then that
  $\psi \neq 0$. Let $e \in E_0$ and let $\text{Pr}_e$ be the Borel probability measure on $Z'(e)$ from Lemma \ref{boet}. For every vertex $v \in V$ we define a
  Borel measure $m^v$ on $Z(v)$ such that $m^v =0$ when $s^{-1}(v)
  \cap E_0 = \emptyset$ and 
$$
m^v(B) = \sum_{e \in s^{-1}(v) \cap E_0} e^{-\beta F(e)} \psi_{r(e)} \text{Pr}_e\left(B \cap
Z'(e)\right) ,
$$
when $ s^{-1}(v)
  \cap E_0 \neq \emptyset$. Then 
$$
m^v(Z(v)) = \sum_{e \in s^{-1}(v)
    \cap E_0} e^{-\beta F(e)} \psi_{r(e)} = \sum_{w \in
    V}A(\beta)_{vw} \psi_w = \psi_v.
$$ 
When we define $m_{\psi}$ such that 
$$
m_{\psi}(B) = \sum_{v \in
  V}m^v(B \cap Z(v))
$$
we have therefore obtained a Borel measure on $\Omega_G$ such that
$m_{\psi}(Z(v)) = m_{\psi}(Z'(v)) = \psi_v$ for all $v \in V$. In
particular, $m_{\psi}$ is finite on compact subsets of $\Omega_G$
because such a set is covered by finitely many $Z(v)$'s. Hence
$m_{\psi}$ is
a regular Borel measure by Proposition 7.2.3 in \cite{Co}. Note that $m_{\psi}$ is
supported on $P(G)$. What remains is now only to show that (\ref{c31}) holds when $W$ is an open bisection
in $\mathcal G$. For this note first that from the definition of $m_{\psi}$
and (\ref{cyl24}) it follows that
\begin{equation}\label{cyl25}
m_{\psi}(Z(\mu)) = m_{\psi}(Z'(\mu)) = e^{-\beta F(\mu)} \psi_{r(\mu)}
\end{equation}
for all $\mu \in P_f(G)$. Next observe that $W$ is the union of
countably many sets of the form (\ref{top}). Note that the set
(\ref{top}) is a Borel subset of $\left\{(\mu x, |\mu| -|\mu'|,\mu'x):
  \ x \in \Omega_G\right\}$. We can therefore write $W$ as a countable
disjoint union $W = \bigcup_i W_i$ of Borel subsets $W_i$ such that
there are finite paths $\mu_i, \mu_i' \in P_f(G)$ with $r(\mu_i) =
r(\mu_i')$ and a subset $B_i \subseteq \Omega_G$ with
$$
W_i = \left\{ (\mu_i x, |\mu_i| - |\mu_i'|, \mu_i'x) : \ x \in B_i \right\} .
$$
Note that $B_i$ is a Borel subset of $\Omega_G$ because $r : W \to
r(W)$ is a homeomorphism. Since the $W_i$'s are mutually disjoint, to
establish (\ref{c31}) it suffices to fix $i$ and show that   
\begin{equation}\label{enough!}
m_{\psi}(s(W_i)) = \int_{r(W_i)}  e^{\beta c_F\left( r_W^{-1}(z)\right)} \
dm_{\psi}(z).
\end{equation}
Note that $s(W_i) = \left\{\mu_i'x : \ x \in B_i\right\}$ and $r(W_i) = \left\{\mu_ix : \ x \in B_i\right\}$. Let $v = r(\mu_i) = r(\mu'_i)$ and consider the two finite Borel measures on $Z(v)$ given by
$$
B \mapsto \int_{\left\{ \mu_i x : \ x \in
    B \right\} } e^{\beta c_F\left( r_W^{-1}(z)\right)} \
dm_{\psi}(z)
$$
and
$$ 
B \mapsto m_{\psi} \left( \left\{ \mu_i' x : \ x \in
    B \right\}\right) .
$$
To show that they are
equal it suffices therefore by Lemma \ref{twice} to check that they
agree on  cylinder sets
$Z(\mu)$ with $s(\mu) = v$. For this note that $c_F$ is constant equal to
$F(\mu_i) - F(\mu'_i)$ on $W_i$. By using
(\ref{cyl25}) it follows that
\begin{equation}
\begin{split}
& \int_{\left\{ \mu_i x : \ x \in
    Z(\mu) \right\} } e^{\beta c_F\left( r_W^{-1}(z)\right)} \
dm_{\psi}(z) = e^{\beta\left(F(\mu_i) - F(\mu'_i)\right)} m_{\psi}\left( \left\{ \mu_i x : \ x \in
    Z'(\mu) \right\}\right) \\
& \\
& = e^{\beta\left(F(\mu_i) - F(\mu'_i)\right)} e^{-\beta F(\mu_i\mu)}
\psi_{r(\mu)} =  e^{-\beta F(\mu'_i\mu)} 
\psi_{r(\mu)} = 
m_{\psi} \left( \left\{ \mu_i' x : \ x \in
    Z'(\mu) \right\}\right) .
\end{split}
\end{equation}
Thus $m_{\psi}$ is a harmonic $\beta$-KMS measure and the proof is complete.

\end{proof}

\begin{thm}\label{a17} The map $m \mapsto \psi$ given by (\ref{b100})
  is a bijection from the
  harmonic $\beta$-KMS measures $m$ on $\Omega_G$ onto the $A(\beta)$-harmonic
  vectors for $A(\beta)$.
\end{thm}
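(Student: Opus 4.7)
The plan is to assemble the theorem directly from the lemmas already proved in the preceding subsection; no new analysis is really needed, the work has been distributed into Lemma \ref{b5}, Lemma \ref{a20} and the injectivity argument for \refthm{almost}.

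First I would verify that the map is well-defined. If $m$ is a harmonic $\beta$-KMS measure, then by definition $m(Q(G))=0$, so the last sentence of Lemma \ref{b5} applies and the vector $\psi_v = m(Z(v))$ satisfies $\sum_{w \in V} A(\beta)_{vw}\psi_w = \psi_v$ for every $v \in V$, i.e.\ $\psi$ is $A(\beta)$-harmonic. Note also that $\psi_v < \infty$ for all $v$ because $Z(v)$ is compact and $m$ is regular, so $\psi \in [0,\infty[^V$ as required.

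Next I would establish surjectivity: given an $A(\beta)$-harmonic vector $\psi$, Lemma \ref{a20} produces a harmonic $\beta$-KMS measure $m_\psi$ with $m_\psi(Z(v)) = \psi_v$ for all $v \in V$. So the map hits every $A(\beta)$-harmonic vector.

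For injectivity, I would simply invoke the argument already given in the proof of injectivity for \refthm{almost}: if $m$ and $m'$ are $\beta$-KMS measures (in particular harmonic ones) with $m(Z(v)) = m'(Z(v))$ for every $v \in V$, then Lemma \ref{b101} gives $m(Z(\mu)) = m'(Z(\mu))$ for every $\mu \in P_f(G)$, Lemma \ref{twice} then forces $m = m'$ on each $Z(v)$, and since $\{Z(v)\}_{v \in V}$ is a countable Borel partition of $\Omega_G$, we conclude $m = m'$. There is no real obstacle here; the only thing to watch is that the harmonicity hypothesis is used at exactly one place, namely to upgrade condition 1) in Lemma \ref{b5} to the equality in 2), so that the image lies inside the harmonic vectors rather than only the almost harmonic ones.
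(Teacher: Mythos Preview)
Your proposal is correct and follows essentially the same approach as the paper: the paper's proof simply states that surjectivity is Lemma \ref{a20} and that injectivity was proved after Lemma \ref{twice}, which is exactly what you do (with the added explicit mention of well-definedness via the last sentence of Lemma \ref{b5}).
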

\begin{proof} Surjectivity is Lemma
  \ref{a20} and injectivity was proved after Lemma \ref{twice}.
  \end{proof}

\emph{Proof of surjectivity in Theorem \ref{almost}:} Let $\psi$ be an
almost $A(\beta)$-harmonic vector. We must find a $\beta$-KMS measure
$m$ such that $m(Z(u)) = \psi_u$ for all $u \in V$. To this end let
$h,k$ be the vectors arising from Lemma \ref{b1}. Note that $k$ is
supported on $V_{\infty}$ since $\psi$ is almost
$A(\beta)$-harmonic. It follows that
$$
\sum_{v \in V_{\infty}} \sum_{n=0}^{\infty} A(\beta)^n_{wv}k_v =
\psi_w - h_w \leq \psi_w < \infty .
$$
and then $m_2 = \sum_{v \in V_{\infty}} k_v m_v$ is a boundary
$\beta$-KMS measure by Theorem \ref{a16}. It follows from Theorem
\ref{a17} that there is a harmonic $\beta$-KMS measure $m_1$ such that
$m_1(Z(v)) = h_v$. Then $m = m_1 + m_2$ is a $\beta$-KMS
measure, and by combining (\ref{b124}) with (\ref{b3}) we find that 
$$
m(Z(u)) = h_u \ + \sum_{v \in V_{\infty}} k_v m_v(Z(u)) = h_u \ +
\sum_{v \in V_{\infty}} \sum_{n=0}^{\infty} A(\beta)^n_{uv} k_v
= \psi_u . \qed
$$

To sum up, by combining Theorem \ref{a1} and Theorem \ref{almost} we
get the following corollary which will be the basis for the following
investigations.

\begin{cor}\label{summary} There are affine bijections between the following
  three sets.
\begin{enumerate}
\item[$\bullet$] The gauge invariant $\beta$-KMS weights for
  $\alpha^F$.
\item[$\bullet$] The non-zero $\beta$-KMS measures on $\Omega_G$.
\item[$\bullet$] The non-zero almost $A(\beta)$-harmonic vectors on
  $V$.
\end{enumerate}
\end{cor}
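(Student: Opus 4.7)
The proof is a straightforward composition of the two previously established bijections, together with a routine verification of affineness and the correspondence between the non-zero conditions, so the plan is to assemble these pieces carefully rather than to introduce any new argument.

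First I would invoke Theorem \ref{a1}, which already gives a bijection $m \mapsto \varphi_m$ between the non-zero $(\mathcal G, c_F)$-conformal measures with exponent $\beta$ (i.e., the non-zero $\beta$-KMS measures) and the gauge invariant $\beta$-KMS weights for $\alpha^F$. Next I would invoke Theorem \ref{almost}, giving the bijection $m \mapsto \psi$ where $\psi_v = m(Z(v))$, between $\beta$-KMS measures on $\Omega_G$ and almost $A(\beta)$-harmonic vectors on $V$. Composing these two bijections yields the third bijection in the list.

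The only thing not quite contained in the two cited theorems is the affineness and the exact matching of the non-zero conditions. For affineness, note that both maps are affine from their very definitions: the assignment $m \mapsto \varphi_m$ is given by $\varphi_m(f) = \int_{\Omega_G} f \, dm$, which is linear in $m$ for each fixed $f \in C_c(\mathcal G)$; and the assignment $m \mapsto \psi$ is given by $\psi_v = m(Z(v))$, which is linear in $m$. Hence both are affine, and their composition is affine as well. For the non-zero condition, note that if $m$ is a $\beta$-KMS measure with $\psi_v = m(Z(v)) = 0$ for every $v \in V$, then by Lemma \ref{twice} applied to $m$ and the zero measure on each $Z(v)$ one concludes $m = 0$; the converse is trivial. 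Thus the non-zero measures correspond precisely to non-zero almost $A(\beta)$-harmonic vectors, and Theorem \ref{a1} already handles the correspondence between non-zero measures and (necessarily non-zero, since proper) gauge invariant $\beta$-KMS weights.

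There is no real obstacle here; the work has all been done in Theorems \ref{a1} and \ref{almost}. The only minor bookkeeping is to note explicitly that both bijections are affine — which is immediate from their defining formulas — and that they respect the non-zero condition, which follows from the uniqueness argument already given via Lemma \ref{twice}.
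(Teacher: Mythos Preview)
Your proposal is correct and takes essentially the same approach as the paper, which simply says the corollary follows ``by combining Theorem \ref{a1} and Theorem \ref{almost}.'' Your additional verification of affineness and the non-zero condition is correct (though for the latter you could argue more directly: if $m(Z(v))=0$ for all $v$, then $m=0$ since $\Omega_G$ is the countable disjoint union of the $Z(v)$'s).
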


Equally important is the division of the KMS weights which arise from
these bijections into harmonic KMS weights and boundary KMS weights.

\section{KMS weights on simple graph $C^*$-algebras}\label{KMSsimple}

Recall that a subset $H \subseteq V$ is \emph{hereditary} when $e \in
E, \ s(e) \in H \Rightarrow r(e) \in H$ and \emph{saturated} when 
$$
v \in V\backslash V_{\infty}, \  
r(s^{-1}(v))  \subseteq H \ \Rightarrow \ v\in H.
$$ 
In the following we say that $G$ is \emph{cofinal} when the only
non-empty subset of $V$ which is both hereditary and
saturated is $V$ itself. This condition is fulfilled when $C^*(G)$ is
simple, and it is a result of Szymanski that the converse is
almost also true, cf. Theorem 12 in \cite{Sz}. We note that when $G$
is \emph{strongly connected}, meaning that for every pair of vertexes $v,w$ there is
a finite path $\mu$ such that $s(\mu ) = v$ and $r(\mu) =w$, then $V$
contains no proper non-empty hereditary subset and $G$ is therefore also cofinal.

\begin{lemma}\label{n17} Let $G$ be cofinal and $H \subseteq V$ a non-empty hereditary
  subset. Set $H_0 = H$ and define $H_i, i \geq 1$, such that
$$
H_i = H_{i-1} \cup \left\{v \in V \backslash V_{\infty} : \
  r(s^{-1}(v)) \subseteq H_{i-1} \right\} .
$$
Then $\bigcup_{i=0}^{\infty} H_i = V$.
\end{lemma}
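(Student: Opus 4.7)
The plan is to set $H_\infty \defeq \bigcup_{i=0}^{\infty} H_i$ and show that $H_\infty$ is a non-empty hereditary and saturated subset of $V$; cofinality then forces $H_\infty = V$.

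Non-emptiness is immediate from $\emptyset \neq H = H_0 \subseteq H_\infty$, so the content is in verifying the two closure properties. For hereditariness, I would argue by induction on $i$ that each $H_i$ is hereditary. The base case $H_0 = H$ is given by hypothesis. For the inductive step, suppose $H_{i-1}$ is hereditary and take $e \in E$ with $s(e) \in H_i$. Either $s(e) \in H_{i-1}$, in which case $r(e) \in H_{i-1} \subseteq H_i$ by induction, or $s(e) \in H_i \setminus H_{i-1}$, in which case $s(e) \in V \setminus V_\infty$ and $r(s^{-1}(s(e))) \subseteq H_{i-1}$ by the defining formula, so $r(e) \in H_{i-1} \subseteq H_i$. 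Hence $H_\infty$, being a union of hereditary sets, is hereditary.

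For saturatedness, the key point is that if $v \in V \setminus V_\infty$, then $v$ is neither a sink nor an infinite emitter, so $s^{-1}(v)$ is a finite set and therefore $r(s^{-1}(v))$ is finite as well. If $r(s^{-1}(v)) \subseteq H_\infty$, the finiteness together with the nested inclusion $H_0 \subseteq H_1 \subseteq H_2 \subseteq \cdots$ produces an index $i$ with $r(s^{-1}(v)) \subseteq H_i$, and then $v \in H_{i+1} \subseteq H_\infty$ straight from the recursive definition.

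Having established these two properties, cofinality of $G$ gives $H_\infty = V$, which is the assertion. I do not anticipate a real obstacle here; the only subtlety worth highlighting is the use of $v \notin V_\infty$ to guarantee $s^{-1}(v)$ is finite, which is exactly what lets us push a containment in $H_\infty$ down to containment in some single $H_i$.
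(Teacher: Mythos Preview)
Your proof is correct and follows exactly the same approach as the paper's own proof, which simply states ``The union is hereditary and saturated.'' You have supplied the details the paper omits, and in particular your observation that $v \notin V_\infty$ forces $s^{-1}(v)$ to be finite is precisely the point needed to verify saturatedness.
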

\begin{proof} The union is hereditary and saturated.
\end{proof}

Since a sink is a hereditary subset of $V$, Lemma \ref{n17} has the following

\begin{cor}\label{b126} Let $G$ be a cofinal graph. Then
\begin{enumerate}
\item[1)]  $G$ contains at most one sink. 
\item[2)]  If $P(G) \neq \emptyset$ there is no sink in $G$.
\item[3)] If $V$ contains a sink, there is for each $u \in V$ a
  natural number $N$ such that $A(\beta)^n_{uw} = 0$ for all $w \in V$
  when $n \geq N$.
\end{enumerate}
\end{cor}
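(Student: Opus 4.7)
The plan is to apply Lemma \ref{n17} three times, taking $H = \{v\}$ for a sink $v$, and to exploit the crucial feature of the inductive definition of $H_i$: vertices in $V_{\infty}$ can only enter the chain at step $0$, since step $i \geq 1$ requires membership in $V \setminus V_{\infty}$. Since a sink is trivially hereditary (its source-fibre is empty), the lemma gives $\bigcup_i H_i = V$ in all three cases.

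For part 1), if $v_1$ and $v_2$ are two distinct sinks, I would apply the lemma with $H = \{v_1\}$. Then $v_2 \in V_\infty$ (every sink lies in $V_\infty$) but $v_2 \notin H_0$, and the recursive definition never admits vertices in $V_\infty$ after step $0$. So $v_2 \notin \bigcup_i H_i = V$, a contradiction. For part 2), given a sink $v$ and an infinite path $p \in P(G)$, I would track the function $n(w) := \min \{ i : w \in H_i\}$ along the path. If $s(p_k) \in H_i$ with $i \geq 1$, then $s(p_k) \notin V_\infty$ and $r(s^{-1}(s(p_k))) \subseteq H_{i-1}$, so $s(p_{k+1}) = r(p_k) \in H_{i-1}$; hence $n(s(p_{k+1})) \leq n(s(p_k))-1$. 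Starting from $p_1$, iterating at most $n(s(p_1))$ times forces $n(s(p_m)) = 0$ for some $m$, so $s(p_m) = v$. But $v$ is a sink, contradicting that $p_m$ is emitted from $s(p_m)$.

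For part 3), I would prove by induction on $i$ the following uniform statement: if $u \in H_i$ then there is no finite path in $G$ of length $\geq N(i)$ starting at $u$, for some $N(i) \in \mathbb N$ depending on $i$. The base case $i = 0$ forces $u = v$, and since $s^{-1}(v) = \emptyset$ no path of positive length starts at $u$, so $N(0) = 1$ works. For the inductive step, if $u \in H_i \setminus H_{i-1}$ then $u \notin V_\infty$ and $r(s^{-1}(u)) \subseteq H_{i-1}$; every path of length $n \geq 1$ from $u$ consists of an edge $e \in s^{-1}(u)$ followed by a path of length $n-1$ from $r(e) \in H_{i-1}$, so taking $N(i) = N(i-1) + 1$ suffices. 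Given $u \in V$, the lemma supplies $i$ with $u \in H_i$, and since $A(\beta)^n_{uw}$ is a sum of weights over paths of length $n$ from $u$ to $w$, it vanishes for every $w$ once $n \geq N(i)$.

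The only nontrivial point is the inductive argument in part 3), where one must recognize that the recursion in Lemma \ref{n17} produces a natural rank function whose decrement along emitted edges forces finite descent. The other two parts are direct corollaries of this same rank-descent phenomenon.
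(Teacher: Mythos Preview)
Your proof is correct and follows precisely the approach the paper intends: the paper's proof is the single sentence ``Since a sink is a hereditary subset of $V$, Lemma~\ref{n17} has the following,'' and you have supplied the details by taking $H=\{v\}$ and exploiting the rank function implicit in the filtration $H_0\subseteq H_1\subseteq\cdots$. One small wording issue in part~2): when you write ``if $s(p_k)\in H_i$ with $i\geq 1$'' you of course mean $i=n(s(p_k))\geq 1$, since membership in $H_i$ alone does not preclude $s(p_k)\in H_0$; the surrounding context makes this clear, but you might state it explicitly.
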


As in \cite{Th1} we consider the set $NW_G$ of non-wandering vertexes.
These are the vertexes $v$ for which there is a finite
path (a loop) $\mu \in P_f(G)$ such that $|\mu| \geq 1$ and $s(\mu) = r(\mu) =
v$. When $G$ is cofinal the vertexes in $NW_G$ and the edges $e\in E$ with $s(e) \in NW_G$ form a
sub-graph of $G$ which we shall also denote by $NW_G$.

\begin{prop}\label{NWG} Assume that $G$ is cofinal and that $NW_G \neq
  \emptyset$. The sub-graph $NW_G$ is strongly connected and
  the vertexes in $NW_G$ form a hereditary subset of
  $V$ with the following property: For every $v \in V$ there is an
  $N_v \in \mathbb N$
  such that $r(\nu) \in NW_G$ for all $\nu\in P_f(G)$ with $|\nu|\geq
  N_v$ and $s(\nu) = v$.
\end{prop}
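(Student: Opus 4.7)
The plan is to exploit Lemma \ref{n17} through a single observation applied three times. Given any non-empty hereditary subset $H \subseteq V$ and the associated filtration $\{H_i\}_{i \geq 0}$ of Lemma \ref{n17}, I will show that every $v \in NW_G$ in fact lies in $H_0 = H$. Let $\mu = e_1 \cdots e_k$ be a loop at $v$ and let $n$ be minimal with $v \in H_n$. If $n \geq 1$ then $v \notin V_\infty$ and $r(s^{-1}(v)) \subseteq H_{n-1}$, so $r(e_1) \in H_{n-1}$. Each $H_i$ is hereditary by an easy induction, hence $r(e_2), \ldots, r(e_k) = v$ all lie in $H_{n-1}$, contradicting minimality. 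Thus $n = 0$.

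Part (b), that $NW_G$ is hereditary, follows by applying this observation with $H = R(w)$, the set of vertices reachable from $w$ (including $w$ itself), where $w = r(e)$ for some edge $e \in s^{-1}(v)$ and $v \in NW_G$. Since $R(w)$ is non-empty and hereditary, $v \in R(w)$; prepending $e$ to a path from $w$ to $v$ yields a loop at $w$, so $w \in NW_G$. The same observation applied with $H = R(u)$ gives, for every $u, v \in NW_G$, a path from $u$ to $v$ in $G$; by (b) this path remains in $NW_G$, and by symmetry the subgraph $NW_G$ is strongly connected.

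For the last assertion I apply Lemma \ref{n17} with $H = NW_G$, which is non-empty by hypothesis and hereditary by (b), and set $N_v = \min\{i : v \in H_i\}$. Induction on $N_v$ shows that every $\nu \in P_f(G)$ with $s(\nu) = v$ and $|\nu| \geq N_v$ satisfies $r(\nu) \in NW_G$: the base case $N_v = 0$ is hereditariness of $NW_G$, and the inductive step factors $\nu = e\nu'$ with $r(e) \in H_{N_v - 1}$, legitimate because any vertex added at a positive saturation stage lies outside $V_\infty$. The main obstacle I anticipate is discovering the central observation of the first paragraph; once that is in hand, all three conclusions follow by essentially formal manipulation.
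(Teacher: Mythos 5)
Your proof is correct, but it organizes the key step differently from the paper. The paper fixes a loop $e_1\cdots e_n$, lets $L$ be the set of vertexes from which some vertex of that loop can be reached, and checks directly that $V\setminus L$ is hereditary \emph{and saturated}, so cofinality forces $L=V$; hereditariness and strong connectedness of $NW_G$ then follow, and the last assertion comes from Lemma \ref{n17} with $H=NW_G$. You instead prove the reusable observation that \emph{every} non-empty hereditary $H\subseteq V$ contains $NW_G$, by running the saturation filtration $\{H_i\}$ of Lemma \ref{n17}, noting each $H_i$ is hereditary, and showing by minimality that a loop cannot first appear at a positive stage; applying this with $H=R(w)$ and $H=R(u)$ gives hereditariness and strong connectedness, and with $H=NW_G$ (together with the induction on $N_v$) the uniform bound on path lengths, exactly as in the paper. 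What your route buys is uniformity: a single lemma invoked three times, with all saturation bookkeeping delegated to Lemma \ref{n17}, so you never have to verify saturation of an ad hoc set; what the paper's route buys is brevity, since one direct appeal to the definition of cofinality settles the reachability statement at once. One small wording slip: prepending $e$ to a path $\mu$ from $w=r(e)$ to $v$ gives the loop $e\mu$ based at $v$ passing through $w$; it is the rotation $\mu e$ that is the loop based at $w$. Either formulation gives $w\in NW_G$, so nothing is lost, but the sentence should be adjusted.
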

 \begin{proof} Consider a path $e_1e_2 \cdots e_n \in P_f(G)$ such
   that $n \geq 1$ and $s(e_1) = r(e_n)$, and let $L$ be the set of
   vertexes $v$ with the property that $v =s (\mu)$ for some $\mu \in
   P_f(G)$ with $r(\mu) \in \bigcup_{i=1}^n
  \{r(e_i)\}$. Then $V \backslash L$ is both hereditary and
  saturated, and it is not all of $V$ since $s(e_1) \in L$, so it is
  empty. This has several consequence. It follows in particular that we can go back and forth
  between loops in $G$ and hence that $NW_G$ is strongly
  connected. It also implies that $NW_G$ is hereditary. For this it
  suffices to show that $r(e) \in
  NW_G$ when $s(e) = s(e_1)$. But since $r(e) \in L$ there is a path
  from $r(e)$ to $s(e)$ and $r(e)$ is therefore contained in a loop,
  i.e. $r(e) \in NW_G$. Finally, to establish the last statement we
  apply Lemma \ref{n17} with $H = NW_G$. It follows that any vertex
  $v$ is in $H_{N_v}$ for some $N_v \in \mathbb N$. By definition of the
  $H_i$'s this implies that any path of
  length $\geq N_v$ starting at $v$ must end in $NW_G$.
\end{proof}

Proposition \ref{NWG} shows that $NW_G$ works as a sort of black hole
in $G$ when $G$ is cofinal.

\begin{lemma}\label{a80}  Assume that $G$ is cofinal. No vertex $v \in V \backslash NW_G$ is an
  infinite emitter.
\end{lemma}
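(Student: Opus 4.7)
The plan is to assume for contradiction that $v \in V \setminus NW_G$ is an infinite emitter and then produce a non-empty hereditary subset $H \subseteq V$ with $v \notin H$, so that cofinality (in the form of Lemma~\ref{n17}) will force $v$ into the saturation of $H$ in a way that demands $v \in V \setminus V_\infty$, contradicting the assumption.

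Concretely, I would set
$$
H = \{ w \in V \setminus \{v\} : \text{there is no } \mu \in P_f(G) \text{ with } |\mu| \geq 1, \ s(\mu) = w, \ r(\mu) = v \}.
$$
Heredity is a direct check: if $s(e) \in H$ and $r(e) = v$, then $e$ itself is a length-one path from $s(e)$ to $v$, contradicting $s(e) \in H$; if $r(e) \neq v$ but some $\mu$ goes from $r(e)$ to $v$, then $e\mu$ is such a path from $s(e)$ to $v$, again a contradiction. Hence $r(e) \in H$. Non-emptiness uses the two hypotheses on $v$: since $v$ is an infinite emitter, pick any $e \in s^{-1}(v)$; since $v \notin NW_G$ we cannot have $r(e) = v$ (that would be a loop at $v$), and there can be no path of length $\geq 1$ from $r(e)$ back to $v$ either, as prepending $e$ would again create a loop at $v$. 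So $r(e) \in H$ and $H \neq \emptyset$.

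With $H$ non-empty and hereditary, Lemma~\ref{n17} gives $V = \bigcup_{i \geq 0} H_i$. Let $i$ be minimal with $v \in H_i$; since $v \notin H_0 = H$, we have $i \geq 1$, and therefore $v \in H_i \setminus H_{i-1}$. By the very definition of the chain $H_i$ in Lemma~\ref{n17}, membership in $H_i \setminus H_{i-1}$ requires $v \in V \setminus V_\infty$, contradicting the assumption that $v$ is an infinite emitter.

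The only delicate point is the choice of $H$: it is engineered precisely so that the hypothesis $v \notin NW_G$ prevents $H$ from being empty (and keeps $v$ outside $H$), while the only obstruction to $v$ being reached by the saturation process of Lemma~\ref{n17} is exactly the condition $v \in V_\infty$. Once $H$ is written down, the contradiction is a one-line application of Lemma~\ref{n17}.
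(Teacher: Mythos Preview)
Your proof is correct and essentially the same as the paper's: both hinge on the set of vertexes from which $v$ cannot be reached, and your $H$ is exactly the complement of the set $A$ the paper uses. The only cosmetic difference is that the paper verifies $V\setminus A$ is both hereditary and saturated and then applies cofinality directly to get $A=V$ (whence any $r(e)$ with $e\in s^{-1}(v)$ yields a loop at $v$), while you check only heredity of $H$ and route the saturation step through Lemma~\ref{n17}; both arrive at the same contradiction with $v\in V_\infty$.
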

\begin{proof} Let $v \in V$ be an infinite emitter. Set
$$
A = \left\{w \in V: \ w = s(\mu) \ \text{for some $\mu \in P_f(G)$ such
    that} \ r(\mu) = v \right\} .
$$ 
Then $v \in A$, and since $V \backslash A$ is hereditary and saturated, it follows
that $A = V$. In particular, $r(s^{-1}(v)) \subseteq A$, which implies
that $v\in NW_G$.    
\end{proof}

In the following we
will say that \emph{all powers of $A(\beta)$ are finite} when
$A(\beta)^n_{uw} < \infty$ for all $n \in \mathbb N$ and all $u,w \in V$.

\begin{lemma}\label{a39} Assume that $G$ is cofinal. Let $\psi \in
  [0,\infty[^V$ be a $A(\beta)$-harmonic vector. Assume that $\psi$ is
  not identical zero. Then $\psi_v > 0$ for all $v \in
  V$ and all powers of $A(\beta)$ are finite.
\end{lemma}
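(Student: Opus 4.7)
The plan is to study the zero set
$$
H \;=\; \{\,v \in V : \psi_v = 0\,\},
$$
show that $H$ is both hereditary and saturated, and then use cofinality and $\psi \neq 0$ to conclude $H = \emptyset$. Once this is in hand, the finiteness of all powers $A(\beta)^n_{vw}$ drops out by a termwise bound using the harmonic identity iterated $n$ times, since then $\psi_w > 0$ everywhere is available as a denominator.

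\textbf{Hereditary and saturated.} For hereditariness, let $v \in H$ and suppose $e \in E$ has $s(e)=v$ and $r(e)=w$. The harmonic identity
$$
\sum_{u \in V} A(\beta)_{vu}\,\psi_u \;=\; \psi_v \;=\; 0
$$
is a sum of non-negative (possibly extended) reals, using the standing convention $0\cdot\infty = \infty\cdot 0 = 0$. Every summand must therefore be $0$, and since $A(\beta)_{vw} \geq e^{-\beta F(e)} > 0$, one is forced to have $\psi_w = 0$, i.e.\ $w \in H$. For saturation, let $v \in V\setminus V_{\infty}$ with $r(s^{-1}(v))\subseteq H$. Because $v$ is neither a sink nor an infinite emitter, $s^{-1}(v)$ is finite and nonempty, so $A(\beta)_{vw}$ is a finite sum of positive numbers when $w \in r(s^{-1}(v))$ and is zero otherwise. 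All relevant $\psi_w$ vanish, hence harmonicity yields $\psi_v = 0$, i.e.\ $v \in H$. Since $G$ is cofinal and $H \neq V$, we conclude $H = \emptyset$, i.e.\ $\psi_v > 0$ for all $v \in V$.

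\textbf{Finiteness of all powers.} With $\psi_w > 0$ for every $w$, the harmonic identity $\sum_u A(\beta)_{vu}\psi_u = \psi_v < \infty$ immediately rules out $A(\beta)_{vu} = \infty$: such a summand would force the sum to be infinite. Thus $A(\beta)_{vu} < \infty$ for all $v,u$. An easy induction on $n$, using Tonelli on the non-negative double sum, gives
$$
\sum_{w \in V} A(\beta)^n_{vw}\,\psi_w \;=\; \psi_v \;<\; \infty
$$
for every $n \in \mathbb N$ and every $v \in V$. Picking out a single term yields $A(\beta)^n_{vw} \leq \psi_v/\psi_w < \infty$ for all $v,w$, which is the desired conclusion.

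\textbf{Main obstacle.} The only genuinely delicate point is the bookkeeping around infinite emitters, where $A(\beta)_{vw}$ can a priori be $+\infty$ and one must argue with extended non-negative reals and the convention $0\cdot\infty = 0$. Apart from this, the proof is really just the standard observation that zero sets of non-negative harmonic vectors are hereditary and saturated, combined with a termwise bound.
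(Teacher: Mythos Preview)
Your proof is correct and follows essentially the same approach as the paper: show that the zero set of $\psi$ is hereditary and saturated, invoke cofinality to conclude it is empty, and then bound $A(\beta)^n_{vw}$ termwise using the iterated harmonic identity $\sum_w A(\beta)^n_{vw}\psi_w = \psi_v$ (the paper cites the inequality version of this, already recorded as (\ref{b7})). Your write-up is simply more explicit about the hereditary/saturated verification and the handling of possible $+\infty$ entries than the paper's terse two-line argument.
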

\begin{proof} That $\psi$ must be strictly positive follows from the
  observation that 
$$
\left\{v \in V : \ \psi_v = 0 \right\}
$$ 
is
  hereditary and saturated. It follows then from (\ref{b7}) that all
powers of $A(\beta)$ are finite.
\end{proof}

\begin{lemma}\label{a41}  Assume that $G$ is cofinal. Let $\beta \in
  \mathbb R$. 
\begin{enumerate}
\item[1)] A sink $v \in V_{\infty}$ is $\beta$-summable.
  \item[2)] An infinite emitter $v \in V_{\infty}$ is $\beta$-summable
  if and only if $\sum_{n=0}^{\infty} A(\beta)^n_{vv} < \infty$. This
  condition implies that all powers of $A(\beta)$ are finite.
\end{enumerate} 
\end{lemma}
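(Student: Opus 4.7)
Plan: I handle the two parts separately. For (1), let $v$ be a sink. Since $v$ has no outgoing edges, $\{v\}$ is hereditary, so Lemma \ref{n17} yields $V=\bigcup_i H_i$ with each $H_i\setminus H_{i-1}\subseteq V\setminus V_{\infty}$. This forces $V_{\infty}=\{v\}$, hence every vertex other than $v$ has finite out-degree and all entries $A(\beta)^n_{uw}$ are finite by induction. Corollary \ref{b126}(3) then supplies, for each $u\in V$, an $N$ with $A(\beta)^n_{uw}=0$ whenever $n\geq N$, so $\sum_{n=0}^{\infty} A(\beta)^n_{uv}$ collapses to a finite sum of finite terms.

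For (2), the forward implication is immediate by specializing to $w=v$. For the converse, assume $v$ is an infinite emitter with $\sum_n A(\beta)^n_{vv}<\infty$. By Lemma \ref{a80}, $v\in NW_G$; by Proposition \ref{NWG}, $NW_G$ is strongly connected and hereditary. The argument of (1) applied to any hypothetical sink would force $V_{\infty}$ to be a singleton sink, contradicting $v\in V_{\infty}$; hence $G$ contains no sinks, and every vertex in $V\setminus NW_G$ has finite positive out-degree. For $u\in NW_G$, pick a path $\mu$ in $NW_G$ from $v$ to $u$ of length $k$; concatenating $\mu$ with a length-$n$ path from $u$ to $v$ yields a loop at $v$ of length $n+k$, whence $A(\beta)^n_{uv}\leq e^{\beta F(\mu)} A(\beta)^{n+k}_{vv}$, and summation bounds $\sum_n A(\beta)^n_{uv}$ by the assumed finite sum.

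For $u\in V\setminus NW_G$, let $N_u$ be the integer from Proposition \ref{NWG}, so any path of length $\geq N_u$ from $u$ ends in $NW_G$. Since $NW_G$ is hereditary, every path from $u$ to $v$ has a well-defined first entry into $NW_G$ at some step $j\leq N_u$ at some $w\in NW_G$, giving the decomposition
\begin{equation*}
A(\beta)^n_{uv}\ =\ \sum_{j=1}^{\min(n,N_u)}\sum_{w\in NW_G} P_u(j,w)\, A(\beta)^{n-j}_{wv},
\end{equation*}
where $P_u(j,w)$ is the weighted count of length-$j$ paths from $u$ to $w$ with the first $j-1$ intermediate vertices in $V\setminus NW_G$. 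All these intermediate vertices have finite out-degree, so $P_u(j,w)$ is finite and non-zero for only finitely many $w$ at each fixed $j$; combined with the previous paragraph's bound on $\sum_m A(\beta)^m_{wv}$, summing over $n$ gives $\sum_n A(\beta)^n_{uv}<\infty$, so $v$ is $\beta$-summable.

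Finally, the claim that all powers $A(\beta)^n_{uw}$ are finite follows by two cases: for $w\in NW_G$ I take a path $\tau$ in $NW_G$ from $w$ to $v$ of length $\ell$ and use $A(\beta)^n_{uw}\leq e^{\beta F(\tau)} A(\beta)^{n+\ell}_{uv}<\infty$; for $w\in V\setminus NW_G$ any length-$n$ path from $u$ to $w$ must stay in $V\setminus NW_G$ by hereditariness, hence is a finite weighted count over the finite-out-degree region. The main obstacle in the plan is the first-entry decomposition for $u\in V\setminus NW_G$—specifically showing $P_u(j,w)$ is both finite and supported on finitely many $w$—which rests essentially on Lemma \ref{a80} and the non-existence of sinks established above.
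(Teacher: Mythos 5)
Your argument is correct, but for the substantial half of 2) it follows a genuinely different route than the paper. The paper disposes of the converse in one stroke: the set $\left\{ u \in V : \ \sum_{n=0}^{\infty} A(\beta)^n_{uv} < \infty \right\}$ is hereditary and saturated and contains $v$, hence equals $V$ by cofinality; and it gets finiteness of all powers uniformly by choosing, for each $u$, one $l$ with $A(\beta)^l_{uv} \neq 0$ and using $A(\beta)^k_{wu}A(\beta)^l_{uv} \leq A(\beta)^{k+l}_{wv} < \infty$. You instead split $V$ along $NW_G$: for $u \in NW_G$ you bound $\sum_n A(\beta)^n_{uv}$ by routing loops through $v$ via strong connectivity of $NW_G$ (Proposition \ref{NWG}), and for $u \notin NW_G$ you perform a first-entrance decomposition at $NW_G$, using Lemma \ref{a80} (no infinite emitters off $NW_G$) and the bound $N_u$ from Proposition \ref{NWG} to make the decomposition a finite sum with finite coefficients $P_u(j,w)$; the finiteness of all powers is then also handled by a case split rather than the paper's uniform inequality. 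Both are sound; the paper's hereditary-and-saturated argument is shorter and uses cofinality directly, while your renewal-type decomposition is more explicit and quantitative but leans on more structural machinery (Proposition \ref{NWG}, Lemma \ref{a80}, and your part (1)-style argument to exclude sinks — the last of which, incidentally, is not strictly needed for the counting, since a sink off $NW_G$ would still have finite out-degree). Your part (1) is essentially the paper's proof, differing only in that you rule out infinite emitters by applying Lemma \ref{n17} to the hereditary set $\{v\}$ rather than passing through $NW_G = \emptyset$ and Lemma \ref{a80}.
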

\begin{proof} 1) Let $w \in V$. It follows from Corollary \ref{b126} that
  there is an $N_w \in \mathbb N$ such that $\sum_{n=0}^{\infty} A(\beta)^n_{wv} =
\sum_{n=0}^{N_w} A(\beta)^n_{wv}$. This sum is finite since $V = V
\backslash NW_G$ by 2) of Corollary \ref{b126} and $V\backslash NW_G$ contains no infinite emitter by Lemma \ref{a80}.

2) The set
\begin{equation}\label{n18}
\left\{ u \in V : \ \sum_{n=0}^{\infty} A(\beta)^n_{uv} < \infty
\right\}
\end{equation}
is hereditary and saturated. Cofinality of $G$ therefore implies the
first statement. To
  prove the second statement, consider two vertexes $w,u \in V$ and a
$k \in \mathbb N$. It follows from Lemma \ref{a80} that $v \in
NW_G$. In particular, $NW_G$ is not empty and hence 2) of Corollary
\ref{b126} implies that there are no sinks. By Proposition \ref{NWG} $NW_G$ is a strongly connected subgraph of $G$ whose
vertexes constitute a hereditary subset of $V$, and it follows
therefore from Lemma
\ref{n17} that there is an $l \in \mathbb N$ such that
$A(\beta)^l_{uv} \neq 0$. Since
$$
A(\beta)^k_{wu}A(\beta)^l_{uv} \leq A(\beta)^{k+l}_{wv} < \infty,
$$
it follows that $A(\beta)^k_{wu} < \infty$.
\end{proof}

\begin{cor}\label{b128} Assume that $G$ is cofinal. There are no
  non-zero $\beta$-KMS measures unless all powers of $A(\beta)$ are finite.
\end{cor}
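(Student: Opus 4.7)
The plan is to start from a non-zero $\beta$-KMS measure $m$ and invoke Corollary \ref{summary} to produce a non-zero almost $A(\beta)$-harmonic vector $\psi$ on $V$. I would then apply the Riesz decomposition of Lemma \ref{b1} to write
\eQ{
\psi \ = \ h \ + \ \sum_{n=0}^{\infty} A(\beta)^n k,
}
with $h$ an $A(\beta)$-harmonic vector and $k_v = \psi_v - \sum_{w \in V} A(\beta)_{vw}\psi_w$. Since $\psi$ is almost $A(\beta)$-harmonic, $k$ is supported on $V_{\infty}$, and since $\psi \neq 0$ at least one of $h$ and $k$ is non-zero.

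If $h \neq 0$, Lemma \ref{a39} delivers the conclusion directly. Otherwise $k_v > 0$ for some $v \in V_{\infty}$; since $m$ is regular, $\psi_w = m(Z(w)) < \infty$ for every $w$, and the bound $\sum_{n=0}^{\infty} A(\beta)^n_{wv} k_v \leq \psi_w$ shows that $v$ is $\beta$-summable. If $v$ is an infinite emitter, Lemma \ref{a41}(2) finishes the proof.

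The remaining case, in which $v$ is a sink, is where I expect to have to work a little harder, since Lemma \ref{a41}(1) only asserts $\beta$-summability and does not by itself yield finiteness of all powers of $A(\beta)$. My plan is to apply Lemma \ref{n17} with $H = \{v\}$ to conclude $V = \bigcup_i H_i$, and to observe that every $u \in V \setminus \{v\}$ lies in some $H_i$ with $i \geq 1$, hence in $V \setminus V_{\infty}$. Thus the graph is \emph{row-finite}: no vertex emits infinitely many edges, and in particular each entry $A(\beta)_{uw}$ is a finite sum. Combined with Corollary \ref{b126}(3), which bounds the length of paths leaving any fixed $u$, each $A(\beta)^n_{uw}$ becomes a finite sum over finitely many paths of finite entries, and hence finite. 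The principal obstacle is therefore precisely this sink sub-case, which cannot be closed by Lemma \ref{a41} alone and requires the structural input from Lemma \ref{n17} together with the depth bound in Corollary \ref{b126}(3).
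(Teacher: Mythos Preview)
Your proof is correct and follows essentially the same line as the paper's. The paper splits the measure $m$ into its harmonic and boundary parts via Lemma \ref{a14}, then invokes Theorem \ref{a17} together with Lemma \ref{a39} for the harmonic part and Lemma \ref{a41}(2) for the boundary part; you instead pass immediately to the vector $\psi$ and apply the Riesz decomposition of Lemma \ref{b1}. Under the bijection of Theorem \ref{almost} these are the same dichotomy, so the two arguments are really parallel.

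Where your write-up is actually \emph{more} careful is the sink sub-case. The paper's proof simply cites ``2) of Lemma \ref{a41}'' for the boundary part and does not comment on sinks, whereas you correctly observe that this lemma only covers infinite emitters. Your fix via Lemma \ref{n17} with $H=\{v\}$ is the right one: it shows that every vertex other than the sink lies in $V\setminus V_\infty$, so the graph is row-finite, and then all powers of $A(\beta)$ are finite by a straightforward induction on $n$ (only finitely many $w$ have $A(\beta)_{uw}\neq 0$, so each $A(\beta)^n_{uw}=\sum_w A(\beta)_{uw}A(\beta)^{n-1}_{ww'}$ is a finite sum of finite terms). The appeal to Corollary \ref{b126}(3) is not actually needed for this step, though it does no harm. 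An equivalent route to row-finiteness in the sink case is to note that the presence of a sink forces $NW_G=\emptyset$ (Corollary \ref{b126}), and then Lemma \ref{a80} rules out infinite emitters.
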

\begin{proof} It follows from Lemma \ref{a39} and Theorem \ref{a17}
  that there are no non-zero harmonic $\beta$-KMS measures unless all
  powers of $A(\beta)$ finite. It follows from 2) of Lemma \ref{a41}
  that there are no non-zero boundary $\beta$-KMS measures unless all
  powers of $A(\beta)$ are finite.
 \end{proof}

In view of Corollary \ref{b128} we shall in the following implicitly assume that
all powers of $A(\beta)$ are finite.

\subsection{No non-wandering vertexes}\label{nowan}

We split now the considerations into three cases, depending of the
size of $NW_G$. We begin with the case where $NW_G = \emptyset$. Then $G$ has no
infinite emitters by Lemma \ref{a80} and at most one sink by 1) in Lemma
\ref{b126}. In particular, $G$ is row-finite and except for the
possible presence of a sink, the case is covered by Corollary 7.3 in \cite{Th2}.

\begin{thm}\label{h1} Assume that $G$ is cofinal and that $NW_G$ is
  empty. 

\begin{enumerate}
\item[a)] Assume that $G$ contains a sink. For every $\beta \in \mathbb
  R$ there is a $\beta$-KMS weight for $\alpha^F$. It is unique up multiplication by
  scalars, and is given by the boundary $\beta$-KMS measure of
  the sink.
\item[b)] Assume that there is no sink in $G$. For any $\beta \in
  \mathbb R$ there are $\beta$-KMS weights, and they
  are all harmonic and gauge invariant.
\end{enumerate}
\end{thm}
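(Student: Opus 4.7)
The plan is to leverage the bijection in Corollary~\ref{summary} between gauge invariant $\beta$-KMS weights for $\alpha^F$, non-zero $\beta$-KMS measures, and non-zero almost $A(\beta)$-harmonic vectors, together with the decomposition in Lemma~\ref{b1} (equivalently Lemma~\ref{a14}) of a KMS measure into a harmonic and a boundary part. Since $NW_G$ is empty, Lemma~\ref{a80} rules out infinite emitters, and Corollary~\ref{b126}(1) allows at most one sink; hence $G$ is row-finite and $V_\infty$ is either empty (case b) or equals $\{v_0\}$ for a unique sink $v_0$ (case a).

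In case a), I would first invoke Lemma~\ref{a41}(1) to observe that $v_0$ is $\beta$-summable for every $\beta \in \mathbb R$; Theorem~\ref{a16} then delivers the boundary $\beta$-KMS measure $m_{v_0}$, whose associated $\beta$-KMS weight $\varphi_{m_{v_0}}$ (Theorem~\ref{a1}) establishes existence. The central step is to rule out any non-zero harmonic $\beta$-KMS measure. If $h \in [0,\infty[^V$ were a non-zero $A(\beta)$-harmonic vector, Lemma~\ref{a39} would force $h_v > 0$ for every $v$; but the presence of a sink activates Corollary~\ref{b126}(3), which provides, for each $u \in V$, an integer $N_u$ such that $A(\beta)^n_{uw} = 0$ for all $w$ and all $n \geq N_u$. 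Iterating the harmonicity identity then gives $h_u = \sum_w A(\beta)^{N_u}_{uw} h_w = 0$, a contradiction. By Theorem~\ref{a17}, no non-zero harmonic $\beta$-KMS measure exists, so every $\beta$-KMS measure is a boundary measure; since $V_\infty = \{v_0\}$, Theorem~\ref{a16} pins any such measure to a positive scalar multiple of $m_{v_0}$, yielding uniqueness.

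In case b), there is neither a sink nor an infinite emitter, so $V_\infty = \emptyset$ and hence $Q(G) = \emptyset$. Every $\beta$-KMS measure is automatically supported on $P(G)$, so via Corollary~\ref{summary} and Theorem~\ref{a17} every corresponding gauge invariant $\beta$-KMS weight is harmonic. The remaining assertions --- existence of $\beta$-KMS weights for every $\beta \in \mathbb R$ and the automatic gauge invariance of every $\beta$-KMS weight (not merely the gauge invariant ones) for $\alpha^F$ in this sink-free, row-finite setting --- are precisely the content of Corollary~7.3 of~\cite{Th2}, which I would cite verbatim.

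The principal obstacle is the harmonicity-killing step in a): it is the one place where one genuinely exploits the presence of the sink, via Corollary~\ref{b126}(3), to collapse $\sum_w A(\beta)^n_{uw} h_w$ to zero after finitely many iterations. Once that is in hand, both halves of a) reduce to a direct application of the structural results of Section~\ref{bounKMS}, and case b) is handed off wholesale to~\cite{Th2}.
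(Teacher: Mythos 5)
Your overall architecture matches the paper's: existence in a) from the $\beta$-summability of the sink (Lemma \ref{a41}(1)) and Theorems \ref{a16} and \ref{a1}, uniqueness by killing the harmonic part, and case b) delegated to Corollary 7.3 of \cite{Th2}. Your way of excluding harmonic measures is a legitimate variant: the paper simply notes that by Corollary \ref{b126}(2) a cofinal graph with a sink has $P(G)=\emptyset$, so harmonic measures (which live on $P(G)$) vanish trivially, whereas you work at the level of $A(\beta)$-harmonic vectors, iterating harmonicity and using Corollary \ref{b126}(3) to get $h_u=\sum_w A(\beta)^{N_u}_{uw}h_w=0$. Both are fine; the paper's is one line shorter, and in your version the appeal to Lemma \ref{a39} is superfluous since the iteration already forces $h=0$ directly.

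There is, however, one genuine gap in a): all of the machinery you invoke (Theorem \ref{a1}, Corollary \ref{summary}, Theorems \ref{a16} and \ref{a17}) classifies only the \emph{gauge invariant} $\beta$-KMS weights, while the statement asserts uniqueness among \emph{all} $\beta$-KMS weights. Remark \ref{a44} makes clear that for general $F$ (allowed to vanish or change sign) non-gauge-invariant KMS weights can occur, so this is not automatic. The paper closes this by observing first that $C^*(G)$ is simple (Theorem 12 in \cite{Sz}, using cofinality and $NW_G=\emptyset$) and then that Proposition 5.6 in \cite{CT2} forces every KMS weight to be gauge invariant; this step serves both a) and b). In your write-up the issue surfaces twice: in a) you never address it, so you only obtain uniqueness within the gauge invariant class, and in b) you attribute the gauge invariance of all KMS weights to Corollary 7.3 of \cite{Th2}, whereas the paper derives it from simplicity via \cite{CT2} and uses \cite{Th2} for existence and harmonicity. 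The repair is a single sentence at the start of the proof, but as written the uniqueness claim in a) and the "all gauge invariant" claim in b) are not fully justified.
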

\begin{proof} $C^*(G)$ is simple in this case by Theorem 12 in
  \cite{Sz} and it follows from Proposition 5.6 in \cite{CT2} that all
  KMS weights are gauge invariant. a): It follows from Corollary \ref{b126} that there is only
  one sink and that $P(G) = \emptyset$. In particular, there are no
  non-zero harmonic $\beta$-KMS measures. There are no infinite
  emitters in $G$ by Lemma \ref{a80} and the sink is therefore the
  only element in $V_{\infty}$. Since the sink is
  $\beta$-summable by 1) of Lemma \ref{a41}, the statements follow
  from Theorems \ref{a1}, \ref{almost} and \ref{a16}. b): This case is covered by Corollary 7.3 in \cite{Th2}.
\end{proof}

In general, the $\beta$-KMS weights in case b) are not unique, not
even up to multiplication by scalars. In fact, work of Kishimoto, \cite{Ki},
suggests that the KMS spectrum can be very rich, but we will not
pursue this case further in the present work.

\subsection{A reduction} We want to study the KMS weights of $\alpha^F$
when $G$ is cofinal and $NW_G$ not empty, and in this section we show
that for this purpose we can assume that $G$ is strongly connected.

Let $G$ be a cofinal graph and assume that
$NW_G \neq \emptyset$. Then $NW_G$ is a strongly connected subgraph of
$G$ by Proposition \ref{NWG}. Note that
$$ 
U = \left\{ x \in \Omega_G : \ s(x) \in NW_G\right\}
$$
is a closed and open subset of $\Omega_G$ which we can and will identify
with $\Omega_{NW_G}$. The reduction 
$$
\mathcal G|_{U} = \left\{ \xi \in \mathcal G: \ s(\xi) \in U,
  \ r(\xi) \in U \right\}
$$
is an \'etale locally compact second countable Hausdorff groupoid and the corresponding
convolution $C^*$-algebra $C^*_r\left(\mathcal G|_{U}\right)$ is
isomorphic to $C^*(NW_G)$. In this way we get an embedding
$$
C^*(NW_G) \subseteq C^*(G)
$$ 
of $C^*(NW_G)$ as a hereditary $C^*$-subalgebra of $C^*(G)$,
cf. Proposition 1.9 in \cite{Ph}. Note that $\alpha^F$ leaves $C^*(NW_G)$ globally invariant.

\begin{prop}\label{reduction} Assume that $G$ is cofinal and that
  $NW_G \neq \emptyset$. For each $\beta \in \mathbb R$ the restriction $\psi \mapsto
  \psi|_{C^*(NW_G)}$ is a bijection from the $\beta$-KMS weights for $\alpha^F$ on $C^*(G)$ to those for the
  restriction of $\alpha^F$ to $C^*(NW_G)$. 
\end{prop}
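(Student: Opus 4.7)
The plan is to exploit Corollary \ref{summary}, which identifies gauge invariant $\beta$-KMS weights with almost $A(\beta)$-harmonic vectors on $V$, together with the absorbing property of $NW_G$ established in Proposition \ref{NWG}. Restricting attention to gauge invariant weights is harmless in view of Remark \ref{a44} and the invariance built into the correspondence itself. Applying Corollary \ref{summary} to both $G$ and $NW_G$ reduces the proposition to showing that the restriction map from almost $A(\beta)$-harmonic vectors on $V$ to almost $A^{NW_G}(\beta)$-harmonic vectors on $NW_G$ is a bijection, where $A^{NW_G}(\beta)$ denotes the matrix $A(\beta)$ for the subgraph $NW_G$.

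That this restriction is well-defined follows from Proposition \ref{NWG} together with Lemma \ref{a80}: since $NW_G$ is hereditary and contains $V_\infty$, no edge with source in $NW_G$ leaves $NW_G$, and a vertex of $NW_G$ belongs to $V_\infty$ for $NW_G$ exactly when it does for $G$. Thus $A^{NW_G}(\beta)$ agrees with $A(\beta)$ on $NW_G\times NW_G$ and the almost harmonicity conditions at $v \in NW_G$ transfer verbatim. For injectivity, note that $NW_G \neq \emptyset$ rules out sinks by Corollary \ref{b126}, so by Lemma \ref{a80} every $v \in V \setminus NW_G$ lies in $V \setminus V_\infty$ and hence satisfies $\psi_v = \sum_w A(\beta)_{vw} \psi_w$; iterating $N_v$ times forces the sum onto $NW_G$, so $\psi_v$ is determined by $\psi|_{NW_G}$.

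For surjectivity, given an almost $A^{NW_G}(\beta)$-harmonic vector $\phi$, I would extend to $V$ by
\[
\psi_v = \sum_{\mu \in \mathcal{F}_v} e^{-\beta F(\mu)} \phi_{r(\mu)},
\]
where $\mathcal{F}_v$ is the set of finite paths $\mu$ from $v$ whose terminal vertex is the first (and only) vertex of $\mu$ belonging to $NW_G$, with the convention $\mathcal{F}_v = \{v\}$ when $v \in NW_G$ (so $\psi_v = \phi_v$ there). Proposition \ref{NWG} bounds $|\mu|$ by $N_v$, and Lemma \ref{a80} makes the emission set at every vertex of $V \setminus NW_G$ finite, so $\mathcal{F}_v$ is finite and $\psi_v < \infty$. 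A direct calculation, splitting $\mathcal{F}_v$ according to the first edge $e$ of $\mu$ and distinguishing the two cases $r(e) \in NW_G$ and $r(e) \notin NW_G$, yields $\psi_v = \sum_w A(\beta)_{vw} \psi_w$ for every $v \notin NW_G$; combined with the almost harmonicity of $\phi$ on $NW_G$, this shows $\psi$ is almost $A(\beta)$-harmonic.

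Finally, I would match this bijection of vectors with the restriction map on KMS weights. Under the identification $C^*(NW_G) \cong C_r^*(\mathcal G|_U)$, the subalgebra $C_c(\mathcal G|_U)$ embeds in $C_c(\mathcal G)$ by extension by zero, so if $\varphi = \varphi_m$ is the weight on $C^*(G)$ attached to a $\beta$-KMS measure $m$, then $\varphi|_{C^*(NW_G)}$ integrates functions in $C_c(\mathcal G|_U)$ against $m|_U$. By Theorem \ref{a1} applied to $NW_G$, this restriction is $\varphi_{m|_U}$, whose associated almost harmonic vector on $NW_G$ is $v \mapsto m(Z(v)) = \psi_v$ for $v \in NW_G$, i.e.\ $\psi|_{NW_G}$. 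Hence the vector bijection above transports precisely to the restriction map on weights, completing the proof. The main obstacle is the path-counting verification of the harmonic identity for $\psi$ at vertices in $V \setminus NW_G$; the rest is bookkeeping on top of Proposition \ref{NWG}.
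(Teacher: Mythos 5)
Your vector-level argument is essentially sound: the restriction of almost $A(\beta)$-harmonic vectors to $NW_G$ is indeed a bijection, and your use of Proposition \ref{NWG} and Lemma \ref{a80} (finite out-degree off $NW_G$, first-hitting decomposition via $\mathcal F_v$, absence of sinks by Corollary \ref{b126}) is correct. The genuine gap is at the very first step: Proposition \ref{reduction} is about \emph{all} $\beta$-KMS weights for $\alpha^F$, whereas Corollary \ref{summary} parametrizes only the \emph{gauge invariant} ones. Remark \ref{a44} does not let you discard the distinction here: it guarantees gauge invariance of all KMS weights only when $F$ is strictly positive or strictly negative everywhere, or when $C^*(G)$ is simple, and neither hypothesis is part of Proposition \ref{reduction} (the graph is merely cofinal with $NW_G\neq\emptyset$, $F$ is arbitrary, and cofinality does not imply simplicity -- indeed the single-loop case, where non-gauge-invariant KMS states do occur, is explicitly allowed). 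So your argument proves the bijection only between the gauge invariant weights on the two algebras, which is strictly weaker than the statement. A secondary soft spot, even within the gauge-invariant setting, is the final identification of the vector bijection with the restriction map on weights: you need that $\psi|_{C^*(NW_G)}$ is a proper (densely defined, nonzero) $\beta$-KMS weight and that agreement with $\varphi_{m|_U}$ on $C_c(\mathcal G|_U)$ forces equality of the two weights; neither is automatic for weights and both need an argument (or an appeal to the uniqueness built into Theorem 2.2 of \cite{Th1}).

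For comparison, the paper's proof avoids the measure picture entirely and works for arbitrary KMS weights: by Theorem \ref{h5} (resting on the Laca--Neshveyev extension theorem), it suffices to pick a vertex $v\in NW_G$ and check that $P_v$ is full in both $C^*(G)$ and $C^*(NW_G)$; since $C^*(NW_G)$ is hereditary in $C^*(G)$ and contains $P_v$, both families of weights are then identified with the $\beta$-KMS states of the common corner $P_vC^*(G)P_v=P_vC^*(NW_G)P_v$, and the composite is precisely the restriction map. If you want to salvage your route, you would either have to add an argument that every KMS weight on $C^*(G)$ (and on $C^*(NW_G)$) is determined by, and recovered from, its restriction to such a corner -- which is exactly the content of Theorem \ref{h5} -- or restrict the claim to gauge invariant weights.
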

\begin{proof} In view of Theorem \ref{h5} it suffices to take a vertex
  $v \in NW_G$ and show that $P_v$ is full in both $C^*(G)$ and
  $C^*(NW_G)$. Let $I$ be an ideal in $C^*(G)$
  containing $P_v$. Then $\left\{ w \in V: \ P_w \in I\right\}$ is not empty. If $e
  \in E$ and $P_{s(e)} \in I$ we find that $P_{r(e)} = S_e^*S_e \in
  I$, and if $v \in V \backslash V_{\infty}$ and $P_{r(e)} \in I$ for
  all $e \in s^{-1}(v)$ we find that $P_v = \sum_{e \in  s^{-1}(v)}
  S_eS_e^* = \sum_{e \in s^{-1}(v)} S_eP_{r(e)}S_e^* \in I$. It
  follows then from the
  cofinality of $G$ that $\left\{ w \in V: \ P_w \in I\right\} = V$
  and hence that $I = C^*(G)$. Thus $P_v$ is full in $C^*(G)$. Since $NW_G$ is strongly connected an
  even simpler argument shows $P_v$ is also full in $C^*(NW_G)$. 
\end{proof}

In view of Proposition \ref{reduction} and since we have handled the
case when $NW_G = \emptyset$ in Section \ref{nowan}, we can restrict attention to
$C^*(NW_G)$. Thus in the sequel we shall assume that $G$ is strongly
connected. But then $C^*(G)$ is simple by Theorem 12 in \cite{Sz},
except when $G$ consists of a single loop and nothing more. In that case
all KMS weights for $\alpha^F$ can be normalized to states and they
can be
found and described by use of results from \cite{CT1}. There is
therefore no loss in assuming that $G$ is strongly connected and that $C^*(G)$
is simple. This will simplify many formulations in the following
because all KMS
weights for $\alpha^F$ are then gauge invariant by Proposition 5.6 in
\cite{CT2}.

\subsection{A Hopf dichotomy} In \cite{CL} Carlsen and Larsen
introduced an interesting division of the harmonic KMS states on a
general graph $C^*$-algebra. It is this division we study in this section, but for
KMS weights on strongly connected graphs. The goal
is to show that harmonic $\beta$-KMS measures are always either
dissipative or conservative, in a sense we now make precise. We assume
throughout that $G$ is strongly connected.
Set 
\begin{equation}\label{rec7}
P(G)_{rec} = \bigcap_{e \in E} \left\{ (x_i)_{i =1}^{\infty} \in P(G): \ x_i = e
   \ \text{for infinitely many} \ i  \right\} ,
\end{equation} 
and 
\begin{equation}\label{wan7}
P(G)_{wan} = \bigcap_{v \in V} \left\{ (x_i)_{i=1}^{\infty} \in
  P(G) : \ \# \left\{ i \in \mathbb N: \ s(x_i) = v\right\} < \infty  \right\} .
\end{equation}
A Borel measure $m$ on $P(G)$ is \emph{conservative} when
$m(P(G) \backslash P(G)_{rec}) = 0$, and \emph{dissipative} when
$m(P(G) \backslash P(G)_{wan})  = 0$. These notions stem from the
theory of dynamical systems where they describe important properties
of the measure $m$ with respect to the left shift on $P(G)$. The aim is here to show how these
properties can be determined from the matrix $A(\beta)$.

We will say that $A(\beta)$ is \emph{recurrent}
when $\sum_{n=0}^{\infty} A(\beta)^n_{vv} = \infty$ for some $v \in
V$. 
The inequality
\begin{equation}\label{comp13}
A(\beta)^l_{vu} A(\beta)^n_{uw}A(\beta)^k_{wv} \leq
A(\beta)^{n+k+l}_{vv},
\end{equation}
valid for all $n,k,l \in \mathbb N$ and all $u,v,w \in V$, shows that
because $G$ is strongly connected $A(\beta)$ is recurrent if and only if $\sum_{n=0}^{\infty}
A(\beta)^n_{uw} = \infty$ for all $u,w \in V$. We
say that $A(\beta)$ is \emph{transient} when $\sum_{n=0}^{\infty}
A(\beta)^n_{vv} <\infty$ for one $v \in V$, or equivalently that $\sum_{n=0}^{\infty}
A(\beta)^n_{vw} <\infty$ for all $v,w \in V$.

\begin{thm}\label{h8} Assume that $G$ is strongly connected, and let $m$
  be a non-zero harmonic $\beta$-KMS measure on $P(G)$. Then $m$ is conservative if and only if
  $A(\beta)$ is recurrent, and dissipative if and only if
  $A(\beta)$ is transient.
\end{thm}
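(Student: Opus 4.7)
The plan is to reduce the dichotomy to classical Markov-chain theory applied to the stochastic matrix $B(\beta)$ constructed in Section~\ref{harm}.

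Set-up and bridge between $B$ and $A$: Since $G$ is strongly connected, Lemma~\ref{a39} gives $\psi>0$ everywhere, hence $E_0 = E$ and the Markov chain with transition matrix $B(\beta)$ and probability measures $\text{Pr}_e$ is defined on all of $E$. A direct telescoping of the products in $B(\beta)^n$ yields, for $n\ge 1$,
\[
B(\beta)^n_{e,e'} \;=\; \psi_{r(e)}^{-1}\psi_{r(e')}\, e^{-\beta F(e')}\, A(\beta)^{n-1}_{r(e),\,s(e')}.
\]
In particular $\sum_{n\ge 1} B(\beta)^n_{e,e} = e^{-\beta F(e)}\sum_{n\ge 0} A(\beta)^n_{r(e),s(e)}$, so strong connectedness together with (\ref{comp13}) shows that $B(\beta)$ is a recurrent Markov chain exactly when $A(\beta)$ is recurrent; the same strong connectedness also makes it irreducible.

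Recurrent case: If $A(\beta)$ is recurrent, $B(\beta)$ is irreducible and recurrent, so the standard theory gives $\text{Pr}_e\{x:x_i = e'\text{ i.o.}\}=1$ for every pair $e,e'\in E$. Intersecting over the countable set $E$ preserves probability one, and transporting the resulting almost-sure statement back via the decomposition $m = \sum_v\sum_{e\in s^{-1}(v)} e^{-\beta F(e)}\psi_{r(e)}\text{Pr}_e$ from Lemma~\ref{a20} gives $m(P(G)\setminus P(G)_{rec})=0$, so $m$ is conservative.

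Transient case: If $A(\beta)$ is transient, summing the telescoping formula over $e'\in s^{-1}(v)$ and using the harmonicity identity $\sum_{e'\in s^{-1}(v)}\psi_{r(e')} e^{-\beta F(e')}=\sum_w A(\beta)_{vw}\psi_w=\psi_v$ gives the finite Green-function identity
\[
\sum_{e'\in s^{-1}(v)}\sum_{n=1}^{\infty}B(\beta)^n_{e,e'} \;=\; \psi_{r(e)}^{-1}\psi_v\sum_{n=0}^{\infty}A(\beta)^n_{r(e),v}.
\]
Its finiteness forces each $v$ to occur as a source only finitely often almost surely under $\text{Pr}_e$, and a countable intersection over $v\in V$ yields $\text{Pr}_e(P(G)_{wan})=1$, hence $m$ is dissipative. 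The two ``only if'' directions follow at once: $P(G)_{rec}\cap P(G)_{wan}=\emptyset$ (a path hitting every edge infinitely often visits each of its sources infinitely often), so a non-zero $m$ cannot be simultaneously conservative and dissipative, and combined with the dichotomy recurrent/transient for $A(\beta)$ this closes the equivalences.

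The main obstacle is the bookkeeping in the $B\leftrightarrow A$ translation --- in particular making sure that the measure-theoretic identification of Lemma~\ref{a20} genuinely converts $\text{Pr}_e$-almost-sure behaviour of sample paths into $m$-null sets, and that the countable intersections over $E$ (for recurrence) or $V$ (for transience) do not spoil the almost-sure claims. Beyond that, standard irreducible Markov chain theory does the heavy lifting.
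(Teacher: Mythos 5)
Your proof is correct, and it runs on the same engine as the paper's: the stochastic matrix $B(\beta)$, the bridge formula (which is exactly (\ref{induction})), the Woess recurrence theory for the chains $\mathrm{Pr}_e$, and the representation of $m$ through the decomposition from Lemma \ref{a20} (for the transport step you should cite the injectivity statement, i.e.\ $m=m_\psi$ by Theorem \ref{a17}, rather than Lemma \ref{a20} alone, since the decomposition a priori only produces \emph{some} measure with the same values on the sets $Z(v)$). Where you genuinely deviate is in the logical organisation and in the transient half. The paper proves all four implications: conservative $\Rightarrow$ recurrent and dissipative $\Rightarrow$ transient directly from Theorems 3.2 and 3.4(a) in \cite{Wo}, recurrent $\Rightarrow$ conservative from 3.4(b), and transient $\Rightarrow$ dissipative by a separate direct estimate on $m$ via Lemma \ref{b101}, precisely because, as the paper remarks, the naive Markov-chain route only controls visits to individual \emph{edges}, which is too weak at an infinite emitter. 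You instead prove only recurrent $\Rightarrow$ conservative and transient $\Rightarrow$ dissipative and obtain the converses for free from the disjointness $P(G)_{rec}\cap P(G)_{wan}=\emptyset$ together with the recurrent/transient dichotomy for $A(\beta)$ (valid here since $G$ is strongly connected and a non-zero harmonic measure forces all powers of $A(\beta)$ to be finite); this spares you Theorem 3.2(b) and half of 3.4(a). For transient $\Rightarrow$ dissipative your summed Green-function identity, using harmonicity to get $\sum_{e'\in s^{-1}(v)}\psi_{r(e')}e^{-\beta F(e')}=\psi_v$, bounds the \emph{expected} number of times the source equals $v$ even when $s^{-1}(v)$ is infinite, so it correctly circumvents the infinite-emitter obstruction; it is in substance the same Borel--Cantelli computation as the paper's direct estimate $m(M(w,v))\le m(Z'(v))\sum_{j\ge N}A(\beta)^{j-1}_{wv}$, just carried out on the chain side instead of directly on $m$. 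So: same machinery, slightly leaner logical architecture, and a valid alternative treatment of the one implication the paper handles by hand.
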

\begin{proof} Set $\psi_v = m(Z'(v))$ and note that $\psi$ is
  $A(\beta)$-harmonic by Lemma \ref{b5} and consider the
  corresponding matrix $B(\beta)$ from (\ref{Bbeta}). By Lemma
  \ref{a39} $B(\beta)$ is a matrix over $E$ in this case. It is easy
  to show (by induction) that
\begin{equation}\label{induction}
B(\beta)^n_{e,f}= \psi_{r(e)}^{-1}\psi_{r(f)} e^{-\beta F(f)}
A(\beta)^{n-1}_{r(e),s(f)}
\end{equation}
for all $n \geq 1$. With this as one key of translation, much of the proof can be
completed by use of standard results from the theory of Markov chains as follows. We use Theorems 3.2 and 3.4 in \cite{Wo} to the
Markov chains on $E$ arising from $B(\beta)$. For this note that when we consider an edge
$e\in E$ as a state $x$ for such a Markov chain, the measure $\text{Pr}_x$ in
\cite{Wo} is the measure $\text{Pr}_e$ used in the proof in Lemma
\ref{boet}, cf.(\ref{cyl24}), and the quantity $H(x,y)$ in
\cite{Wo} is
$$
\text{Pr}_x\left(  \left\{ (x_i)_{i =1}^{\infty} \in P(G): \ x_i = y
   \ \text{for infinitely many} \ i  \right\}  \right)  .
$$
Equally important is it that the quantity $G(x,x)$ occurring in
Theorem 3.4 of \cite{Wo} in the present application is the sum $\sum_{n=0}^{\infty}
B(\beta)^n_{x,x}$. Once this is realized the rest of the 'translation'
reads a follows.

Assume that $m$ is conservative. Fix an edge $e \in E$. Since $m$ is conservative
it is supported on the set (\ref{rec7}) and since $m = m_{\psi}$ by
Theorem \ref{a17}, it
follows that so is the measure $\text{Pr}_e$, and the edge $e$ is therefore recurrent as a
 state for the Markov chain defined from $B(\beta)$, cf. Definition
 3.1 in \cite{Wo}. It follows first from Theorem 3.4 (a) in
 \cite{Wo} that $\sum_{n=0}^{\infty} B(\beta)^n_{e,e} = \infty$ and
 then from (\ref{induction}) (and Lemma \ref{a39}) that $A(\beta)$
 is recurrent. Conversely, if $A(\beta)$ is recurrent it follows from
 (\ref{induction}) and Theorem 3.4 (a) in \cite{Wo} that $e$ is a
 recurrent state for the Markov chain. Since $G$ is strongly connected
 it follows then from Theorem
 3.4 (b) in \cite{Wo} that 
$$
\text{Pr}_e\left(\left\{ (x_i)_{i =1}^{\infty} \in Z'(e): \ x_i = f
   \ \text{for infinitely many} \ i  \right\}\right) = 1
$$
for all $f\in E$. But $E$ is countable and $\text{Pr}_e$ is a probability
measure on $Z'(e)$ and it follows therefore that
\begin{equation}\label{combine1}
\text{Pr}_e\left( \bigcap_{f \in E} \left\{ (x_i)_{i =1}^{\infty} \in Z'(e): \ x_i = f
   \ \text{for infinitely many} \ i  \right\}\right) = 1 .
\end{equation}
Now recall from the proof of Lemma \ref{a20} that
\begin{equation}\label{combine2}
m(B) = m_{\psi}(B) = \sum_{v \in V} \sum_{e \in s^{-1}(v)} e^{-\beta
  F(e)}\psi_{r(e)} \text{Pr}_e\left(B \cap Z'(e)\right) 
\end{equation}
for all Borel sets $B$. By combining (\ref{combine1}) and
(\ref{combine2}) it follows that $m\left(P(G) \backslash
  P(G)_{rec}\right) = 0$, i.e. $m$ is conservative.     

Assume then that $m$ is dissipative. Note that 
$$
P(G)_{wan} \ \subseteq \ \bigcap_{e \in E } \left\{ (x_i)_{i=1}^{\infty} \in
  P(G) : \ \# \left\{ i \in \mathbb N: \ x_i = e\right\} < \infty  \right\} .
$$
By combining this with the relation
(\ref{combine2}) it follows from Theorem 3.2
(b) in \cite{Wo} that every edge $e\in E$ is a transient state for the Markov chain defined from
$B(\beta)$. By Theorem 3.4 (a) in \cite{Wo} this means that $\sum_{n
  =0}^{\infty} B(\beta)^n_{e,e} < \infty$ for all $e \in E$, and then
$A(\beta)$ is transient by (\ref{induction}). 

The converse implication, that transience of $A(\beta)$ implies
dissipativity of $m$, is stronger than what can be obtained directly from
\cite{Wo} by the method used so far and we give instead a direct
proof. Consider two vertexes $w,v$ and the set
$$
M(w,v) = \left\{ (x_i)_{i=1}^{\infty} \in Z'(w)
  : \  s(x_j) = v \ \text{for infinitely many} \ j \right\} .
$$ 
Let $N \in \mathbb N$ and for $j \geq N$, let
$$
M^j =  \left\{ (x_i)_{i=1}^{\infty} \in Z'(w)
  : \  s(x_j) = v \  \right\}.
$$
Then $M(w,v)  \subseteq \bigcup_{j \geq N} M^j$ and hence
$$
m(M(w,v)) \leq \sum_{j=N}^{\infty} m\left(M^j\right) .
$$
From Lemma \ref{b101} we get the identity $m(M^j) =
A(\beta)^{j-1}_{wv} m(Z'(v))$ and therefore the estimate
$$
m(M(w,v)) \ \leq \ m(Z'(v))\sum_{j=N}^{\infty} A(\beta)^{j-1}_{wv} .
$$
Since $\lim_{N \to \infty} \sum_{j=N}^{\infty} A(\beta)^{j-1}_{wv} = 0$
because $A(\beta)$ is transient, we
conclude that $m(M(w,v)) = 0$. This completes the proof because
$$
P(G) \backslash P(G)_{wan} \ = \ \bigcup_{w,v \in V} M(w,v).
$$

\end{proof}

Note that Theorem \ref{h8} describes a dichotomy: For any $\beta$
either all non-zero harmonic
$\beta$-KMS measures are dissipative or they are all conservative.

\subsection{Criteria for recurrence and transience}
In Proposition \ref{renew5} below we describe a recurrence/transience criterion which will be very
useful in the following. It is essentially equivalent to part (i)
of Theorem C in \cite{V}, but while Vere-Jones omits the proof by
referring to standard renewal arguments, we give a direct proof, parts
of which will be used later.   

Let $v\in V$. A \emph{loop} based at $v$ in $G$ is a finite path $\mu = e_1e_2e_3\cdots e_n$ of
length $n \geq 1$ such that $s(\mu) = r(\mu) = v$. It is a \emph{simple
  loop} when $r(e_i) \neq r(\mu), i = 1,2,\cdots , n-1$. The set of loops
(resp. simple loops)
$\mu$ of length $n$ based at $v$ will be denoted by $L_v(n)$
(resp. $l_v(n)$). We set
$$
l^n_{vv}(\beta) = \sum_{\mu \in l_v(n)} e^{- \beta F(\mu)} 
$$
when $l_v(n) \neq \emptyset$ and $l^n_{vv}(\beta) = 0$ when $l_v(n) = \emptyset$. 

\begin{lemma}\label{renew1} For $t> 0$,
$$
 \sum_{k=0}^{\infty} \left( \sum_{j=1}^{\infty}
  l^j_{vv}(\beta)t^j \right)^k = \sum_{n=0}^{\infty}
A(\beta)^n_{vv}t^n.
$$ 

\end{lemma}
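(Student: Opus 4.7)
The plan is to prove the identity by establishing a bijective decomposition of loops at $v$ into concatenations of simple loops at $v$, and then matching coefficients of $t^n$ on both sides. Since all quantities involved are non-negative (possibly $+\infty$), all interchanges of summation are legitimate by Tonelli's theorem, and the identity holds in $[0,\infty]$.

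\textbf{Step 1: Unique decomposition.} Given a loop $\mu = e_1 e_2 \cdots e_n \in L_v(n)$ with $n \geq 1$, let $0 = t_0 < t_1 < t_2 < \cdots < t_k = n$ be those indices $i \in \{1,2,\ldots,n\}$ with $r(e_i) = v$, augmented with $t_0 = 0$. Setting $\mu^{(i)} = e_{t_{i-1}+1} e_{t_{i-1}+2} \cdots e_{t_i}$ and $j_i = t_i - t_{i-1}$, each $\mu^{(i)}$ is a simple loop at $v$ of length $j_i$. Conversely, any finite concatenation of simple loops at $v$ yields a loop at $v$, and the decomposition $\mu = \mu^{(1)}\mu^{(2)}\cdots\mu^{(k)}$ is uniquely determined by $\mu$. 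This gives a bijection
$$L_v(n) \;\longleftrightarrow\; \bigsqcup_{k=1}^{n} \bigsqcup_{\substack{j_1+\cdots+j_k=n\\ j_i\geq 1}} l_v(j_1)\times \cdots \times l_v(j_k).$$

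\textbf{Step 2: Weight factorization.} Since $F$ is additive on paths, $F(\mu) = \sum_i F(\mu^{(i)})$, so $e^{-\beta F(\mu)} = \prod_i e^{-\beta F(\mu^{(i)})}$. Summing over $\mu \in L_v(n)$ and using the bijection,
$$A(\beta)^n_{vv} \;=\; \sum_{\mu \in L_v(n)} e^{-\beta F(\mu)} \;=\; \sum_{k=1}^n \sum_{\substack{j_1+\cdots+j_k=n\\ j_i\geq 1}} \prod_{i=1}^k l^{j_i}_{vv}(\beta)$$
for $n \geq 1$. For $n = 0$ both sides equal $1$ if one interprets $k=0$ and the empty product as $1$, giving $A(\beta)^0_{vv} = 1$ as required by the convention defining $A(\beta)^0$.

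\textbf{Step 3: Sum over $n$.} Multiplying by $t^n$ and summing over $n \geq 0$, then interchanging sums (permitted since everything is non-negative),
\begin{equation*}
\sum_{n=0}^{\infty} A(\beta)^n_{vv} t^n \;=\; \sum_{k=0}^{\infty} \sum_{\substack{j_1,\ldots,j_k \geq 1}} \prod_{i=1}^{k} l^{j_i}_{vv}(\beta) t^{j_i} \;=\; \sum_{k=0}^{\infty} \left(\sum_{j=1}^{\infty} l^j_{vv}(\beta) t^j \right)^{k},
\end{equation*}
which is the claimed equality.

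There is no serious obstacle; the only thing to be careful about is the $n=0$ term (matched by the $k=0$ summand on the right) and the observation that everything lives in $[0,\infty]$, so no finiteness assumption on $t$ or on $A(\beta)$ is needed and no convergence argument beyond Tonelli is required.
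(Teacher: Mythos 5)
Your proof is correct and rests on the same key idea as the paper's: the unique decomposition of a loop based at $v$ into a concatenation of simple loops based at $v$, which turns $A(\beta)^n_{vv}$ into the coefficient of $t^n$ in the powers of $\sum_{j\geq 1} l^j_{vv}(\beta)t^j$. The only difference is the handling of the infinite summations: you sum over all $n$ at once and interchange via Tonelli, everything taking values in $[0,\infty]$, whereas the paper works with finite truncations, establishing the sandwich $\sum_{n=1}^N A(\beta)^n_{vv}t^n \le \sum_{k=1}^N \bigl(\sum_{j=1}^N l^j_{vv}(\beta)t^j\bigr)^k \le \sum_{n=1}^{N^2} A(\beta)^n_{vv}t^n$ and then letting $N\to\infty$ by monotone convergence; both routes are valid and yours is a slightly more streamlined version of the same argument.
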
 
\begin{proof} We claim that
\begin{equation}\label{renew14}
\sum_{n=1}^N A(\beta)^n_{vv}t^n \ \leq \ \sum_{k=1}^N \left( \sum_{j=1}^N
  l^j_{vv}(\beta)t^j \right)^k  \ \leq \ \sum_{n=1}^{N^2} A(\beta)^n_{vv}t^n
\end{equation}
for all $N \in \mathbb N$. The proof is a matter of book-keeping.
For each $(d_1,d_2, \cdots, d_j) \in  \left\{1,2,\cdots ,N\right\}^j$,
set
$$
W(d_1,d_2, \cdots , d_j) = l^{d_1}_{vv}(\beta)t^{d_1} l^{d_2}_{vv}(\beta)t^{d_2}
\cdots l^{d_j}_{vv}(\beta)t^{d_j},
$$
and
$$
\left\{1,2,\cdots ,N\right\}^j_W = \left\{ \xi \in \left\{1,2,\cdots
    ,N\right\}^j: \ W(\xi) \neq 0 \right\}.
$$
Set
$$
A_N = \sqcup_{k=1}^N \left\{1,2,\cdots ,N\right\}_W^k ,
$$
and note that
\begin{equation}\label{renew4}
\sum_{k=1}^N \left( \sum_{j=1}^N
  l^j_{vv}(\beta)t^j \right)^k  = \sum_{\xi \in A_N} W(\xi) .
\end{equation}
An element $\mu \in \sqcup_{n=1}^NL_v(n)$ admits a unique decomposition $\mu = \mu_1\mu_2
  \cdots \mu_j$ such that each $\mu_i$ is a simple loop based at $v$, and in this
  way $\mu$ determines an element
$$
p(\mu) =  (\left|\mu_1\right|, \left|\mu_2\right|, \cdots ,
\left|\mu_j\right|) \in A_N .
$$
Since 
\begin{equation*}
p\left( L_v(n)\right) = \sqcup_{k=1}^N \left\{ (d_1,d_2,\cdots, d_k) \in
    \{1,2,\cdots, N\}_W^k : \ d_1 + \cdots +d_k = n \right\} 
\end{equation*}
and 
$$
\sum_{\mu \in p^{-1}(\xi)} e^{-\beta F(\mu)}t^{|\mu|} = W(\xi),
$$
we find that
$$
A(\beta)^n_{vv}t^n \ = \sum_{\mu \in L_v(n)} e^{-\beta F(\mu)}t^{|\mu|}  = \sum_{\xi}
W(\xi),
$$
where the last sum is over 
$$
\xi \in \sqcup_{k=1}^N \left\{ (d_1,d_2,\cdots, d_k) \in
    \{1,2,\cdots, N\}_W^k : \ d_1 + \cdots +d_k = n \right\} .
$$
Consequently
\begin{equation}\label{renew7}
\sum_{n=1}^N A(\beta)^n_{vv}t^n = \sum_{\xi \in B_N} W(\xi),
\end{equation}
where 
$$
B_N = \sqcup_{k=1}^N \left\{ (d_1,d_2,\cdots, d_k) \in
    \{1,2,\cdots, N\} _W^k : \ d_1 + \cdots +d_k \leq N\right\}.
$$ 
Then (\ref{renew14}) follows by combining (\ref{renew4})
and (\ref{renew7}) with the observation that $B_N \subseteq A_N
\subseteq B_{N^2}$. Define functions $f_N : \mathbb N \to [0,\infty)$ such
  that
$$
f_N(k) = \begin{cases}  \left(\sum_{j=1}^N
  l^j_{vv}(\beta) \right)^k,   & \ k \in \{1,2,\cdots , N\}, \\ 0, & k
> N . \end{cases}
$$
Since $f_N \leq f_{N+1}$, an application of Lebesgues monotone
convergence theorem to this sequence of functions shows that  
$$
\lim_{N \to \infty}  \sum_{k=1}^N \left( \sum_{j=1}^N
  l^j_{vv}(\beta) \right)^k  =  \sum_{k=1}^{\infty} \left( \sum_{j=1}^{\infty}
  l^j_{vv}(\beta) \right)^k .
$$
Combined with the inequalities from (\ref{renew14}) this leads to the
stated identity.
\end{proof}

\begin{prop}\label{renew5}  Assume that $G$ is strongly connected and
  let $\beta \in \mathbb R$. 
\begin{enumerate}
\item[$\bullet$] $A(\beta)$ is recurrent if and only if
  $\sum_{k=1}^{\infty} l^k_{vv}(\beta) \geq 1$ for some (and hence
  every) vertex $v\in V$.
\item[$\bullet$]  $A(\beta)$ is transient if and only if
  $\sum_{k=1}^{\infty} l^k_{vv}(\beta) < 1$ for some (and hence every)
  vertex $v \in V$. 
\end{enumerate} 
\end{prop}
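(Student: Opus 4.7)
The plan is to derive the proposition directly from Lemma \ref{renew1} by specializing to $t=1$. Setting $s_v \defeq \sum_{j=1}^{\infty} l^j_{vv}(\beta) \in [0,\infty]$ and evaluating the identity in Lemma \ref{renew1} at $t=1$ yields
$$
\sum_{n=0}^{\infty} A(\beta)^n_{vv} \ = \ \sum_{k=0}^{\infty} s_v^k
$$
as elements of $[0,\infty]$. The right-hand side is a geometric series that is finite precisely when $s_v < 1$ and equals $+\infty$ when $s_v \geq 1$ (using the conventions $0 \cdot \infty = 0$ and $\infty + \infty = \infty$ already in force in the paper, with $\infty^k = \infty$ for $k\geq 1$). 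Hence $\sum_n A(\beta)^n_{vv} < \infty$ iff $s_v < 1$, and $\sum_n A(\beta)^n_{vv} = \infty$ iff $s_v \geq 1$. This gives both equivalences at the particular vertex $v$.

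For the parenthetical "some (and hence every)" assertion, I would appeal to the observation made just before the statement of the proposition: since $G$ is strongly connected, for any $u,v \in V$ there exist $k,l \in \mathbb{N}$ with $A(\beta)^k_{vu}, A(\beta)^l_{uv} > 0$, and the inequality (\ref{comp13}) then forces $\sum_n A(\beta)^n_{vv}$ and $\sum_n A(\beta)^n_{uu}$ to be simultaneously finite or simultaneously infinite. Combined with the previous paragraph, this means the condition $s_v \geq 1$ (respectively $s_v < 1$) is independent of the chosen vertex $v$, giving the stated quantifier-flipping.

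I expect no real obstacle: the entire substance of the proof is packed into Lemma \ref{renew1}, and the remainder is a one-line geometric-series observation together with the vertex-independence already established for the diagonal sums $\sum_n A(\beta)^n_{vv}$. The only mild subtlety is checking that the extended-real interpretation of the geometric sum when $s_v = \infty$ causes no trouble, but the standing conventions of the paper handle this uniformly, so the argument goes through unchanged.
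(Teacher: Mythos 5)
Your proof is correct and is essentially the paper's own argument: the paper's proof of Proposition \ref{renew5} is literally ``take $t=1$ in Lemma \ref{renew1},'' with the geometric-series dichotomy and the vertex-independence (via strong connectedness and (\ref{comp13})) left implicit. You have merely spelled out these two routine steps explicitly, which is fine.
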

\begin{proof} Take $t=1$ in Lemma \ref{renew1}.
\end{proof}

\subsection{The recurrent case} We still assume that
$G$ is strongly connected. In order to describe
for which $\beta$ there are $\beta$-KMS weights we introduce 
 the number (in $[-\infty,\infty]$),
\begin{equation}\label{h204}
\mathbb P(-\beta F) = \log \left( \limsup_n
  \left(A(\beta)^n_{vv}\right)^{\frac{1}{n}} \right) ,
\end{equation}
where $v$ is any vertex. Since $G$ is strongly connected it follows
from inequalities like (\ref{comp13}) that this quantity does not depend on
$v$ and is equal to
$$
\log \left( \limsup_n
  \left(A(\beta)^n_{uw}\right)^{\frac{1}{n}} \right)
$$
for any pair of vertexes $u,w$. The quantity $\mathbb P(-\beta F)$
appears in the theory of dynamical systems. In fact it is a version of what O. Sarig
calls the \emph{Gurevich pressure} in
\cite{S} and the function $\beta \mapsto \mathbb P(-\beta F)$ is
sometimes called the pressure function corresponding to a potential
defined from $F$. When $G$ is finite, and also in certain cases with
$G$ infinite, $\mathbb P(-\beta F)$ is the logarithm of the spectral radius of a
bounded operator defined from $A(\beta)$.

\begin{lemma}\label{h203} Assume that $G$ is strongly connected and
  that $C^*(G)$ is simple. Let $\beta \in \mathbb R$. There are no $\beta$-KMS weights unless $\mathbb P(-\beta F) \leq
  0$ and no boundary $\beta$-KMS weights unless $A(\beta)$ is transient.
 \end{lemma}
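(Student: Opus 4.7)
The plan is to translate both statements into statements about almost $A(\beta)$-harmonic vectors via Corollary \ref{summary} and then invoke the counting lemmas already established. Since $C^*(G)$ is assumed simple, all $\beta$-KMS weights for $\alpha^F$ are gauge invariant by Proposition 5.6 in \cite{CT2}, so Corollary \ref{summary} applies without reservation.

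For the first claim, suppose there is a $\beta$-KMS weight, and let $\psi$ be the corresponding non-zero almost $A(\beta)$-harmonic vector on $V$. The first step is to show $\psi_v > 0$ for every $v \in V$: pick some $u$ with $\psi_u > 0$, and for arbitrary $v$ use strong connectedness to choose $k \in \mathbb N$ with $A(\beta)^k_{vu} > 0$; then (\ref{b7}) gives $\psi_v \geq A(\beta)^k_{vu}\psi_u > 0$. With positivity in hand, apply (\ref{b7}) once more with $w = v$ to obtain $A(\beta)^n_{vv}\psi_v \leq \psi_v$, hence $A(\beta)^n_{vv} \leq 1$ for all $n$. Therefore $\limsup_n \bigl(A(\beta)^n_{vv}\bigr)^{1/n} \leq 1$, and taking logarithms yields $\mathbb P(-\beta F) \leq 0$.

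For the second claim, I would first observe that because $G$ is strongly connected every vertex emits at least one edge, so $G$ has no sinks; consequently $V_\infty$ consists entirely of infinite emitters. Now suppose there exists a non-zero boundary $\beta$-KMS measure. By Corollary \ref{b121} there must then exist a $\beta$-summable vertex $v \in V_\infty$, which by the definition of $\beta$-summability means $\sum_{n=0}^{\infty} A(\beta)^n_{wv} < \infty$ for every $w \in V$. Applying part 2) of Lemma \ref{a41} to the infinite emitter $v$ gives $\sum_{n=0}^{\infty} A(\beta)^n_{vv} < \infty$, and this is precisely the definition of $A(\beta)$ being transient.

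Neither part presents a real obstacle; the only subtlety worth flagging is the need to propagate positivity of $\psi$ across all vertexes via strong connectedness before the estimate $A(\beta)^n_{vv} \leq 1$ can be extracted, and the need to rule out sinks so that the $\beta$-summable vertex furnished by Corollary \ref{b121} can be fed into Lemma \ref{a41} 2) (rather than Lemma \ref{a41} 1), which gives no information about transience).
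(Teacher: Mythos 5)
Your argument is correct and is essentially the paper's: both pass to (almost) $A(\beta)$-harmonic vectors via Corollary \ref{summary} (with gauge invariance of all KMS weights supplied by Proposition 5.6 of \cite{CT2}), obtain $\mathbb P(-\beta F)\leq 0$ from the estimate $A(\beta)^n_{vv}\psi_v\leq \psi_v$ of (\ref{b7}) together with strict positivity of $\psi$, and deduce transience from the $\beta$-summability of the vertex in $V_{\infty}$ furnished by Theorem \ref{a16} (or Corollary \ref{b121}). The only difference is cosmetic: you apply the estimate directly to the almost $A(\beta)$-harmonic vector, proving positivity by strong connectedness, whereas the paper splits into boundary and harmonic parts, quotes Lemma \ref{a39} for positivity of the harmonic piece, and rules out boundary weights by noting that $\mathbb P(-\beta F)>0$ forces $A(\beta)$ to be recurrent.
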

\begin{proof} If $A(\beta)$ is recurrent it follows that
  $\sum_{n=0}^{\infty} A(\beta)^n_{vw} = \infty$ for all $v,w \in V$
  and hence no infinite emitter can be $\beta$-summable. By Theorem
  \ref{a16} there are then no boundary $\beta$-KMS weights. Since
  $A(\beta)$ is recurrent if $\mathbb P(-\beta F) > 0$ this also
  implies that there are no boundary $\beta$-KMS weights unless $\mathbb P(-\beta F) \leq
  0$. If there is a harmonic $\beta$-KMS weight
  it follows from Theorem \ref{a17} that there is a non-zero
  $A(\beta)$-harmonic vector $\psi$. But then $A(\beta)^n_{vv}\psi_v \leq \sum_w A(\beta)^n_{vw}\psi_w \leq
\psi_v$ for all $n \in \mathbb N, \ v \in V$. Since $\psi_v > 0$ by
Lemma \ref{a39} it follows that 
$$
\limsup_n
  \left(A(\beta)^n_{vv}\right)^{\frac{1}{n}} = \limsup_n
  \left(A(\beta)^n_{vv}\psi_v\right)^{\frac{1}{n}} \leq \limsup_n
  \psi_v^{\frac{1}{n}}  = 1,
$$
implying that  $\mathbb P(-\beta F) \leq
  0$. It follows now from Theorems \ref{a1} and \ref{almost} that there are
no gauge invariant $\beta$-KMS weights when $\mathbb P(-\beta F) >
0$. By Proposition 5.6 in \cite{CT2} there are then no $\beta$-KMS weights
at all.
\end{proof}

\begin{thm}\label{recurrent} Assume that $G$ is strongly connected and
  that $C^*(G)$ is simple. Let $\beta \in \mathbb R$ and assume that
  $A(\beta)$ is recurrent. There exists a $\beta$-KMS weight for
  $\alpha^F$ if and only if $\mathbb P(-\beta F) =0$. When this is the
  case the $\beta$-KMS weight is unique up to multiplication by scalars, it
  is harmonic and the corresponding $\beta$-KMS measure on $P(G)$ is conservative.
\end{thm}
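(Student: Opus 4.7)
The plan is to establish three things in order: that every $\beta$-KMS weight is harmonic with conservative measure; that the existence criterion is $\mathbb P(-\beta F)=0$; and that the weight is unique up to a positive scalar. For the first task, recurrence of $A(\beta)$ gives $\sum_n A(\beta)^n_{uw}=\infty$ for every pair $u,w\in V$, so no vertex of $V_\infty$ is $\beta$-summable and Theorem \ref{a16} rules out non-zero boundary $\beta$-KMS measures. Hence Corollary \ref{summary} forces every $\beta$-KMS weight to come from a harmonic $\beta$-KMS measure, and Theorem \ref{h8} then gives conservativity of that measure on $P(G)$.

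For the existence criterion, Lemma \ref{h203} already gives $\mathbb P(-\beta F)\le 0$ whenever a $\beta$-KMS weight exists, while $\mathbb P(-\beta F)<0$ would force $\limsup_n \bigl(A(\beta)^n_{vv}\bigr)^{1/n}<1$ and so $\sum_n A(\beta)^n_{vv}<\infty$ by the root test, contradicting recurrence; hence existence requires $\mathbb P(-\beta F)=0$. For the converse, I would combine Proposition \ref{renew5} (recurrence gives $\sum_k l^k_{vv}(\beta)\ge 1$) with the observation that $\mathbb P(-\beta F)=0$ means the power series in Lemma \ref{renew1} has radius of convergence $1$, forcing $\sum_k l^k_{vv}(\beta)\le 1$. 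Together these give the sharp equality $\sum_k l^k_{vv}(\beta)=1$. Fix now a vertex $v$, set $\psi_v=1$, and for $w\neq v$ define
\[
\psi_w\;=\;\sum_{n=1}^{\infty}\;\sum_{\mu}\;e^{-\beta F(\mu)},
\]
summing over paths $\mu$ of length $n$ from $v$ to $w$ that visit $v$ only at the initial vertex. The identity $\sum_k l^k_{vv}(\beta)=1$, combined with a first-return-to-$v$ decomposition of paths, gives finiteness of each $\psi_w$ and the harmonicity relation $\sum_w A(\beta)_{uw}\psi_w=\psi_u$, and Theorem \ref{a17} then converts $\psi$ into a harmonic $\beta$-KMS measure.

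For uniqueness, let $\psi,\psi'$ be two positive $A(\beta)$-harmonic vectors; both are strictly positive by Lemma \ref{a39}. Form the stochastic matrix $B(\beta)$ on $E$ from $\psi$ as in \eqref{Bbeta}; by the translation carried out inside the proof of Theorem \ref{h8}, conservativity of $m_\psi$ is equivalent to recurrence of the Markov chain with transition matrix $B(\beta)$. A direct computation shows that $h(e):=\psi'_{r(e)}/\psi_{r(e)}$ is a strictly positive $B(\beta)$-harmonic function on $E$:
\[
\sum_f B(\beta)_{e,f}h(f) \;=\; \psi_{r(e)}^{-1}\sum_w A(\beta)_{r(e),w}\psi'_w \;=\; \psi_{r(e)}^{-1}\psi'_{r(e)} \;=\; h(e).
\]
The classical minimum principle for an irreducible recurrent stochastic matrix then forces $h$ to be constant on $E$, so $\psi'=\lambda\psi$ on $V$ for some $\lambda>0$, completing uniqueness. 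I expect the principal obstacle to be the existence step: producing $\psi$ by the taboo/first-return series and verifying its harmonicity both rest on the exact equality $\sum_k l^k_{vv}(\beta)=1$, which is precisely where recurrence and the criticality $\mathbb P(-\beta F)=0$ interlock through Lemma \ref{renew1}.
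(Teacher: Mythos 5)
Your overall architecture is sound and, apart from the existence step, it is a legitimate alternative to the paper's route: the paper simply cites Vere-Jones (Corollary 2, p.~371 of \cite{V}) for both existence and uniqueness of the $A(\beta)$-harmonic vector, whereas you derive $\sum_k l^k_{vv}(\beta)=1$ from Proposition \ref{renew5} and Lemma \ref{renew1} (this is exactly Lemma \ref{renew9}) and prove uniqueness by transporting the problem to the stochastic matrix $B(\beta)$, which is recurrent by (\ref{induction}), and invoking the fact that positive harmonic functions of an irreducible recurrent chain are constant. That uniqueness argument is correct and self-contained modulo \cite{Wo}. (One small omission: Corollary \ref{summary} only parametrizes the \emph{gauge invariant} KMS weights, so you still need Proposition 5.6 of \cite{CT2}, via simplicity of $C^*(G)$, to conclude that the statement applies to all $\beta$-KMS weights; the paper says this explicitly.)

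The genuine gap is in your explicit construction of the harmonic vector, which you yourself identify as the crux. With the paper's convention, $\psi$ is $A(\beta)$-harmonic when $\sum_w A(\beta)_{uw}\psi_w=\psi_u$, i.e.\ $\psi$ is a \emph{right} eigenvector. Your $\psi_w$, defined by summing $e^{-\beta F(\mu)}$ over paths $\mu$ \emph{from $v$ to $w$} that avoid $v$ after the start, is the classical taboo construction of the \emph{invariant measure}: the first-return decomposition (on the last edge) together with $\sum_k l^k_{vv}(\beta)=1$ gives $\sum_u \psi_u A(\beta)_{uw}=\psi_w$, a left-eigenvector identity, not the one you need. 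Concretely, if $V=\{v,w\}$ with $A(\beta)=\left(\begin{smallmatrix}0&a\\ b&0\end{smallmatrix}\right)$ and $ab=1$, $a\neq 1$, then your vector is $(\psi_v,\psi_w)=(1,a)$ and $\sum_w A(\beta)_{vw}\psi_w=a^2\neq 1=\psi_v$, while the genuine harmonic vector is $(1,b)$. The fix is to reverse the direction: set $\psi_u=\sum_{n\ge 1}\sum_\mu e^{-\beta F(\mu)}$ over paths $\mu$ of length $n$ from $u$ to $v$ that meet $v$ only at their terminal vertex (and $\psi_v=1=\sum_k l^k_{vv}(\beta)$). Decomposing such paths on their \emph{first} edge gives $\sum_w A(\beta)_{uw}\psi_w=\psi_u+A(\beta)_{uv}\bigl(\sum_k l^k_{vv}(\beta)-1\bigr)=\psi_u$, and finiteness of each $\psi_u$ follows because concatenating any fixed path from $v$ to $u$ avoiding $v$ after its start with a first-passage path from $u$ to $v$ produces distinct simple loops at $v$, whose total weight is $1$. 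With this correction your argument goes through and does constitute a more self-contained proof of the existence half than the paper's citation of Vere-Jones.
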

\begin{proof} Assume that there is a $\beta$-KMS weight for
  $\alpha^F$. Since $A(\beta)$ is transient when $\mathbb P(-\beta F)
  < 0$ it follows from Lemma \ref{h203} that $\mathbb P(-\beta F)
  =0$. Assume then that  $\mathbb P(-\beta F)
  =0$. By Lemma \ref{h203} there are no boundary $\beta$-KMS
  weights, but the work of Vere-Jones shows that there is a non-zero
  $A(\beta)$-harmonic vector, and that it is unique up to multiplication by
  scalars. This follow from Corollary 2 on page 371 in \cite{V}, and
  from Corollary \ref{summary} we conclude that there is a gauge
  invariant $\beta$-KMS weight for $\alpha^F$ which is unique up to
  multiplication by scalars. It follows
  from Theorem \ref{h8} that the corresponding $\beta$-KMS measure is
  conservative. Finally, by Proposition 5.6 in \cite{CT2} all $\beta$-KMS weights are gauge invariant.
\end{proof}

The following lemma gives a useful criterion for when the assumptions
in Theorem \ref{recurrent} are satisfied.

\begin{lemma}\label{renew9}  Assume that $G$ is strongly connected and
  let $\beta \in \mathbb R$. The following are equivalent:
\begin{enumerate}
\item[1)]  $\sum_{k=1}^{\infty} l^k_{vv}(\beta) = 1$.
\item[2)]  $A(\beta)$ is recurrent and $\mathbb P(-\beta F) =0$.
\end{enumerate}
\end{lemma}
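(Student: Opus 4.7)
The plan is to transform the statement into a comparison of radii of convergence of two power series, using the generating function identity from Lemma \ref{renew1}. Fix a vertex $v$ and set
\[
L(t) = \sum_{j=1}^{\infty} l^j_{vv}(\beta) t^j, \qquad R(t) = \sum_{n=0}^{\infty} A(\beta)^n_{vv} t^n,
\]
for $t > 0$. Lemma \ref{renew1} gives the key identity $R(t) = \sum_{k=0}^{\infty} L(t)^k$, i.e.\ $R(t) = (1-L(t))^{-1}$ when $L(t) < 1$ and $R(t) = \infty$ when $L(t) \geq 1$. Moreover, the Cauchy--Hadamard formula and the definition (\ref{h204}) show that the radius of convergence of $R$ equals $e^{-\mathbb P(-\beta F)}$.

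For the implication $1) \Rightarrow 2)$, I first invoke Proposition \ref{renew5}: $\sum_k l^k_{vv}(\beta) = 1$ immediately gives recurrence of $A(\beta)$. Since the coefficients of $L$ are nonnegative and $L(1) = 1$, the function $L$ is strictly increasing on $(0,1]$, so $L(t) < 1$ for every $t \in (0,1)$; hence $R(t) = (1 - L(t))^{-1} < \infty$ and the radius of convergence of $R$ is at least $1$, giving $\mathbb P(-\beta F) \leq 0$. On the other hand, recurrence means $R(1) = \infty$, so the radius of convergence is at most $1$, yielding $\mathbb P(-\beta F) \geq 0$; together $\mathbb P(-\beta F) = 0$.

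For $2) \Rightarrow 1)$, recurrence gives $L(1) \geq 1$ by Proposition \ref{renew5}, so it remains to exclude $L(1) > 1$. If $L(1) > 1$ (possibly $+\infty$), then some partial sum $P_N(t) := \sum_{j=1}^{N} l^j_{vv}(\beta) t^j$ satisfies $P_N(1) > 1$; since $P_N$ is a polynomial it is continuous at $1$, so there exists $t_0 \in (0,1)$ with $L(t_0) \geq P_N(t_0) > 1$, and hence $R(t_0) = \sum_k L(t_0)^k = \infty$. But $\mathbb P(-\beta F) = 0$ means the radius of convergence of $R$ equals $1$, so $R(t_0) < \infty$ — a contradiction. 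Thus $L(1) = 1$.

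The only non-routine step is the continuity argument needed to pass from $L(1) > 1$ to $L(t_0) > 1$ for some $t_0 < 1$, which is necessary because $L(1)$ might be infinite and because we cannot directly quote continuity of $L$ at the boundary of its disc of convergence; the workaround via polynomial partial sums circumvents this cleanly.
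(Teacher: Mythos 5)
Your proof is correct and follows essentially the same route as the paper: both rest on the generating-function identity of Lemma \ref{renew1}, Proposition \ref{renew5}, and the identification of $e^{-\mathbb P(-\beta F)}$ with the radius of convergence of $\sum_{n}A(\beta)^n_{vv}t^n$. The only cosmetic difference is in the last step of $2)\Rightarrow 1)$, where the paper lets $t\uparrow 1$ and uses monotone convergence to conclude $\sum_{k}l^k_{vv}(\beta)\leq 1$, while you reach the same conclusion by contradiction via partial sums; the two arguments are interchangeable.
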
 
\begin{proof} 1) $\Rightarrow$ 2): The recurrence of $A(\beta)$ is a direct consequence of
  Proposition \ref{renew5} and 1). To show that $\mathbb P(-\beta F) =0$,
  note that $e^{-\mathbb P(-\beta F)}$ is the radius of convergence of the power
series $\sum_{n=0}^{\infty} A(\beta)^n_{vv} t^n$. By Lemma
\ref{renew1} this is the same as
$$
\sup \left\{ t \geq 0: \ \sum_{k=1}^{\infty} l^k_{vv}(\beta)t^k < 1
\right\} .
$$
It follows from 1) that this number is $1$.

2) $\Rightarrow$ 1): Since $A(\beta)$ is recurrent it follows from
Proposition \ref{renew5} that
$\sum_{k=1}^{\infty} l^k_{vv}(\beta) \geq 1$. Since we also assume
that $\mathbb P(-\beta F) =0$, which means that the radius of the
convergence of the power series $\sum_{n=0}^{\infty} A(\beta)^n_{vv}
t^n$ is $1$, it follows first that $\sum_{n=0}^{\infty} A(\beta)^n_{vv}
t^n < \infty$ and then by Lemma \ref{renew1} that $\sum_{k=1}^{\infty}
l^k_{vv}(\beta)t^k < 1$ when $t < 1$. From Lebesgues theorem on
monotone convergence we deduce from this that
$$
\sum_{k=1}^{\infty} l^k_{vv}(\beta) = \lim_{t \uparrow 1}
\sum_{k=1}^{\infty} l^k_{vv}(\beta)t^k \leq 1.
$$   
\end{proof}

In view of Theorem \ref{recurrent} we shall in the following focus on the case where
$A(\beta)$ is transient. Concerning the boundary KMS-weights the
situation is rather simple:

 \begin{prop}\label{trans0} Assume that $G$ is strongly connected and
  that $C^*(G)$ is simple. If $A(\beta)$ is transient there is a
  bijective correspondence between the infinite emitters in $G$ and
  the extremal rays of boundary $\beta$-KMS weights.
\end{prop}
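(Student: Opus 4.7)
The plan is to assemble the proposition from results already established in the paper; the work is entirely organisational, since each ingredient is in place.

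First I would handle the structure of $V_{\infty}$ under the hypotheses. Because $G$ is strongly connected, $V$ contains no proper non-empty hereditary subset. In particular there is no sink, since a singleton sink would be hereditary. Therefore $V_{\infty}$ consists entirely of infinite emitters, and the proposition reduces to showing a bijection between $V_{\infty}$ and the rays of extremal boundary $\beta$-KMS weights.

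Next I would use Lemma \ref{a41}(2): an infinite emitter $v \in V_{\infty}$ is $\beta$-summable if and only if $\sum_{n=0}^{\infty} A(\beta)^n_{vv} < \infty$. Since by assumption $A(\beta)$ is transient, this sum is finite for every $v \in V$ (transience was defined by the condition at a single vertex, and by inequality (\ref{comp13}) it propagates to all vertices because $G$ is strongly connected). Consequently every infinite emitter is $\beta$-summable, and conversely every $\beta$-summable vertex in $V_{\infty}$ is an infinite emitter. This identifies the set of infinite emitters with the set of $\beta$-summable vertices in $V_{\infty}$.

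Then I would invoke Corollary \ref{b121}, which already states that $v \mapsto m_v$ is a bijection from the $\beta$-summable vertices in $V_{\infty}$ onto the rays of non-zero extremal boundary $\beta$-KMS measures. Combining the two identifications gives a bijection between infinite emitters and extremal rays of boundary $\beta$-KMS measures.

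Finally I would pass from measures to weights. Theorem \ref{a1} furnishes an affine bijection $m \mapsto \varphi_m$ between non-zero $\beta$-KMS measures and gauge invariant $\beta$-KMS weights for $\alpha^F$, and affinity preserves extremality. The remaining subtlety, and the only point that deserves explicit mention, is that the target should be all boundary $\beta$-KMS weights rather than merely the gauge invariant ones; this is ensured by the simplicity of $C^*(G)$ via Proposition 5.6 in \cite{CT2}, which guarantees that every KMS weight for $\alpha^F$ is automatically gauge invariant. Composing the three bijections produces the desired correspondence between infinite emitters in $G$ and extremal rays of boundary $\beta$-KMS weights. There is no real obstacle; the only care needed is to check that each bijection is affine so that the notion of extremal ray is transported faithfully.
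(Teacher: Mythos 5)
Your argument is correct and follows essentially the same route as the paper: transience of $A(\beta)$ plus Lemma \ref{a41}(2) makes every infinite emitter $\beta$-summable, and then Theorem \ref{a16} (via Corollary \ref{b121}) gives the bijection with extremal rays of boundary $\beta$-KMS measures, hence weights. The extra points you spell out — that strong connectivity rules out sinks so the $\beta$-summable vertexes in $V_{\infty}$ are exactly the infinite emitters, and that simplicity (Proposition 5.6 in \cite{CT2}) lets one drop the gauge-invariance qualifier — are left implicit in the paper but are exactly the right justifications.
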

\begin{proof}  Since $A(\beta)$ is
  transient it follows from 2) of Lemma \ref{a41} that all infinite
  emitters are $\beta$-summable and then from Theorem \ref{a16} that
  the infinite emitters correspond bijectively to the extremal rays of
  boundary $\beta$-KMS weights.
\end{proof}

\subsection{Finitely many non-wandering vertexes}

In this section we now consider the case where $G$ is strongly
connected and only has finitely
many vertexes. It
follows from 2) of Corollary \ref{b126} that there are no sinks in
$G$, but there may be infinite emitters in
$V$.

\begin{thm}\label{h202} Assume that $G$ strongly connected with
  finitely many vertexes and that $C^*(G)$ is simple. Let $\beta \in
  \mathbb R$. Assume that $A(\beta)$ is transient. The extremal rays
  of $\beta$-KMS weights for $\alpha^F$ are in bijective
  correspondence with the infinite emitters in $G$, and there are no harmonic $\beta$-KMS weights.  
\end{thm}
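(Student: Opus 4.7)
My plan is to combine Proposition \ref{trans0}, which already handles the boundary part, with a short argument ruling out harmonic $\beta$-KMS weights in this finite-vertex setting. First, note that since $V$ is finite and $G$ is strongly connected, Corollary \ref{b126}(2) gives that $G$ has no sinks, so $V_\infty$ consists exactly of the infinite emitters. Proposition \ref{trans0} then already yields the claimed bijection between infinite emitters and extremal rays of boundary $\beta$-KMS weights, so everything reduces to showing that no non-zero $A(\beta)$-harmonic vector on $V$ exists; by Theorem \ref{a17} this means no non-zero harmonic $\beta$-KMS measures, and Lemma \ref{a14} then forces every $\beta$-KMS measure to be a boundary one, completing the proof.

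The key step is therefore the following. Suppose $\psi$ is $A(\beta)$-harmonic and non-zero. By Lemma \ref{a39}, $\psi_v > 0$ for every $v \in V$, and since $V$ is finite, $\max_{v} \psi_v < \infty$. Iterating harmonicity gives
\eq{\psi_v = \sum_{w \in V} A(\beta)^n_{vw} \psi_w, \qquad v \in V, \ n \in \mathbb N.}
Because $A(\beta)$ is transient, $\sum_{n=0}^{\infty} A(\beta)^n_{vv} < \infty$ for every $v$, and so $A(\beta)^n_{vv} \to 0$ as $n \to \infty$. Using inequality (\ref{comp13}) together with strong connectedness, for any pair $u,w \in V$ we can choose $l,k \in \mathbb N$ with $A(\beta)^l_{vu} > 0$ and $A(\beta)^k_{wv} > 0$, and then
\eq{A(\beta)^n_{uw} \ \leq \ \left(A(\beta)^l_{vu} A(\beta)^k_{wv}\right)^{-1} A(\beta)^{n+k+l}_{vv} \ \xrightarrow[n \to \infty]{} \ 0 .}

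Since $V$ is finite, I may interchange the limit with the sum in the harmonicity identity above to conclude
\eq{\psi_v = \lim_{n \to \infty} \sum_{w \in V} A(\beta)^n_{vw} \psi_w = 0, \qquad v \in V,}
contradicting $\psi \neq 0$. Hence no non-zero $A(\beta)$-harmonic vector exists. By Theorem \ref{a17} there are no non-zero harmonic $\beta$-KMS measures, and by Lemma \ref{a14} every $\beta$-KMS measure on $\Omega_G$ is a boundary $\beta$-KMS measure. Combined with Proposition \ref{trans0} this yields the claimed bijection between infinite emitters and extremal rays of $\beta$-KMS weights.

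The only obstacle worth naming is the interchange of limit and sum, and this is trivial here because $V$ is finite; the entire content of the theorem is really the transience-plus-finiteness argument that forces $A(\beta)^n$ to vanish entrywise, which in turn kills harmonicity. All ingredients are already in place in the paper.
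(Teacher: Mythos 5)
Your argument is correct and, at its core, is the same reduction the paper makes: by Proposition \ref{trans0} the boundary part is done, so everything hinges on showing there is no non-zero $A(\beta)$-harmonic vector, using iterated harmonicity, transience, strong connectedness and the finiteness of $V$. The only difference is the final manipulation: the paper sums the identity $\sum_{w} A(\beta)^n_{vw}\psi_w = \psi_v$ over $n$ to get $\sum_{n}\sum_w A(\beta)^n_{vw}\psi_w = \infty$, deduces (since $V$ is finite) that $\sum_n A(\beta)^n_{vw} = \infty$ for some $w$, and then uses strong connectedness to contradict transience of $A(\beta)$; you instead let $n \to \infty$, using transience plus the inequality (\ref{comp13}) to get entrywise decay $A(\beta)^n_{uw} \to 0$ and then the finiteness of $V$ to conclude $\psi_v = 0$, contradicting Lemma \ref{a39}. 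Both are sound; neither buys anything the other doesn't.

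One small step is missing from your write-up: the correspondence you invoke (Theorem \ref{a17}, Lemma \ref{a14}, and the measure picture behind Proposition \ref{trans0}) only accounts for the \emph{gauge invariant} $\beta$-KMS weights, whereas Theorem \ref{h202} is a statement about all $\beta$-KMS weights. Since $C^*(G)$ is assumed simple, Proposition 5.6 in \cite{CT2} gives that every KMS weight for $\alpha^F$ is gauge invariant, and this is exactly the one-line remark with which the paper opens its proof. You should add that citation; without it, a priori there could be non-gauge-invariant extremal $\beta$-KMS weights not seen by the measure-theoretic classification, and the claimed bijection with the infinite emitters would not follow.
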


\begin{proof} As usual it follows from Proposition 5.6 in \cite{CT2}
  that all KMS-weights are gauge invariant, and in view of Proposition
  \ref{trans0} it suffices therefore to show that there are no non-zero
  $A(\beta)$-harmonic vectors. So assume $\psi$ is such a vector, and let
$v$ be a vertex in $G$. Then $\psi_v > 0$ by Lemma \ref{a39} and hence
$$
\sum_{w \in G} \sum_{n=0}^{\infty} A(\beta)^n_{vw}\psi_w =
\sum_{n=0}^{\infty}\sum_{w \in G} A(\beta)^n_{vw}\psi_w =
\sum_{n=0}^{\infty} \psi_v = \infty.
$$
It follows that $ \sum_{n=0}^{\infty} A(\beta)^n_{vw} = \infty$ for at
least one, and hence for all $w \in V$ since $G$ is strongly connected. In particular, $\sum_{n=0}^{\infty}
A(\beta)^n_{vv} = \infty$ which contradicts the assumed transience of $A(\beta)$.
\end{proof}

In the setting of Theorem \ref{h202} the algebra $C^*(G)$ is unital
and all
KMS-weights can be normalized to states.

Theorem \ref{h202} applies of course also when $G$ is a finite graph
which is strongly connected such that $C^*(G)$ is simple. In
combination with Theorem \ref{recurrent} the conclusion is that there
is at most one KMS state in this case. For the gauge action, where
$F=1$, this was first proved by Enemoto, Fujii and Watatani,
\cite{EFW}, and when $F$ is strictly positive (or strictly negative)
it follows from the work of Exel and Laca, \cite{EL}. For general $F$
it follows from \cite{CT1} where the necessary and sufficient
conditions for the existence of the KMS state is also described. As the next example shows the presence of
infinite emitters allows for a more complicated KMS spectrum.

\begin{example}\label{exx1} Let $G$ be the directed graph with two
  vertexes $v$ and $w$ and edges $e_i,f_i$ and $a,b$,
  such that $r(e_i) = s(e_i) = v$, $r(f_i) = s(f_i) = w$ for all $i\in
  \mathbb N$,
  while $s(a) = v, r(a) = w$ and $s(b) = w$ and $r(b) =v$. Define $F :
  E \to \mathbb R$ such that $F(e_{i}) = F(f_i) = \log (i+1)$ for
  all $i$ and $F(a) = F(b) = 1$. For $\beta > 1$, set
$$
s(\beta) =  \sum_{i=2}^{\infty}
  i^{-\beta} .
$$
Then 
$$
 l^i_{vv}(\beta) = \begin{cases} s(\beta), & \ i = 1, \\ e^{-2\beta} ,
   & \ i  = 2, \\ s(\beta)^{i-2}e^{-2\beta} , & \ i \geq 3,
\end{cases}
$$
and hence
$$
\sum_{k=1}^{\infty} l^k_{vv}(\beta) = s(\beta) + e^{-2 \beta} +
\frac{s(\beta)e^{-2\beta}}{1-s(\beta)} 
$$
when $s(\beta) < 1$. It follows that there is a real number $\beta_0 >
0 $
such that 
$\sum_{k=1}^{\infty} l^k_{vv}(\beta) < 1$ when $\beta >
\beta_0$, $\sum_{k=1}^{\infty} l^k_{vv}(\beta_0)  = 1$ and
$\sum_{k=1}^{\infty} l^k_{vv}(\beta)  > 1$ when $\beta
 < \beta_0$. Hence $A(\beta)$ is transient if and only if $\beta >
 \beta_0$ by Proposition \ref{renew5}. Theorem \ref{h202} now tells us that there are
two extremal $\beta$-KMS states for $\alpha^F$ when $\beta >
\beta_0$; one for each of two vertexes since they are both infinite
emitters. Theorem \ref{recurrent} implies that there are no
$\beta$-KMS states when $\beta < \beta_0$, and exactly one when $\beta
= \beta_0$.

\end{example}

We have now obtained a complete description and numbering of the $\beta$-KMS
weights 
when $G$ is strongly connected, $C^*(G)$ is simple and either
$A(\beta)$ is recurrent or $G$ only has finitely many vertexes. What
remains is therefore the cases where $A(\beta)$ is transient and $G$
has infinitely many vertexes. The only general result we have to offer
in that case is the following.

\begin{prop}\label{genbeta} Assume that $G$ is strongly connected with
  infinitely many vertexes and that
  $C^*(G)$ is simple. For $\beta \in \mathbb R$ there exist
  $\beta$-KMS weights for $\alpha^F$ if and only if $\mathbb P(-\beta
  F) \leq 0$.
\end{prop}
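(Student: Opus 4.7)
The forward direction is Lemma~\ref{h203}. For the converse, assume $\mathbb P(-\beta F) \leq 0$ and split into cases. If $A(\beta)$ is recurrent, then $\sum_n A(\beta)^n_{vv} = \infty$ forces the radius of convergence of $\sum_n A(\beta)^n_{vv} t^n$ to be at most $1$, so $\mathbb P(-\beta F) \geq 0$; combined with the hypothesis we get $\mathbb P(-\beta F) = 0$, and Theorem~\ref{recurrent} supplies a $\beta$-KMS weight. If $A(\beta)$ is transient and $G$ contains an infinite emitter, Proposition~\ref{trans0} directly provides an extremal boundary $\beta$-KMS weight.

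The remaining case, in which $A(\beta)$ is transient and $G$ is row-finite, is the delicate one; the plan is to construct a non-zero $A(\beta)$-harmonic vector by a Martin-boundary-style argument and then invoke Theorem~\ref{a17} together with Corollary~\ref{summary}. Fix $v_0 \in V$, let $G_1(u,w) \defeq \sum_{n \geq 0} A(\beta)^n_{uw}$ (finite by transience), and define the kernel $K(u,w) \defeq G_1(u,w)/G_1(v_0,w)$, which is well defined since strong connectedness gives $G_1(v_0,w) > 0$. For each $u \in V$ pick an integer $\ell_u$ with $A(\beta)^{\ell_u}_{v_0 u} > 0$; concatenating paths yields $G_1(v_0,w) \geq A(\beta)^{\ell_u}_{v_0 u}\, G_1(u,w)$, so $K(u,\cdot)$ is uniformly bounded on $V$ by $1/A(\beta)^{\ell_u}_{v_0 u}$. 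Since $V$ is countably infinite, a diagonal extraction (further thinned so that $w_k$ eventually avoids every finite subset of $V$) yields a sequence $(w_k)$ along which $h(u) \defeq \lim_k K(u,w_k)$ exists for every $u \in V$; in particular $h(v_0) = 1$, so $h$ is non-zero.

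Harmonicity of $h$ follows from the identity
\[
\sum_{w \in V} A(\beta)_{uw}\, K(w,w_k) \;=\; K(u,w_k) \;-\; \frac{\delta_{u,w_k}}{G_1(v_0,w_k)},
\]
in which the Kronecker term vanishes for $k$ large because $w_k$ escapes every finite set, and, because $G$ is row-finite, the left-hand sum has only finitely many nonzero terms so the limit passes through the sum to give $A(\beta) h = h$. The main obstacle is exactly this row-finite transient subcase: the uniform bound on $K(u,\cdot)$ uses strong connectedness essentially, while the limit--sum interchange uses both row-finiteness (to make the sum over $w$ finite for every fixed $u$) and the infiniteness of $V$ (so that $w_k$ can genuinely leave every finite subset).
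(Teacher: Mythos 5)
Your proof is correct, and it coincides with the paper's argument in the first two cases: the forward implication is Lemma~\ref{h203}, the recurrent case is settled by observing $\mathbb P(-\beta F)=0$ and invoking Theorem~\ref{recurrent}, and the transient case with infinite emitters by Proposition~\ref{trans0}. Where you genuinely diverge is the remaining case ($A(\beta)$ transient, $G$ row-finite): the paper disposes of it by citing the Corollary of Pruitt \cite{Pr}, which asserts the existence of a non-zero $A(\beta)$-harmonic vector, whereas you prove this existence directly by a Martin-kernel compactness argument. Your construction checks out: $G_1(u,w)<\infty$ by transience, $G_1(v_0,w)>0$ and the bound $K(u,\cdot)\leq A(\beta)^{\ell_u}_{v_0u}{}^{-1}$ follow from strong connectedness, a diagonal extraction along a sequence of distinct vertexes (which then automatically leaves every finite subset of $V$) gives the pointwise limit $h$ with $h(v_0)=1$, and the renewal identity $\sum_w A(\beta)_{uw}K(w,w_k)=K(u,w_k)-\delta_{u,w_k}G_1(v_0,w_k)^{-1}$ passes to the limit because each row of $A(\beta)$ has only finitely many non-zero entries and $\delta_{u,w_k}=0$ eventually; Theorem~\ref{a17} and Corollary~\ref{summary} then convert $h$ into a harmonic $\beta$-KMS weight. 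In effect you have reproved the cited result of Pruitt in the special case needed, which makes the argument self-contained; it also makes visible that in this subcase only transience is used (the hypothesis $\mathbb P(-\beta F)\leq 0$ being automatic there), and that row-finiteness is exactly what permits the limit--sum interchange to yield harmonicity rather than mere superharmonicity (without it one would only get $A(\beta)h\leq h$ by Fatou, which is why the infinite-emitter case is better handled by Proposition~\ref{trans0}). The trade-off is purely one of length versus self-containment; both routes rest on the same dichotomy and the same translation machinery of Corollary~\ref{summary}.
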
 
\begin{proof} Thanks to Lemma \ref{h203} there is only one implication
  to consider. So assume that $\mathbb P(-\beta
  F) \leq 0$. If $A(\beta)$ is recurrent we have that $\mathbb P(-\beta
  F) = 0$ since $\sum_{n=0}^{\infty} A(\beta)^n_{vv} < \infty$ if $\mathbb P(-\beta
  F) < 0$. It follows therefore from Theorem \ref{recurrent} that
  there is a $\beta$-KMS weight in this case. Assume then that $A(\beta)$ is
  transient. If there are infinite emitters in $G$ the existence of a
  $\beta$-KMS weight follows from Proposition \ref{trans0}. Finally,
  when $A(\beta)$ is transient and $G$ does not have infinite emitters, it
  is a result of Pruitt, found as the Corollary at the bottom of page
  1799 in \cite{Pr}, that there is a non-zero $A(\beta)$-harmonic
  vector because $\mathbb P(-\beta F) \leq 0$. By Corollary
  \ref{summary} there is therefore a $\beta$-KMS weight also in this case. 
\end{proof}

We know now, at least in principle, when there are any $\beta$-KMS weights
to be found. In the following we shall identify a class of strongly
connected graphs where the structure of KMS weights is both transparent
and very rich.

\section{KMS weights from exits in $G$}

In this section we assume that $G$ is strongly connected and that
$A(\beta)$ is transient. Set
$$
P(V) = \left\{ (v_i)_{i=1}^{\infty} \in V^{\mathbb N} :  \ v_{i+1} \in
  r\left(s^{-1}(v_i)\right) \ \forall i \right\} .
$$
An element $t = (t_i)_{i=1}^{\infty} \in P(V)$ will be called an
\emph{exit path} when $\lim_{i \to \infty} t_i = \infty$, in the
sense that for every finite subset $M\subseteq V$ there is an $N \in
\mathbb N$ such that $t_i \notin M$ when $i \geq N$. Let $\beta \in \mathbb R$ and consider an exit path $t =(t_i)_{i=1}^{\infty}
\in P(V)$. Set
$$
t^{\beta}(i) =A(\beta)_{t_1t_2} A(\beta)_{t_2t_3}\cdots
A(\beta)_{t_{i-1}t_i} .
$$
Then $\sum_{n=0}^{\infty} A(\beta)^n_{v
  t_{i}} < \infty$ for all $i$ since $A(\beta)$ is transient and 
\begin{equation*}\label{n1}
 t^{\beta}(i)^{-1} \sum_{n=0}^{\infty} A(\beta)^n_{v
  t_{i}}  =t^{\beta}(i+1)^{-1} \sum_{n=0}^{\infty} A(\beta)^n_{v
  t_{i}}A(\beta)_{t_it_{i+1}}  \leq  t^{\beta}(i+1)^{-1} \sum_{n=0}^{\infty} A(\beta)^n_{v t_{i+1}}
\end{equation*}
for all $i$ and all $v \in V$. Hence the limit
\begin{equation}\label{n5}
\lim_{i \to \infty} t^{\beta}(i)^{-1} \sum_{n=0}^{\infty} A(\beta)^n_{v
  t_{i}} 
\end{equation}
exists, although it may of course be $+\infty$. For all $w \in V$ and
$k,i \in \mathbb N$ we have the estimate
$$
A(\beta)^k_{wv}  t^{\beta}(i)^{-1} \sum_{n=0}^{\infty} A(\beta)^n_{v
  t_{i}}  \leq  t^{\beta}(i)^{-1} \sum_{n=0}^{\infty}
A(\beta)^{n+k}_{w  t_{i}} \leq  t^{\beta}(i)^{-1} \sum_{n=0}^{\infty} A(\beta)^n_{w
  t_{i}}.
$$
Because $G$ is strongly connected it follows from this that the limit (\ref{n5})
is finite for all $v \in V$ if it is finite for one. When this holds
we say that $t$ is \emph{$\beta$-summable}.

\begin{lemma}\label{n6} Let $\beta \in \mathbb R$ and assume that
  $t$ is a $\beta$-summable exit path in
  $G$. It follows that the vector $\psi \in ]0,\infty[^{V}$ defined
  such that
\begin{equation*}\label{n9}
\psi_v =  \lim_{i \to \infty} t^{\beta}(i)^{-1} \sum_{n=0}^{\infty} A(\beta)^n_{v
  t_{i}} ,
\end{equation*}
is $A(\beta)$-harmonic.
\end{lemma}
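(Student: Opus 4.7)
The plan is to verify directly that $\sum_{w \in V} A(\beta)_{vw}\psi_w = \psi_v$ for every $v \in V$, via a one-step recurrence for the partial sums $g_v^{(i)} := \sum_{n=0}^\infty A(\beta)^n_{v t_i}$ combined with monotone convergence. Positivity of $\psi_v$ is immediate once finiteness is in hand, since $\psi_v \geq t^\beta(1)^{-1} g_v^{(1)} = \sum_{n=0}^{\infty} A(\beta)^n_{v t_1}$, which is strictly positive because $G$ is strongly connected (so at least one path from $v$ to $t_1$ exists).

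First, splitting off the first edge of each path from $v$ to $t_i$, i.e., using $A(\beta)^n_{vt_i} = \sum_w A(\beta)_{vw}A(\beta)^{n-1}_{wt_i}$ for $n\geq 1$ and separating the $n=0$ term, gives the identity
\[ g_v^{(i)} \;=\; \delta_{v,t_i} \;+\; \sum_{w\in V} A(\beta)_{vw}\, g_w^{(i)}, \]
in which every quantity is finite thanks to transience of $A(\beta)$. Dividing by $t^\beta(i) > 0$ yields
\[ t^\beta(i)^{-1} g_v^{(i)} \;=\; t^\beta(i)^{-1}\delta_{v,t_i} \;+\; \sum_{w\in V} A(\beta)_{vw}\, t^\beta(i)^{-1} g_w^{(i)}. \]

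Now I would fix $v$ and let $i \to \infty$. Because $t$ is an exit path, $t_i \neq v$ for all sufficiently large $i$, so the term $t^\beta(i)^{-1}\delta_{v,t_i}$ vanishes eventually. The crucial ingredient, already verified in the display preceding Lemma \ref{n6}, is that the sequence $i \mapsto t^\beta(i)^{-1} g_w^{(i)}$ is non-decreasing for every $w$ and converges up to $\psi_w$. Hence, for each fixed $w$, the summand $A(\beta)_{vw}\, t^\beta(i)^{-1} g_w^{(i)}$ is non-decreasing in $i$, and the monotone convergence theorem applied to counting measure on $V$ permits the interchange of limit and sum, giving
\[ \psi_v \;=\; \lim_{i\to\infty} t^\beta(i)^{-1} g_v^{(i)} \;=\; \sum_{w\in V} A(\beta)_{vw}\,\psi_w. \]

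The only point requiring care is the interchange of limit and summation, and it is essentially free once the monotonicity of $i \mapsto t^\beta(i)^{-1} g_w^{(i)}$ is invoked, so I do not foresee any serious obstacle. No uniform domination argument is needed because monotone convergence allows the values $+\infty$ a priori; the $\beta$-summability assumption on $t$ then guarantees that the limit is actually finite for some, hence every, $v \in V$, so the displayed harmonicity relation is an equality of finite positive numbers.
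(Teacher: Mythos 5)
Your proof is correct and uses essentially the same ingredients as the paper's: the one-step recursion defining the powers of $A(\beta)$, the monotonicity of $i \mapsto t^{\beta}(i)^{-1}\sum_{n=0}^{\infty}A(\beta)^n_{wt_i}$ established just before the lemma to justify interchanging $\lim_i$ with $\sum_{w}$ by monotone convergence, and the exit-path property to discard the $n=0$ (Kronecker delta) contribution for large $i$. Packaging the computation as a finite-$i$ identity for $g_v^{(i)}$ before passing to the limit, and deriving positivity from the $i=1$ term together with strong connectedness, are only cosmetic variations on the paper's argument.
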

\begin{proof} Note that $\psi_{t_1} \geq 1$ since
$$
 \sum_{n=0}^{\infty} A(\beta)^{n}_{t_1
  t_{i}} \geq A(\beta)^{i-1}_{t_1t_i} \geq t^{\beta}(i)
$$
when $i \geq 2$. Since $t^{\beta}(i)^{-1} \sum_{n=0}^{\infty} A(\beta)^n_{v
  t_{i}}$ never decreases when $i$ increases, we find that
\begin{equation*}
\begin{split}
&\sum_{w \in V} A(\beta)_{vw}\psi_w = \lim_{i \to \infty} \sum_{w \in V} A(\beta)_{vw}t^{\beta}(i)^{-1}\sum_{n=0}^{\infty} A(\beta)^n_{w
  t_{i}} \\
& 
= \lim_{i \to \infty}
t^{\beta}(i)^{-1} \sum_{n=0}^{\infty} \sum_{w \in V} A(\beta)_{vw} A(\beta)^n_{w
  t_{i}} = \lim_{i \to \infty}
 t^{\beta}(i)^{-1}\sum_{n=0}^{\infty} A(\beta)^{n+1}_{v
  t_{i}}.
\end{split}
\end{equation*} 
Since $t$ is an exit path, it follows that $A(\beta)^0_{vt_i} = 0$ and $\sum_{n=0}^{\infty} A(\beta)^{n+1}_{v
  t_{i}} = \sum_{n=0}^{\infty} A(\beta)^{n}_{v
  t_{i}}$ for all large $i$. Hence
$$
\sum_{w \in V} A(\beta)_{vw}\psi_w = \lim_{i \to \infty}
t^{\beta}(i)^{-1} \sum_{n=0}^{\infty} A(\beta)^{n+1}_{v
  t_{i}} = \psi_v.
$$

\end{proof}

It follows from Lemma \ref{n6} and Theorem \ref{a17} that a
$\beta$-summable exit path $t$ gives rise to a unique harmonic $\beta$-KMS
measure $m_t$ determined by the requirement that
$$
m_t(Z'(v)) =  \lim_{i \to \infty} t^{\beta}(i)^{-1} \sum_{n=0}^{\infty}
A(\beta)^n_{vt_i} 
$$
for all $v \in V$. We call this an \emph{exit measure}. To learn more
about this measure we shall use the shift map
$\sigma$ which acts on both $P(V)$ and $P(G)$ in the natural way; in both
cases given by the formula
$$ 
\sigma\left((x_i)_{i=1}^{\infty}\right) = (x_{i+1})_{i=1}^{\infty} .
$$
There is a map $\pi : P(G) \to P(V)$ defined by
$$
\pi\left( (e_i)_{i=1}^{\infty}\right) = \left(
  s(e_i)\right)_{i=1}^{\infty},
$$
which intertwines the two shift maps; $\sigma \circ \pi = \pi \circ
\sigma$.

\begin{lemma}\label{borexit} Let $t = (t_i)_{i=1}^{\infty} \in P(V)$. Then $\pi^{-1}(t)$ is a Borel subset of $P(G)$ and for
  any harmonic $\beta$-KMS measure $m$ on $P(G)$, any vertex $v\in V$ and
  any $n \in \mathbb N \cup \{0\}$,
$$
m \left( Z'(v) \cap \sigma^{-n}\left(\pi^{-1}(t)\right)\right) = \lim_{k \to \infty} A(\beta)^n_{vt_1}A(\beta)_{t_1t_2}
A(\beta)_{t_2t_3} \cdots A(\beta)_{t_kt_{k+1}}
m\left(Z'(t_{k+1})\right) .
$$
\end{lemma}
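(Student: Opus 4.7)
The plan is to express $\pi^{-1}(t)$ as a decreasing intersection of disjoint unions of cylinder sets, then apply the cylinder formula from Lemma \ref{b101} together with upper continuity of $m$. For each $k \geq 1$, let $P_k(t)$ denote the set of finite paths $\mu = e_1 e_2 \cdots e_k$ in $G$ with $s(e_i) = t_i$ and $r(e_i) = t_{i+1}$ for $i = 1, 2, \dots, k$. An infinite path lies in $\pi^{-1}(t)$ precisely when each of its length-$k$ prefixes visits $t_1, t_2, \dots, t_{k+1}$ in order, so
$$
\pi^{-1}(t) \ = \ \bigcap_{k \geq 1} \ \bigsqcup_{\mu \in P_k(t)} Z'(\mu).
$$
Each $Z'(\mu)$ is open in $P(G)$ and the $Z'(\mu)$, $\mu \in P_k(t)$, are pairwise disjoint (being distinct cylinders of the same length), so the union is open and the intersection is a $G_\delta$ set; in particular $\pi^{-1}(t)$ is Borel.

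For the measure identity I would write, analogously,
$$
Z'(v) \cap \sigma^{-n}\!\bigl(\pi^{-1}(t)\bigr) \ = \ \bigcap_{k \geq 1} \ \bigsqcup_{\mu, \eta} Z'(\mu\eta),
$$
where $\mu$ ranges over length-$n$ paths with $s(\mu) = v$ and $r(\mu) = t_1$, and $\eta$ ranges over $P_k(t)$; again the cylinders in each union are pairwise disjoint, having common length $n + k$. By Lemma \ref{b101},
$$
m\bigl(Z'(\mu\eta)\bigr) \ = \ e^{-\beta F(\mu)} e^{-\beta F(\eta)}\, m\bigl(Z'(t_{k+1})\bigr).
$$
Summing over $(\mu, \eta)$ and using the two elementary identities
$$
\sum_{\mu} e^{-\beta F(\mu)} = A(\beta)^n_{v t_1}, \qquad \sum_{\eta \in P_k(t)} e^{-\beta F(\eta)} = \prod_{i=1}^{k} A(\beta)_{t_i t_{i+1}},
$$
gives the claimed expression, for each fixed $k$, for $m\bigl(\bigsqcup_{\mu,\eta} Z'(\mu\eta)\bigr)$.

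Finally, the sets $\bigsqcup_{\mu,\eta} Z'(\mu\eta)$ decrease with $k$ to $Z'(v) \cap \sigma^{-n}(\pi^{-1}(t))$, and $m\bigl(Z'(v)\bigr) \leq m(Z(v)) < \infty$ by regularity of $m$ on the compact set $Z(v)$, so continuity of measure from above yields the limit formula in the statement. The main obstacle is really just the bookkeeping: producing the $G_\delta$ description cleanly, verifying pairwise disjointness so measure is additive on the finite unions, and matching the product of sums to the matrix product; no new analytic ingredient is needed beyond Lemma \ref{b101} and the finiteness of $m$ on $Z(v)$.
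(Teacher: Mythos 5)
Your proposal is correct and follows essentially the same route as the paper: the same description of $\pi^{-1}(t)$ (and of $Z'(v)\cap\sigma^{-n}(\pi^{-1}(t))$) as a decreasing intersection of unions of disjoint cylinders of a fixed length, the cylinder formula of Lemma \ref{b101} to compute the measure of each finite-stage union as the stated matrix product, and continuity from above (using $m(Z'(v))<\infty$) to pass to the limit. The only difference is cosmetic: you make the disjointness of the cylinders and the $G_\delta$ structure explicit, which the paper leaves implicit.
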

\begin{proof} For each $k$ let $M_k$ be the set of paths $\mu =
  e_1e_2\cdots e_k$ in $G$
  such that $s(e_i) = t_i, i = 1,2, \cdots,k$, and $r(e_k) =
  t_{k+1}$. Then
$$
\pi^{-1}(t) = \bigcap_{k \geq 1} \bigcup_{\mu \in M_k} Z'(\mu) ,
$$
proving that $\pi^{-1}(t)$ is Borel. Let $M_0$ be the set of paths $\nu$ of
length $n$ from $v$ to $t_1$. Then
$$
Z'(v) \cap \sigma^{-n}\left( \pi^{-1}(t)\right) = \bigcap_{k \geq 1} \left(
\bigcup_{\nu \in M_0,\mu \in M_k} \ Z'(\nu \mu)\right) .
$$
Furthermore, since 
$$
 \bigcup_{\mu \in M_{k+1}} Z'(\nu \mu) \subseteq  \bigcup_{\mu \in M_k}
   Z'(\nu\mu)
$$
for all $k$ and all $\nu \in M_0$, it follows that 
$$
m \left(Z'(v) \cap \sigma^{-n}\left( \pi^{-1}(t)\right)\right) =
\lim_{k\to \infty} m\left(   \bigcup_{\nu \in M_0, \mu \in M_k}
   Z'(\nu \mu)\right) .
$$
This completes the proof because it follows from (\ref{b102}) that
$$
 m\left(  \bigcup_{\nu \in M_0,\mu \in M_k} \ Z'(\nu \mu) \right) =  A(\beta)^n_{vt_1}A(\beta)_{t_1t_2}
A(\beta)_{t_2t_3} \cdots A(\beta)_{t_kt_{k+1}}
m\left(Z'(t_{k+1})\right) .
$$
\end{proof}

 Set 
$$
\mathcal G \pi^{-1}(t) \defeq  \bigcup_{n,m \in \mathbb N} \sigma^{-n}
\left( \pi^{-1} \left( \sigma^m(t)\right)\right) . 
$$

\begin{lemma}\label{h14} Let $t\in P(V)$ be an exit path, and let $m$ be a
  harmonic $\beta$-KMS measure on $P(G)$. Then
\begin{equation}\label{j1}
m(Z'(v) \cap \mathcal G \pi^{-1}(t)) = m(\pi^{-1}(t)) \lim_{i \to \infty} t^{\beta}(i)^{-1}\sum_{n=0}^{\infty} A(\beta)^n_{vt_i}
\end{equation}
for all $v \in V$.
\end{lemma}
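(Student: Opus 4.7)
My plan is to reduce the measure of $Z'(v)\cap\mathcal{G}\pi^{-1}(t)$ to a double sum that can be evaluated directly with Lemma \ref{borexit}, using the exit-path hypothesis to secure both a disjointness and a monotonicity statement.

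First, I would specialise Lemma \ref{borexit} by replacing $t$ with the shifted exit path $\sigma^m(t)=(t_{m+1},t_{m+2},\dots)$. Since $\sigma^m(t)$ is still an exit path in $P(V)$, the lemma applies and yields
\[
m\!\left(Z'(v)\cap\sigma^{-n}\!\left(\pi^{-1}(\sigma^m(t))\right)\right)=\lim_{k\to\infty}A(\beta)^n_{v\,t_{m+1}}\,A(\beta)_{t_{m+1}t_{m+2}}\cdots A(\beta)_{t_{m+k}t_{m+k+1}}\,m(Z'(t_{m+k+1})).
\]
The tail of this product is $t^{\beta}(m+k+1)/t^{\beta}(m+1)$, and applying the $n=0$, $v=t_1$ case of Lemma \ref{borexit} identifies $\lim_k t^{\beta}(k+1)m(Z'(t_{k+1}))=m(\pi^{-1}(t))$; hence
\[
m\!\left(Z'(v)\cap\sigma^{-n}\!\left(\pi^{-1}(\sigma^m(t))\right)\right)=\frac{A(\beta)^n_{v\,t_{m+1}}}{t^{\beta}(m+1)}\,m(\pi^{-1}(t)).\tag{$\ast$}
\]

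Next I would establish disjointness: for each fixed $m\geq 0$, the sets $\sigma^{-n}(\pi^{-1}(\sigma^m(t)))$, $n\geq 0$, are pairwise disjoint. If $x$ lay in both with $n<n'$, then $s(x_{n+j})=t_{m+j}$ and $s(x_{n'+j})=t_{m+j}$ for every $j\geq 1$ would force $t_{m+j}=t_{m+j-(n'-n)}$ for all $j$ large, making $(t_i)$ eventually periodic; this contradicts $\lim_i t_i=\infty$. Set $E_i:=\bigcup_{n\geq 0}\sigma^{-n}(\pi^{-1}(\sigma^{i-1}(t)))$. A direct reindexing ($n\mapsto n+1$, $i\mapsto i+1$) shows $\sigma^{-n}(\pi^{-1}(\sigma^{i-1}(t)))\subseteq\sigma^{-(n+1)}(\pi^{-1}(\sigma^{i}(t)))$, so the sequence $E_1\subseteq E_2\subseteq\cdots$ is increasing, and by inspection $\bigcup_i E_i=\mathcal{G}\pi^{-1}(t)$.

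Combining these ingredients with the continuity of $m$ along an increasing union, and using the disjointness within each $E_i$ together with $(\ast)$ (applied with $m=i-1$), I obtain
\[
m\!\left(Z'(v)\cap\mathcal{G}\pi^{-1}(t)\right)=\lim_{i\to\infty}\sum_{n=0}^{\infty}\frac{A(\beta)^n_{v\,t_i}}{t^{\beta}(i)}\,m(\pi^{-1}(t))=m(\pi^{-1}(t))\,\lim_{i\to\infty}t^{\beta}(i)^{-1}\sum_{n=0}^{\infty}A(\beta)^n_{v\,t_i},
\]
which is the asserted identity.

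The main obstacle I anticipate is the disjointness step: it is the only place where the exit-path hypothesis enters in an essential way, and one must argue carefully that the ``offset'' between $x$ and $t$ is unique—otherwise the series $\sum_n$ would double-count and the limit formula would be wrong. Once disjointness (and the resulting monotonicity of $i\mapsto m(Z'(v)\cap E_i)$) is in place, the rest is bookkeeping with Lemma \ref{borexit}.
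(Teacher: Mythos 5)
Your proposal is correct and follows essentially the same route as the paper: an increasing exhaustion of $\mathcal G\pi^{-1}(t)$ by the sets $\bigcup_{n}\sigma^{-n}\bigl(\pi^{-1}(\sigma^{i}(t))\bigr)$, disjointness in $n$ from the non-pre-periodicity of the exit path, and Lemma \ref{borexit} applied to the shifted exit paths. Your extra step identifying $\lim_k t^{\beta}(k+1)\,m(Z'(t_{k+1}))=m(\pi^{-1}(t))$ via the $n=0$, $v=t_1$ case of Lemma \ref{borexit} just makes explicit what the paper leaves implicit in its displayed identity.
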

\begin{proof} Note that $\sigma^{-n}\left(\sigma^i(\pi^{-1}(t))\right) \subseteq
\sigma^{-(n+1)} \left(\sigma^{i+1}(\pi^{-1}(t))\right)$ so that
$$
\bigcup_{n=0}^{\infty} \sigma^{-n}\left(\sigma^i(\pi^{-1}(t))\right) \subseteq
\bigcup_{n=0}^{\infty} \sigma^{-n}\left(\sigma^{i+1}(\pi^{-1}(t))\right) .
$$
Since 
$$
\mathcal G \pi^{-1}(t) = \bigcup_{i=1}^{\infty}
\bigcup_{n=0}^{\infty} \sigma^{-n}\left(\sigma^i(\pi^{-1}(t))\right) ,
$$
it follows that
\begin{equation}\label{j2}
m(Z'(v) \cap \mathcal G \pi^{-1}(t)) = \lim_{i \to \infty} m\left(Z'(v)
  \cap \bigcup_{n=0}^{\infty}
  \sigma^{-n}\left(\sigma^i(\pi^{-1}(t))\right)\right).
\end{equation}
Because $t$ is not pre-periodic under the shift, $\sigma^{-n}\left( \sigma^i(\pi^{-1}(t))\right) \cap
\sigma^{-n'}\left(\sigma^i(\pi^{-1}(t))\right) = \emptyset$ when $n
\neq n'$, and hence
\begin{equation}\label{j3}
 m\left(Z'(v)
  \cap \bigcup_{n=0}^{\infty}
  \sigma^{-n}\left(\sigma^i(\pi^{-1}(t))\right)\right) =
\sum_{n=0}^{\infty}   m\left(Z'(v)
  \cap \sigma^{-n}\left(\sigma^i(\pi^{-1}(t))\right)\right).
\end{equation}
Note that $\sigma^i\left( \pi^{-1}(t)\right) =
\pi^{-1}\left(\sigma^i(t)\right)$ and that $\sigma^i(t)$ is an exit
path because $t$ is. We can therefore use Lemma \ref{borexit} to conclude that
\begin{equation*}
 m\left(Z'(v) \cap \sigma^{-n}\left(\pi^{-1}(\sigma^k(t))\right)\right) =
  A(\beta)^n_{vt_{k+1}} t^{\beta}(k+1)^{-1} m(\pi^{-1}(t)) .
\end{equation*}
Inserted into (\ref{j3}) and (\ref{j2}) this yields (\ref{j1}).
\end{proof}

\begin{lemma}\label{n10} Let $t$ be a $\beta$-summable exit path
  in $P(V)$ and $m_t$ be the corresponding exit measure. Then 
$m_t(\pi^{-1}(t)) = 1$
and
$m_t$ is supported on $\mathcal G\pi^{-1}(t)$; that is,
$m_t\left(P(G) \backslash \mathcal G\pi^{-1}(t)\right) = 0$.
\end{lemma}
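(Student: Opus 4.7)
The plan is to play Lemma \ref{borexit} and Lemma \ref{h14} against each other, applying both to the measure $m_t$ itself. Lemma \ref{borexit} will provide a lower bound $m_t(\pi^{-1}(t))\ge 1$, while Lemma \ref{h14} (which applies because $m_t$ is a harmonic $\beta$-KMS measure) will provide a matching upper bound $m_t(\pi^{-1}(t))\le 1$.

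For the lower bound, I would apply Lemma \ref{borexit} with $n=0$ and $v=t_1$. Since $\pi^{-1}(t)\subseteq Z'(t_1)$, this yields
\[
m_t(\pi^{-1}(t)) \;=\; \lim_{k\to\infty} t^\beta(k+1)\, m_t(Z'(t_{k+1})).
\]
Inserting the defining formula for $m_t(Z'(t_{k+1}))$ and using that, for $i>k+1$,
\[
\frac{t^\beta(k+1)}{t^\beta(i)} \;=\; \bigl(A(\beta)_{t_{k+1}t_{k+2}}\cdots A(\beta)_{t_{i-1}t_i}\bigr)^{-1},
\]
the single term $n=i-k-1$ in $\sum_{n}A(\beta)^n_{t_{k+1}t_i}$ already dominates the reciprocal above, because following the exit path $t_{k+1},t_{k+2},\ldots,t_i$ contributes exactly $A(\beta)_{t_{k+1}t_{k+2}}\cdots A(\beta)_{t_{i-1}t_i}$ to $A(\beta)^{i-k-1}_{t_{k+1}t_i}$. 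Hence every approximating expression is $\ge 1$, and since the sequence $t^\beta(i)^{-1}\sum_nA(\beta)^n_{vt_i}$ is monotone non-decreasing in $i$ (exactly the monotonicity noted just before Lemma \ref{n6}), one concludes $t^\beta(k+1)m_t(Z'(t_{k+1}))\ge 1$ for every $k$, and therefore $m_t(\pi^{-1}(t))\ge 1$.

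For the upper bound, I would apply Lemma \ref{h14} to $m_t$ with $v=t_1$. Since $\lim_i t^\beta(i)^{-1}\sum_n A(\beta)^n_{t_1 t_i}=m_t(Z'(t_1))$ by construction, Lemma \ref{h14} reads
\[
m_t\bigl(Z'(t_1)\cap \mathcal G\pi^{-1}(t)\bigr) \;=\; m_t(\pi^{-1}(t))\cdot m_t(Z'(t_1)).
\]
The left side is bounded above by $m_t(Z'(t_1))$, and $m_t(Z'(t_1))$ lies in $(0,\infty)$: it is strictly positive by Lemma \ref{a39} applied to the $A(\beta)$-harmonic vector of Lemma \ref{n6} (which satisfies $\psi_{t_1}\ge 1$), and finite by regularity of $m_t$. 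Dividing gives $m_t(\pi^{-1}(t))\le 1$, so combined with the previous paragraph $m_t(\pi^{-1}(t))=1$.

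With this value substituted back into Lemma \ref{h14}, I obtain $m_t(Z'(v)\cap \mathcal G\pi^{-1}(t))=m_t(Z'(v))$ for every $v\in V$, hence $m_t(Z'(v)\setminus\mathcal G\pi^{-1}(t))=0$. Since $V$ is countable and $\{Z'(v):v\in V\}$ partitions $P(G)$, summing over $v$ gives $m_t(P(G)\setminus\mathcal G\pi^{-1}(t))=0$, which is the support statement. The only delicate point is the lower bound: one must identify the contribution of the exit path $t$ itself to the weighted path sum, but this is essentially bookkeeping once the relation $t^\beta(i)=t^\beta(k+1)\cdot A(\beta)_{t_{k+1}t_{k+2}}\cdots A(\beta)_{t_{i-1}t_i}$ is written down.
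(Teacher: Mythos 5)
Your argument is correct, and its second half takes a genuinely different route from the paper. The lower bound $m_t(\pi^{-1}(t))\ge 1$ is in substance the same computation as the paper's (the paper works with the decreasing open sets $U_j=\{x:\ s(x_i)=t_i,\ i\le j\}$ and Lemma \ref{b101}, you get the same estimate through Lemma \ref{borexit} with $n=0$, $v=t_1$; either way the point is that the path $t_{k+1}t_{k+2}\cdots t_i$ alone contributes $t^{\beta}(i)t^{\beta}(k+1)^{-1}$ to $\sum_n A(\beta)^n_{t_{k+1}t_i}$). Where you diverge is in how the exact value $1$ and the support statement are obtained: the paper introduces the auxiliary measure $m(B)=m_t(\pi^{-1}(t))^{-1}m_t(B\cap\mathcal G\pi^{-1}(t))$, uses the $\mathcal G$-invariance of $\mathcal G\pi^{-1}(t)$ and Lemma \ref{h6} to see that $m$ is again a harmonic $\beta$-KMS measure, computes $m(Z'(v))$ by Lemma \ref{h14}, and then invokes the injectivity statement of Theorem \ref{a17} to conclude $m=m_t$. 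You instead apply Lemma \ref{h14} directly to $m_t$, which legitimately gives $m_t\bigl(Z'(v)\cap\mathcal G\pi^{-1}(t)\bigr)=m_t(\pi^{-1}(t))\,m_t(Z'(v))$ for all $v$; taking $v=t_1$ and using $0<m_t(Z'(t_1))=\psi_{t_1}<\infty$ (with $\psi_{t_1}\ge 1$ from the proof of Lemma \ref{n6}) yields the upper bound, and substituting $m_t(\pi^{-1}(t))=1$ back into the same identity and summing over the countable partition $\{Z'(v)\}_{v\in V}$ gives the support statement. Your version is somewhat more economical: it needs neither Lemma \ref{h6} nor the uniqueness Theorem \ref{a17}, only the identity of Lemma \ref{h14} for $m_t$ itself, whereas the paper's detour through the normalized restricted measure is the same mechanism that is reused immediately afterwards in Corollary \ref{n101} and Proposition \ref{h19}, so the two proofs end up exercising the same toolkit overall. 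The only stylistic caveat is in your lower bound paragraph: the phrase about the single term ``dominating the reciprocal'' should be stated as the inequality $A(\beta)^{i-k-1}_{t_{k+1}t_i}\ge A(\beta)_{t_{k+1}t_{k+2}}\cdots A(\beta)_{t_{i-1}t_i}=t^{\beta}(i)t^{\beta}(k+1)^{-1}$, after which each approximant is $\ge 1$ without any appeal to monotonicity; this is cosmetic and does not affect correctness.
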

\begin{proof} Set
$$
U_n = \left\{ (x_i)_{i=1}^{\infty} \in P(G): \ s(x_i) = t_i, \ i \leq n
\right\} .
$$
Then $U_1 \supseteq U_2 \supseteq U_3 \supseteq \cdots$ is a
decreasing sequence of open sets in $P(G)$ such that
$$
\bigcap_j U_j = \pi^{-1}(t) .
$$  
It follows from Lemma \ref{b101} that 
$$
m_t(U_j) = t^{\beta}(j) \lim_{i \to \infty} t^{\beta}(i)^{-1} \sum_{n=0}^{\infty} A(\beta)^n_{t_j
  t_{i}}  .
$$
Since, for $i >j$,
$$
\sum_{n=0}^{\infty} A(\beta)^n_{t_j
  t_{i}} \geq A(\beta)_{t_jt_{j+1}}A(\beta)_{t_{j+1}t_{j+2}} \cdots
A(\beta)_{t_{i-1}t_i} = t^{\beta}(i){t^{\beta}(j)}^{-1}
$$
it follows that $m_t(U_j) \geq 1$ for all $j$. Combined with the
observation that $m_t(U_j) \leq
m_t\left(Z'(t_1)\right) < \infty$ for all $j$, we find that $1 \leq m_t\left(\pi^{-1}(t)\right) < \infty.$
Define a Borel measure $m$ on $P(G)$ such that
$$
m(B) =  m_t\left(\pi^{-1}(t)\right)^{-1} m_t\left(B \cap \mathcal
  G\pi^{-1}(t)\right) .
$$
Note that $m\left(\pi^{-1}(t)\right) = 1$ and that $m$ is supported on
$\mathcal G\pi^{-1}(t)$. Since $\mathcal G\pi^{-1}(t)$ is $\mathcal G$-invariant it follows from
Lemma \ref{h6} that $m$ is a harmonic $\beta$-KMS measure. It follows
therefore from Lemma \ref{h14} that
$$
m(Z'(v)) =  \lim_{i \to \infty} t^{\beta}(i)^{-1} \sum_{n=0}^{\infty} A(\beta)^n_{v
  t_{i}}  = m_t(Z'(v)) 
$$
for all $v\in V$. Hence $m = m_t$ by Theorem \ref{a17}.
\end{proof}

\begin{cor}\label{n101} Let $t$ be a $\beta$-summable exit path
  in $P(V)$, and let $m_t$ be the corresponding exit measure. Then
  $m_t$ is extremal among $\beta$-KMS measures. Specifically, when $\mu$ is a $\beta$-KMS
  measure such that $\mu \leq m_t$, then $\mu = sm_t$ where $s = \mu(\pi^{-1}(t))$.
\end{cor}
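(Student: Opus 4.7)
The plan is to exploit the strong support information about $m_t$ established in Lemma \ref{n10} and then apply Lemma \ref{h14} directly to $\mu$. First I would observe that $m_t$ is a harmonic $\beta$-KMS measure (by Lemma \ref{n6} together with Theorem \ref{a17}), so in particular $m_t(Q(G))=0$; since $\mu \leq m_t$, the measure $\mu$ also vanishes on $Q(G)$, and therefore $\mu$ is itself a harmonic $\beta$-KMS measure, so Lemma \ref{h14} is applicable to it.

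Next, Lemma \ref{n10} tells us that $m_t$ is supported on $\mathcal{G}\pi^{-1}(t)$, so the inequality $\mu \leq m_t$ forces $\mu(P(G)\setminus\mathcal{G}\pi^{-1}(t)) = 0$. Consequently, for every vertex $v \in V$,
$$
\mu(Z'(v)) \;=\; \mu\bigl(Z'(v) \cap \mathcal{G}\pi^{-1}(t)\bigr).
$$
Setting $s = \mu(\pi^{-1}(t))$, note that $s \leq m_t(\pi^{-1}(t)) = 1 < \infty$. Applying Lemma \ref{h14} to the harmonic $\beta$-KMS measure $\mu$, we get
$$
\mu(Z'(v)) \;=\; s \cdot \lim_{i \to \infty} t^{\beta}(i)^{-1} \sum_{n=0}^{\infty} A(\beta)^n_{v t_i} \;=\; s \cdot m_t(Z'(v)),
$$
where the last equality is just the defining formula for the exit measure $m_t$.

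Finally, I would invoke the injectivity asserted in Theorem \ref{a17} (proved right after Lemma \ref{twice}): two harmonic $\beta$-KMS measures that agree on all sets $Z'(v)$, $v \in V$, must coincide. Hence $\mu = s \, m_t$, which is precisely the claimed identity and in particular gives extremality of $m_t$ among $\beta$-KMS measures. I do not foresee a serious obstacle here; the only technical point is making sure Lemma \ref{h14} applies to $\mu$ rather than only to $m_t$, which is handled by the observation that $\mu \leq m_t$ forces $\mu$ to be harmonic and to inherit the support property $\mu(P(G)\setminus \mathcal{G}\pi^{-1}(t)) = 0$.
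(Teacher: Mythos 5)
Your proposal is correct and follows essentially the same route as the paper: note that $\mu\leq m_t$ forces $\mu$ to be harmonic and supported on $\mathcal G\pi^{-1}(t)$, apply Lemma \ref{h14} to $\mu$ to get $\mu(Z'(v))=\mu(\pi^{-1}(t))\,m_t(Z'(v))$ for all $v$, and conclude via the injectivity in Theorem \ref{a17}. The only difference is cosmetic, namely that you spell out the preliminary verifications (via Lemmas \ref{n6} and \ref{n10}) a bit more explicitly than the paper does.
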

\begin{proof} Assume $\mu$ is a $\beta$-KMS measure and that $\mu \leq
  m_t$. Since $m_t$ is supported on $P(G)$ so is $\mu$, and $\mu$
  is therefore a harmonic $\beta$-KMS measure. Furthermore, $\mu$ is supported
  on $\mathcal G\pi^{-1}(t)$ since $m_t$ is and it follows then from
  Lemma \ref{h14} that $\mu(Z'(v)) = \mu(\pi^{-1}(t))
  m_t(Z'(v))$. This holds for all $v\in V$ and it follows from
  Theorem \ref{a17} that $\mu = \mu(\pi^{-1}(t))m_t$.
\end{proof}

Two elements
$t=(t_i)_{i=1}^{\infty}, t'=(t'_i)_{i=1}^{\infty}$ in $P(V)$ are \emph{tail
  equivalent} when there is a $k \in \mathbb Z$ such that $t_{i+k} =
t'_i$ for all large $i$, and \emph{tail inequivalent} otherwise. A tail equivalence class of exit paths will be
called an \emph{exit}. For $\beta \in \mathbb R$ we say that an exit
is $\beta$-summable when one of its exit paths is $\beta$-summable (and
then they all are).

\begin{prop}\label{h19} Assume that $G$ is strongly connected,
  that $C^*(G)$ is simple and that $A(\beta)$ is transient. Let $m$ be
  a harmonic $\beta$-KMS measure on $P(G)$. There is a decomposition $m = m_1 + m_2$ where $m_i,i = 1,2$, are harmonic
  $\beta$-KMS measures such that 
\begin{enumerate}
\item[i)]  there are a $N \in \mathbb N \cup \{0, \infty\}$, tail inequivalent $\beta$-summable exit paths
  $t^1,t^2,\cdots, t^N$ and positive real numbers $\lambda_1,\lambda_2, \cdots,
  \lambda_N$ such that 
$$
m_1 = \sum_{i=1}^N \lambda_i m_{t^i} ,
$$
and
\item[ii)]  $m_2 \circ \pi^{-1}(t) = 0$ for all $t  \in P(V)$.
\end{enumerate}
\end{prop}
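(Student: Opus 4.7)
The plan is to decompose $m$ according to which tail equivalence classes of vertex sequences carry its mass, separating the ``discrete'' part supported on individual orbits from a ``diffuse'' remainder that charges no single fiber $\pi^{-1}(t)$.

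The first step I would take is to exploit the dissipativity of $m$. Since $A(\beta)$ is transient, Theorem~\ref{h8} tells us that $m$ is dissipative, so $m$ is concentrated on $P(G)_{wan}$. For any $x \in P(G)_{wan}$ the vertex sequence $\pi(x)$ visits every vertex only finitely often, hence $\pi(x)$ is automatically an exit path. Consequently $m(\pi^{-1}(t)) = 0$ whenever $t \in P(V)$ is not an exit path; this disposes of the non-exit case in~(ii) at once.

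Next I would enumerate the candidate exits. Let
\[
T \;=\; \{\, t \in P(V) : t \text{ is an exit path and } m(\pi^{-1}(t)) > 0 \,\}.
\]
For each $v \in V$, the sets $\pi^{-1}(t) \cap Z'(v)$ with $t \in T$ and $t_1 = v$ are pairwise disjoint Borel subsets of $Z'(v)$, and since $m(Z'(v)) < \infty$ only countably many such $t$ can have positive measure; summing over the countable set $V$ shows $T$ is countable. For each $t \in T$, Lemma~\ref{h14} forces the limit $\lim_{i\to\infty} t^{\beta}(i)^{-1}\sum_{n=0}^{\infty} A(\beta)^n_{vt_i}$ to be finite (otherwise the left hand side of the formula in Lemma~\ref{h14} would exceed the finite number $m(Z'(v))$), so every element of $T$ is $\beta$-summable. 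I would then group $T$ into its finitely or countably many tail equivalence classes and pick a representative $t^i$ from each class, $i = 1, 2, \ldots, N$ with $N \in \{0,1,2,\ldots\} \cup \{\infty\}$.

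Then I would set $A = \bigcup_{i=1}^{N} \mathcal G \pi^{-1}(t^i)$, which is a disjoint union because the $t^i$ are tail inequivalent, and define
\[
m_1(B) \;=\; m(B \cap A), \qquad m_2(B) \;=\; m(B \setminus A).
\]
Each $\mathcal G \pi^{-1}(t^i)$ is a $\mathcal G$-invariant Borel set, so Lemma~\ref{h6} gives that the restrictions $\mu_i = m|_{\mathcal G \pi^{-1}(t^i)}$ are $\beta$-KMS measures, and the same applies to $m_1$ and $m_2$; both remain harmonic because $m$ is. For each $i$, applying Lemma~\ref{h14} to $\mu_i$ and recognising the limit as $m_{t^i}(Z'(v))$ yields
\[
\mu_i(Z'(v)) \;=\; m(\pi^{-1}(t^i))\, m_{t^i}(Z'(v)) \qquad (v \in V),
\]
and the injectivity of the correspondence in Theorem~\ref{a17} gives $\mu_i = \lambda_i m_{t^i}$ with $\lambda_i = m(\pi^{-1}(t^i)) > 0$. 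Summing over $i$ produces the required expression for $m_1$. To finish (ii): if $t$ is not an exit path, $m_2(\pi^{-1}(t)) \leq m(\pi^{-1}(t)) = 0$ by the first step; if $t$ is an exit path with $m(\pi^{-1}(t)) = 0$, there is nothing to check; and if $t$ is an exit path with $m(\pi^{-1}(t)) > 0$, then $t \in T$ is tail equivalent to some $t^i$, so $\pi^{-1}(t) \subseteq \mathcal G \pi^{-1}(t) = \mathcal G \pi^{-1}(t^i) \subseteq A$, whence $m_2(\pi^{-1}(t)) = 0$. The main obstacle I anticipate is the bookkeeping in identifying each orbit restriction $\mu_i$ precisely: one must verify $\mathcal G$-invariance of $\mathcal G \pi^{-1}(t^i)$ to apply Lemma~\ref{h6}, and then pin down its $\psi$-vector exactly via Lemma~\ref{h14} to extract the scalar $\lambda_i$ and invoke the uniqueness in Theorem~\ref{a17}.
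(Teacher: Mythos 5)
Your proposal is correct and follows essentially the same route as the paper: dissipativity via Theorem \ref{h8} to see that only exit paths can carry fiber mass, countability and $\beta$-summability of those paths via finiteness of $m(Z'(v))$ and Lemma \ref{h14}, restriction to the $\mathcal G$-invariant sets $\mathcal G\pi^{-1}(t^i)$ via Lemma \ref{h6}, and identification of each restriction as $\lambda_i m_{t^i}$ through Lemma \ref{h14} together with the injectivity of the measure-to-vector correspondence. The only (cosmetic) difference is that you define $m_1$ directly as the restriction to $\bigcup_i \mathcal G\pi^{-1}(t^i)$ and invoke Theorem \ref{a17}, whereas the paper defines $m_2$ as the remainder and uses Lemma \ref{twice} directly.
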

\begin{proof} Set $M = \left\{t \in P(V): \ m\left(\pi^{-1}(t)\right)
    > 0 \right\}$. Since $m$ is dissipative by Theorem \ref{h8} all elements of $M$ are
  exit paths. Since $m(Z(v)) < \infty$ it follows that $M \cap
  \left\{t \in P(V): \ t_1 = v\right\}$ is countable for all $v$, and
  hence that $M$ is countable. Let $M = M_1 \sqcup M_2
  \sqcup \cdots $ be the partition of $M$ into tail
  equivalence classes and take an element $t^i =
  (t^i_j)_{j=1}^{\infty} \in M_i$ for each $i$. It follows from Lemma
  \ref{h14} that
$$
\lim_{j \to \infty} (t^i)^{\beta}(j)^{-1} \sum_{n=0}^{\infty}
A(\beta)^n_{t^i_1t^i_j} \ \leq \ \frac{m(Z'(t^i_1))}{m(\pi^{-1}(t^i))} ,
$$ 
showing that the $t^i$'s are all $\beta$-summable. Since $B \mapsto
m\left(B \cap \mathcal G\pi^{-1}(t^i)\right)$ is a harmonic $\beta$-KMS
measure by Lemma \ref{h6} it follows from a combination of Lemma
\ref{h14} and Lemma \ref{twice} that
$m\left(B \cap \mathcal G\pi^{-1}(t^i)\right) =
m\left(\pi^{-1}(t^i)\right)m_{t^i}(B)$ for all Borel sets $B$. Set
$\lambda_i = m\left(\pi^{-1}(t^i)\right)$ and note that 
$$
m_2(B) = m(B) -
  \sum_{i=1}^N \lambda_i m_{t^i}(B) = m\left( B \backslash
    \bigcup_{i=1}^N \mathcal G \pi^{-1}(t^i)\right) \ \geq \ 0
$$
for all Borel sets $B$. It follows from Lemma \ref{h6} that $m_2$ is a
harmonic $\beta$-KMS measure. This completes the proof because ii)
holds by construction.
\end{proof}

\begin{thm}\label{n11} Assume that $G$ is strongly connected, that
  $C^*(G)$ is simple and that there are at most countably many exits
  in $G$. Let $\beta \in \mathbb R$. There
  are no $\beta$-KMS weights unless $\mathbb P(- \beta F) \leq 0$. Assume
  therefore that this holds. 

\begin{enumerate}
\item[1)] Assume that $A(\beta)$ is recurrent. Then $\mathbb P(- \beta
  F) = 0$ and there is a
  $\beta$-KMS weight, unique up to multiplication by scalars. It is harmonic and the associated measure on
  $P(G)$ is conservative.
\item[2)] Assume that $A(\beta)$ is transient. The rays of
  extremal boundary $\beta$-KMS weights are in bijective
  correspondence with the infinite emitters in
  $G$, and all
  extremal harmonic $\beta$-KMS weights arise from exit measures of $\beta$-summable exits in $G$.
\end{enumerate}
\end{thm}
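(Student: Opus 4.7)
The plan is to derive the theorem by combining results already in the paper, with one new observation about countability. The preliminary statement $\mathbb{P}(-\beta F) \leq 0$ is immediate from Lemma \ref{h203}. For part 1) I would argue as follows: if $A(\beta)$ is recurrent then $\sum_n A(\beta)^n_{vv} = \infty$, which forces $e^{-\mathbb{P}(-\beta F)}$ (the radius of convergence of $\sum_n A(\beta)^n_{vv} t^n$) to be $\leq 1$; combined with $\mathbb{P}(-\beta F) \leq 0$ this gives $\mathbb{P}(-\beta F) = 0$. The uniqueness, harmonicity, and conservativity statements then follow verbatim from Theorem \ref{recurrent}.

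For part 2), assume $A(\beta)$ is transient. The statement about boundary weights is immediate: by 2) of Lemma \ref{a41} every infinite emitter is $\beta$-summable, so Theorem \ref{a16} (or the already-stated Proposition \ref{trans0}) gives the bijection with extremal rays of boundary $\beta$-KMS weights. The substance lies in the harmonic part. I would let $m$ be a non-zero extremal harmonic $\beta$-KMS measure and argue that $m$ is proportional to some $m_t$.

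The key step is the following countability observation. By Theorem \ref{h8}, $m$ is dissipative and so supported on $P(G)_{wan}$. Any $(e_i) \in P(G)_{wan}$ visits every vertex only finitely often, so $\pi((e_i)) = (s(e_i))$ is an exit path; thus $P(G)_{wan} \subseteq \pi^{-1}(E)$, where $E \subseteq P(V)$ denotes the set of exit paths. A single tail equivalence class of exit paths is countable (a representative is determined by a finite initial segment together with a shift parameter, and $G$ is countable), so the assumption that there are at most countably many exits forces $E$ itself to be countable. Therefore $P(G)_{wan} = \bigsqcup_{t \in E}\pi^{-1}(t)$ is a \emph{countable} disjoint Borel union.

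Now apply Proposition \ref{h19} to obtain $m = m_1 + m_2$ with $m_1 = \sum_{i=1}^N \lambda_i m_{t^i}$ for pairwise tail inequivalent $\beta$-summable exit paths $t^i$, and $m_2(\pi^{-1}(t)) = 0$ for every $t \in P(V)$. Since $E$ is countable and $m_2$ is supported on $P(G)_{wan} \subseteq \pi^{-1}(E)$, countable additivity gives $m_2 \leq m_2(\pi^{-1}(E)) = \sum_{t \in E} m_2(\pi^{-1}(t)) = 0$, hence $m_2 = 0$. Finally, the sets $\mathcal{G}\pi^{-1}(t^i) = \pi^{-1}([t^i])$ are pairwise disjoint (different tail equivalence classes), so the $\lambda_i m_{t^i}$ are mutually singular $\beta$-KMS measures dominated by $m$. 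Extremality of $m$ applied to $\lambda_1 m_{t^1} \leq m$ forces $m$ to be a scalar multiple of $m_{t^1}$, and hence $N=1$. Conversely, Corollary \ref{n101} ensures that each $m_t$ coming from a $\beta$-summable exit path is itself extremal, completing the correspondence.

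The main obstacle is recognizing that countability of exits is exactly what is needed to eliminate the residual component $m_2$ in the Proposition \ref{h19} decomposition; everything else is bookkeeping that invokes results already established (Lemma \ref{h203}, Theorem \ref{recurrent}, Theorem \ref{h8}, Proposition \ref{trans0}, Proposition \ref{h19}, and Corollary \ref{n101}).
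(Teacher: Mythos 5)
Your proposal is correct and follows essentially the same route as the paper: Lemma \ref{h203} for the necessity of $\mathbb P(-\beta F)\leq 0$, Theorem \ref{recurrent} for case 1), Proposition \ref{trans0} for the boundary weights, and then dissipativity (Theorem \ref{h8}) plus countability of the set of exit paths to kill the residual term $m_2$ in the decomposition of Proposition \ref{h19}, with extremality and Corollary \ref{n101} finishing the harmonic case. The only differences are cosmetic (your explicit mutual-singularity remark and the radius-of-convergence phrasing in case 1), which is the same argument used in Proposition \ref{genbeta}).
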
 
\begin{proof}  That there
  are no $\beta$-KMS weights unless $\mathbb P(- \beta F) \leq 0$
  follows from Lemma \ref{h203}. Case 1) follows from Theorem
  \ref{recurrent}. The statement concerning boundary $\beta$-KMS
  weights in case 2) follow from Proposition \ref{trans0}. In view of
  Corollary \ref{n101} it suffices then to show that every non-zero extremal
  harmonic $\beta$-KMS measure $m$ is a multiple of the exit measure
  $m_t$ of a $\beta$-summable exit $t$. For this note that $P(G)_{wan}
  = \pi^{-1}(M)$ where $M$ is the set of exit paths in $P(V)$,
  cf. (\ref{wan7}). Since $G$ is a countable graph the set of exit
  paths tail-equivalent to a given exit path is countable and hence,
  under the present assumption, the set $M$ is countable. By Theorem
  \ref{h8}, $m$ is
  dissipative because $A(\beta)$ is transient, which means that $m(P(G) \backslash \pi^{-1}(M)) = 0$. Therefore, in the
  decomposition $m= m_1 + m_2$ from Proposition \ref{h19} the measure
  $m_2$ must be zero and by extremality of $m$ there is exactly one
  exit measure in the decomposition of $m_1 = m$.
\end{proof}

\section{Constructing graphs with prescribed  structure of KMS weights
  for the gauge action}

\subsection{The setting and the objective}

We retain the assumption that $G$ is strongly connected and consists
of more than a single loop so that $C^*(G)$ is simple. In addition we assume now that the function $F$ is
constant $1$ so that $\alpha^F$ is the gauge action on
$C^*(G)$. Let
$A$ be the adjacency matrix of $G$, cf. (\ref{h210}). Then $A(\beta) = e^{-\beta}A$ and 
\begin{equation}\label{n19}
\mathbb P(-\beta F) = \mathbb P(-\beta) = h(G) - \beta,
\end{equation}
where 
$$
h(G) = \limsup_n \frac{1}{n} \log A^n_{vv}  
$$
is independent of which vertex $v \in V$ we consider. The number $h(G)$ is
known as \emph{the Gurevich entropy} of $G$. It follows from
Lemma \ref{h203} that there are no KMS weights at all when $\beta <
h(G)$.  In the following we will therefore only consider
strongly connected graphs $G$ with finite Gurevich entropy. We will call these graphs \emph{admissible}. 

Following standard terminology, cf. e.g. \cite{Ru}, we will
say that $G$ is \emph{recurrent} when
$$
\sum_{n=0}^{\infty} A^n_{vv}e^{-nh(G)}
$$
is infinite for some and hence all $v\in V$, and that $G$ is
\emph{transient} when it is not. In the terminology we use here, $G$ is recurrent or
transient exactly when the matrix $e^{-h(G)}A$ is recurrent or transient, respectively.


 Let $t = (t_i)_{i=1}^{\infty}$ be an exit path in $G$ and consider
 $\beta \in \mathbb R$. Since we are
considering the gauge action we have that $A(\beta) = e^{-\beta}A$ and 
$$
t^{\beta}(k) = e^{-(k-1)\beta} t(k),
$$
where $t(k) = A_{t_1t_2}A_{t_2t_3}\cdots A_{t_{k-1}t_k}$, and
\begin{equation}\label{termk}
t^{\beta}(k)^{-1}\sum_{n=0}^{\infty} A(\beta)^n_{vt_k}  =  e^{(k-1)\beta}
t(k)^{-1} \sum_{n=0}^{\infty} A^n_{vt_k}e^{-n\beta}
\end{equation}
for all $\beta \in \mathbb R$ and all $v \in V$. For a given $\beta \in \mathbb R$
the exit path $t$ is $\beta$-summable if and only if
$$
\sum_{n=0}^{\infty} A^n_{t_1t_1} e^{-n\beta} < \infty
$$    
and
$$
\lim_{ k  \to \infty}  e^{(k-1)\beta}
t(k)^{-1} \sum_{n=0}^{\infty} A^n_{t_1t_k}e^{-n\beta}  < \infty .
$$
The convexity of the exponential function implies therefore that the
$\beta$-values for which an exit path is $\beta$-summable constitutes an interval. When there are $L$ exits, the possible inverse
temperatures which the exit measures and their KMS weights can
contribute is a set of the form
$$
\bigcup_{i=1}^L I_i 
$$
where each $I_i$ is a sub-interval (open, closed or half-open) in
$]h(G),\infty[$ when $G$ is recurrent and in $[h(G),\infty[$ when $G$
is transient. Furthermore, for each $\beta \in \bigcup_{i=1}^L I_i$
the number of extremal rays of $\beta$-KMS weights arising from the exits is
the number
$$
\# \left\{1 \leq i \leq L : \  \beta \in  I_i\right\}  .
$$

We aim now to show that all these possibilities can actually be
realized with one necessary restriction in the row-finite case.

\subsection{Ruette graphs}

When $v$ is a vertex we let $l^k_{vv}(G)$ be the number of simple
loops of length $k$ in $G$ based at $v$. We introduce now a class of
graphs which were considered by Ruette in Example 2.9 in \cite{Ru}.

\begin{defn}\label{Ruette} A vertex $v$ in an admissible graph $G$ is a
  \emph{Ruette vertex} when
\begin{enumerate}
\item[i)] $l^1_{vv}(G) = 1$, and
\item[ii)] $\limsup_n \left(l^n_{vv}(G)\right)^{\frac{1}{n}} =
  e^{h(G)}$.
\end{enumerate}
\end{defn}

An admissible graph with a Ruette vertex will be called a
\emph{Ruette graph}.

\begin{lemma}\label{ruette3} Let $G$ be an admissible graph and $v$ a vertex in
  $G$. Set 
$$
L = \limsup_n \left(l^n_{vv}(G)\right)^{\frac{1}{n}}. 
$$
The following are equivalent:
\begin{enumerate}
\item[i)] $L > 0$ and $\sum_{n=1}^{\infty} l^n_{vv}(G)L^{-n} = 1$.
\item[ii)]  $G$ is recurrent and $L =
  e^{h(G)}$. 
\end{enumerate}
\end{lemma}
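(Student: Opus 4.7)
The plan is to apply the generating-function identity of Lemma \ref{renew1} in the specialization $\beta=0$ (so $l^j_{vv}(\beta)$ becomes the count $l^j_{vv}(G)$ and $A(\beta)_{vv}$ becomes $A_{vv}$), keeping the parameter $t$ free. This turns Lemma \ref{renew1} into the purely combinatorial identity
\[
\sum_{k=0}^{\infty} \Bigl(\sum_{j=1}^{\infty} l^j_{vv}(G)\, t^j\Bigr)^k \ = \ \sum_{n=0}^{\infty} A^n_{vv}\, t^n ,
\]
valid for $t \geq 0$. By Cauchy--Hadamard, $L^{-1}$ is the radius of convergence of the inner power series, while $e^{-h(G)}$ is the radius of convergence of the right-hand series. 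The proof is then a direct transcription of the argument used for Lemma \ref{renew9}, comparing these two radii through the identity above.

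For (i) $\Rightarrow$ (ii): evaluating the identity at $t = L^{-1}$ with $\sum_j l^j_{vv}(G) L^{-j} = 1$ makes the left-hand side $\sum_k 1^k = \infty$, so $\sum_n A^n_{vv} L^{-n} = \infty$. On the other hand, for any $0 \leq t < L^{-1}$ the sum $\sum_j l^j_{vv}(G) t^j$ is strictly less than $1$ (the hypothesis $L > 0$ forces infinitely many simple loops, making $t \mapsto \sum_j l^j_{vv}(G) t^j$ strictly increasing), so the identity gives $\sum_n A^n_{vv} t^n < \infty$. Thus $L^{-1}$ is exactly the radius of convergence of $\sum_n A^n_{vv} t^n$, yielding $L = e^{h(G)}$, and divergence at $t = L^{-1}$ is precisely recurrence of $G$.

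For (ii) $\Rightarrow$ (i): recurrence and $L = e^{h(G)}$ give $\sum_n A^n_{vv} L^{-n} = \infty$, and inserting into the identity at $t=L^{-1}$ forces $\sum_j l^j_{vv}(G) L^{-j} \geq 1$. Conversely, for every $t < L^{-1} = e^{-h(G)}$ the series $\sum_n A^n_{vv} t^n$ converges (we are inside its radius of convergence), so the identity forces $\sum_j l^j_{vv}(G) t^j < 1$. Letting $t \uparrow L^{-1}$, monotone convergence gives $\sum_j l^j_{vv}(G) L^{-j} \leq 1$, and the two bounds combine to the desired equality. Note $L>0$ is automatic from $L = e^{h(G)}$ since $h(G)$ is finite.

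No serious obstacle is expected: everything reduces to Lemma \ref{renew1} plus elementary facts about power series. The only point requiring a moment's care is the strict inequality $\sum_j l^j_{vv}(G) t^j < 1$ for $t < L^{-1}$ in the direction (i) $\Rightarrow$ (ii), which relies on $L>0$ to guarantee that the series has infinitely many nonzero terms and is therefore strictly monotone in $t$.
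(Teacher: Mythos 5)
Your proof is correct and takes essentially the same route as the paper: the paper's proof is the one-line observation that the statement is Lemma \ref{renew9} applied with $F=1$ and $\beta=\log L$, and the proof of that lemma is precisely the Lemma \ref{renew1} identity combined with the radius-of-convergence and monotone-convergence argument you transcribe, your free parameter $t$ playing the role of $e^{-\beta}$. The only cosmetic difference is that you inline the argument at $\beta=0$ rather than citing Lemma \ref{renew9} directly; nothing substantive is gained or lost.
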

\begin{proof} Apply Lemma
  \ref{renew9} to $G$ with $F= 1$ and $\beta = \log L$.
\end{proof}

\begin{prop}\label{ruette4} Let $G$ be an Ruette graph with Ruette
  vertex $v$. Assume that $G$ is recurrent and let $G'$ be the graph obtained
  from $G$ by removing the edge $e$ with $s(e) = r(e) = v$. Then $G'$
  is transient and $h(G') = h(G)$.
\end{prop}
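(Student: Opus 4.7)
The plan is to exploit the simple-loop characterisation of recurrence (Proposition \ref{renew5}) together with the identity from Lemma \ref{ruette3}. The key observation is that removing the single self-loop $e$ at $v$ changes only the \emph{length-one} simple loop count at $v$: a simple loop at $v$ of length $n\ge 2$ satisfies $r(e_i)\ne v$ for $i<n$, so it cannot contain $e$ (which has $s(e)=r(e)=v$). Hence
\[
l^1_{vv}(G')=0,\qquad l^n_{vv}(G')=l^n_{vv}(G) \ \text{ for all } n\ge 2.
\]

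First I would check the auxiliary hypotheses needed to invoke the earlier results for $G'$. Strong connectivity is immediate: any path in $G$ can be replaced by one in $G'$ by deleting every occurrence of $e$ (these are simply loops at $v$ that can be omitted). Next I would establish $h(G')=h(G)$. The inequality $h(G')\le h(G)$ is the elementary monotonicity $A'^n_{vv}\le A^n_{vv}$. For the reverse inequality I use Definition~\ref{Ruette}(ii) and the fact just noted:
\[
\limsup_n \bigl(A'^n_{vv}\bigr)^{1/n}\;\ge\;\limsup_n \bigl(l^n_{vv}(G')\bigr)^{1/n}\;=\;\limsup_n \bigl(l^n_{vv}(G)\bigr)^{1/n}\;=\;e^{h(G)},
\]
so $h(G')\ge h(G)$, hence $h(G')=h(G)$. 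In particular $G'$ is still admissible.

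Now I apply Lemma~\ref{ruette3} to the recurrent Ruette graph $G$ (with $L=e^{h(G)}$): this gives
\[
\sum_{n=1}^{\infty} l^n_{vv}(G)\, e^{-nh(G)}\;=\;1.
\]
Since $v$ is a Ruette vertex, $l^1_{vv}(G)=1$, and splitting off the $n=1$ term yields
\[
\sum_{n=2}^{\infty} l^n_{vv}(G)\, e^{-nh(G)}\;=\;1-e^{-h(G)}\;<\;1.
\]
Using $h(G')=h(G)$ and the relation between simple-loop counts in $G$ and $G'$,
\[
\sum_{n=1}^{\infty} l^n_{vv}(G')\, e^{-nh(G')}\;=\;\sum_{n=2}^{\infty} l^n_{vv}(G)\, e^{-nh(G)}\;=\;1-e^{-h(G)}\;<\;1.
\]
By Proposition~\ref{renew5} applied to $G'$ with $F=1$ and $\beta=h(G')$ this says precisely that the matrix $A'(h(G'))=e^{-h(G')}A'$ is transient, i.e.\ $G'$ is transient, completing the proof.

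There is no real obstacle here; the only thing to be slightly careful about is the fact that simple loops of length $\ge 2$ at a vertex never pass through that vertex internally, which is exactly what makes the removal of the length-one loop $e$ invisible to the counts $l^n_{vv}$ for $n\ge 2$. Everything else is bookkeeping around Lemma~\ref{ruette3} and Proposition~\ref{renew5}.
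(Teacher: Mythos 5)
Your proof is correct and follows essentially the same route as the paper: both establish $h(G')=h(G)$ from the invariance of the simple-loop counts $l^n_{vv}$ for $n\ge 2$, then use $\sum_n l^n_{vv}(G)e^{-nh(G)}=1$ (Lemma \ref{ruette3}) to get $\sum_n l^n_{vv}(G')e^{-nh(G')}=1-e^{-h(G)}<1$ and conclude transience; your appeal to Proposition \ref{renew5} is just the $t=1$ case of the paper's appeal to Lemma \ref{renew1}, and your extra check of strong connectivity of $G'$ is a harmless addition.
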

\begin{proof} Since $G' \subseteq G$ we have trivially that $h(G')
  \leq h(G)$. Let $A'$ be the adjacency matrix of $G'$. Then
  $l^n_{vv}(G') \leq \left(A'\right)^n_{vv}$ and hence
$$
\limsup_n \left( l^n_{vv}(G')\right)^{\frac{1}{n}} \leq e^{h(G')} .
$$
But $l^n_{vv}(G') = l^n_{vv}(G)$ when $n \geq 2$ and $v$ is a Ruette
vertex in $G$ so
$$
\limsup_n \left( l^n_{vv}(G')\right)^{\frac{1}{n}} = e^{h(G)} .
$$
Therefore $h(G') = h(G)$. It follows from Lemma \ref{ruette3} that $\sum_{n=1}^{\infty}
l^n_{vv}(G) e^{-n h(G)} = 1$ and hence
$$
\sum_{n=1}^{\infty}
l^n_{vv}(G') e^{-n h(G')} = \sum_{n=2}^{\infty}
l^n_{vv}(G) e^{-n h(G)} = 1- e^{-h(G)} < 1.
$$
By Lemma \ref{renew1} this implies that $\sum_{n=0}^{\infty}
\left(A'\right)^n_{vv}e^{-n h(G')} < \infty$, i.e $G'$ is transient.

\end{proof}

\subsection{Technical lemmas on series}

We need some notation and terminology to describe the intervals of
summability for the exits in the examples we are going to
construct. Given three sequences $\textbf{a} = \{a_i\}_{i=1}^{\infty}$, $\textbf{b}
=\{b_i\}_{i=1}^{\infty}$ and $\textbf{c} =\{c_i\}_{i=1}^{\infty}$, of
non-zero natural numbers, let 
$$
J(\textbf{a},\textbf{b},\textbf{c})
$$ 
be the set
of real numbers $\beta \in \mathbb R$ with the property that
\begin{equation}\label{m6}
\sum_{k=1}^{\infty} \frac{b_k}{a_1a_2 \cdots a_k} e^{-k\beta} < \infty .
\end{equation}
and
\begin{equation}\label{m3}
\sum_{k=1}^{\infty} \frac{c_k}{a_1a_2\cdots a_{k}}
e^{k\beta} < \infty .
\end{equation}

\begin{lemma}\label{summint}  Let $ 0 < r \leq R$ be real numbers, and let $I$
  be one of the intervals 
$$
\big[r,R\big], \ \big]r,R\big], \ \big[r,R\big[, \ \big]r,R\big[,\
\big[r,\infty\big[, \ \big]r,\infty\big[. 
$$
There is a choice of sequences
$\textbf{a} = \{a_i\}_{i=1}^{\infty}$, $\textbf{b}
=\{b_i\}_{i=1}^{\infty}$ and $\textbf{c} =\{c_i\}_{i=1}^{\infty}$ of
non-zero natural numbers such
that $I = J(\textbf{a},\textbf{b},\textbf{c})$.
\end{lemma}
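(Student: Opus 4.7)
The plan is to exploit the three independent degrees of freedom: the sequence $\textbf{a}$ sets an underlying exponential (or super-exponential) scale through the partial products $P_k = a_1a_2\cdots a_k$, while $\textbf{b}$ and $\textbf{c}$ are then used to pin down the left and right critical exponents of (\ref{m6}) and (\ref{m3}) at $r$ and $R$ respectively, with endpoint behaviour controlled by polynomial correction factors.

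Concretely, when $R<\infty$ I would fix an integer $N > e^R$ and take $a_k = N$ for every $k$, so $P_k = N^k$. Then put $b_k = \lceil N^k e^{kr} g_r(k)\rceil$ and $c_k = \lceil N^k e^{-kR} g_R(k)\rceil$, where the gauge factor $g_r(k)$ equals $1/k^2$ if the endpoint $r$ is to belong to $I$ and equals $1$ otherwise, and $g_R(k)$ is defined symmetrically. Both $b_k$ and $c_k$ are positive integers because $Ne^r > 1$ and $Ne^{-R} > 1$. After dividing by $P_k$, the ceiling introduces an error of size at most $N^{-k}$, whose contribution to (\ref{m6}) is dominated by the geometric series $\sum_k (e^{-\beta}/N)^k$; this converges whenever $\beta > -\log N$, hence everywhere on $I$. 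The main terms therefore decide: (\ref{m6}) converges precisely when $\sum_k g_r(k) e^{k(r-\beta)}$ does, which is exactly when $\beta > r$ or $\beta \geq r$ according to the choice of $g_r$, and symmetrically for (\ref{m3}). Intersecting the two conditions yields $J(\textbf{a},\textbf{b},\textbf{c}) = I$, and the degenerate case $r = R$ with $I = \{r\}$ is the choice $g_r = g_R = 1/k^2$.

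For the unbounded intervals $[r,\infty[$ and $]r,\infty[$, no constant $a_k$ can make (\ref{m3}) converge for every $\beta \in \mathbb{R}$, so I would instead take $a_k = k$ and $c_k = 1$. Then $P_k = k!$ and (\ref{m3}) becomes $\sum_k e^{k\beta}/k! = e^{e^\beta} - 1$, imposing no upper restriction. The same recipe as before, $b_k = \lceil k!\, e^{kr} g_r(k)\rceil$, places the left critical point at $r$ with the required strictness, the ceiling error this time being dominated by $\sum_k e^{-k\beta}/k!$, which converges for all $\beta$. The empty interval (arising when $r = R$ and $I$ is any non-closed variant) is handled trivially: for instance $a_k = c_k = 1$ and $b_k = k!$ make (\ref{m6}) diverge for every real $\beta$, so $J = \emptyset$.

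The one point that deserves attention in a full write-up is the verification that the ceiling errors cannot corrupt the convergence behaviour at the critical endpoints, which is where the main terms may converge only delicately. This is harmless throughout because, in every case considered, the errors are termwise dominated by $1/P_k$, which decays exponentially (or factorially) fast, so their contributions are summable uniformly on any compact subset of $\mathbb{R}$ and in particular at every $\beta \in I$.
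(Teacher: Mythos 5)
Your construction is correct, and it reaches the statement by a route that differs from the paper's in its mechanics. The paper first manufactures sequences of positive \emph{rationals} $q_n\approx e^{nr}$ and $q_n'\approx e^{-nR}$ (with the same $1/n^2$ device you use to close an endpoint, and with lower bounds of the form $(s+n^{-1})^{-n}$ to force divergence at the radius), and then invokes Lemma \ref{m7} to clear denominators: a single sequence $\textbf{a}$ is produced so that $q_n=b_n/(a_1\cdots a_n)$ and $q_n'=c_n/(a_1\cdots a_n)$ exactly. You dispense with Lemma \ref{m7} altogether by fixing $\textbf{a}$ in advance ($a_k=N$ with $N>e^R$, or $a_k=k$ for the unbounded intervals) and defining $b_k,c_k$ as ceilings of the ideal values; integrality is then automatic, and the price is the rounding-error estimate, which you handle correctly: the errors are nonnegative, so the series dominate their main terms and divergence outside $I$ is preserved, while on $I$ the error contributions are controlled by $\sum_k(e^{\mp\beta}/N)^k$ (resp. $\sum_k e^{\mp k\beta}/k!$), which converge there since $I\subseteq\,]0,\infty[$ and $N>e^R$. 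Your write-up also has the merit of treating all six interval types (and the degenerate empty case when $r=R$) uniformly and explicitly, whereas the paper details only $]r,R[$ and $]r,R]$ and leaves the rest, including the half-lines, to the reader; your factorial choice $a_k=k$, $c_k=1$ is a clean way to kill condition (\ref{m3}) identically for those.

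One cosmetic point: in your closing remark the claim that the rounding errors are summable ``uniformly on any compact subset of $\mathbb R$'' is an overstatement in the bounded case, where $\sum_k e^{-k\beta}N^{-k}$ converges only for $\beta>-\log N$; what you actually need, and what your argument does establish, is summability at every $\beta\in I$ (indeed on a neighbourhood of $I$), so the proof is unaffected.
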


For the proof of Lemma \ref{summint} we need the following observation.

\begin{lemma}\label{m7} Let $\{q_n\}_{n=1}^{\infty},\{q'_n\}_{n=1}^{\infty}$ be sequences of
  positive rational numbers. There are sequences $\{a_n\}_{n=1}^{\infty}$, $\{b_n\}_{n=1}^{\infty}$,
  $\{c_n\}_{n=1}^{\infty}$ of non-zero natural numbers such that
$$
{q_n} = \frac{b_n}{a_1a_2\cdots a_n} \ \ \text{and} \ \ {q'_n} = \frac{c_n}{a_1a_2\cdots a_n}
$$
for all $n$.
\end{lemma}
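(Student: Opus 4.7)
The plan is to construct the sequences recursively by making the cumulative product $a_1 a_2 \cdots a_n$ absorb whatever denominators are needed to make both $q_n (a_1 \cdots a_n)$ and $q'_n (a_1\cdots a_n)$ integers. No optimization is required: we may freely enlarge $a_n$, which only makes $b_n,c_n$ larger (still positive integers).

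Concretely, write each $q_n=p_n/P_n$ and $q'_n=p'_n/P'_n$ as a quotient of positive integers, and set $L_n=\mathrm{lcm}(P_n,P'_n)$. Define
\eQ{ a_n = L_n \in \mathbb N, \quad n\geq 1. }
Then by construction $L_n$ divides $a_1a_2\cdots a_n$, so
\eQ{ b_n \defeq q_n\,a_1a_2\cdots a_n \quad \text{and} \quad c_n \defeq q'_n\,a_1a_2\cdots a_n }
are positive rationals whose denominators (after reduction) divide $L_n$ and hence divide $a_1\cdots a_n$; that is, $b_n$ and $c_n$ are integers. They are non-zero since $q_n,q'_n>0$. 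The required identities $q_n=b_n/(a_1\cdots a_n)$ and $q'_n=c_n/(a_1\cdots a_n)$ hold by the very definition of $b_n$ and $c_n$.

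There is essentially no obstacle here; the only thing to notice is that we have two sequences to realise simultaneously, which is why we take $L_n=\mathrm{lcm}(P_n,P'_n)$ rather than just $P_n$ or $P'_n$. The choice $a_n=L_n$ is certainly not minimal, but minimality is not needed for the application to Lemma~\ref{summint}.
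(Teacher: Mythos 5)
Your argument is correct: taking $a_n=\mathrm{lcm}(P_n,P'_n)$ makes the denominator of both $q_n$ and $q'_n$ divide $a_1\cdots a_n$, so $b_n=q_na_1\cdots a_n$ and $c_n=q'_na_1\cdots a_n$ are non-zero natural numbers and the identities hold by construction. The paper leaves this proof to the reader, and yours is exactly the straightforward argument that is intended.
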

\begin{proof} Left to the reader.
\end{proof}

\emph{Proof of Lemma \ref{summint}:}  We give only the details in the proof in the cases $ I =]r,R[$ and $I =
  ]r,R]$. The remaining four cases can be handled in a similar way. Set $S = e^R$ and $s = e^{-r}$. Let $\{\epsilon_n\}$
  and $\{\epsilon'_n\}$
  be sequences of positive real numbers and $\{q_n\}, \{q'_n\}$
  sequences of positive rational numbers such that
$$
\sum_{n=1}^{\infty} \epsilon_n s^n < \infty ,  \ \ \ 
\sum_{n=1}^{\infty} \epsilon'_n S^n < \infty,
$$
and
$$
\frac{1}{(s + n^{-1})^n} \leq \frac{1}{{s}^n} - \epsilon_n \leq q_n
\leq \frac{1}{s^n}, \ \ \frac{1}{(S + n^{-1})^n} \leq \frac{1}{{S}^n} -
\epsilon'_n \leq q'_n \leq \frac{1}{S^n}
$$
for all $n$. The radii
of convergence of the powers series $\sum_{n=1}^{\infty} q_n z^n$ and
$\sum_{n=1}^{\infty} {q'_n} z^n$ are $s$ and $S$, respectively. 
Note that 
$$
\sum_{n=1}^{\infty} q_n{s}^n \geq \sum_{n=1}^{\infty} \left(\frac{1}{{s}^n} -
    \epsilon_n\right){s}^n =  \infty .
$$
Similar, $\sum_{n=1}^{\infty} q'_n{S}^n = \infty$. It follows that if
we let $\textbf{a} = \{a_i\}_{i=1}^{\infty}$, $\textbf{b}
=\{b_i\}_{i=1}^{\infty}$ and $\textbf{c} =\{c_i\}_{i=1}^{\infty}$ be the
sequences obtained by applying Lemma \ref{m7} to the sequence
$\{q_n\}$ and $\{q_n'\}$, we have that $]r,R[ =
J(\textbf{a},\textbf{b},\textbf{c})$.

To handle the case $I = ]r,R]$ we proceed as follows: We use Lemma
\ref{m7} to find sequences $\textbf{a} = \{a_i\}_{i=1}^{\infty}$, $\textbf{b}
=\{b_i\}_{i=1}^{\infty}$ and $\textbf{c} =\{c_i\}_{i=1}^{\infty}$ such that
$$
q_n = \frac{b_n}{a_1a_2\cdots a_n},
$$
and
$$
\frac{q'_n}{n^2} = \frac{c_n}{a_1a_2\cdots a_n}.
$$
The series $\sum_{n=1}^{\infty} \frac{q'_n}{n^2} z^n$ has $S$ as
its radius of convergence, and
$$
\sum_{n=1}^{\infty} \frac{q'_n}{n^2}{S}^n \leq \sum_{n=1}^{\infty}
\frac{1}{n^2} < \infty.
$$  
It follows that (\ref{m6}) holds if and only $e^{-\beta} < s$ while
(\ref{m3}) holds if and only if $e^{\beta} \leq S$. Hence
$J(\textbf{a},\textbf{b},\textbf{c}) = ]r,R]$.  
\qed

\begin{lemma}\label{djurs10} Let $D \in ]0,1[$ and let $\{a_n\}_{n=2}^{\infty}$ be a
  sequence of non-negative integers such that
\begin{equation}\label{less1}
\sum_{n=2}^{\infty} a_n D^n < 1-D.
\end{equation}
There is a sequence $\{b_n\}_{n=1}^{\infty}$ of non-negative integers such
that 
\begin{enumerate}
\item[i)] $b_1 = 1$,
\item[ii)] $b_n \geq a_n$ for all $n \geq 2$, 
\item[iii)] $b_n - a_n \geq 2$ for infinitely many $n$,
\item[iv)] $\limsup_n (b_n)^{\frac{1}{n}} =
D^{-1}$ and 
\item[v)] $\sum_{n=1}^{\infty} b_n D^n = 1$.
\end{enumerate}
\end{lemma}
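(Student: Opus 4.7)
The plan is to set $b_1 \defeq 1$ and $b_n \defeq a_n + c_n$ for $n \geq 2$, where $c_n \in \mathbb{Z}_{\geq 0}$ are integers to be constructed. With $\epsilon \defeq 1 - D - \sum_{n \geq 2} a_n D^n$, hypothesis (\ref{less1}) gives $\epsilon > 0$, and (v) becomes the requirement $\sum_{n \geq 2} c_n D^n = \epsilon$; once this holds together with $c_n \geq 2$ for infinitely many $n$, conditions (i)--(iii) and (v) are immediate. For (iv) I note that the bounds $a_n D^n \leq 1 - D$ and $c_n D^n \leq \epsilon$ (the latter because $c_n D^n$ is a single term of a convergent non-negative sum equal to $\epsilon$) already force $\limsup_n b_n^{1/n} \leq D^{-1}$, so it will be enough to arrange $\limsup_n c_n^{1/n} \geq D^{-1}$.

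The construction proceeds in two layers. On the sparse sequence $n_k \defeq 2^k$, set
\[
c^{(1)}_{n_k} \defeq \left\lfloor \frac{3\epsilon}{\pi^2 k^2}\, D^{-n_k} \right\rfloor,
\]
and $c^{(1)}_n \defeq 0$ for $n$ not of the form $2^k$. The normalization gives $c^{(1)}_{n_k} D^{n_k} \leq 3\epsilon/(\pi^2 k^2)$ and hence $\sum_n c^{(1)}_n D^n \leq \epsilon/2$; at the same time $D^{-n_k}/k^2 \to \infty$, so $c^{(1)}_{n_k} \geq 2$ for all large $k$, and because the factor $k^{-2}$ contributes only $(k^{-2})^{1/2^k} \to 1$, one has $(c^{(1)}_{n_k})^{1/n_k} \to D^{-1}$. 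Setting $R \defeq \epsilon - \sum_n c^{(1)}_n D^n \in [\epsilon/2,\epsilon]$, the standard greedy base-$D^{-1}$ expansion ($c^{(2)}_n = \lfloor R_n D^{-n}\rfloor$ with $R_2 = R$ and $R_{n+1} = R_n - c^{(2)}_n D^n$) produces non-negative integers $c^{(2)}_n$, $n \geq 2$, with $\sum_n c^{(2)}_n D^n = R$ exactly and $c^{(2)}_n \leq \lfloor D^{-1}\rfloor$ for all $n \geq 3$. Putting $c_n \defeq c^{(1)}_n + c^{(2)}_n$, I then get $\sum_n c_n D^n = (\epsilon - R) + R = \epsilon$, while $c_{n_k} \geq c^{(1)}_{n_k} \geq 2$ for all large $k$, and boundedness of the second layer yields $\limsup_n c_n^{1/n} = \lim_k (c^{(1)}_{n_k})^{1/n_k} = D^{-1}$.

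The main difficulty is to obtain an \emph{exact} sum $\sum c_n D^n = \epsilon$ while simultaneously forcing the growth rate $D^{-1}$ and infinitely many digits $c_n \geq 2$. The two-layer decomposition is designed precisely to resolve this tension: the sparse first layer is explicit, contributes strictly less than $\epsilon$, and supplies both the required exponential growth and the infinitely many large digits, while the greedy second layer absorbs the exact positive residue with uniformly bounded digits that do not perturb the limsup.
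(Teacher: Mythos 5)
Your argument is correct: the two\mbox{-}layer decomposition $c_n=c^{(1)}_n+c^{(2)}_n$ does deliver all five properties. The sparse layer at $n_k=2^k$ with $c^{(1)}_{n_k}=\big\lfloor \tfrac{3\epsilon}{\pi^2k^2}D^{-n_k}\big\rfloor$ contributes at most $\epsilon/2$, gives $c^{(1)}_{n_k}\geq 2$ eventually and $(c^{(1)}_{n_k})^{1/n_k}\to D^{-1}$, while the greedy base-$D^{-1}$ expansion of the residue $R\in[\epsilon/2,\epsilon]$ has digits bounded by $\lfloor D^{-1}\rfloor$ from $n\geq 3$ on and sums to $R$ exactly because $0\leq R_{N+1}<D^{N}\to 0$; the termwise bounds $a_nD^n\leq 1-D$ and $c_nD^n\leq\epsilon$ then give $\limsup_n b_n^{1/n}\leq D^{-1}$, so iv) holds with equality. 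This is a genuinely different construction from the one in the paper. There the digits are produced by a single adaptive induction: one fixes $s_m\uparrow s=1-D-\sum_{n\geq 2}a_nD^n$ and an auxiliary sequence $r_m$ with $(s_m-s_{m-1})^{1/r_m}\to 1$, and then chooses positions $n_m\geq r_m$ and least admissible digits $k_m$ so that the partial sums track $s$ within $s/m$; exactness of the sum is obtained only in the limit, and the limsup comes from the inequality $k_mD^{n_m}\geq s_m-s_{m-1}$ combined with the choice of $r_m$ (the factor $3$ in the inductive inequality is what secures iii)). Your version decouples the three constraints -- growth rate, infinitely many digits $\geq 2$, and exact sum -- into independent, explicitly verifiable pieces, at the price of invoking the (easy but extra) observation that the greedy expansion of a positive number in powers of $D$ is exact with uniformly bounded digits; the paper's scheme needs more bookkeeping with the sequences $s_m,r_m,n_m,k_m$ but never has to discuss such expansions. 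Both are elementary and of comparable length, so this is a matter of taste rather than of strength.
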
 
\begin{proof} Set $s = 1- D -\sum_{n=2}^{\infty}a_n D^n$ and choose a
  sequence of real numbers $0 < s_1 <s_2 <s_3 < \cdots$ such that $\lim_{m \to\infty}
  s_m = s$. Choose first a sequence $\{r_n\}_{n=1}^{\infty}$ in $\mathbb N$ such that 
\begin{equation}\label{rchoice}
\lim_{m \to  \infty} (s_m -s_{m-1})^{\frac{1}{r_m}} = 1.
\end{equation}
We will inductively construct sequences of natural numbers
  $\{k_m\}_{m=1}^{\infty}$ and $\{n_m\}_{m=1}^{\infty}$ such that
  $\{n_m\}$ is strictly increasing and
\begin{enumerate}
\item[a)] $n_{m} \geq r_m $ for all
  $m\geq 1$,
\item[b)] $ k_{m} D^{n_m} \geq
  s_{m} - s_{m-1}$ for all $m \geq 2$, and 
\item[c)] $0 < s_m  - \sum_{i=1}^{m}  k_i D^{n_i}  \leq
\frac{s}{m}$ for all $m\geq 1$.
\end{enumerate} 
To start the induction note that since $\lim_{n \to \infty} D^{n} =
 0$, we can choose $n_1,k_1\in \mathbb N$ such that $n_1 \geq \max\{r_1,2\}$ and
$k_1 D^{n_1} < s_1$. Assume then that $n_i,k_i$ have been found when $i \leq m-1$. To simplify notation, set
$$
\Sigma_{m-1} = \sum_{i=1}^{m-1}  k_i D^{n_i}  .
$$
Note that
\begin{equation}\label{djurs14}
\begin{split}
& s_{m} -s_{m-1} + \frac{m-1}{m}\left(s_{m-1} - \Sigma_{m-1}\right) \\
& =s_{m} - \Sigma_{m-1} - \frac{1}{m}\left(s_{m-1}
  -\Sigma_{m-1}\right)  < s_{m} - \Sigma_{m-1} .
\end{split}
\end{equation}

Using that  $\lim_{n \to \infty} D^{n} = 0$
we find an $n_{m} > \max\{ r_{m}, n_{m-1}\}$ in $\mathbb N$ such that
\begin{equation}\label{djurs11}
3D^{n_{m}} \leq  \frac{1}{m}\left(s_{m-1}
  -\Sigma_{m-1}\right) . 
\end{equation}
Let $k_{m}$ be the least natural number such that
\begin{equation}\label{djurs13}
 k_{m} D^{n_{m}} > s_{m}-s_{m-1} + \frac{m-1}{m}(s_{m-1} -\Sigma_{m-1}) .  
\end{equation}
Then b) certainly holds, and since $k_{m}$ is
the least natural number with the stated property it follows by use of
(\ref{djurs11}) and (\ref{djurs14}) that
$$
  k_{m} D^{n_{m}} <  s_{m} - \Sigma_{m-1},
$$
giving us the strict inequality in c). The other inequality in c)
follows because
$$
s_{m} - \Sigma_{m-1} - \frac{1}{m}\left(s_{m-1} -\Sigma_{m-1}\right)  <
k_{m} D^{n_{m}} <  s_{m} - \Sigma_{m-1} 
$$
by construction.

Set $c_n = k_m$ when $n = n_m$ and $c_n = 0$ when $ n \notin \left\{n_k: \ k \in \mathbb
  N \right\}$, and then $b_n = c_n + a_n, n \geq 2$, and $b_1 = 1$
. Then i), ii) and iii) hold and v) follows from c):
$$
\sum_{n=1}^{\infty} b_n D^n = D + \sum_{n=2}^{\infty} a_nD^n +
\sum_{m=1}^{\infty} k_mD^{n_m} = D+
\sum_{n=2}^{\infty} a_nD^n + s = 1.
$$  
To see that iv) holds note first that v) implies that $\limsup_n
(b_n)^{\frac{1}{n}} \leq 
D^{-1}$. To establish the reversed inequality note that b) implies that 
$$
\limsup_m
(k_m)^{\frac{1}{n_m}} D = \limsup_m
\left(k_mD^{n_m}\right)^{\frac{1}{n_m}} \geq \limsup_m (s_m
-s_{m-1})^{\frac{1}{n_m}}.
$$ 
Thanks to (\ref{rchoice}) and because $n_m
\geq r_m$, it follows that $ \limsup_m (s_m
-s_{m-1})^{\frac{1}{n_m}} =1$ and hence that $\limsup_m
(k_m)^{\frac{1}{n_m}} \geq D^{-1}$. This finishes the proof because
$$
\limsup_n (b_n)^{\frac{1}{n}} \geq \limsup_n (c_n)^{\frac{1}{n}} =
\limsup_m (k_m)^{\frac{1}{n_m}} .
$$
 \end{proof}

\section{The row-finite cases}

In this section we prove the following two theorems.

\begin{thm}\label{REV1} Let $h
  \in ]0,\infty[$ be a positive real number. Let $\mathbb I$ be a finite or
  countably infinite collection of intervals in $\big]h,\infty\big[$. Assume that
$$
I_0 = \big]h,\infty\big[
$$ 
for at least one $I_0 \in \mathbb I$.

There is a strongly connected
recurrent row-finite graph $G$ with Gurevich entropy $h(G) =h$ such
that the set of exits in $G$ is in bijective correspondence with
$\mathbb I$. Furthermore, for $\beta \geq
  h$ there are the following extremal $\beta$-KMS weights for the
  gauge action on $C^*(G)$:
\begin{enumerate}
\item[$\bullet$] For $\beta > h$ the rays of extremal $\beta$-KMS
  weights are in bijective correspondence with the set
$$
\left\{ I \in \mathbb I :  \ \beta \in I \right\} .
$$
\item[$\bullet$] For $\beta = h$ there is a unique ray of extremal $h$-KMS weights.
\end{enumerate}
\end{thm}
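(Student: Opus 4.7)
The plan is to build $G$ by attaching, to a Ruette base at a distinguished vertex $v_0$, one \emph{exit branch} $B_I$ per interval $I \in \mathbb I$, designed so that its unique exit is $\beta$-summable precisely when $\beta \in I$. Recurrence of $G$ and the value $h(G) = h$ are secured by arranging that $v_0$ is a Ruette vertex of $G$; Lemma \ref{ruette3} then delivers both properties at once.

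For each $I \in \mathbb I$ other than the mandatory $I_0 = ]h,\infty[$, I apply Lemma \ref{summint} to obtain sequences $\mathbf{a}^I, \mathbf{b}^I, \mathbf{c}^I$ of positive integers with $J(\mathbf{a}^I, \mathbf{b}^I, \mathbf{c}^I) = I$, and I take a trivial constant choice for $I_0$. The branch $B_I$ is a spine $u^I_1, u^I_2, \ldots$ of fresh vertices with $a^I_k$ parallel forward edges $u^I_k \to u^I_{k+1}$, together with parallel edges between $v_0$ and the spine whose multiplicities are prescribed by $\mathbf b^I$ and $\mathbf c^I$. Decomposing each walk $v_0 \to u^I_k$ in $G$ by its last visit to $v_0$ factors $\sum_n A^n_{v_0, u^I_k}e^{-n\beta}$ as a return Green's function at $v_0$ (finite for $\beta > h$ by transience of $A(\beta)$) times a one-pass traversal of $B_I$; dividing by $t^\beta(k) = e^{-(k-1)\beta}\prod_{i<k} a^I_i$ and letting $k \to \infty$ as in (\ref{termk}) yields a finite limit exactly when the two series conditions (\ref{m6}) and (\ref{m3}) both hold, i.e., exactly when $\beta \in I$.

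To fix the Ruette base, let $a_n$ be the total count of simple loops of length $n$ at $v_0$ introduced by the union of all branches. Rescaling the $\mathbf b^I$ and $\mathbf c^I$ if necessary, the sequence $\{a_n\}_{n \geq 2}$ satisfies hypothesis (\ref{less1}) of Lemma \ref{djurs10} with $D = e^{-h}$; the lemma then produces $\{b_n\}$ with $b_1 = 1$, $b_n \geq a_n$, $\sum_n b_n e^{-nh} = 1$ and $\limsup b_n^{1/n} = e^h$. Let $G_0$ be the graph with vertex $v_0$ and $b_n - a_n$ internally disjoint simple loops of length $n$ at $v_0$ (using fresh intermediate vertices), and set $G = G_0 \cup \bigcup_{I \in \mathbb I} B_I$ glued at $v_0$. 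Then the total number of simple loops of length $n$ at $v_0$ in $G$ equals $b_n$, making $v_0$ a Ruette vertex; $G$ is countable, row-finite, and strongly connected (each spine vertex reaches $v_0$ through a $\mathbf c^I$-edge and $v_0$ reaches everything through spine entries and loops), and its tail-equivalence classes of exit paths are exactly $\{[t^I] : I \in \mathbb I\}$ with $t^I = (u^I_k)_k$.

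The KMS conclusions follow from Theorem \ref{n11}. Row-finiteness rules out infinite emitters and hence boundary KMS weights. At $\beta = h$, $A(\beta)$ is recurrent with $\mathbb P(-h) = 0$, so Theorem \ref{recurrent} supplies a unique ray of harmonic $h$-KMS weights. For $\beta > h$, Proposition \ref{renew5} makes $A(\beta)$ transient, and Theorem \ref{n11} identifies the extremal $\beta$-KMS rays with the $\beta$-summable exits, hence with $\{I \in \mathbb I : \beta \in I\}$. The principal obstacle is the bookkeeping in the second step: one must verify the term-by-term matching to $J(\mathbf a^I, \mathbf b^I, \mathbf c^I)$ in \emph{both} directions so that the summability interval of $t^I$ is exactly $I$ (not merely a sub- or super-interval), while simultaneously ensuring the combined branch-induced simple-loop sequence $\{a_n\}$ obeys (\ref{less1}) so that Lemma \ref{djurs10} applies to finalize the Ruette base.
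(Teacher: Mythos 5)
There is a genuine gap, and it sits exactly at the point that distinguishes Theorem \ref{REV1} from Theorem \ref{intro1}: row-finiteness. You attach every branch $B_I$ to a single distinguished vertex $v_0$, with ``parallel edges between $v_0$ and the spine whose multiplicities are prescribed by $\mathbf b^I$ and $\mathbf c^I$.'' To make the summability interval of the exit $t^I$ equal to $J(\mathbf a^I,\mathbf b^I,\mathbf c^I)$ you need, for every $k$, edges from the base \emph{into} the spine at depth comparable to $k$ (these produce the terms $c_k e^{k\beta}/(a_1\cdots a_k)$ of (\ref{m3})) and edges entering the spine ``late'' relative to their length (producing the terms $b_k e^{-k\beta}/(a_1\cdots a_k)$ of (\ref{m6})). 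If all of these infinitely many families of edges emanate from the one vertex $v_0$, then $v_0$ is an infinite emitter and the graph is not row-finite -- this is precisely the construction of Section \ref{constr2}, which is why the paper reserves it for Theorems \ref{intro1} and \ref{intro}. If instead you orient the $\mathbf c^I$-edges from the spine back to $v_0$ (as your strong-connectedness remark suggests), row-finiteness is saved but the condition (\ref{m3}) disappears from the computation of $\sum_n A^n_{v_0,u^I_k}e^{-n\beta}$ after factoring out the return Green's function: only unbounded intervals can then be realized, so the bijection with $\mathbb I$ fails for bounded $I$. The paper's way around this is the basic construction of Section \ref{constr}: the sources of the attaching edges are distributed along a bare backbone $v_M,v_{M+1},v_{M+2},\dots$, one finite bundle per backbone vertex, with the multiplicities adjusted to $d_i=c_ia_{i+1}\cdots a_{2i}$ to compensate for the longer approach, so that the bookkeeping still yields exactly (\ref{m6}) and (\ref{m3}). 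That backbone also realizes the mandatory interval $I_0=\,]h,\infty[$ automatically (Lemma \ref{r11}), whereas your ``trivial constant choice'' for $I_0$ is unjustified -- constant multiplicities do not give $J=\,]h,\infty[$, and the bare exit is in any case forced by the Appendix.

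A secondary but real problem is your treatment of (\ref{less1}). You propose to ``rescale $\mathbf b^I$ and $\mathbf c^I$'' so that the simple-loop counts $a_n$ at $v_0$ satisfy $\sum_n a_n e^{-nh}<1-e^{-h}$, but these are integer multiplicities bounded below by $1$, and they are already pinned down by Lemma \ref{summint}; you cannot shrink them. The paper controls the loop weights not through the multiplicities but through the \emph{lengths} of the added return paths: the return path through the $i$-th attached vertex is given length $n_i$ chosen so large that the (finitely many) simple paths from $v_1$ to that vertex contribute at most $e^{-n_ih/2}$, after which Lemma \ref{djurs10} and Lemma \ref{ruette3} make $v_1$ a Ruette vertex, give $h(G)=h$ and recurrence, and Proposition \ref{ruette4} handles the transient companion. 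Your final appeal to Theorem \ref{n11} and Theorem \ref{recurrent} is the right endgame, but without the backbone-distributed attachment and the length-based loop control the graph you describe is either not row-finite or does not realize the prescribed intervals.
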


\begin{thm}\label{REV2} Let $h
  \in ]0,\infty[$ be a positive real number. Let $\mathbb I$ be a finite or
  countably infinite collection of intervals in $\big[h,\infty\big[$. Assume that
$$
I_0 = \big[h,\infty\big[
$$ 
for at least one $I_0 \in \mathbb I$.

There is a strongly connected
transient row-finite graph $G$ with Gurevich entropy $h(G) =h$ such
that the set of exits in $G$ is in bijective correspondence with $\mathbb I$. Furthermore, for $\beta \geq
  h$ the rays of extremal $\beta$-KMS weights for the gauge action on
  $C^*(G)$ are in bijective correspondence with the set
$$
\left\{ I \in \mathbb I :  \ \beta \in I \right\} .
$$
\end{thm}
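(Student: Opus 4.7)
The proof is parallel to that of Theorem \ref{REV1} and proceeds by explicit construction. I build $G$ as a union of \emph{branch} building blocks glued at a common base vertex $v_0$, one branch for each $I \in \mathbb{I}$. The mandatory interval $I_0 = [h,\infty[$ plays in the transient setting the role that $]h,\infty[$ plays in the recurrent one: it provides at least one exit that is $\beta$-summable for every $\beta \geq h$, which is necessary because in a transient row-finite graph Theorem \ref{n11}(2) forces every extremal $\beta$-KMS weight, including at $\beta = h$, to come from an exit (no infinite emitters means no boundary weights, by Proposition \ref{trans0}; and $A(h)$ transient means no recurrent contribution via Theorem \ref{recurrent}).

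For each interval $I \in \mathbb{I}$ invoke Lemma \ref{summint} to obtain sequences $\mathbf{a}^{(I)},\mathbf{b}^{(I)},\mathbf{c}^{(I)}$ of non-zero natural numbers with $J(\mathbf{a}^{(I)},\mathbf{b}^{(I)},\mathbf{c}^{(I)}) = I$. The corresponding branch $G_I$ has a backbone $t_1^{(I)}\to t_2^{(I)}\to\cdots$ carrying exactly $a^{(I)}_k$ edges from $t_k^{(I)}$ to $t_{k+1}^{(I)}$; in addition, for each $k$ one attaches $b^{(I)}_k$ disjoint \emph{feed-in} paths from $v_0$ to $t_k^{(I)}$ of length $k$, and $c^{(I)}_k$ disjoint \emph{return} paths from $t_k^{(I)}$ back to $v_0$ of length $k$, using fresh auxiliary vertices so that $G_I$ is row-finite and strongly connected through $v_0$. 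The backbone of $G_I$ is the designated exit path of the branch. Using formula (\ref{termk}) together with the identity $t(k) = a^{(I)}_1 a^{(I)}_2 \cdots a^{(I)}_{k-1}$, a direct bookkeeping of the length-$n$ paths from $v_0$ to $t_k^{(I)}$ shows that the two series (\ref{m6}) and (\ref{m3}) from the definition of $J(\mathbf{a},\mathbf{b},\mathbf{c})$ control respectively the high-$\beta$ and low-$\beta$ ends of the limit $\lim_{k\to\infty} t^\beta(k)^{-1}\sum_n A^n_{v_0 t_k^{(I)}} e^{-n\beta}$. Thus the backbone exit is $\beta$-summable precisely when $\beta \in I$, and distinct branches produce tail-inequivalent exits since any exit path visits $v_0$ only finitely often and eventually lives inside one $G_I$.

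To calibrate the Gurevich entropy and force transience, observe that the number $l^k_{v_0 v_0}(G)$ of simple loops at $v_0$ splits as a sum over $I \in \mathbb{I}$ of explicit contributions from $(\mathbf{a}^{(I)},\mathbf{b}^{(I)},\mathbf{c}^{(I)})$. By iterating the feed-in and return lengths along the backbones and, if necessary, absorbing the residual mass into an additional tuning branch at $v_0$, apply Lemma \ref{djurs10} to the resulting sequence $\{l^k_{v_0 v_0}(G)\}$ to arrange simultaneously $\sum_{k=1}^\infty l^k_{v_0 v_0}(G)\, e^{-kh} < 1$ and $\limsup_k (l^k_{v_0 v_0}(G))^{1/k} = e^h$. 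By Proposition \ref{renew5} the first inequality makes $G$ transient, and the $\limsup$ condition combined with inequalities analogous to (\ref{comp13}) yields $h(G) = h$.

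With $G$ transient, row-finite and strongly connected, $C^*(G)$ is simple (Theorem 12 in \cite{Sz}) and Theorem \ref{n11}(2) completes the argument: the absence of infinite emitters removes all boundary weights, and the extremal rays of harmonic $\beta$-KMS weights are in bijection with the $\beta$-summable exits, i.e.\ with $\{I\in\mathbb{I}:\beta\in I\}$. The main obstacle is the combinatorial balance in the preceding paragraph: the branches must be rich enough to realize every prescribed interval exactly, yet sparse enough that their contributions to $\sum_k l^k_{v_0 v_0} e^{-kh}$ stay strictly below $1$ while maintaining the correct $\limsup$. Handling the countably infinite case requires a diagonal choice in which the feed-in/return structure of branch $G_I$ is pushed far out along the backbone, so only finitely many branches contribute to any fixed loop length at $v_0$; Lemma \ref{djurs10} is engineered precisely to deliver such sequences, and once the tuning is in place the remaining verifications are routine.
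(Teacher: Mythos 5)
Your construction has two genuine gaps, and the first already breaks the statement you are trying to prove. Anchoring, for every $k$, the $b^{(I)}_k$ disjoint feed-in paths (and the first interval is only the beginning: this happens for every branch $G_I$) at the single base vertex $v_0$ forces $v_0$ to emit one fresh edge per path, hence infinitely many edges in total, so your graph is not row-finite -- precisely the hypothesis that distinguishes Theorem \ref{REV2} from Theorem \ref{intro}. This is not a cosmetic slip: the whole difficulty of the row-finite case is that you cannot attach the interval-encoding edges at one fixed vertex (that is what the non-row-finite construction of Section \ref{constr2} does), and the paper's Section \ref{constr} avoids it by letting the attaching edges emanate from \emph{moving} contact vertexes $v_{M+i-1}$, $v_{M-1+2i}$ along a pre-existing bare backbone. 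Second, even ignoring row-finiteness, your bookkeeping does not produce the interval $I$. With feed-in paths of length $k$ from $v_0$ to $t_k^{(I)}$ and return paths of length $k$ back, the alternative routes to $t_k^{(I)}$ have the \emph{same} length as the backbone route, so in the quotient $t^{\beta}(k)^{-1}\sum_n A^n_{v_0 t_k}e^{-n\beta}$ the exponentials cancel: the feed-ins contribute a series of the form $\sum_j b_j\,(a_1\cdots a_{j-1})^{-1}$ with no factor $e^{-j\beta}$, and the return paths only enter through the loop factor $\alpha=\sum_n A^n_{v_0v_0}e^{-n\beta}$, i.e.\ through transience of $e^{-\beta}A$, not through a series of the form (\ref{m3}). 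Thus each of your exits is $\beta$-summable either for all $\beta$ with $e^{-\beta}A$ transient or for none, independently of $I$. The conditions (\ref{m6}) and (\ref{m3}) only appear if the auxiliary routes have \emph{mismatched} lengths: feed-ins of length about $2k$ into $t_{k+1}$ (giving $e^{-k\beta}$) and short-cuts of length about $k$ into $t_{2k+1}$ (giving $e^{+k\beta}$), as in the picture (\ref{G1}).

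The calibration step is also not quite right as stated: Lemma \ref{djurs10} delivers a sequence with $\sum_n b_n e^{-nh}=1$ exactly, which by Lemma \ref{ruette3} yields a \emph{recurrent} graph, so you cannot invoke it directly to get $\sum_k l^k_{v_0v_0}(G)e^{-kh}<1$ together with $\limsup_k (l^k_{v_0v_0}(G))^{1/k}=e^h$. The paper resolves this by first building a recurrent Ruette graph $G$ with Ruette vertex $v_1$ (this proves Theorem \ref{REV1}) and then deleting the single loop at $v_1$; Proposition \ref{ruette4} guarantees the resulting graph is transient with the same entropy $h$, and by Lemma \ref{r11} the bare backbone, which carries the mandatory interval $I_0=[h,\infty[$, is then $\beta$-summable for every $\beta\geq h$. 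Your reduction of the KMS statement to $\beta$-summability of exits via Theorem \ref{n11} and Proposition \ref{trans0} is fine, but the graph you feed into it neither is row-finite nor realizes the prescribed intervals, so the argument does not go through without adopting the moving-contact-vertex attachment and the Ruette-graph deletion device.
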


In order to fully appreciate these results it must be observed that the
assumed presence of an interval in $\mathbb I$ of maximal size is not
accidental. To explain this we introduce a key ingredient in the
following constructions. An exit path $(v_i)_{i=1}^{\infty}$ in $G$ is a \emph{bare exit path} when
$s\left(r^{-1}(v_{i+1})\right) = v_i$ for all $i$. It is easy to see
that a bare exit is $\beta$-summable for all $\beta$ in the largest
possible interval, cf. Lemma \ref{r11}, namely for all $\beta$ such that $e^{-\beta}A$ is
transient, where
$A$ denotes the adjacency matrix of the graph. And it is a fact, proved in Appendix \ref{appendix}, that a strongly
connected graph with infinitely many vertexes and at most countably many exits must have a bare
exit when it is row-finite. So the presence of a largest possible
interval among the intervals in $\mathbb I$ can not be avoided; for
strongly connected row-finite graphs with infinitely many vertexes and
at most countably many exits
there must be an exit which contributes a $\beta$-KMS weight for
the gauge action for all $\beta$ in the largest possible interval.

\begin{lemma}\label{r11}  Assume that $G$ is strongly connected with
  adjacency matrix $A$. A bare exit path $t = (t_i)_{i=1}^{\infty}$
  is $\beta$-summable if and only if $\sum_{n=0}^{\infty} A^n_{t_1t_1}
  e^{-n \beta} < \infty$. 
\end{lemma}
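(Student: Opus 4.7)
The plan is to exploit the bareness condition to get an exact formula for $A^n_{t_1 t_i}$ and thereby compute the $\beta$-summability limit in closed form. Specifically, the bare condition $s(r^{-1}(t_j)) = t_{j-1}$ (for $j \geq 2$) says that every edge ending at $t_j$ must start at $t_{j-1}$, so a backward induction on any path that terminates at $t_i$ shows that the last $i-1$ vertices traversed are necessarily $t_1,t_2,\dots,t_i$ (in that order).

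First I would establish the combinatorial identity
\begin{equation*}
A^n_{t_1 t_i} \;=\; \begin{cases} 0, & n < i-1,\\ t(i)\, A^{n-(i-1)}_{t_1 t_1}, & n \geq i-1, \end{cases}
\end{equation*}
where $t(i) = A_{t_1 t_2}A_{t_2 t_3}\cdots A_{t_{i-1}t_i}$. Any path of length $n \geq i-1$ from $t_1$ to $t_i$ decomposes uniquely as a path of length $n-(i-1)$ from $t_1$ back to $t_1$, followed by one of the $t(i)$ choices for the forced edges $t_1 \to t_2 \to \cdots \to t_i$; for $n < i-1$ there simply isn't room for the forced sub-sequence.

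Next I would insert this identity into the expression (\ref{termk}): since $A(\beta) = e^{-\beta}A$,
\begin{equation*}
t^{\beta}(i)^{-1}\sum_{n=0}^{\infty} A(\beta)^n_{t_1 t_i} \;=\; e^{(i-1)\beta}\,t(i)^{-1}\sum_{n=i-1}^{\infty} t(i)\,A^{n-(i-1)}_{t_1 t_1}\,e^{-n\beta} \;=\; \sum_{m=0}^{\infty} A^m_{t_1 t_1}\,e^{-m\beta}.
\end{equation*}
The striking point, which does all the work, is that the right-hand side is independent of $i$, so the limit defining $\beta$-summability at the vertex $v = t_1$ is trivially equal to $\sum_{m=0}^{\infty} A^m_{t_1 t_1}\,e^{-m\beta}$.

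Finally I would tie this to the stated equivalence. If $t$ is $\beta$-summable, then this constant value is finite, giving $\sum_n A^n_{t_1 t_1} e^{-n\beta} < \infty$. Conversely, if $\sum_n A^n_{t_1 t_1}e^{-n\beta} < \infty$, then for each $i$ the partial sums $\sum_n A(\beta)^n_{v t_i}$ are finite (strong connectedness together with the inequality (\ref{comp13}) applied to $A(\beta)$ transfers finiteness at $t_1$ to all vertices), so the limit in (\ref{n5}) is well-defined, and the observation immediately after (\ref{n5}) ensures that finiteness at $v = t_1$ forces finiteness at every $v \in V$. There is no serious obstacle here; the proof is essentially a one-line consequence of the bareness-induced factorization once the combinatorial formula for $A^n_{t_1 t_i}$ is in hand.
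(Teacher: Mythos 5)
Your proposal is correct and follows essentially the same route as the paper: bareness forces every path from $t_1$ to $t_k$ to decompose as a loop at $t_1$ followed by the forced crossing $t_1\to t_2\to\cdots\to t_k$, giving $\sum_{n}A^n_{t_1t_k}e^{-n\beta}=t(k)e^{-(k-1)\beta}\sum_{n}A^n_{t_1t_1}e^{-n\beta}$, so the quantity in the definition of $\beta$-summability is independent of $k$ and equal to $\sum_{n}A^n_{t_1t_1}e^{-n\beta}$. You merely spell out the combinatorial identity and the trivial converse in more detail than the paper does.
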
 
\begin{proof} Assuming that $\sum_{n=0}^{\infty} A^n_{t_1t_1}
  e^{-n \beta} < \infty$ we observe that
$$
\sum_{n=0}^{\infty} A^n_{t_1t_k} e^{-n\beta} =  t(k)e^{-(k-1)\beta}
\sum_{n = 0}^{\infty} A^{n}_{t_1t_1} e^{-n\beta}
$$
since $t$ is bare. Hence
$$
e^{(k-1)\beta} t(k)^{-1}\sum_{n=0}^{\infty} A^n_{t_1t_k} e^{-n\beta} =
\sum_{n = 0}^{\infty} A^{n}_{t_1t_1} e^{-n\beta} < \infty
$$
for all $k$, and $t$ is $\beta$-summable. The converse is trivial. 

\end{proof}

\subsection{The basic construction I:  Adding an exit}\label{constr} In the following 
construction of examples a key step is the attachment of an exit
path to an existing graph by a procedure which depends on whether 
the resulting graph should be row-finite or not. In this section we
describe the relevant method in the row-finite case.

Let $G_0$ be an arbitrary directed graph. We assume that
$(v_i)_{i=1}^{\infty}$ is a bare exit in $G_0$. Let $I$ be an arbitrary
interval in $]0,\infty[$ and choose with the aid of Lemma \ref{m7}
three sequences $\textbf{a} = (a_i)_{i=1}^{\infty},\textbf{b} = (b_i)_{i=1}^{\infty}
  , \textbf{c} = (c_i)_{i=1}^{\infty}$ of non-zero natural numbers
  such that $J(\textbf{a},\textbf{b},\textbf{c}) = I$. For each $i
  \geq 1$ we let $d_i = c_ia_{i+1}a_{i+2}\cdots a_{2i}$. Add to $G_0$
  the vertexes $t_2, t_3,t_4, \cdots $ and set $t_1 = v_M$, where $M
  \in \mathbb N$ is arbitrary. Add $a_i$ arrows from $t_i$ to $t_{i+1}$, add
  $d_i$ arrows from $v_{M+i-1}$ to $t_{2i+1}$ and $b_i$ arrows from
  $v_{M-1+2i}$ to $t_{i+1}$ for $i =1,2,3,\cdots $. Let $G$ denote the
  resulting graph. If we ignore the part of $G_0 \backslash \{v_i\}_{i=1}^{\infty}$ with no contact to
  the new vertexes, the graph $G$ looks at follows.
\begin{equation}\label{G1}
\begin{xymatrix}{ v_1  \ar@{~>}[d] 
  & & & & && &&&\\ 
 v_M =t_1 \ar@[red][d]_{a_1}  \ar[rd] \ar@[green]@/^2pc/[dd]_(0.30){d_1}  & & & & & & & & &\\
 t_2  \ar@[red][d]_{a_2}&
 \ar@[blue][l]^-{b_1} \ar[rd] v_{M+1}  \ar@[green]@/^1pc/[dddl]_(0.3){d_2} &  & & & & & && \\
    t_3\ar@[red][d]_{a_3}
&  &  \ar[rd] \ar@[green]@/^1pc/[ddddll]_(0.25){d_3} v_{M+2}
 &  & & & & && \\
    t_4 \ar@[red][d]_{a_4}
&  & & \ar@[blue][ulll]^-{b_2} \ar[rd]
\ar@{-}@[green]@/^1pc/[ddddll]_(0.4){d_4} v_{M+3} & & & & && \\
   t_5 \ar@[red][d]_{a_5}
&  &  & & \ar[rd] v_{M+4} \ar@{-}@[green]@/^1pc/[dddl]_(0.7){d_5}   & & & &&\\
 t_6 \ar@[red][d]_{a_6}  &
 &  &  & &  \ar@[blue][llllluu]^(0.55){b_3}  v_{M+5} \ar@{-}@[green]@/^/[dd] \ar[dr] & & && \\
  t_7 \ar@[red][d]_{a_7} &
 &  &  & & &\ar[dr] v_{M+6} \ar@{-}@[green][d] &  && \\
  &  &  &  & &   \ar@[blue][llllluu]^(0.5){b_5}  & &
  \ar@[blue][llllllluuu]^(0.6){b_4} v_{M+7} && \\
 \vdots  & \vdots &  \vdots &  \vdots &
 \vdots & \vdots & \vdots & \vdots &  &   \\ }
\end{xymatrix}
\end{equation}

The vertex $v_M$ will be referred to as \emph{the first contact
  vertex}.

\bigskip

\subsection{Proof of Theorems \ref{REV1} and \ref{REV2}:} 
Let $I_1,I_2,I_3, \cdots$ be a numbering of the
  intervals in $\mathbb I \backslash \{I_0\}$. Start with the following graph $H_0$:
\begin{equation}\label{feber}
\begin{xymatrix}{ v_1 \ar[r] & v_2 \ar[r]  & v_3 \ar[r]    & v_4
    \ar[r] & v_5 \ar[r] & \hdots   \hdots}
\end{xymatrix}
\end{equation}
the vertexes of which is a bare exit path $t^0$. Using this exit path as a backbone we shall build a
recurrent Ruette graph $G$ with $v_1$ as a Ruette vertex such that $G$
has the properties described in Theorem \ref{REV1} while the graph
$G'$ obtained from $G$ by removing the edge starting and ending at
$v_1$ will be a graph with the properties described in Theorem
\ref{REV2}.

Attach to $H_0$ an exit path $t^1$ as described in Section \ref{constr} using $v_1$ as
first contact vertex and using multiplicities $\textbf{a(1)} =
\{a_i\},\textbf{b(1)} = \{b_i\}$ and $\textbf{c(1)} = \{c_i\}$ such that
$J(\textbf{a(1)},\textbf{b(1)},\textbf{c(1)}) = I_1$. Let $H_1$ be the
resulting graph. Attach to $H_1$ an exit path $t^2$ using $v_2$ as
first contact vertex and using multiplicities $\textbf{a(2)},\textbf{b(2)}$ and $\textbf{c(2)}$ such that
$J(\textbf{a(2)},\textbf{b(2)},\textbf{c(2)}) = I_2$. Let $H_2$ be the
resulting graph and continue recursively, either indefinitely or until
the intervals in $\mathbb I\backslash \{I_0\}$ have been used up. Set
$$
H = \bigcup_i H_i .
$$
Since we used different first contact vertexes the graph $H$ will be
row-finite. In case $I_0$ is the only interval in $\mathbb I$ we
haven't done anything and the
graph $H$ is just the bare exit $H_0$ we started with. In any case, $H$ is
certainly not strongly connected.

We next add to $H$ a sequence $\mu_i , i =1,2,3, \cdots $, paths going
from
vertexes in the exit paths we have attached to the vertex $v_1$, in order to make the
resulting graph $G_0$ strongly connected. The paths are mutually disjoint, both
from each other and from $H$, except for their initial and terminal
vertexes. We want also to arrange that
\begin{equation}\label{feber1}
\sum_{n=1}^{\infty} l^n_{v_1v_1}(G_0)
e^{-nh}  < 1 -e^{-h}.
\end{equation} 
This can be done in many ways; the following procedure is perhaps slightly
brutal, but it is easy to describe. In the following a \emph{simple
  path} from $v$ to $w$ in a graph $K$ is a finite path $\mu = e_1e_2 \cdots e_n \in E^n$ in $P_f(K)$ such
that $s(e_1 ) = v, \ r(e_n) =w, \ s(e_j) \neq v \ \forall j \in
\{2,3,\cdots, n\}$. Let
$w_1,w_2,w_3,\cdots $ be a numbering of the vertexes in the attached
exit paths $t^i, \ i \geq 1$. Observe that for each $i$ there are only
finitely many, say $a_i$, simple paths
in $H$ from $v_1$ to $w_i$. Let $l_i$ be the maximal length of such a
path. Choose $n_1 \in \mathbb N$ so large that
$a_1 \leq e^{n_1\frac{h}{2}}$ and $\sum_{ j \geq n_1}
e^{-\frac{jh}{2}} < 1 - e^{-h}$,
and then recursively $n_{k+1} > n_k + l_k$ such that
$$
a_{k+1} \leq e^{\frac{n_{k+1}h}{2}} .
$$  
Add to $H$ for each $i$ a path $\mu_i$ from $w_i$ to $v_1$ of length $n_i$. Since every simple loop in $G_0$ from
$v_1$ back to itself must
pass through a $w_i$, we find that $l^n_{v_1v_1}(G_0) = 0$ unless $n
\in \bigcup_k [n_k,n_k+l_k]$ and hence
\begin{equation}
\begin{split}
&\sum_{n=1}^{\infty} l^n_{v_1v_1}(G_0)e^{-nh} = \sum_{k=1}^{\infty}
\sum_{j=n_k}^{n_k+l_k} l^j_{v_1v_1}(G_0)e^{-jh}   \leq
\sum_{k=1}^{\infty} \sum_{j=n_k}^{n_k+l_k} l^j_{v_1v_1}(G_0)e^{-n_kh}
\\
&  =
\sum_{k=1}^{\infty} a_k e^{-n_kh}  \leq \sum_{k=1}^{\infty}
e^{-\frac{n_kh}{2}} \leq \sum_{ j \geq n_1}
e^{-\frac{jh}{2}}  < 1 -e^{-h},
\end{split}
\end{equation}
as desired. To make the final adjustment to the graph we first use
Lemma \ref{djurs10} to get a sequence $\{b_n\}_{n=1}^{\infty}$ of non-negative
integers such that $b_1 = 1$, $b_n \geq l^n_{v_1v_1}(G_0)$ for all $n$,
$\limsup_n (b_n)^{\frac{1}{n}} = e^h$ and
$$
\sum_{n=1}^{\infty} b_n e^{-nh} = 1.
$$ 
For each $n \in \mathbb N$ we add $b_n-l^n_{v_1v_1}(G_0)$ arrows to $G_0$
going from $v_n$ to $v_1$. The resulting graph $G$ is then a recurrent Ruette
graph with Gurevics entropy $h(G) = h$ by Lemma \ref{ruette3}. By
Proposition \ref{ruette4} we obtain a transient graph $G'$ with the
same entropy by removing the single edge going from $v_1$ to
$v_1$. Both graphs are row-finite. We
claim $G$ will have the properties described in
Theorem \ref{REV1} and $G'$ the properties described in Theorem
\ref{REV2}. For both graphs it is easy to see that the exits
correspond bijectively to the exit paths $t^i, \ i \geq 0$. In view of
Theorem \ref{n11} it suffices therefore to show that each exit
path $t^i$ will be $\beta$-summable iff $\beta \in I_i$ and
$e^{-\beta}A$ is transient, where $A$ is the adjacency matrix of the
graph in question, i.e. either $G$ or $G'$. The argument for this is a
matter of book-keeping and runs as follows.

For the bare exit the claim follows from Lemma \ref{r11} so we
consider one of the attached exit paths, say $ t = t^M$ so that the graph
(\ref{G1}) depicts the relevant part of $G$.  For $k \geq 2$, let $\mathbb L_k(n)$ denote the set of simple paths
from $t_1$ to $t_k$ of length $n$, and
set 
$$
\mathbb L_k = \bigcup_n \mathbb L_k(n) .
$$
Let $\mathbb A_k$ be the set of all finite paths $\mu
\in P_f(G)$ such that $s(\mu) = t_1, \ r(\mu) = t_k$. Finally, we let
$\mathbb B$ be the set of all finite paths (loops) $\mu \in P_f(G)$ such that
$s(\mu) = r(\mu) = t_1$. Then
$$
\mathbb A_k = \sqcup_{\mu' \in \mathbb L_k} \left\{ \mu''\mu' : \
  \mu'' \in \mathbb B \right\} .
$$
It follows that
$$
\sum_{n=0}^{\infty} A^n_{t_1t_k} e^{-n \beta} =  \sum_{\mu \in \mathbb
  A_k} e^{-|\mu|\beta} = \alpha \sum_{\mu \in \mathbb L_k}
e^{-|\mu|\beta} 
$$
where
$$
\alpha  \defeq \sum_{n=0}^{\infty} A^n_{t_1t_1} e^{-n\beta}.
$$
Set 
\begin{equation}\label{X}
x_k =   e^{(k-1)\beta}
t(k)^{-1}\sum_{n=1}^{k-1} e^{-n\beta} \# \mathbb L_k(n) 
\end{equation}
and
\begin{equation}\label{Y}
y_k =  e^{(k-1)\beta}
t(k)^{-1}\sum_{n \geq k}e^{-n\beta} \# \mathbb L_k(n) .
\end{equation}
Then
$$
 e^{(k-1)\beta}
t(k)^{-1} \sum_{n=0}^{\infty} A^n_{t_1t_k}e^{-n\beta} = \alpha(x_k +
y_k) ,
$$
and $t$ will be a $\beta$-summable exit path iff $\alpha < \infty$ and
$\sup_k 
x_k$ and  $\sup_k 
y_k$ are both finite, or alternatively, since $x_k+y_k$ increases with
$k$, iff  $\alpha < \infty$ and $\sup_k 
x_{2k+1}$ and  $\sup_k 
y_{2k+1}$ are both finite. A simple count shows that
 \begin{equation}\label{djur1}
\sum_{n=1}^{2k} e^{-n\beta} \# \mathbb L_{2k+1}(n) = e^{-2k\beta} a_1a_2 \cdots a_{2k} + e^{-k\beta}d_k +
  \sum_{i=1}^{k-1} e^{- (2k-i)\beta} d_i a_{2i+1}a_{2i+2}
  \cdots a_{2k}  ,  
\end{equation}
for $k \geq 1$, 
while
\begin{equation}\label{djur2}
\sum_{n \geq k} e^{-n\beta} \# \mathbb L_k(n) 
= e^{-(2k-2)\beta} b_{k-1} + \sum_{i=1}^{k-2} e^{-(k+i-1)\beta}b_i a_{i+1}a_{i+2} \cdots a_{k-1}   ,
\end{equation}
for $k \geq 2$. Inserting (\ref{djur1}) and (\ref{djur2}) into (\ref{X}) and
(\ref{Y}), and remembering that $d_i = c_ia_{i+1} \cdots a_{2i}$, we find that 
\begin{equation*}
x_{2k+1}  =
   1+ 
  \sum_{i=1}^{k} e^{i\beta} c_i \left(a_{1}a_2 \cdots a_{i}\right)^{-1}  ,
\end{equation*}
for $k \geq 1$, and
\begin{equation*}
y_{k} =
  \sum_{i=1}^{k-1} e^{-i\beta}b_i \left(a_{1}a_2 \cdots a_{i}\right)^{-1}   ,  
\end{equation*}
for $k \geq 2$.
It follows that $\sup_k x_{2k+1}$ and $\sup_k y_{2k+1}$ will both be
finite if and only if $\beta \in J(\textbf{a},\textbf{b},\textbf{c}) =
I_M$.
\qed

\section{Graphs with infinite emitters} In this section we prove the
two Theorems \ref{intro1} and \ref{intro} from the introduction.

\subsection{The basic construction II:  Adding an exit}\label{constr2} 

Let $G_0$ be an arbitrary graph and $v$ a vertex in $G_0$. In analogy
with what we did in Section \ref{constr} we want to
add to $G_0$ a new exit, but this time the construction will make $v$
an infinite emitter, also when it wasn't in $G_0$. Let $I$ an arbitrary
interval in $]0,\infty[$ and choose with the aid of Lemma \ref{summint}
three sequences $\textbf{a} = (a_i)_{i=1}^{\infty},\textbf{b} = (b_i)_{i=1}^{\infty}
  , \textbf{c} = (c_i)_{i=1}^{\infty}$ of non-zero natural numbers
  such that $J(\textbf{a},\textbf{b},\textbf{c}) = I$. Set $t_1 = v$
  and add to $G_0$ the vertexes $t_2,t_3,t_4, \cdots$. Also, add $a_i$
  arrows from $t_i$ to $t_{i+1}, \ i \geq 1$, add
  $c_i$ arrows from $t_1$ to $t_{i+1}, \ i \geq 1$, and add a path of length $2i$
  from $t_1$ to $t_i, i\geq 2$, whose first edge has multiplicity
  $b_{i-1}$ and the rest multiplicity $1$. Then, if we ignore the part of $G_0$ with no contact to
  the new vertexes, the graph $G$ looks at follows.

\begin{equation}\label{Grus}
\begin{xymatrix}{ v= t_1  \ar@[red][d]_{a_1} \ar@[green]@/^1pc/[d]_-{c_1}
    \ar@[green]@/^2pc/[dd]_(.60){   c_2} \ar@[green]@/^3pc/[ddd]_(.6){ c_3}  \ar@[green]@/^4pc/[dddd]_(.6){c_4}
    \ar@[blue]@/^2pc/[rrrd]_(.5){b_1}  \ar@[blue]@/^2pc/[rrrdd]_(.5){b_2} \ar@[blue]@/^2pc/[rrrddd]_(.6){b_3} \ar@[blue]@/^2pc/[rrrdddd]_(.7){b_4}& &  & \\ 
 t_2  \ar@[red][d]_{a_2}&
 \ar[l] & \hdots \ar[l] & \ar[l]\\
    t_3\ar@[red][d]_{a_3}
& \ar[l]&   \hdots \ar[l] &  \ar[l]  \\
 t_4 \ar@[red][d]_{a_4}
& \ar[l] & \ar[l]\hdots & \ar[l] \\
   t_5 \ar@[red][d]_{a_5} 
& \ar[l] & \ar[l] \hdots &\ar[l] \\
\vdots & \vdots & \vdots & \vdots } 
\end{xymatrix}
\end{equation}

The labels show the multiplicity of the edge; unlabeled black edges have
multiplicity 1.

\bigskip

\subsection{Proof of  Theorem \ref{intro1} and Theorem \ref{intro}:} The fundamental idea of the proof is the same as for the proof of
Theorems \ref{REV1} and \ref{REV2}: We construct a recurrent Ruette graph
$G$ with the properties described in Theorem \ref{intro1} and then
remove a single edge to obtain a graph with the properties described
in Theorem \ref{intro}. Let $M$ be a set which is countably infinite
if $N = \infty$ or else has $N-1$ elements, and let $I_1,I_2,I_3, \cdots$ be a numbering of the
  intervals in $\mathbb I$. Let $H_0$ be the graph
which only consists of the vertex $v$ and for each $i \in M$ a vertex
$w_i$, and an arrow from $v$ to $w_i$. Attach to $H_0$ an exit path
$t^1$ whose first vertex is $v$ as described in Section \ref{constr2} using multiplicities $\textbf{a(1)} =
\{a_i\},\textbf{b(1)} = \{b_i\}$ and $\textbf{c(1)} = \{c_i\}$ such that
$J(\textbf{a(1)},\textbf{b(1)},\textbf{c(1)}) = I_1$. Let $H_1$ be the
resulting graph. Attach to $H_1$ an exit path $t^2$ using multiplicities $\textbf{a(2)},\textbf{b(2)}$ and $\textbf{c(2)}$ such that
$J(\textbf{a(2)},\textbf{b(2)},\textbf{c(2)}) = I_2$. Let $H_2$ be the
resulting graph and continue recursively, either indefinitely or until
the intervals in $\mathbb I$ have been used up. Set
$$
H = \bigcup_i H_i .  
$$
We next add to $H$ a sequence $\mu_i , i =1,2,3, \cdots $, of paths going
from the
vertexes in the exit paths we have attached to the vertex $v$. The paths are mutually disjoint, both
from each other and from $H$, except for their initial and terminal
vertexes. We want also to arrange that in the resulting graph $G_0$ we
have that $l^1_{vv}(G_0) = 0$ and 
\begin{equation}\label{feber1}
\sum_{n=1}^{\infty} l^n_{vv}(G_0)
e^{-nh}  < 1 -e^{-h}.
\end{equation} 
This can be done in exactly the same way as in the proof in Section
\ref{constr}, but note that this time $G_0$ is still not strongly
connected unless $N=1$ since the vertexes $w_i,i \in M$, are sinks. To make the final adjustment to the graph we first use
Lemma \ref{djurs10} to get a sequence $\{b_n\}_{n=1}^{\infty}$ of non-negative
integers such that $b_1 = 1$, $b_n \geq l^n_{vv}(G_0)$ for all $n$,
$\limsup_n (b_n)^{\frac{1}{n}} = e^h$ and
$$
\sum_{n=1}^{\infty} b_n e^{-nh} = 1.
$$ 
By condition iii) in Lemma \ref{djurs10},
$$
W = \left\{ n \geq 2: \ b_n - l^n_{vv}(G_0) > 0 \right\}
$$
is infinite. When $N=1$ so that $M = \emptyset$, we add $b_n - l^n_{vv}(G)$ loops of
length $n$ from $v$ back to $v$ for each $n \in W$, all mutually disjoint except at
$v$. When $N \geq 2$ we partition $W$ into $\# M$ sets $W_i, i \in M$, with
infinitely many elements in each set. For each $k \in W_i$ we then add
$b_k-l^k_{vv}(G_0)$ paths of length $k-1$ from $w_i$ to $v$. The
paths must be mutually disjoint and disjoint from $G_0$ except at
their initial and terminal vertex. Finally, add a single edge from $v$
back to itself. In the resulting graph all $w_i, \ i \in M,$ are now
infinite emitters, and
$$
\sum_{n=1}^{\infty} l^n_{vv}(G) e^{-nh} = 1.
$$  
It follows from  Lemma \ref{ruette3} that $G$ is a recurrent Ruette
graph with Gurevics entropy $h(G) = h$. By
Proposition \ref{ruette4} we obtain a transient graph $G'$ with the
same entropy by removing the single edge going from $v$ to
$v$. To check that $G$ has the properties described in Theorem
\ref{intro1} and $G'$ the properties described in Theorem \ref{intro}
it remains only to check the assertions regarding the exits. For both graphs it is easy to see that the exits
correspond bijectively to the exit paths we have added. In view of
Theorem \ref{n11} it suffices therefore to show that the exit $t$
corresponding to one of the intervals in $\mathbb I$, say $I_i$, will be $\beta$-summable iff $\beta \in I_i$ and
$e^{-\beta}A$ is transient, where $A$ is the adjacency matrix of the
graph in question, i.e. either $G$ or $G'$. Let $\textbf{a} = (a_i)_{i=1}^{\infty},\textbf{b} = (b_i)_{i=1}^{\infty}
  , \textbf{c} = (c_i)_{i=1}^{\infty}$ be the three sequences of
  non-zero natural numbers used to construct $H_i$ from $H_{i-1}$ and
  having the property that $J(\textbf{a},\textbf{b},\textbf{c}) = I_i$. Using the
  same notation as in the proof of Theorems \ref{REV1} and \ref{REV2}
  we find this time that
\begin{equation*}
\begin{split}
& \sum_{n=1}^k e^{-n\beta} \# \mathbb L_{k+1}(n) = c_ke^{-\beta}  + a_kc_{k-1}e^{-2\beta} +
a_ka_{k-1}c_{k-2}e^{-3\beta} + \\
&\ \ \ \ \ \ \ \ \ \ \cdots 
+ a_ka_{k-1} \cdots a_2c_1e^{-k\beta} +a_ka_{k-1}a_{k-2}\cdots a_1
e^{-k\beta}  .
\end{split}
\end{equation*}
Hence 
\begin{equation*}
\begin{split}
 x_{k+1} =   e^{k\beta} t(k+1)^{-1} \sum_{n=1}^k e^{-n\beta} \#
 \mathbb L_{k+1}(n) = c_1^{-1}\left(
1 + e^{-\beta}\sum_{i=1}^k \frac{c_i}{a_1a_2\cdots a_i} e^{i\beta}\right) .\\
\end{split}
\end{equation*}
We see that the limit $\lim_{k \to \infty} x_k$ is finite if and only
if (\ref{m3}) holds. Similarly we find that
\begin{equation*}
\begin{split}
&  \sum_{n \geq k+1} e^{-n\beta} \# \mathbb L_{k+1}(n) = a_ka_{k-1} \cdots a_2 b_1e^{-(3+ k)\beta} +
a_ka_{k-1} \cdots a_3b_2e^{-(4 +k)\beta} + \\
& \ \ \ \cdots +
 a_kb_{k-1}e^{-(2k + 1)\beta} + b_ke^{-(2k+2) \beta}. \\
\end{split}
\end{equation*}
It follows that
\begin{equation*}
\begin{split}
&c_1 y_{k+1}  =  c_1e^{k\beta} t(k+1)^{-1}  \sum_{n \geq k+1} e^{-n\beta} \# \mathbb L_{k+1}(n)    \\
&= a_1^{-1} b_1e^{-3\beta} +
(a_2a_1)^{-1}b_2 e^{-4\beta} + \\
& \ \ \ \cdots +
 (a_{k-1}a_{k-2} \cdots a_1)^{-1}b_{k-1}e^{-(k +1)\beta} +
 (a_{k}a_{k-1} \cdots a_1)^{-1} b_{k}e^{-(k+2) \beta}. \\
\end{split}
\end{equation*}
Thus $\lim_{k \to \infty} y_k $ is finite if and only
 (\ref{m6}) holds. Since
$$
 e^{(k-1)\beta}
t(k)^{-1} \sum_{n=0}^{\infty} A^n_{vt_k}e^{-n\beta} = \alpha(x_k +
y_k) ,
$$
where $\alpha = \sum_{n=0}^{\infty} A^n_{vv} e^{-n \beta}$ we find
that $t$ is $\beta$-summable if and only if $e^{-\beta} A$ is
transient and $\beta \in J\left(\textbf{a},\textbf{b},\textbf{c}\right) = I_i$.

\qed

\section{Appendix}\label{appendix}

An exit path $(v_i)_{i=1}^{\infty}$ is
\emph{bare} when 
\begin{equation}\label{bare}
\#  s\left(r^{-1}\left(v_{i+1}\right)\right)  = 1
\end{equation} 
for all $i$, and \emph{eventually bare} when (\ref{bare}) holds for
all $i$ large enough. An exit is \emph{bare} when one of its
representing exit paths is bare, in which case they are all eventually
bare. The purpose with this Appendix is to prove

\begin{thm}\label{!!!} Let $G$ be a strongly connected row-finite graph with
  infinitely many vertexes and at most countably many exits. Then $G$
  contains a bare exit.
\end{thm}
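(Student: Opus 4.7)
The plan is to proceed by contrapositive: assuming $G$ has no bare exit, I will produce uncountably many pairwise tail-inequivalent exit paths, contradicting the hypothesis.

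First I introduce the auxiliary directed graph $T$ on $V$ whose edges are the pairs $(u,v)$ with $s(r^{-1}(v)) = \{u\}$; that is, $T$ records the ``unique in-neighbor'' structure. A bare exit path is precisely an exit path which, from position $1$ onwards, is an infinite forward walk in $T$. Every vertex has $T$-in-degree at most one, with $T$-in-degree $0$ exactly at the junctions $J \defeq \{v : \#s(r^{-1}(v)) \geq 2\}$. I would first check that $T$ has no cycles: a $T$-cycle would consist of non-junction vertices whose only $G$-in-neighbors lie within the cycle, so no $G$-edge could enter the cycle from outside, which combined with the existence of a vertex outside the cycle (since $|V| = \infty$) contradicts strong connectivity. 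Thus $T$ is a forest, and each connected component is a tree rooted at a junction.

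Since $G$ is row-finite, so is $T$, and König's lemma implies that every infinite tree of $T$ contains an infinite forward branch, i.e.\ a bare exit path (forward paths in a forest visit pairwise distinct vertices, so they automatically leave every finite subset of $V$). The assumption that no bare exit exists therefore forces every tree of $T$ to be finite. As $V$ is the disjoint union of these finite trees, $|V| = \infty$ forces $|J| = \infty$.

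Next I would verify that every exit path $\xi$ visits the set $F \defeq \{v : \#r(s^{-1}(v)) \geq 2\}$ of \emph{forks} infinitely often. Indeed, if from some position $N$ the vertex $\xi(i)$ had a unique out-neighbor for all $i \geq N$, then the forward walk from $\xi(N+1)$ would be uniquely determined and never return to $\xi(N)$, contradicting strong connectivity. Fix a starting vertex $v_0$ and an exit path $\xi^\ast$ from $v_0$ with fork positions $i_1 < i_2 < \ldots$. For each $\vec{c} = (c_k) \in \{0,1\}^{\mathbb{N}}$ I attach an exit path $\xi_{\vec{c}}$ defined inductively: at every fork met during the construction, use the next unused bit of $\vec{c}$ to choose between two distinct out-edges, and after every such choice continue with a fixed exit-path extension from the chosen vertex (such extensions exist because $G$ is strongly connected, row-finite and has $|V| = \infty$). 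By construction distinct $\vec{c}$'s produce distinct sequences $\xi_{\vec{c}}$, as they diverge at the first fork where they disagree. Because the tail equivalence class of any single exit path contains only countably many exit paths (for each shift $k \in \mathbb{Z}$ the tail is fixed and only the finite preamble varies), $2^{\aleph_0}$ injective images cannot all fit into countably many tail classes; hence $G$ has uncountably many exits, contradicting the hypothesis.

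The main obstacle is making the Cantor-style assignment $\vec{c} \mapsto \xi_{\vec{c}}$ genuinely well-defined: one has to pick the ``alternative'' extensions so that each $\xi_{\vec{c}}$ really escapes every finite subset of $V$ (i.e., is an exit path, not merely an infinite walk), and one must confirm that the resulting injection lands in uncountably many tail classes. The preceding structural steps---the forest property of $T$, the König argument and the fork-visiting claim---are comparatively routine and follow directly from row-finiteness and strong connectivity.
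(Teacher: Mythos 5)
Your overall strategy---assume there is no bare exit and produce uncountably many pairwise tail-inequivalent exit paths---is the same contradiction the paper exploits, but the engine you propose does not work, and it breaks exactly at the point you set aside as ``the main obstacle''. Your Cantor construction branches at \emph{forks} (vertices with at least two out-neighbours) and nowhere uses the hypothesis that $G$ has no bare exit: its only inputs are strong connectivity, row-finiteness and $\#V=\infty$ (the fork-visiting lemma needs nothing more), so if the assignment $\vec c\mapsto\xi_{\vec c}$ could always be made well defined, it would show that \emph{every} infinite strongly connected row-finite graph has uncountably many exit paths. That is false: take vertices $v_1,v_2,v_3,\dots$ with edges $v_n\to v_{n+1}$ for all $n$ and $v_n\to v_1$ for all $n\geq 2$. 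This graph is strongly connected, row-finite and infinite, every $v_n$ with $n\geq 2$ is a fork, yet it has only countably many exit paths and a single (bare) exit; no choice of ``alternative extensions'' can make your map land injectively in the exit paths of this graph. So ensuring that each $\xi_{\vec c}$ really escapes every finite subset of $V$ is not a technicality to be checked at the end---it is the whole content of the theorem, and it can only be arranged by using the absence of bare exits structurally. Your outline never does this; indeed the conclusion of your first half (infinitely many junctions, an in-degree statement) is never used in the second half, which works entirely with out-degrees.

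For comparison, in the paper the binary alternatives are not two ways of leaving a fork but two finite paths with the \emph{same} endpoints, running between consecutive marked vertices of one fixed exit path $t$: one is the segment of $t$ itself, the other a detour re-entering $t$ through an edge whose source is not the predecessor on $t$---this is precisely where non-bareness is used, and since every coded concatenation follows the skeleton of $t$, it is automatically an exit path, with the bits recoverable from last occurrences of the marked vertices. Even that only shows that a non-(eventually) bare exit can be ``undercut'' by another exit; to produce a bare exit the paper needs a second layer which has no counterpart in your proposal: the relation $\leq$ on exits, Lemma \ref{!1} (countability of the exits forces antisymmetry), Lemma \ref{!3} (existence of a minimal exit via Zorn plus a diagonal argument using row-finiteness), and Lemma \ref{!5} (a minimal exit is bare). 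A smaller, fixable point: your K\"onig step is too quick, since an infinite component of $T$ need not contain an infinite forward branch a priori---it could be infinite only through an infinite backward chain $\cdots\to u_2\to u_1\to u_0$ in which each $u_k$ has its unique in-neighbour on the chain; one must first rule this out (it does contradict strong connectivity, because then no vertex off the chain, nor $u_0$ itself after leaving it, can ever reach $u_0$). With that repair your first half correctly yields a bare exit whenever some component of $T$ is infinite, but the decisive gap is in the second half.
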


The first step is to show that there exist exit paths.

\begin{lemma}\label{r4} Assume that $G$ is a strongly connected
  row-finite graph with infinitely many vertexes. Then $G$
  contains an exit path.
\end{lemma}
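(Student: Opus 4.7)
The plan is to apply K\"onig's lemma to the tree of finite simple paths emanating from a fixed vertex. First I would pick any vertex $v_1 \in V$ and form the rooted tree $T$ whose nodes are the finite paths in $G$ starting at $v_1$ and having pairwise distinct vertices (simple paths), ordered by one-edge extension. Row-finiteness of $G$ makes $T$ finitely branching, since a node ending at vertex $v$ has at most $\# s^{-1}(v) < \infty$ children in $T$.

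Next I would show that $T$ has nodes of arbitrary depth. For each $n$, row-finiteness implies that the set $B_n(v_1)$ of vertices reachable from $v_1$ by a path of length at most $n$ is finite, because at every step there are only finitely many outgoing edges. On the other hand, strong connectedness gives $\bigcup_n B_n(v_1) = V$, and $V$ is infinite by hypothesis. Therefore, for every $n$ there exists a vertex $w \in V \setminus B_{n-1}(v_1)$, and any shortest path from $v_1$ to such a $w$ is automatically simple and has length at least $n$. This produces a node of $T$ at depth at least $n$.

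By K\"onig's lemma the tree $T$ then has an infinite branch, which corresponds to an infinite simple path $p = (p_i)_{i=1}^{\infty}$ in $G$ with pairwise distinct vertices and $p_1 = v_1$. For any finite subset $F \subseteq V$ only finitely many $p_i$ can lie in $F$, so there exists $N \in \mathbb{N}$ with $p_i \notin F$ for all $i \geq N$. This is exactly the condition $\lim_{i \to \infty} p_i = \infty$ that makes $(p_i)_{i=1}^{\infty}$ an exit path in $P(V)$ in the sense introduced just before the lemma.

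The only step where some care is warranted is the verification that $T$ has arbitrarily deep nodes; the potential worry would be that the tree could spread only in width and not in depth as $|V|$ grows, but row-finiteness rules this out because each $B_n(v_1)$ is finite and hence the infinitude of $V = \bigcup_n B_n(v_1)$ forces vertices at arbitrarily large graph distance from $v_1$.
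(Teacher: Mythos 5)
Your proof is correct and is essentially the same argument the paper uses (quoting Van Cyr's thesis): the paper extracts the infinite path by successive pigeonhole on the first, second, \dots\ edges of minimal-length paths from a fixed vertex to all other vertices, which is exactly K\"onig's lemma applied to a finitely branching tree of simple paths, with row-finiteness giving finite branching and strong connectedness plus the infinitude of $V$ giving arbitrarily long simple paths. The only cosmetic difference is that you verify arbitrary depth via the finite balls $B_n(v_1)$ and invoke K\"onig's lemma explicitly, whereas the paper's minimal-length paths deliver simplicity and arbitrary length automatically.
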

\begin{proof} This was proved by Van Cyr in his thesis, cf. page 94 in
  \cite{Cy}. Here is the argument: Let $v_0,v_1,v_2,v_3, \cdots$ be a
  numbering of the vertexes in $V$. For
  each $i$ choose a finite path $\mu_i$ from $v_0$ to $v_i$ of minimal
  length. Since $s^{-1}(v_0)$ is finite there are infinitely many
  $\mu_i$'s that share the first edge, $e_1$ say. Among them there are
  infinitely many that share the second edge, $e_2$, and so on. This
  results in an infinite path $p = e_1e_2e_3e_4\cdots $ in which the
  vertexes only occur once. It follows that the vertexes in $p$ form an exit path.  
\end{proof}

When $\mu = e_1e_2 \cdots e_n $ is a finite path in $G$ and $F \subseteq
V$ is a set of vertexes, we write $\mu \cap F = \emptyset$ when  $\mu$ does not contain any
vertex from  $F$, i.e.   
$$
\bigcup_{i=1}^n \left\{ s(e_i),r(e_i)\right\}  \cap F = \emptyset
.
$$
Let $(v_i)_{i=1}^{\infty}$ and $(w_i)_{i=1}^{\infty}$ be exit paths in
$G$. We write 
$$
(v_i)_{i=1}^{\infty} \ \leq \ (w_i)_{i=1}^{\infty}
$$
when the following holds: For every finite subset $F \subseteq V$ and
every $N \in \mathbb N$ there are $n,m \geq N$ and a finite path $\mu$
in $G$ such that $s(\mu) = v_n, \ r(\mu) = w_m$ and $\mu \cap F =
\emptyset$. Given two exits $t$ and $t'$ in $G$ we write $t\leq t'$
when there are exit paths, $(v_i)_{i=1}^{\infty}$ and
$(w_i)_{i=1}^{\infty}$, representing $t$ and $t'$, respectively, such that
$(v_i)_{i=1}^{\infty} \ \leq \ (w_i)_{i=1}^{\infty}$.

Let $\mathcal E$ be the collection of exits in $G$.

\begin{lemma}\label{!1}  Let $G$ be a strongly connected graph with
  infinitely many vertexes and at most countably many exits. $\mathcal
  E$ is partially ordered by the relation $\leq$.
\end{lemma}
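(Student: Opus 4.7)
I would verify the three partial-order axioms in turn, first noting that $\leq$ descends to tail equivalence classes — if $(v_i) \sim (v'_i)$ via shift $k$ and $(w_i) \sim (w'_i)$ via shift $k'$, then any witness of $(v_i) \leq (w_i)$ at indices $n, m \geq N + |k| + |k'|$ is also a witness of $(v'_i) \leq (w'_i)$ at indices $n-k$, $m-k'$, both $\geq N$. Reflexivity is immediate: given $F, N$, pick $n \geq N$ with $v_n \notin F$ (possible since $v_i \to \infty$) and take $\mu$ to be the trivial length-$0$ path at $v_n$, with $m = n$.

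For transitivity with $(v_i) \leq (w_i) \leq (u_i)$ and given $F, N$, the plan is to use the $w$-exit path itself as a bridge. First enlarge $N$ to $N_0 \geq N$ so that $w_i \notin F$ for all $i \geq N_0$. Apply the first inequality with $(F, N_0)$ to produce $n, m \geq N_0$ and a path $\mu_1 \colon v_n \to w_m$ avoiding $F$, with finite vertex set $V_1$. Apply the second with $(F \cup V_1, m)$ to obtain $m', k \geq m$ and a path $\mu_2 \colon w_{m'} \to u_k$ avoiding $F \cup V_1$. The exit-path segment $w_m \to w_{m+1} \to \cdots \to w_{m'}$ has all vertices of index $\geq m \geq N_0$, so it avoids $F$; concatenating $\mu_1$, this segment and $\mu_2$ yields a path $v_n \to u_k$ avoiding $F$, with $n, k \geq N$.

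The main obstacle is antisymmetry, where the countable-exits hypothesis enters. Suppose exit paths $(v_i), (w_j)$ represent distinct exits $t \neq t'$ yet $(v_i) \leq (w_j)$ and $(w_j) \leq (v_i)$. The plan is to manufacture uncountably many pairwise tail-inequivalent exit paths, contradicting $\#\mathcal E \leq \aleph_0$. For each $\epsilon = (\epsilon_k) \in \{0,1\}^{\mathbb N}$ I would construct $p_\epsilon$ recursively: at stage $k$ the current endpoint lies on either $(v_i)$ or $(w_j)$ at index $\geq N_k$, with $N_k \to \infty$; if $\epsilon_k = 0$, extend by one step along the currently occupied exit path, and if $\epsilon_k = 1$, invoke the appropriate one of the two inequalities to hop to the opposite side via a finite path avoiding all vertices used so far. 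The resulting $p_\epsilon$ is an exit path because the index along whichever side it currently follows grows without bound. After passing to subsequences of $(v_i)$ and $(w_j)$ having disjoint tail vertex sets — obtainable from tail inequivalence and the fact that an exit path visits each vertex finitely often — the stage-$k$ choice is visible in the vertex sequence of $p_\epsilon$, so $\epsilon$ can be recovered from the tail of $p_\epsilon$ up to finite modification; hence $[p_\epsilon] = [p_{\epsilon'}]$ in $\mathcal E$ forces $\epsilon$ and $\epsilon'$ to agree at all sufficiently large indices. Since $\{0,1\}^{\mathbb N}$ modulo tail equivalence is uncountable, this produces the required uncountable family in $\mathcal E$, and antisymmetry follows. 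The delicate step is engineering the recursion and the subsequence choice so that the stage-$k$ decision leaves a distinguishable trace in the tail of the constructed exit path.
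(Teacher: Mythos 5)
Your reflexivity and transitivity arguments are fine (the paper treats these as obvious), and your overall strategy for antisymmetry --- manufacturing a Cantor family of zig-zag exit paths to contradict countability of $\mathcal E$ --- is the same idea the paper uses. But the execution has a genuine gap at exactly the point you yourself flag as delicate. The reduction you propose, namely that tail inequivalence of $(v_i)$ and $(w_j)$ together with the fact that an exit path visits each vertex only finitely often lets you pass to tails with \emph{disjoint} vertex sets, is not a valid deduction: two tail-inequivalent exit paths can share infinitely many vertices (for instance they may pass through a common sequence of vertices $u_1,u_2,u_3,\dots$ going to infinity while being joined by different intermediate segments), so no truncation makes their vertex sets disjoint. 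Ruling this out in a graph with at most countably many exits is essentially another instance of the very Cantor-type argument you are trying to set up, so as stated the reduction is circular, and nothing in what you cite supplies it.

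Even granting disjointness, the adaptive recursion (at stage $k$ either take one step along the current side or hop across, avoiding only the vertices used so far by that particular $p_\epsilon$) does not yet make $\epsilon$ recoverable from the tail of $p_\epsilon$: tail equivalence discards an arbitrary finite prefix, the stage boundaries carry no intrinsic marker, and the connecting paths and avoided sets depend on $\epsilon$, so there is no way to align stages when comparing $p_\epsilon$ with $p_{\epsilon'}$. The paper's proof resolves precisely this by first fixing a single zig-zag exit path $z = v_{n_1}\mu_1 w_{m_1}\nu_1 v_{n_2}\mu_2 w_{m_2}\nu_2\cdots$ arranged so that each marker vertex $v_{n_i}$, resp.\ $w_{m_i}$, never occurs later in $z$ (possible because exit paths leave every finite set and each connecting path can be chosen avoiding a prescribed finite set), and then letting the binary choice at stage $i$ be between the straight block $a_i^0 = v_{n_i}v_{n_i+1}\cdots v_{n_{i+1}-1}$ and the detour block $a_i^1 = v_{n_i}\mu_i w_{m_i}\nu_i$ along this \emph{fixed} skeleton; the last occurrence of $v_{n_i}$ then locates block $i$ in any tail of any member of the family, so if $a_i^0 \neq a_i^1$ for infinitely many $i$ one gets uncountably many mutually tail-inequivalent exit paths. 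Consequently $a_i^0 = a_i^1$ eventually, which directly forces $(v_i)$ and $(w_i)$ to be tail equivalent --- no disjointness assumption and no contradiction with $t \neq t'$ is needed. Your argument needs this fixed-skeleton, never-revisited-marker device (or an equivalent mechanism) to close the gap.
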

\begin{proof} Only the anti-symmetry condition is not obvious. For
  this we must show that when $(v_i)_{i=1}^{\infty}$ and
  $(w_i)_{i=1}^{\infty}$ are exit paths such that
\begin{equation}\label{!2}  
(v_i)_{i=1}^{\infty} \ \leq \ (w_i)_{i=1}^{\infty} \ \leq \
  (v_i)_{i=1}^{\infty},
\end{equation} 
then the paths are tail-equivalent. 
It follows from (\ref{!2}) that we can go back and forth between
$(v_i)_{i=1}^{\infty}$ and $(w_i)_{i=1}^{\infty}$ with paths that
eventually leave all finite subsets of vertexes. In particular, we can find 
sequences $n_1 < n_2 < \cdots $ and $m_1 <
m_2 < \cdots $ in $\mathbb N$ and elements $\mu_i,\nu_i \in \bigcup_n V^n, i
=1,2,3, \cdots$, such that 
$$
z = v_{n_1}\mu_1w_{m_1}\nu_1v_{n_2}\mu_2 w_{m_2}\nu_2 \cdots
$$
is an exit path in $G$ with the property that $v_{n_i}$ does not occur
in $\mu_iw_{m_i}\nu_iv_{n_i+1}\mu_{i+1}w_{m_{i+1}} \cdots$ and $w_{m_i}$
does not occur in $\nu_iv_{n_{i+1}}\mu_{i+1}
w_{m_{i+1}}\nu_{i+1}\cdots $. For each $i
\in \mathbb N$, set $a_i^0 =  v_{n_i}v_{n_i+1} \cdots v_{n_{i+1} -1}$
and $a^1_i = v_{n_i}\mu_iw_{m_i}\nu_i$. For each
$(i_j)_{j=1}^{\infty} \in \{0,1\}^{\mathbb N}$ we can then consider the exit path
$$
P((i_j)_{j=1}^{\infty}) = a^{i_1}_1a^{i_2}_2a^{i_3}_3 a^{i_4}_4\cdots .
$$
If $a^0_i \neq a^1_i$ for infinitely may $i$ it follows that
$P\left(\{0,1\}^{\mathbb N}\right)$ contains uncountably many mutually
tail inequivalent exit paths, contradicting our assumption on $G$. It
follows therefore that there is an $i_0$ such that $a^0_i =
a^1_i$ for all $i \geq i_0$. This implies that $z$ is tail-equivalent
to $(v_i)_{i=1}^{\infty}$, and then by symmetry also to $(w_i)_{i=1}^{\infty}$.

\end{proof}

 Fix a vertex $v \in V$
  and an increasing sequence $F_1 \subseteq F_2 \subseteq F_3
  \subseteq \cdots$ of finite subsets of $V$ such that $v\in F_1$ and
  $\bigcup_n F_n = V$. Let $w =
  (w_i)_{i=1}^{\infty}$ be an exit path in $G$ with $w_1 = v$ or an
  element $w \in V^n$ with $w_1 = v$ and $w_n \notin F_k$. There is
  then a unique vertex
  $E(w,k)\in V$ and a unique $j\in \mathbb N$ such that
\begin{enumerate}
\item[i)] $w_j = E(w,k)$,
\item[ii)] $w_i \notin F_k, \ i \geq j$,
\item[iii)] $w_{j-1} \in F_k$.
\end{enumerate}
In words, $E(w,k)$ is the first vertex outside of $F_k$ which $w$ visits
when it leaves $F_k$ for good.

\begin{lemma}\label{!3}  Let $G$ be a strongly connected graph with
  infinitely many vertexes and at most countably many exits. Then $(\mathcal E,\leq)$ has a minimal element.  

\end{lemma}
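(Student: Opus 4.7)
The plan is to apply Zorn's lemma to the partially ordered set $(\mathcal E,\leq)$. By Lemma~\ref{!1} the relation $\leq$ is a partial order, and by Lemma~\ref{r4} the set $\mathcal E$ is non-empty, so it suffices to show that every chain in $\mathcal E$ has a lower bound. Since $\mathcal E$ is at most countable, any chain is countable; a chain with a minimum provides its own lower bound, while otherwise the chain admits a strictly descending cofinal sequence $t_1 > t_2 > t_3 > \cdots$, and a lower bound for this sequence is a lower bound for the whole chain. The heart of the proof is thus to construct, from an arbitrary strictly descending sequence $t_1 > t_2 > \cdots$ in $\mathcal E$, an exit $[z] \in \mathcal E$ with $[z] \leq t_i$ for every $i$.

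To this end I would fix, for each $i$, a representative exit path $w^{(i)} = (w^{(i)}_j)_{j\geq 1}$ of $t_i$ with $w^{(i)}_1 = v$; this is available because $G$ is strongly connected. Unpacking $t_{i+1} \leq t_i$ produces, for every finite $F \subseteq V$ and every $N \in \mathbb N$, bridges from some $w^{(i+1)}_n$ with $n\geq N$ to some $w^{(i)}_m$ with $m \geq N$ avoiding $F$; iterating, whenever $j \geq i$ there are analogous bridges from late vertices of $w^{(j)}$ to late vertices of $w^{(i)}$ avoiding any prescribed finite set.

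The construction of $z$ will be a diagonal procedure indexed by $k=1,2,3,\ldots$, using the exhaustion $F_1 \subseteq F_2 \subseteq \cdots$ fixed before the lemma. At the $k$-th stage the current finite prefix of $z$ is extended by a forward segment in $G$ so that $z$ now ends at a vertex $w^{(j_k)}_{m_k}$ with $j_k \geq k$ and with $m_k$ large enough that $w^{(j_k)}_m \notin F_k$ for every $m \geq m_k$; the existence of such an extension relies on the bridges above together with strong connectedness of $G$. Once $z$ is obtained, the verification that $[z] \leq t_i$ is straightforward: given a finite $F\subseteq V$ and $N\in\mathbb N$, choose $k \geq \max\{i,N\}$ with $F \subseteq F_k$, locate the vertex $w^{(j_k)}_{m_k}$ on $z$, walk along the $F$-free tail of $w^{(j_k)}$ to a sufficiently late position, and then apply the bridge from $w^{(j_k)}$ to $w^{(i)}$ provided by $t_{j_k} \leq t_i$.

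The main obstacle will be ensuring that the path $z$ produced by the diagonal construction is genuinely an exit path, i.e.\ that it leaves every finite subset of $V$ eventually. The relation $\leq$ provides bridges only in the direction from $w^{(i+1)}$ to $w^{(i)}$, while the diagonal procedure must splice together forward segments through a sequence $w^{(j_1)}, w^{(j_2)}, \ldots$ with $j_k \to \infty$, and the forward splicing bridges, though existing by strong connectedness, are not a priori guaranteed to avoid any particular $F_k$. Handling this will require using the freedom to make the tails of $w^{(j_k)}$ that $z$ follows at each stage progressively longer and further out, so that each $F_k$ is visited by $z$ only finitely many times in total. Arranging this bookkeeping precisely is the key technical content of the argument.
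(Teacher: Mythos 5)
Your framing of the Zorn argument is fine (the paper instead takes a maximal chain and shows it has a minimum, but reducing to a coinitial descending sequence $t_1>t_2>\cdots$ as you do is equally legitimate), and your verification step at the end is essentially the right one. The genuine problem is the construction of $z$, and it is not, as you suggest, a matter of bookkeeping that can be fixed by following the tails of the $w^{(j_k)}$ ``progressively longer and further out.'' The relation $\leq$ only supplies $F$-avoiding bridges from $w^{(i+1)}$ to $w^{(i)}$, i.e.\ backwards along your descending sequence, so the forward splices from $w^{(j_k)}$-territory to $w^{(j_{k+1})}$-territory come from strong connectedness alone, with no control whatsoever. There are graphs satisfying all hypotheses in which every such forward connection is forced through a fixed bottleneck vertex: picture disjoint rays $w^1,w^2,\dots$ with bridge edges $w^{i+1}_n\to w^i_n$ (giving $[w^{i+1}]<[w^i]$), return edges $w^i_n\to v_0$, and a single spine $v_0\to u_1\to u_2\to\cdots$ with $u_i\to w^i_1$ as the only way to reach ray $i$. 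Here any path from far out on ray $j_k$ to ray $j_{k+1}$ with $j_{k+1}>j_k$ must pass through $v_0$, no matter how far out you start or aim, so your spliced $z$ visits $v_0$ infinitely often and is not an exit path. (The lemma is still true there: the spine is the lower bound, but it is not obtained by following any ray far out and splicing.)

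The missing idea is the one the paper uses, and it is where row-finiteness enters (which your sketch never invokes, another sign the essential mechanism is absent). Fix representatives $v^n$ of the $s^n$ starting at $v$ and consider, for each $k$, the vertex $E(v^n,k)$ at which $v^n$ leaves $F_k$ for good; by row-finiteness these all lie in the finite set $r\left(s^{-1}(F_k)\right)$, so a pigeonhole/diagonal choice of nested infinite sets $N_1\supseteq N_2\supseteq\cdots$ makes $E(v^n,k)$ independent of $n\in N_k$. The lower-bound path $u$ is then assembled so that its segment from $E(u,k)$ to $E(u,k+1)$ is literally a segment of a single $v^n$ with $n\in N_{k+1}$; consecutive pieces are joined at genuinely shared vertexes, no connecting paths are needed, and each such segment automatically avoids $F_k$, so $u$ is an exit path. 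The comparison $[u]\leq s^m$ then goes exactly as in your last verification paragraph, except that instead of following a tail of $w^{(j_k)}$ one rides along $v^n$ itself from the shared vertex $E(u,k')=E(v^n,k')$ (which stays outside $F_{k'}$) and then applies the bridge coming from $v^n\leq v^m$ for $n\geq m$. Without this replacement of ``splice far-out tails via strong connectedness'' by ``take a row-finiteness limit through common last-exit vertexes,'' the proposed proof does not go through.
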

\begin{proof} This follows from Zorn's lemma if we prove that a  maximal totally ordered subset $M$ in $(\mathcal
  E,\leq)$ has a minimal element. This is trivial if $M$ is finite. By
  assumption $M$ countable, say $M = \{t^i:  \ i = 1,2,3,
  \cdots \}$. For each $n \in \mathbb N$ we let $s^n$ be the minimal
  element in $\{t^i: \ i  = 1,2, \cdots, n\}$.
 Let $v^n =
  \left(v^n_i\right)_{i=1}^{\infty}$ be an exit path representing
  $s^n$ such that $v^n_1 = v$. Since $G$ is row-finite there are infinite subsets $N_1 \supseteq N_2
\supseteq N_3 \supseteq \cdots $ of natural numbers such that $E(v^n,k) = E(v^{n'},k)$
for all $n,n' \in N_k$. We can therefore piece together an exit path
$u = (u_i)_{i=1}^{\infty}$ such that for all $k \in \mathbb N$ there
are infinitely many $n \in \mathbb N$ with $E(v^n,k) =E(u,k)$. It is
easy to see that $ (u_i)_{i=1}^{\infty} \leq  \left(v^m_i\right)_{i=1}^{\infty}$ for all $m \in \mathbb N$. It follows
that $u$ represents an exit $[u]$ such that $[u] \leq t$ for all $t
\in M$.  
\end{proof}


We can now finish the proof of Theorem \ref{!!!} with the following

\begin{lemma}\label{!5} Let $G$ be a strongly connected graph with
  infinitely many vertexes and at most countably many
  exits. A minimal element in $(\mathcal E,\leq)$ is represented by a
  bare exit path.
\end{lemma}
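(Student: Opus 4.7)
The plan is to argue by contradiction. Suppose $t$ is minimal in $(\mathcal E,\leq)$, $(v_i)_{i=1}^{\infty}$ is a representative of $t$, and $(v_i)$ is not eventually bare. I will construct an exit path $w$ with $[w]\leq t$ and $[w]\neq t$, directly contradicting the minimality of $t$.

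First I would extract the branching data. The set $B=\{i:\#s(r^{-1}(v_{i+1}))\geq 2\}$ is infinite, so I enumerate $B$ as $i_1<i_2<\cdots$ and choose for each $k$ an alternative predecessor $u_k\in s(r^{-1}(v_{i_k+1}))\setminus\{v_{i_k}\}$. Row-finiteness of $G$ (implicit in this appendix, exactly as in the proof of Lemma \ref{!3}) gives a pigeonhole: each fixed vertex $y\in V$ has finite out-degree, and the targets $v_{i_k+1}$ are pairwise distinct for large $k$, so $\{k:u_k=y\}$ is finite. Passing to a subsequence, I may assume that the $u_k$'s are pairwise distinct, disjoint from $\{v_j:j\geq 1\}$, and satisfy $u_k\to\infty$ in $V$.

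Next I would build $w$ by an inductive detour procedure. Fix an exhaustion $V_1\subseteq V_2\subseteq\cdots$ of $V$ by finite sets. Having chosen $k_1<\cdots<k_{n-1}$ and simple paths $\sigma_1,\ldots,\sigma_{n-1}$, set $A_n=V_n\cup\bigcup_{j<n}\sigma_j$ (finite), pick $k_n$ large enough that $v_{i_{k_n}},u_{k_n}\notin A_n$, and choose a simple path $\sigma_n$ in $G$ from $v_{i_{k_n}}$ to $u_{k_n}$ that avoids $A_n$. Then $w$ is defined to follow the trunk $(v_i)$ but, after each visit to $v_{i_{k_n}}$, to insert $\sigma_n$ followed by the edge $u_{k_n}\to v_{i_{k_n}+1}$ before rejoining the trunk. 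By the avoidance conditions each vertex of $V$ occurs in $w$ only finitely often, so $w\to\infty$ and $w$ is an exit path. Because $w$ meets $v_j$ for arbitrarily large $j$, the relation $[w]\leq t$ is immediate (take $\mu$ to be the empty finite path at $v_j$, which avoids any prescribed finite $F\subseteq V$ once $j$ is large). Each $\sigma_n$ has length at least $1$, so the cumulative shift between $w$ and $(v_i)$ diverges, $w$ is not tail-equivalent to $(v_i)$, and $[w]\neq t$, contradicting minimality of $t$.

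The main obstacle is the existence of the avoidance path $\sigma_n$ at each induction step: strong connectivity of $G$ provides some path from $v_{i_{k_n}}$ to $u_{k_n}$, but it could in principle be forced through the finite set $A_n$. I would handle this by dichotomy: if no such obstruction ever occurs the proof proceeds as above, while if there is a finite $A\subseteq V$ such that for infinitely many $k$ every path from $v_{i_k}$ to $u_k$ meets $A$, I would instead exploit the countable-exits hypothesis directly. In the latter case the rooted tree of simple paths in $G$ leaving $A$ and terminating at vertices in the infinite set $\{u_k\}$ is infinite and finitely branching by row-finiteness, so König's lemma yields an infinite forward branch; combining this branching structure with the binary-sequence argument from the proof of Lemma \ref{!1} (one exit path per subset $S\subseteq \mathbb N$, with paths corresponding to $S,S'$ of infinite symmetric difference being tail-inequivalent) produces uncountably many distinct exits, contradicting the countable-exits assumption. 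Either way one obtains a contradiction, so $(v_i)$ must be eventually bare and dropping a long enough initial segment gives a bare representative of $t$.
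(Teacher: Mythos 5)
Your branch~1, even granting all its steps, is not where the difficulty of this lemma lies; the essential gap is branch~2 of your dichotomy, which is exactly the hard case, and your treatment of it does not work as described. When the avoidance paths $\sigma_n$ fail to exist you propose to extract, via K\"onig's lemma, an infinite branch in the tree of simple paths leaving a finite set $A$ towards the vertices $u_k$, and then to invoke ``the binary-sequence argument from the proof of Lemma \ref{!1}'' to get uncountably many exits. But that argument requires a \emph{two-way} connection structure: one must be able to go back and forth between two routes infinitely often, avoiding larger and larger finite sets, so that at each scale there is a genuine binary choice. Your K\"onig limit supplies at most a one-directional family of paths from the fixed vertex into late trunk vertices; there is no second route to alternate with, nothing prevents the limit path from simply being tail-equivalent to the trunk itself, and you never use minimality in this branch. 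The paper's proof shows how this case is actually handled: one builds an auxiliary exit path $y$ with $y\leq [u]$ by the last-exit-vertex pigeonhole $E(\mu_i,k)$ (this device also disposes of the finite-set-avoidance problem that blocks your branch~1, since the tail of $\mu_i$ after its last exit from $F_k$ automatically avoids $F_k$); minimality then \emph{forces} $y$ to be tail-equivalent to $u$, and this transfers onto $u$ itself a system of detours $\nu_i$ from $u_{l_i}$ to $u_{k_i}$ which avoid $F_i$ and enter $u_{k_i}$ by an edge not emitted by $u_{k_i-1}$. It is the resulting pair of distinguishable routes between $u_{l_i}$ and $u_{k_i}$ at every scale that feeds the $\{0,1\}^{\mathbb N}$-indexed family of tail-inequivalent exit paths and contradicts the countability of exits. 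Without reproducing that use of minimality (or some substitute producing two routes), your branch~2 asserts the conclusion without a mechanism.

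There are also secondary problems in branch~1. You cannot pass to a subsequence on which the alternative predecessors $u_k$ are disjoint from the trunk $\{v_j\}$: pigeonhole over finite out-degrees excludes a fixed vertex recurring, not membership in an infinite set, and in general every alternative entrance may come from a trunk vertex; moreover, to conclude $u_k\to\infty$ you must first restrict to indices where $v_{i_k+1}$ does not recur later along the trunk, as in condition (\ref{!7}). Finally, ``the cumulative shift diverges'' does not by itself rule out tail-equivalence of $w$ with $(v_i)$, because exit paths may repeat vertices and the inserted words could in principle coincide with trunk segments; this can be repaired (for instance by making the inter-detour trunk windows long compared with the number of recurrences of the re-entry vertex), but it needs an argument. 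These points are fixable; the branch~2 gap is the one that defeats the proposal as it stands.
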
 
\begin{proof} Let $u$ be an exit path such that the exit $[u]$ it represents
  is minimal in $(\mathcal E,\leq)$. Assume for a contradiction that
\begin{equation}\label{!6}
\#s\left(r^{-1}(u_{i+1})\right) \geq 2
\end{equation}
for infinitely many $i$. There are then also infinite many $i \in
\mathbb N$ such that (\ref{!6}) holds, and at the same time
\begin{equation}\label{!7}
k > i+1 \ \Rightarrow \ u_k \neq u_{i+1} .
\end{equation}
 For each $i$ for which (\ref{!6}) and (\ref{!7}) both hold we choose
an edge $x_i \in r^{-1}(u_{i+1})$ with $s(x_i) \neq u_i$. Note that
$\lim_{i \to \infty} s(x_i) = \infty$ since $G$ is row-finite. Without
loss of generality we can therefore assume that $s(x_i) \notin
F_i$. For each $i$ we choose a finite path $\mu_i$ in $G$ with $s(\mu_i) = v$
such that $x_i$ is the last edge in $\mu_i$. Then $E(\mu_i,k)$ is
defined for all $i \geq k$ and since $G$ is row-finite there is a sequence $N_1
\supseteq N_2 \supseteq N_3 \supseteq \cdots$ of infinite subsets in
$\mathbb N$ such that $E(\mu_i,k)$ is the same vertex for all $i \in N_k$.
Let $w_k$ be this common vertex. We
can then piece together an exit path $y = (y_i)_{i=1}^{\infty}$ such
that there is a sequence $l_1 \leq l_2 \leq \cdots$ in $\mathbb N$ with
$y_{l_k} = w_k$ for all $k \in \mathbb N$. By construction there is
then also a sequence $j_1 <
j_2 < \cdots$ in $\mathbb N$ and for
each $i \in \mathbb N$ a finite path $\mu'_i$ in $G$ such that $s(\mu'_i) =
y_{l_i}$, $\mu'_i \cap F_i = \emptyset$ and such that the last edge in $\mu'_i$
is $x_{j_i}$. Since the set $N_i$ is infinite we can choose $j_i$
arbitrarily large, for each $l_i$. The paths $\{\mu'_i\}$ ensure that $y \leq u$ and the minimality of $[u]$
implies then that $y$ and $u$ are tail-equivalent.

It follows that $u$ has the
following property: For each $i \in \mathbb N$ there is a finite path
$\nu_i$ in $G$ and natural numbers $l_i,k_i \in \mathbb N$ such that $\nu_i \cap F_i =
\emptyset$, $l_i < k_i$, $s(\nu_i) = u_{l_i}$, $r(\nu_i) = u_{k_i}$,
$j > k_i \Rightarrow u_j \neq u_{k_i}$, and the last edge in
$\nu_i$ is not in $s^{-1}\left(u_{k_i-1}\right)$. Furthermore,
choosing the $\nu_i$ recursively we can
arrange that $k_i < l_{i+1}$ for all $i$ and that $\nu_{i+1}$ does not
contain a vertex which occurs in $\nu_j$ for some $j \leq i$. Set then $a^0_i = u_{l_i}u_{l_i+1} \cdots u_{k_i}$ and let
$a^1_i \in \bigcup_n V^n$ be the string of vertexes in $\nu_i$, starting with $u_{l_i}$ and ending with
$u_{k_i}$. Define $P :\{0,1\}^{\mathbb N} \to P(V)$ such that
\begin{equation*}
\begin{split}
&P((i_j)_{j=1}^{\infty}) \\
&= a^{i_1}_1
u_{k_1+1}u_{k_1+2} \cdots u_{l_2-1}a^{i_2}_2u_{k_2+1}u_{k_2+2} \cdots
u_{l_3-1} a^{i_3}_3 u_{k_3+1}u_{k_3+2} \cdots
u_{l_4-1}a^{i_4}_4\cdots .
\end{split}
\end{equation*}
Note that the last occurrence in $P((i_j)_{j=1}^{\infty})$ of each $u_{k_i}$ and the vertex
preceding it give away $(i_j)_{j=1}^{\infty}$. We conclude therefore
that $P$ is injective and that $P\left(\{0,1\}^{\mathbb N}\right)$
contains uncountably many tail-equivalence classes of exit paths in
$G$, contradicting the assumption. This contradiction shows that $u$
must be eventually bare.

\end{proof}

\end{document}